\documentclass{amsart}
\usepackage{fontenc}
\usepackage[english]{babel}
\usepackage[margin=1in]{geometry}
\usepackage{tikz}

\usepackage{amssymb, amsmath,mathtools}

\usepackage{mathrsfs}
\usepackage{amscd}
\usepackage{verbatim}
\usepackage[dvipsnames]{xcolor}
\mathtoolsset{showonlyrefs}
\usepackage{ esint }
\newcommand{\llangle}{\langle \! \langle}

\newcommand{\rrangle}{\rangle \! \rangle}

\usepackage{enumerate}

\usepackage[colorlinks,linkcolor={blue},citecolor={blue},urlcolor={purple},]{hyperref}

\usepackage[colorinlistoftodos,prependcaption,textsize=tiny]{todonotes}

\newcommand{\G}{\mathscr{G}}

\numberwithin{equation}{section}

\usepackage{forest}
\usepackage{tikz}
\tikzset{>=latex}

\usepackage{tabularx}
\newcolumntype{L}{>{\arraybackslash}X}
\usepackage{multirow}

\theoremstyle{plain}
\newtheorem{theorem}{Theorem}[section]
\theoremstyle{remark}
\newtheorem{remark}[theorem]{Remark}

\theoremstyle{plain}
\newtheorem{corollary}[theorem]{Corollary}
\newtheorem{lemma}[theorem]{Lemma}
\newtheorem{proposition}[theorem]{Proposition}

\newtheorem{definition}[theorem]{Definition}

\newtheorem{assumption}[theorem]{Assumption}

\numberwithin{equation}{section}
\newcommand{\wh}{\check}
\newcommand{\wt}{\widetilde}

\newcommand{\N}{\mathbb {N}}
\newcommand{\Z}{\mathbb{Z}}

\newcommand{\HH}{\mathcal{H}}
\newcommand{\R}{\mathbb{R}}

\renewcommand{\P}{\mathbb{P}} 
\newcommand{\E}{\mathbb{E}} 
\newcommand{\T}{O} 
\newcommand{\A}{\mathscr{A}} 
\newcommand{\vp}{\varphi}
\newcommand{\FF}{\mathfrak{F}}
\renewcommand{\d}{\mathrm{d}} 
\newcommand{\EE}{\mathcal{E}}

\newcommand{\supp}{\mathrm{supp}}
\newcommand{\spa}{\text{span}}
\newcommand{\MM}{\mathcal{M}}

\newcommand{\D}{\mathscr{D}}
\renewcommand{\div}{\mathrm{div}}

\renewcommand{\L}{\mathscr{L}}
\newcommand{\px}{\partial_x}
\newcommand{\CC}{\mathscr{C}}
\newcommand{\pv}{p\text{-}\mathrm{Var}}
\newcommand{\ptv}{\tilde{p}\text{-}\mathrm{Var}}
\newcommand{\grad}{\mathrm{grad}}
\newcommand{\X}{\mathcal{X}}

\newcommand{\diff}{\mathrm{diff}}

\newcommand{\Ca}{\mathrm{Cap}}

\newcommand{\EEw}{\mathcal{E}_{\text{weak}}}
\newcommand{\EEf}{\mathcal{E}_{\text{flat}}}
\renewcommand{\tilde}{\widetilde}
\allowdisplaybreaks

\title[Dirichlet form approach to STFEs]{Dirichlet form approach to stochastic thin-film equations}
\date{\today}
\author[Gess,  Otto,  Sauerbrey]{Benjamin Gess\textsuperscript{1}, Felix Otto\textsuperscript{2}, Max Sauerbrey\textsuperscript{3}}
\thanks{\textsuperscript{1}TU Berlin and MPI MiS Leipzig, Germany (\href{mailto:benjamin.gess@gmail.com}{benjamin.gess@gmail.com}), \textsuperscript{2}MPI MiS Leipzig, Germany (\href{mailto:felix.otto@mis.mpg.de}{felix.otto@mis.mpg.de}), \textsuperscript{3}MPI MiS Leipzig, Germany (\href{mailto:maxsauerbrey97@gmail.com}{maxsauerbrey97@gmail.com})}

\subjclass[2020]{60J46, 60H17, 76A20} 
\keywords{Singular SPDEs, Dirichlet forms, thin-film equation, Mosco convergence, Gaussian analysis, energy solutions, approximation of SPDEs, regularity}

\renewcommand{\eta}{F}
\renewcommand{\zeta}{G}

\begin{document}
	
	\begin{abstract}
		The thin-film equation describes the evolution of a thin liquid film driven by capillarity and limited by viscosity. 
		For sufficiently small film heights, thermal fluctuations may not be neglected. Since the thin-film equation as an overdamped 
		evolution equation has a gradient flow structure, the informal  implementation of the fluctuation-dissipation principle is straightforward. In this paper, we work out that the concept of Dirichlet forms and the related construction of  Markov processes are tailor-made for this type of overdamped continuum evolution with thermal noise. However, the concept of Dirichlet forms relies on the availability of a Gibbsian equilibrium measure, which, in case of the thin-film equation, limits the treatment to the $(1+1)$-dimensional case.

		 By means of Dirichlet form theory, we obtain the existence and uniqueness of the evolution of laws on configuration space. 
		  We show that the regularity of the realizations of this Markov process  matches the heuristics. In a simplified situation with a Gaussian reference measure and a non-degenerate mobility function, we characterize it as  what is called an energy solution.  The  latter provides a pathwise decomposition of the driving forces into a deterministic  and a thermal one, which was the starting point.     
		
		Applying the same techniques in the spatially discretized setting proposed in [Math. Comp. 92 (2023), 1931--1976], we obtain the same results uniformly in the discretization parameter. In particular, Galerkin as well as spectral approximations of the stochastic thin-film equation are shown to be uniformly tight in optimal topologies. In the simplified setting, we also prove Mosco convergence of the associated Dirichlet forms and thereby obtain convergence of these approximations.
	\end{abstract}

	\maketitle
	\section{Motivation}

	\subsection{A finite dimensional analogue}\label{SS_fin_dim_analog}
	This work revolves around the analysis of thermodynamical models of thin-film type based on the following ansatz: Let us consider a {Riemannian manifold} $(\MM, g)$ equipped with a {normalized  measure of Gibbs type} 
	\begin{equation}\label{e:mu_beta}
		\d\mu  (h) \,\propto\, \exp( -  E(h))\, \d h.
	\end{equation} Here, $\d h$ denotes a  reference measure on a flat ambient space of $\MM$ and $E\colon \MM\to [0,\infty] $ a given  energy functional. Naturally, the metric $g_h $ on $\mathcal{T}_h\MM$ induces a dual metric on the space of cotangent  vectors $ \mathcal{T}_h^*\MM$ via the Legendre transformation 
	\begin{equation}\label{e:Legendre}
		\frac{1}{2} g^h  (p,p ) \,=\, \sup_{\dot h \in \mathcal{T}_h\MM} \biggl\{ 
		p_\cdot \dot{h } \,-\, \frac{1}{2} g_h( \dot{h },\dot{h }) 
		 \biggr\},\qquad p \in \mathcal{T}_h^*\MM.
	\end{equation}
	By polarization, the above extends to an inner product on $\mathcal{T}_h^*\MM$ and allows us to pair the differentials of two observables $\eta,\zeta  \colon \MM \to \R$. Using the measure from \eqref{e:mu_beta}, we can define a Dirichlet form by
	\begin{equation}\label{e:Dirichlet_form}
		\EE (\eta,\zeta ) \,=\,\int_{\MM} g^h ( \diff|_h \eta ,\diff |_h\zeta  ) \, \d  \mu(h).
	\end{equation}
	 The latter gives rise to a generator through the integration by parts formula
	\begin{equation}\label{e:generator}
		 \int_{\MM}  \eta \L \zeta\, \d  \mu \,=\,  - 	\EE (\eta,\zeta ) ,
	\end{equation}
	which, under mild additional assumptions, can be associated to an $\MM$-valued time-homogeneous Markov process with transition semigroup $(T_t)_{t>0}$ by the identity
	\begin{equation}\label{e:generation}
		\L \eta \,=\, \lim_{t\searrow 0} \frac{ T_t \eta -\eta} {t}.
	\end{equation}
	That the Dirichlet form \eqref{e:Dirichlet_form} is  symmetric ensures reversibility of the Markov process with respect to the Gibbs measure $\mu$. Defining moreover the Kolmogorov forward operator  $\L^*$ by
	\[
	\int_{\MM } \eta \,\d \L^* {\nu}  \,=\, 	\int_{\MM } \L \eta \,\d {\nu},
	\]
	we find that 
	\[
	 - \int_{\MM}  \eta \, \d \L^* {\nu}  \,=\,   \EE( \eta , \d {\nu} /\d \mu ) \,=\, \int_{\MM}  \diff  \eta_\cdot  \grad \frac{\d {\nu}} {\d \mu} \, \d  \mu.
	\]
	For $\nu =\mu$, the probability current vanishes therefore, so  the detailed balance condition is   satisfied. 
	Moreover, the locality\footnote{A Dirichlet form has the local property if $\EE(F,G) = 0$ for $F$ and $G$ with disjoint support.} of \eqref{e:Dirichlet_form} ensures  that the associated Markov process has continuous paths and therefore is a diffusion process. See Appendix \ref{app:DFs} for this and more facts from Dirichlet form theory.
	
	Many overdamped equations of continuum physics have the structure of a gradient flow. We believe that Dirichlet form theory provides an efficient framework to introduce thermal fluctuations into these evolutions. As an example, we treat here the thin-film equation. 
	
	\subsection{The thin-film equation as a gradient flow}\label{SS:example}
	Let us specify an informal choice of $(\MM,g)$ and $E$ describing the evolution of a thin liquid film supported on a flat substrate and driven by capillarity. We consider a container with bottom\footnote{We choose to work with non-periodic domains because this choice leads to a simpler variational interpretation of the spatially discretized setting proposed in \cite{GGKO21}.} $O\subset \R^d$, for $d\in \{1,2\}$. 
	 While $d=2$ is the physical (effective) dimension\footnote{The total dimension of the problem is $d+1$.}, $d=1$ corresponds to a fluid-film that is confined in one spatial direction, cf.\ \cite{DMES2005}. Thereby, the manifold of possible fluid-film configurations described by their height profile $h$ is
		\begin{equation}\label{e:M_example}
			\mathcal{M}^+\, =\, \biggl\{ h\colon \T \to [0,\infty)  \, \bigg|\, \int_{\T} h\, =\, 1\biggr\}, 
		 \end{equation}
	 {where $ \int_{\T} h\, =\, 1$ can be assumed  by a suitable non-dimensionalization.}
	 The tangent space at a point $h \in \MM^+$ is accordingly given by all mass-preserving directions
	 \begin{equation}\label{e:TM_example}
	 	\mathcal{T}_h\mathcal{M}^+\, =\, \biggl\{\dot  h\colon \T \to \R  \, \bigg|\, \int_{\T} \dot  h\, =\, 0\biggr\},
	 \end{equation} 
 	and equipped with the metric tensor
 	\begin{align}\label{tensot_j}
 		g_h(\dot{h},\dot{h}) \,=\, \inf_j \biggl\{
 		\int_{\T} \frac{|j|^2}{m(h)}  \,\bigg|\,\forall\tilde{p}: \, 
 		\int_O j\cdot \nabla \tilde{p}  \,=\, \int_O  \dot{h} \tilde{p}  
 	 		\biggr\},
 	\end{align}
 In addition to imposing that $ \nabla \cdot j + \dot{h}= 0$ in $O$, the above variational formulation also enforces that  $\nu \cdot j =0$ on $\partial O$,
 where $\nu$ denotes the outer unit normal vector. The latter no-flux boundary condition models the non-penetrability of the vertical container walls. 
 	The   {mobility function} $m \colon (0,\infty) \to (0,\infty)$ is typically given by a power law $m(h) = h^n$ for $n\ge 0$ depending on different modeling choices: Assuming that the fluid is governed by the Stokes equation and  satisfies a no-slip boundary condition at the fluid-substrate interface gives $n=3$, while a Navier slip condition results in $n=2$ for heights below the slippage length. Darcy's law instead of viscous dissipation results in the choice $n=1$, in which case the above induces the Wasserstein distance. 
 	
 	The Euler--Lagrange equation for \eqref{tensot_j} yields  that $j/m(h)$ admits a potential $p$ satisfying natural boundary conditions, so that we may  alternatively express
 		\begin{equation}\label{e:metric}
 			g_h (\dot{h},\dot{h}) \,=\, \int_{\T} m(h) |\nabla p|^2 , 
 			\qquad \forall\tilde{p}: \, 
 			\int_O m(h) \nabla p \cdot \nabla \tilde{p}  \,=\, \int_O  \dot{h} \tilde{p}  .
 	\end{equation}
 	   Then, the gradient flow of the surface energy 
 	\begin{equation}\label{e:Energy}
 		E\colon \MM^+\to [0,\infty] , \, h\mapsto \frac{1}{2}\int_{\T} |\nabla h|^2,
 	\end{equation}  
 	on $(\MM^+,g)$ is seen to be the thin-film equation\footnote{The function $-p$ is the capillary pressure.}
 	\begin{equation}\label{e:TFE}
 		\begin{cases}
 		\partial_t h + \nabla \cdot( m(h)\nabla p)\,=\, 0 \quad \text{and}\quad p = \Delta h, & \text{in $O$},\\
 		\nu \cdot m(h)\nabla p = 0  \quad \text{and}\quad \nu \cdot \nabla h = 0, & \text{on $\partial O$}, 
 	\end{cases}
 	\end{equation}
 which traditionally is obtained from  lubrication approximation of the full fluid dynamics, see  \cite{ODB97}. The second boundary condition is a natural one, an equilibrium boundary condition that 
 	arises from the modeling choice that the surface energy \eqref{e:Energy} does not contain a term coming from the interface between the fluid and the vertical container wall. For  more information on the underlying  geometry of $(\MM^+,g)$ for $n=1$ we refer  to 
 	\cite{Otto_PME}.
 	
 	\subsection{Rigorous definition of the pre-Dirichlet form}\label{SS:pre_DF}
 	Following  Subsection \ref{SS_fin_dim_analog}, we use the Dirichlet form ansatz \eqref{e:Dirichlet_form}  to model  thermal fluctuations for thin films in line with the detailed balance condition. Here we demonstrate that despite the infinite-dimensional setting, the Dirichlet form may be rigorously defined on cylinder functions. Indeed, for the differential one-form   
 	\begin{equation}\label{E10}
 		\dot{h} \mapsto \int_{\T}  \dot{h} p
 	\end{equation} 
 	we may evaluate 
 	\begin{align}\label{E39872}
 	g^{h} (p,p ) \,=\, \int_{\T} m(h) |\nabla p|^2 ,
 	\end{align}
 	based on \eqref{e:Legendre} and \eqref{e:metric}.
 	For a cylinder function
 	\begin{equation}\label{e:cyl_fct}
 		\eta(h)\,=\, \Phi\biggl(\int_{\T}  h p^1,\dots , \int_{\T}  h p^k \biggr),
 	\end{equation}
 	the above gives then rise to the formula 
 	\begin{align}&
 		g^{h} (\diff\eta|_h ,\diff \eta |_h)
 		 \\&\quad=\,\sum_{i,j=1}^k  
 		\biggl(	\int_{\T} m(h) \nabla p^i \cdot \nabla p^j \biggr) 
 		\partial_i \Phi \biggl(\int_{\T}  h p^1 ,\dots , \int_{\T}  h p^k \biggr)
 		 \partial_j \Phi \biggl(\int_{\T}  h p^1 ,\dots , \int_{\T}  h p^k \biggr),\label{eqn412}
 	\end{align}
 	for the integrand appearing in  \eqref{e:Dirichlet_form}. To also give meaning to the Gibbs measure \eqref{e:mu_beta}, we use that $2 E(h)$  is the squared Cameron--Martin norm of the Gaussian free field, so that one way to define
	\begin{equation}\label{eqn9}
	\d \mu^+(h) \,\propto\,  \exp( -  E(h))\, \d h \,=\, \mathbf{1}_{\MM^+}(h)
	\exp( -  E(h))\, \d h,
	\end{equation}
	 is to center said   field at $\mathbf{1}_{\T}$ and to subsequently restrict it to $\MM^+$. While this amounts to conditioning on a set of positive probability if $d=1$, it is unclear how to achieve this  in the critical dimension $d= 2$, in which the point evaluation is not continuous in the topology of the Cameron--Martin space. 
	 We remark that writing 
	 \begin{align}\label{eqn_N}
		\mathbf{1}_{\MM^+}( h ) = \exp( -V (h)) ,\qquad V(h) =\begin{cases}0 ,& h\in \MM^+ ,\\
			\infty , & \text{else},
		\end{cases} 	 
 	 \end{align}
  suggests a similarity of \eqref{eqn9} to the  $\phi_d^4$-measure, which can in $d\in \{2,3\}$ despite similar problems be constructed as a renormalized measure. The worse singularity of the potential in \eqref{eqn_N} compared to $V(h)= \int_O h^4 $ prevents, however, a direct adaption of various renormalization  methods.

	 For these reasons, we restrict ourselves for the remainder of this paper   to the subcritical $(1+1)$-dimensional setting by taking 
	\begin{align}
		\T \,= \,(0,1).
	\end{align} 
 	In this case, integrability of  \eqref{eqn412} with respect to $\mu^+$ is guaranteed  if $p^i \in H^1(\T)$,  $\Phi $ has continuous and  polynomially bounded derivatives, and $m(h)$ is polynomially bounded in $h$. We can therefore rigorously define \begin{equation}\label{e:Dirichlet_form_+}
	 		\EE (\eta,\zeta ) \,=\,\int_{\MM^+} g^h ( \diff|_h \eta ,\diff|_h \zeta ) \, \d  \mu^+(h)\,=\, 
	 		\int_{\MM^+} \int_{\T} m(h) \partial_x  \diff|_h \eta \partial_x \diff|_h \zeta  \,\d x\,  \d  \mu^+(h)
	 		,
	 	\end{equation}for such cylinder functions,  giving rise to a pre-Dirichlet form on $L^2(\mu^+)$.
 	
As a disclaimer, we point out  that the evolution of a Markov process obtained from Dirichlet form theory is  determined only up to so-called exceptional sets of initial data, which are in particular of zero $\mu^+$-measure. 
In particular, the construction does not guarantee the existence of a diffusion process starting from every prescribed deterministic initial datum.
 Since by entropic repulsion\footnote{This follows from the law of the infimum being absolutely continuous, which may, e.g., be seen using explicit calculations \cite[Section 23]{janson} or proved using Malliavin calculus \cite[Theorem 2.1.2]{Nualart_related}.} 
\begin{align}\label{ent_rep}
\inf_{x\in \T} h(x)>0,\qquad \text{$\mu^+$-a.e.},
\end{align}
we also make no claims in this manuscript about the stochastic thin-film equation started from initial values with a contact line, i.e., a triple junction of gas, liquid and solid.
\section{Results and discussion}
\renewcommand{\thetheorem}{\Roman{theorem}}
\setcounter{theorem}{0}

\subsection{Existence and regularity of an associated diffusion process.}As  \eqref{e:Dirichlet_form_+} is merely a pre-Dirichlet form, one needs to pass to its closure in order to obtain an associated diffusion process. In this infinite dimensional and degenerate setting, closability is a non-trivial issue. A common strategy to show the latter is to prove an integration by parts formula, leading to an identification of its generator via \eqref{e:generator}. As we detail in Subsection \ref{SS:generator_is_diff} below, non-constant cylinder functions typically do not lie in the domain of the generator of \eqref{e:Dirichlet_form_+}, so that this strategy is not applicable in our setting of interest. Instead, we use the results from \cite{AR_90_closability,AR_91} to argue that for mobility functions such that {
\begin{align} \label{eqn111}\text{$m\colon(0,\infty) \to (0,\infty)$ is continuous,}\qquad
	\inf_r \frac{m(r)}{\rho(r)} >0, \qquad\text{and}\qquad  \sup_r \frac{m(r)}{(1 \vee r)^{n}}<\infty ,
\end{align}
for some  $n\in [0,\infty)$ and $\rho\colon (0,\infty) \to (0,\infty)$, which is bounded away from $0$ on intervals $[r,\infty)$, the pre-Dirichlet form has a closed extension.} In particular, we rely on a combination of the above lower bound with \eqref{ent_rep} leading to the lower bound\footnote{Throughout, dotted spaces refer to spaces of mean-free functions or distributions equipped with their homogeneous norm.} 
\begin{equation}
	g^h(\diff|_h\eta ,\diff|_h \eta)\,\gtrsim\, \rho(h) \|\diff \eta|_h\|_{\dot{H}^{1}(\T)}^2,
\end{equation}
for a positive density function {still denoted by $\rho$}. 
The upper bound ensures then tightness of the induced capacity in the topology of $(H^{1}(\T))^*$ by \cite{rock_schmu_tightness}, which yields the 
existence of  an $\MM^+$-valued diffusion associated to the closure of \eqref{e:Dirichlet_form_+}. All this is executed in a more general quasi-Gaussian setting, covering for instance also stochastic thin-film equations affected by van der Waals forces between fluid and substrate leading to an additional potential term in \eqref{e:Energy}.

Coming back to \eqref{e:Dirichlet_form_+}, the obtained Markov process is guaranteed to have continuous paths in $(H^{1}(\T))^*$. Comparing to the case  $m\equiv 1$, one may expect it however to lie in 
\begin{equation}\label{e:Holder}
C^{1/8-,1/2-}_{t,x}\,=\,\Bigl\{
h\colon [0,\infty) \times \T\to \R\,\Big|
\, \forall T\in (0,\infty),  \,\gamma\in (0,1/2):\,
h\in C^{\gamma/4,\gamma}([0,T]\times \T)
\Bigr\},
\end{equation}
at least
when started from equilibrium. To show this regularity assertion in the more general setting, we interpolate the spatial regularity of $\mu$ with annealed estimates on the increments of suitable additive functionals resulting from decomposing them into forward and backwards martingales, cf.\ \cite{LyonsZheng}. The latter uses the upper bound of \eqref{eqn111} as well as the reversibility of the diffusion process. All this results in Theorem \ref{thm_existence_deg_case}, which can be stated as follows. 
\begin{theorem}[Existence \& Regularity]\label{thmA}If the mobility function satisfies \eqref{eqn111} for some  $n$ and $\rho$ as above, the pre-Dirichlet form \eqref{e:Dirichlet_form_+} is closable on $L^2(\mu^+)$ and  its closure admits an associated $\MM^+$-valued diffusion process. The paths of the latter lie almost surely  in $C_{t,x}^{1/8-, 1/2-}$ when started from $\mu^+$.
\end{theorem}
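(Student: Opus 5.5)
The proof splits into three parts: closability, existence of a diffusion, and path regularity. Throughout we work on the state space $(H^1(\T))^*$, i.e. the dual of $H^1$, and we view $\mu^+$ as a probability measure on it. Recall that $\mu^+$ is the Gaussian free field centered at $\mathbf 1_{\T}$ and conditioned on the event $\MM^+$, which has positive probability when $d=1$. For $\eta$ a cylinder function, write $\nabla\eta(h)=\sum_i\partial_i\Phi(\dots)p^i$ for its gradient.

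\emph{Closability.} We plan to use the Albeverio--R\"ockner criterion \cite{AR_90_closability,AR_91} for forms of the type $\int \langle A(h)\nabla\eta,\nabla\zeta\rangle\,\d\mu$. The steps are as follows.
\begin{enumerate}
\item By \eqref{ent_rep}, $\inf_x h(x)>0$ $\mu^+$-a.e. Since $\rho$ is bounded below on intervals $[r,\infty)$, the lower bound in \eqref{eqn111} gives
\begin{equation}
g^h(\diff\eta|_h,\diff\eta|_h)\ge \rho(h)\,\|\partial_x\nabla\eta(h)\|_{L^2}^2,
\end{equation}
where the weight $\rho(h):=\inf_{r\ge\inf h}\inf_s m(s)/\rho(s)\cdot\rho(r)>0$ is a.e. positive.
\item It therefore suffices to show closability of the flat form $\EEf(\eta,\zeta)=\int\langle\partial_x\nabla\eta,\partial_x\nabla\zeta\rangle\,\rho\,\d\mu^+$, with constant coefficients in $\dot H^1$ and the positive density weight $\rho$. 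Domination of the original form by a closable form then transfers closability by the standard monotonicity argument.
\item The criterion of \cite{AR_90_closability} needs that $\mu^+$ admits a suitable partial quasi-invariance or closability along each direction $p\in H^1$.
\item We expect this to hold because $\mu^+$ is the Gaussian free field restricted to the set $\{h>0\}$, which is open in the uniform topology. Along a Cameron--Martin direction $k$ the Gaussian measure is quasi-invariant with explicit density $\exp(\langle k,h\rangle_{CM}-\tfrac12|k|^2)$, and on the open set the indicator is locally constant in the direction $k$.
\item This yields the one-dimensional absolute continuity and integration by parts on the slices $\{h+sk\}$, which the Albeverio--R\"ockner theorem requires, at least after localizing to $\{\inf h>\delta\}$.
\end{enumerate}
We expect this localization, which reconciles the constraint $\MM^+$ with the form, to be the main obstacle. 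Its purpose is to replace generator-based integration by parts, which fails here.

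\emph{Existence of the diffusion.} The closed form is local because cylinder functions satisfy the chain rule. To apply the quasi-regularity criterion we need tightness of capacity, which we obtain from \cite{rock_schmu_tightness}. The plan is:
\begin{enumerate}
\item Construct a countable family of cylinder functions in the form domain that separates points of $(H^1)^*$.
\item Use the norm function $h\mapsto \|h\|_{(H^1)^*}$, approximated by cylinder functions through finite spectral truncations.
\item Bound its energy using the upper bound $m(r)\lesssim(1\vee r)^n$ together with the moments $\int\|h\|_{L^\infty}^{n}\,\d\mu^+<\infty$.
\item Since $H^{1/2-}\hookrightarrow(H^1)^*$ compactly and $\mu^+$ lives on $H^{1/2-}$, we get coercivity, and \cite{rock_schmu_tightness} gives a quasi-regular local form.
\item The general theory of Appendix \ref{app:DFs} then produces the $\mu^+$-symmetric diffusion with paths continuous in $(H^1)^*$.
\item It is supported on $\MM^+$ because $\mu^+(\MM^+)=1$ and the form is conservative, since $1$ lies in the domain and has zero energy.
\end{enumerate}

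\emph{Regularity.} We start the process from $\mu^+$ and use the Lyons--Zheng decomposition \cite{LyonsZheng}. The steps are:
\begin{enumerate}
\item For a test function $p\in H^1$, the additive functional $\langle h_t-h_0,p\rangle$ equals $\tfrac12(M_t+\hat M_t)$. Here $M$ is a forward martingale, $\hat M$ is a backward martingale, and both have quadratic variation $\int_0^t\int m(h_s)|\partial_xp|^2\,\d s$.
\item Burkholder--Davis--Gundy, stationarity and the moment bound on $m(h)$ give
\begin{equation}
\E|\langle h_t-h_s,p\rangle|^q\lesssim |t-s|^{q/2}\|p\|_{\dot H^1}^q.
\end{equation}
This says that time increments are of order $|t-s|^{1/2}$ in the $H^{-1}$ norm.
\item By stationarity, the spatial regularity of $\mu^+$ gives $\E\|h_t\|^q_{C^{\gamma}}<\infty$ for all $\gamma<1/2$.
\item Interpolate between these two bounds. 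Testing against a mollifier at scale $\ell$ gives, for the increment,
\begin{equation}
|h_t(x)-h_s(x)|\lesssim \ell^{\gamma}+\ell^{-3/2}|t-s|^{1/2}.
\end{equation}
\item Optimizing at $\ell\sim|t-s|^{1/4}$ gives the temporal exponent $\gamma/4$. Strictly, the $L^\infty$ pairing gives $\ell^{-3/2}$, but the annealed $L^q$ bounds combined with Kolmogorov/Garsia--Rodemich--Rumsey, with $q$ large, recover the exponents $\gamma/4$ in time and $\gamma$ in space for every $\gamma<1/2$, up to arbitrarily small losses.
\item Hence the paths lie in $C^{1/8-,1/2-}_{t,x}$.
\end{enumerate}
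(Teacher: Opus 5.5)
Your architecture is the same as the paper's: closability via the Albeverio--R\"ockner machinery with a weight coming from \eqref{ent_rep}, quasi-regularity via \cite{rock_schmu_tightness}, and regularity via Lyons--Zheng, BDG, stationarity and the mollifier choice $\ell=|t-s|^{1/4}$. The regularity part is essentially the proof of Proposition \ref{prop_reg_est}. With $\ell=|t-s|^{1/4}$ your two terms are directly $|t-s|^{\gamma/4}$ and $|t-s|^{1/8}$, so nothing needs to be ``recovered''. However, two justifications in the first two parts would not go through as written.

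\emph{Closability.} A lower bound of $\EE$ by a closable form does not by itself give closability, and no monotonicity argument does this. For instance, $\int_0^1|f'|^2\,\mathrm{d}x+|f'(1/2)|^2$ dominates a closable form but is not closable. What works, and what the paper does, is to exploit that both forms are built from the \emph{same} operator $\diff$. Step by step:
\begin{enumerate}
\item Prove that $\diff\colon\D_0\subset L^2(\mu^+)\to L^2(\rho\,\mathrm{d}\mu^+;\dot H^1)$ is closable (Lemma \ref{lemm_clos_gradn}).
\item Take an $\EE$-Cauchy sequence $\eta_k\to0$ in $L^2(\mu^+)$. The lower bound $g^h\ge\rho\|\cdot\|^2_{\dot H^1}$ forces $\diff\eta_k\to0$ in $L^2(\rho\,\mathrm{d}\mu^+)$, hence $\mu^+$-a.e.\ along a subsequence.
\item Fatou's lemma then gives $\EE(\eta_l,\eta_l)\le\liminf_k\EE(\eta_l-\eta_k,\eta_l-\eta_k)\to0$. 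This uses that $p\mapsto g^h(p,p)$ is continuous, which follows from the upper bound.
\end{enumerate}

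For closability of $\diff$ itself, the obstacle you single out dissolves. One reduces to countably many directions $\vp_k$, an orthonormal basis of $(\dot H^1)^*$ that is orthogonal also in $\dot H^1$, and uses that a sum of closable forms is closable. For each direction one verifies the Hamza-type criterion of \cite[Theorem 3.2]{AR_90_closability} and \cite[Corollary 2.5]{AR_91}.

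That criterion does not ask for integration by parts on slices. Such integration by parts indeed fails: the slice density $r\mapsto\mathbf 1\{\inf(h+r\vp_k)>0\}\,e^{-r^2/(2\sigma_k^2)}$ jumps at the endpoints of its support. The criterion only asks that this density, and the weight, be locally bounded away from zero near points where the density is positive. Both follow from $\{\inf h>0\}$ being open (and convex) in $C(\overline{\T})$ and from monotonicity/continuity of $\rho$. So no localisation to $\{\inf h>\delta\}$ is needed. You should also check that the directional forms are well defined on $\mu^+$-equivalence classes, since $\mu^+$ does not have full support.

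\emph{Compact nest.} Sublevel sets of $\|h\|_{(H^1)^*}$ are not compact in $(H^1)^*$. Conversely, the differential of $\|h\|^2_{H^s}$ has $\dot H^1$-norm of order $\|h\|_{H^{2s+1}}$, which is $\mu^+$-a.s.\ infinite for $s\ge-1/4$, so in particular for $H^{1/2-}$-type norms. Your coercivity step therefore does not produce a finite-energy function with compact sublevel sets. There are two fixes:
\begin{enumerate}
\item Apply \cite[Proposition 3.1]{rock_schmu_tightness} directly, as the paper does. It only needs $g^h(p,p)\le\sigma(h)\|p\|^2_{\dot H^1}$ with $\sigma\in L^1(\mu^+)$.
\item Use $F(h)=\|h\|_{H^s}^2$ with $s\in(-1,-1/4)$. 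Then $\{F\le R\}$ is compact in $(H^1)^*$, $F$ has finite energy by Gaussian moments and the polynomial bound on $m$, and $\mathrm{Cap}(\{F>R\})\le(\EE(F,F)+\|F\|^2_{L^2(\mu^+)})/R^2\to0$.
\end{enumerate}
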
\noindent
Since, as argued at the beginning of this introduction, the detailed balance condition is satisfied, the resulting diffusion process  provides a natural model for  thermal fluctuations of  thin fluid-films. Moreover, comparison to the case  $m\equiv 1$ shows that the 
obtained $C_{t,x}^{1/8-, 1/2-}$-regularity is optimal.

\subsection{Identification and uniform tightness of Markov processes resulting from discretization} \label{SS_tightness_dircscrete} We also follow up on the work \cite{GGKO21}, where a spatially discrete situation is considered. More precisely, one may discretize the above by restricting the set of configurations to
\begin{equation}\label{eqn418_new}
	\MM^+_N \,=\, \bigl\{ h\in \MM^+ \,\big|\, \text{$h$ is linear on $[x_i,x_{i+1} ]$}\bigr\},
\end{equation}  
for {$N+1$ equidistant  points  $0=x_0< \dots < x_N =1$} and replacing the metric \eqref{e:metric} by
\begin{equation}\label{E9}
	\tilde{g}_{h} (\dot{h},\dot{h}) \,=\, \int_{\T} m(h) |\px  {p}|^2   , \qquad
	\forall\tilde{p}:\, 
	\int_O m(h) \px p \px \tilde{p}  \,=\,  \frac{1}{N}\biggl(\frac{\dot {h}(0)\tilde p ({0})+\dot {h}(1)\tilde{p}(1)}{2}+ \sum_{i=1}^{N-1} \dot {h}(x_i)\tilde{p}(x_i)\biggr).
\end{equation}  
Then, 
the gradient flow of the surface energy coincides with the positivity preserving scheme for  thin-film equations from  \cite{gruen_rumpf}. See also Subsection \ref{SS:GR-scheme} for a discussion of this choice of metric and, e.g., \cite{wuchen} for other discretizations tailored  toward the Wasserstein diffusion \cite{Renesse_Sturm}. As metric and energy already determine a gradient Dirichlet form, the latter gives rise to the approximation
\begin{equation}\label{e:Dirichlet_form_+_discr}
\widetilde{\EE}_{N} (\eta, \eta )\,=\, 
\int_{\MM_N^+} \wt{g}^h ( \diff |_h\eta  ,\diff |_h \eta ) \, \d  \mu_{N}^+(h),
\end{equation}
of \eqref{e:Dirichlet_form_+}, 
where 
	\begin{equation}\label{defi_mu_N+}\d \mu_{N}^+ \,\propto\,  \exp( -  E(h))\, \d h \,=\, \mathbf{1}_{\MM_N^+}(h)
	\exp( -  E(h))\, \d h.\end{equation}
Unlike in the infinite dimensional setting, for continuously differentiable mobility functions, one can  integrate by parts in \eqref{e:Dirichlet_form_+_discr} leading to an identification of the generator as
\begin{align}\label{eqn1}
	\widetilde{\L}_N \eta \,=\, \wt{g}^{\alpha \alpha'} \partial_{\alpha\alpha'} \eta \,+\, \partial_\alpha \eta \bigl(\partial_{\alpha'}(\wt{g}^{\alpha'\alpha}) -\wt{g}^{\alpha \alpha'} \partial_{\alpha'}E\bigr),\qquad \eta \in C^\infty_c(\MM_N^+),
\end{align} where $(h^\alpha)_\alpha$ is a flat coordinate chart and  we use  Einstein's summation convention. This implies that for any $\eta \in C^\infty(\MM_N^+)$, until an associated Markov process reaches the boundary of $\MM_N^+$,
\begin{align}\label{Eq191}
	\eta(h_t) - \eta (h_0) - \int_0^t \wt{g}^{\alpha \alpha'} \partial_{\alpha\alpha'} \eta(h_s) \,+\, \partial_\alpha \eta \bigl(\partial_{\alpha'}(\wt{g}^{\alpha'\alpha}) -\wt{g}^{\alpha \alpha'} \partial_{\alpha'}E\bigr)(h_s)\, \d s 
\end{align} 
is a martingale in the filtration generated by $h$. 
One of the main results of \cite{GGKO21} is that said martingale problem preserves strictly positive film heights, i.e., that $h_t(x_i)>0$ for all $i=0,\dots , N$, $\P$-a.s., if $h_0(x_i)>0$, under the condition  \begin{equation}\label{ass_n}
		\sup_r \frac{ m(r)}{  r^{n}}<\infty ,
	\end{equation}
for $n\ge 3$. 
In particular, the martingale problem is solved globally in time. Passing to the equivalent formulation as an SDE, this leads to a positivity preserving numerical scheme for its solution.
 In terms of Dirichlet form theory this result may be interpreted  as Markov uniqueness for the operator \eqref{eqn1}: Any Markov process associated to an extension of $\wt{\L}_N$ is uniquely determined by the solution to the above martingale problem and therefore there is only one such extension. 
 
 For $N=1$, however,  the  Feller test shows that  solutions to the resulting SDE preserve positivity also for $n\in [2,3)$. Motivated by said gap in exponents, we further extend these insights in Theorem \ref{thm_nonnegativity}, which asserts the following.
\begin{theorem}[Improved positivity of numerical scheme]\label{thmC}
	Assume \eqref{ass_n} for $n \ge 2$ and a continuously differentiable mobility function $m$. Then, if  $h$ is a solution to the martingale problem \eqref{Eq191}  
	with $\P(\forall i :  h_0(x_i) >0) = 1$, we have  $\P(\forall i, t\ge 0 : h_t(x_i) >0) =1$. 	 
\end{theorem}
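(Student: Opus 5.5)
We prove Theorem \ref{thmC} with a Lyapunov-function argument of Feller type for the finite-dimensional martingale problem \eqref{Eq191}. The aim is a function $\Psi\colon\{h\in\MM_N \,:\, h(x_i)>0\ \forall i\}\to[0,\infty)$ with two properties. First, $\Psi(h)\to\infty$ whenever $\min_i h(x_i)\to0$ while $E(h)$ stays bounded. Second, $\widetilde{\L}_N\Psi\le C(1+\Psi)$ on the interior. Once $\Psi$ is available, localize with the stopping times $\tau_k=\inf\{t\,:\,\Psi(h_t)+E(h_t)\ge k\}$. By \eqref{Eq191} and Gronwall, $\E[\Psi(h_{t\wedge\tau_k})]\le e^{Ct}\,\E[\Psi(h_0)]$, after first conditioning on $\{\Psi(h_0)\le M\}$ to handle random initial data. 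This gives $\P(\tau_k\le t)\lesssim e^{Ct}M/k\to0$. Non-explosion of $E$ is handled separately: $E$ itself satisfies a Lyapunov bound, since $\widetilde{\L}_N E=\mathrm{tr}(\wt g\,\nabla^2E)+\partial_\alpha E\,\partial_{\alpha'}\wt g^{\alpha\alpha'}-|\nabla E|^2_{\wt g}$ is controlled by the polynomial bound \eqref{ass_n} together with mass conservation.

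The natural candidate comes from the case $N=1$, where the generator \eqref{eqn1} reduces to a one-dimensional diffusion in the single free variable. There one reads off the scale function. Near $h\to0$ the drift $\partial\wt g$ competes with the diffusion $\wt g\sim m(h)\sim h^n$, and the Feller test yields entrance or non-attainability of $0$ exactly when $n\ge2$. For general $N$ we take the entropy-type functional
\begin{equation}
\Psi(h)=\frac1N\sum_{i}\omega_i\,G\bigl(h(x_i)\bigr),\qquad G''(r)=\frac{1}{m(r)},
\end{equation}
with the trapezoidal weights $\omega_i$ of \eqref{E9}. This $G$ is the entropy for which $\widetilde{\L}_N$ acting on the deterministic part gives the dissipation $-\int(\partial_x h)^2$-type terms, as in the discrete $\alpha$-entropy estimates of \cite{gruen_rumpf}. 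For $n\ge2$ we have $G(r)\gtrsim r^{2-n}$ (or $\log(1/r)$ if $n=2$), hence $G(r)\to\infty$ as $r\to0$. The point of choosing $G''=1/m$ is that the metric \eqref{E9} is built from the mobility, so $\diff\Psi|_h$ evaluated at the nodes equals $G'(h(x_i))$. The drift term $-\wt g(\diff\Psi,\diff E)$ then becomes $-\int m(h)\partial_xp\,\partial_x\tilde p$ with $\tilde p$ the nodal interpolant of $G'(h)$. This is the discrete counterpart of $-\int \partial_x G'(h)\,m(h)\,\partial_x^3h$, which is negative up to lower-order terms because $m\,\partial_xG'(h)=\partial_x h$.

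The second-order part $\wt g^{\alpha\alpha'}\partial_{\alpha\alpha'}\Psi+\partial_\alpha\Psi\,\partial_{\alpha'}\wt g^{\alpha'\alpha}$ is the Itô correction. It must be shown to be bounded by $C(1+\Psi)$ using only $n\ge2$, and this is the main obstacle. The term $\wt g^{ii}G''(h_i)$ involves the discrete inverse-Laplacian-type operator $\wt g$ weighted by $m$, whose diagonal entries scale like $m$ at neighboring elements rather than at the node. On elements where $h$ is linear between a small and a large value, $\int m(h)$ is not comparable to $m(h_i)$. I would first compute $\wt g$ explicitly on a single element. In 1D the constraint in \eqref{E9} gives $p$ with $m(h)\partial_xp$ piecewise constant, so $\wt g$ is tridiagonal-in-inverse and expressible through the harmonic means $\bigl(\int_{x_i}^{x_{i+1}}m(h)^{-1}\bigr)^{-1}$. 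I would then verify that the divergence term $\partial_{\alpha'}\wt g^{\alpha'\alpha}$ cancels the part of the Itô term that fails to be bounded, exactly as the drift cancels the singular diffusion in the $N=1$ Feller computation. If this cancellation only holds up to $r^{n-2}$-type errors, those are bounded for $n\ge2$, which is precisely where the improvement from $n\ge3$ to $n\ge2$ would enter.
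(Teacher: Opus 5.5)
Your architecture is the same as the paper's: the discrete entropy $\Psi=S$ with $G''=1/m$ as Lyapunov function, stopping at level sets of $S$, conditioning on the $\mathfrak{F}_0$-event $\{S(h_0)\le M\}$, and a Chebyshev bound on $\P(\tau_k\le t)$. The gap is that the estimate everything rests on, $\widetilde{\L}_N\Psi\le C(1+\Psi)$ on the interior, is not proved. You call the second-order part the main obstacle and only announce that you would ``verify'' a cancellation, possibly up to $r^{n-2}$-errors.

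This step cannot be done by estimating terms separately. Take the zigzag coordinates, an element on which $h$ is linear from $\epsilon$ to $1$, and $m(r)=r^n$. The Itô term $\wt g^{\alpha\alpha}\partial_{\alpha\alpha}S$ is a sum of non-negative terms and is at least $N^2\wt g^{\alpha\alpha}/m(\epsilon)\sim N^2(n-1)\epsilon^{n-1}\epsilon^{-n}$, i.e.\ of order $\epsilon^{-1}$. Meanwhile $\Psi$ grows only like $\epsilon^{2-n}$ (or $\log(1/\epsilon)$ for $n=2$). So for $n\in[2,3)$, exactly the new range, the Itô term is not $O(1+\Psi)$ and must be cancelled exactly by the geometric term.

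The missing idea is the identity \eqref{E12}. Because $h$ is linear on $I_\alpha$, substituting $r=h(x)$ gives $\fint_{I_\alpha}1/m(h)=\frac{s'(h^{\alpha+})-s'(h^{\alpha-})}{h^{\alpha+}-h^{\alpha-}}$ with $s'=G'$. Hence the diagonal cometric is $\wt g^{\alpha\alpha}=\frac{h^{\alpha+}-h^{\alpha-}}{s'(h^{\alpha+})-s'(h^{\alpha-})}$. Together with $\partial_\alpha S=A^\alpha s'(h)/\sqrt N$ this gives $\wt g^{\alpha\alpha'}\partial_\alpha S=A^{\alpha'}h/\sqrt N$, which is \emph{linear} in $h$. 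Since $\wt g$ is symmetric, the Itô and geometric terms in \eqref{eqn1} combine into a divergence:
\[
\wt g^{\alpha\alpha'}\partial_{\alpha\alpha'}S+\partial_\alpha S\,\partial_{\alpha'}\wt g^{\alpha'\alpha}\,=\,\partial_{\alpha'}\bigl(\wt g^{\alpha\alpha'}\partial_\alpha S\bigr)\,=\,\frac{1}{\sqrt N}\,\partial_{\alpha'}\bigl(A^{\alpha'}h\bigr)\,=\,2N^2(N+1).
\]
This is a constant for every $m$, with no remainder.

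The drift $-\wt g^{\alpha\alpha'}\partial_\alpha S\,\partial_{\alpha'}E$ is exactly minus the discrete dissipation \eqref{eqn43ubv}, so it is non-positive outright, not ``up to lower-order terms''. Your continuum analogue also has the wrong sign and the wrong order: it is $+\int\partial_xG'(h)\,m(h)\,\partial_x^3h=-\int(\partial_x^2h)^2$. Hence $\widetilde{\L}_NS\le 2N^2(N+1)$, no Gronwall is needed, and your localization argument closes.

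This also changes two points of your write-up. First, $n\ge2$ enters only through $S(h)<\infty\Rightarrow\min_i h(x_i)>0$, not through bounding cancellation errors. The gain over the threshold $n\ge3$ of \cite{GGKO21} comes from working only with the expectation of the stopped entropy (and moments of order $p<1$ of its supremum), so the quadratic variation of the martingale part never has to be controlled. Second, the separate Lyapunov control of $E$ is superfluous, since positivity and the mass constraint already force $h(x_i)\le 2N$.
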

The above is obtained by  refining  the argument from \cite{GGKO21}. 
Involving additionally Dirichlet form methods, we obtain also the assertion of Theorem \ref{thmA}  uniformly in $N$, see  Theorem \ref{thm_existence_deg__fin_case} below, and connect the Dirichlet form \eqref{e:Dirichlet_form_+_discr} to the above martingale problem for   $n\ge 1$ in Theorem \ref{thm_nonnegativity}.
\begin{theorem}[Tight, non-negative solutions to numerical scheme]\label{thmB}Assume \eqref{eqn111} for some  $n\in [0,\infty)$. 
	\begin{enumerate}[(i)]
		\item\label{I1}
	The pre-Dirichlet form \eqref{e:Dirichlet_form_+_discr} is closable on $L^2(\mu_N^+)$ and  its closure admits an associated $\MM^+_N$-valued diffusion process. The latter are uniformly in $N$ tight on $C_{t,x}^{1/8-, 1/2-}$ when started from $\mu_N^+$.
	\item\label{I2} Assume additionally \eqref{ass_n} for $n\ge 1$ and  that $m$ is continuously differentiable and increasing.  
	Then the  diffusion associated to the closure of  \eqref{e:Dirichlet_form_+_discr} solves the martingale problem \eqref{Eq191} when started from $\mu_N^+$.
\end{enumerate}
\end{theorem}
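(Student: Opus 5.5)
For part (i) I would follow the strategy sketched for Theorem~\ref{thmA}, checking that every constant is independent of $N$. Closability on $L^2(\mu_N^+)$ is easy in finite dimensions. $\MM_N^+$ is an $N$-dimensional simplex-type set with flat coordinates $(h(x_i))_i$ under the mass constraint. The measure $\mu_N^+$ has a continuous, strictly positive density on the interior, and by entropic repulsion it puts no mass on the boundary. The co-metric $\wt g^h$ is continuous and, by the lower bound in \eqref{eqn111}, locally uniformly elliptic on the interior. Closability then follows from the Hamza-type criterion of \cite{AR_90_closability,AR_91}, or simply by integrating by parts against compactly supported test functions in the interior.

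The existence of an associated diffusion needs quasi-regularity. Here the state space is locally compact, so regularity of the closure on $\MM_N^+$ together with locality suffices.

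For uniform tightness I would reproduce the Lyons--Zheng argument. For a test function $p\in H^1(\T)$, consider the additive functional $\int_\T h_t p$. Its Lyons--Zheng decomposition writes it as $\frac12(M_t - (\hat M_T - \hat M_{T-t}))$, a forward martingale minus a time-reversed backward one. The quadratic variations of both are $\int_0^t \wt g^{h_s}(p,p)\,\d s$. Reversibility with respect to $\mu_N^+$ and the upper bound of \eqref{eqn111} give
\[\E\Bigl|\int_\T (h_t-h_s)p\Bigr|^q\lesssim |t-s|^{q/2}\,\|p\|_{\dot H^1}^q\,\E_{\mu_N^+}\bigl[(1\vee\|h\|_\infty)^{nq/2}\bigr].\]
I then need two uniform estimates. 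The first is moments of $\sup h$ under $\mu_N^+$, uniformly in $N$; this holds because $\mu_N^+$ is a conditioned piecewise-linear interpolation of a Gaussian bridge, and the conditioning event has probability bounded below. The second is uniform $C^{1/2-}$ spatial moments under $\mu_N^+$, which I would get from Kolmogorov applied to piecewise-linear interpolants of Brownian-type paths.

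Since the discrete co-metric involves the lumped mass pairing in \eqref{E9}, I must choose $p$ as a mollified point evaluation at scale $\ell$ and compare it with its discrete counterpart. Taking $\ell=|t-s|^{1/4}$ balances the time bound $|t-s|^{1/2}\ell^{-1/2}$ against the spatial error $\ell^{1/2}$, giving time increments $|t-s|^{1/8-}$. Stationarity then yields Kolmogorov--Chentsov moment bounds uniform in $N$. I expect the main obstacle to be making the comparison between the mass-lumped pairing in \eqref{E9} and the $L^2$ pairing uniform in $N$, in particular at scales $\ell<1/N$.

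For part (ii), the goal is to identify the generator on $C_c^\infty$ of the interior as \eqref{eqn1}, which uses $m\in C^1$. The process stays in the interior by Theorem~\ref{thmC}, which needs $n\ge2$. For $n\in[1,2)$ I would instead show that the boundary has zero capacity. Monotonicity of $m$ and \eqref{ass_n} let me build cutoffs $\chi_\varepsilon(\min_i h(x_i))$ with logarithmic profile. Their energy is bounded by $\int m(h)|\nabla\chi_\varepsilon|^2\,\d\mu_N^+$, and this tends to zero because the density vanishes near the boundary like the Gaussian bridge conditioning dictates while $m(r)\lesssim r$ there.

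Given zero capacity, $C_c^\infty(\mathrm{int}\,\MM_N^+)$ is a core, and Fukushima's decomposition yields that \eqref{Eq191} is a martingale for $\eta\in C_c^\infty$. A localization argument then extends this to $\eta\in C^\infty$, quasi-everywhere and hence $\mu_N^+$-a.e. This capacity estimate is the second delicate point.
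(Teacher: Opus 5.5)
Your part (i) follows the paper's route: Theorem~\ref{thm_abstract} gives closability and the diffusion, Lyons--Zheng is applied to $\psi_\ell*h_t(x)$ with $\ell=|t-s|^{1/4}$, and uniform H\"older moments come from $\mu_N=\mu\circ Q_N^{-1}$ together with $\liminf_N\lambda_N\ge\lambda>0$. The obstacle you flag is settled by \eqref{E89}. For piecewise linear $\dot h$, the pairing $\int_\T \dot h q$ equals the lumped pairing of $\dot h$ with $\tilde q=\hat\rho*q$. Legendre's formula and Young's inequality then give $\wt{g}^h(q,q)\le\int_\T m(h)|\px\tilde q|^2\le \sup_\T m(h)\,\|\px q\|_{L^2(\T)}^2$, uniformly in $N$ and for every $\ell$, including $\ell<1/N$. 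One small correction: the time bound is $|t-s|^{1/2}\ell^{-3/2}$, since $\|\psi_\ell\|_{\dot H^1}\sim\ell^{-3/2}$, and it is this that makes $\ell=|t-s|^{1/4}$ the balancing choice.

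Part (ii) has a genuine gap. For $n\in[1,2)$ the boundary of $\MM_N^+$ does not have zero capacity, so no cutoff family with vanishing energy exists. Both parts of your heuristic are off:
\begin{itemize}
\item The density of $\mu_N^+$ is $\propto e^{-E}$ restricted to $\MM_N^+$, hence bounded below up to $\partial\MM_N^+$; it does not vanish there.
\item The relevant weight is not $m$ but $\wt{g}^{\alpha\alpha}=(\fint_{I_\alpha}1/m(h))^{-1}$. Take $m(r)=r^n$, which is admissible in (ii). The directions transversal to the face $\{h(x_i)=0\}$ are the $\bar\vp_\alpha$ with $\alpha\pm=i$, and along them $\wt{g}^{\alpha\alpha}\sim h(x_i)^{n-1}$ for $n\in(1,2)$ and $\wt{g}^{\alpha\alpha}\sim 1/|\log h(x_i)|$ for $n=1$.
\end{itemize}
Transversally, the form is therefore a one-dimensional weighted form whose weight $w$ satisfies $\int_0 w^{-1}\,\d r<\infty$, so the face has positive capacity. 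This is exactly the Feller classification of Subsection~\ref{SS_Feller}: for $N=1$ the boundary is regular (accessible) when $n<2$, and the paper remarks that $C_c^1((0,2))$ is then not dense in $\D(\wt{\EE}_1)$. Consequently $C_c^\infty(\mathrm{int}\,\MM_N^+)$ is not a core; its closure yields the killed, non-conservative process. The diffusion hits $\partial\MM_N^+$ with positive probability, so your localization from $C_c^\infty(\mathrm{int}\,\MM_N^+)$ to $C^\infty(\MM_N^+)$ breaks down. For $n\ge2$, citing Theorem~\ref{thmC} presupposes the martingale problem you are trying to prove. A capacity argument does work in that case, because $\wt{g}^{\alpha\alpha}\lesssim_N h(x_i)$ and a logarithmic cutoff has vanishing energy against a merely bounded density.

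The missing idea is that the boundary does not need to be polar. As in Lemma~\ref{lemma_IBP_fd}, integrate by parts for $\eta\in C^2(\MM_N^+)$, smooth up to the boundary, against $\zeta\in C^1(\MM_N^+)$:
\begin{itemize}
\item The boundary terms vanish because, in the transversal directions, $\wt{g}^{\alpha\alpha}$ is dominated by the logarithmic mean \eqref{eqn433}. This uses \eqref{ass_n} with $n\ge1$ and $h(x_i)\le 2N$, and the logarithmic mean tends to $0$ on the face.
\item Moreover $\partial_\alpha\wt{g}^{\alpha\alpha}\in L^1(\mu_N^+)$. Monotonicity of $m$ makes $\wt{g}^{\alpha\alpha}=G(h^{\alpha-},h^{\alpha+})$ increasing in each argument, so its partial derivatives are non-negative with bounded integral.
\end{itemize}
Fukushima's decomposition together with the Revuz-measure identification \cite[Corollary~5.4.1]{FOT} then identifies the zero-energy part as $\int_0^t\wt{\L}_N\eta(h_s)\,\d s$ for all $\eta\in C^\infty(\MM_N^+)$. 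The process may touch $\partial\MM_N^+$, but the degeneracy of $\wt{g}$ at the faces kills the would-be reflection term. That is the actual content of part (ii).
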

While the former assertion \eqref{I1} relies on similar arguments as Theorem \ref{thmA}, the latter \eqref{I2} is obtained using Fukushima's decomposition of additive functionals.
In combination, they yield that if $n$ from \eqref{ass_n} is sufficiently large,  then the solutions to the martingale problem are tight in the space in which the $\MM^+$-valued diffusion associated to  \eqref{e:Dirichlet_form_+} is supported.  This suggests that one can show convergence by identifying  weak limit points, implying convergence of the numerical algorithm from \cite{GGKO21}.  Achieving this poses, however, challenges due to the infinite dimensional setting of the limiting dynamics: On the one hand, working directly with the generator would require  identifying functions in its domain, while, as we detail in Subsection \ref{SS:generator_is_diff} below, typical observables do not allow for an integration by parts in \eqref{e:Dirichlet_form_+}. Working on the other hand with Mosco convergence would require an identification of weak and strong Sobolev spaces on $\MM^+$ to argue that natural upper and lower limiting forms coincide. Also this is difficult as we deal with a degenerate and infinite dimensional setting as well as with the presence of a boundary $\partial \MM^+$. 

\subsection{Approximation and characterization of the associated diffusion in a non-degenerate setting.}\label{SS_non-deg} Given the just mentioned obstructions, we also consider a simplified setting, where we remove the non-negativity condition as well as the degeneracy of $m$. More precisely, in order to gain understanding of the diffusion process in the continuum we consider also the manifold 
	\begin{equation}\label{e:M_example_flat}
	\mathcal{M}\, =\, \biggl\{ h\colon \T \to \R  \, \bigg|\, \int_{\T} h\, =\, 0\biggr\}
\end{equation}
equipped with the mean-free Gaussian free field $\mu$,   
and impose the assumption that
\begin{equation}\label{assumption_simplified}
 0< c\le  m(r) \le C<\infty.
\end{equation}
In this case, since positivity is immaterial, next to 
\begin{equation}\label{E1}
	\wt{\EE}_N (\eta ,\eta ) = \int_{\MM_N} \wt{g}^h ( \diff |_h\eta ,\diff |_h\eta ) \, \d  {\mu}_{N}(h),
\end{equation}
defined analogously to \eqref{e:Dirichlet_form_+_discr}, one may also consider approximations to 
	\begin{equation}\label{e:Dirichlet_form-2}
	\EE (\eta,\eta ) \,=\,\int_{{\MM}} g^h ( \diff  |_h\eta ,\diff|_h  \eta) \, \d  \mu(h)
\end{equation}
	based on projections onto the cosine basis. Namely, we may set $\check{\MM}_N= \Pi_{\le N}\MM$, where 
	\begin{equation}\label{eqn73343}
	\Pi_{\le N} h(x) \,=\, 2\sum_{k=1}^N\biggl( \int_\T  \cos(\pi k \cdot) h  \biggr)  \cos(\pi k x),
	\end{equation}
	 so that also $\mathcal{T}_h \wh{\MM}_N = \Pi_{\le N} \mathcal{T}_h\mathcal{M}$. We adjust also here the metric tensor \eqref{e:metric}, but this time  by 
		\begin{equation} \label{eqn_spectral_Galerkin_metric}
		\wh{g}_h (\dot{h},\dot{h}) \,=\, \int_{\T} m(h) |\partial_x {p}|^2 , \qquad \forall \tilde{p}:\, \int_O  m(h) \partial_x  p \partial_x \tilde p \,=\,\int  \dot {h}\tilde{p},
	\end{equation}
	where we restrict ourselves in the above to $p,\tilde{p} \in \Pi_{\le N}\mathcal{T}_h^*\mathcal{M}$.
	This yields another approximate  form 
	\begin{align}\label{E2}
		\check{\EE}_N (\eta,\eta ) \,=\,\int_{{\check{\MM}_N}} \check{g}^h ( \diff|_h \eta  ,\diff|_h \eta ) \, \d  \check{\mu}_N(h).
	\end{align}

	The assertions of Theorem \ref{thmA} and Theorem \ref{thmB}~\eqref{I1} carry over to the current setting. This  implies again uniform in $N$ tightness of the processes associated to \eqref{E1} and \eqref{E2} in $C_{t,x}^{1/8-, 1/2-}$. In the non-degenerate setting, we succeed, however, in  identifying their limit by showing that $\tilde{\EE}_N, \check{\EE}_N\to \EE$ in Mosco sense and therefore the associated processes converge, cf.\  Theorem \ref{thm_convergence}.
	\begin{theorem}[Convergence in a simplified setting]
		\label{thmD} Assume \eqref{assumption_simplified} and that $m$ is continuous.  When started from equilibrium, the diffusion processes associated to \eqref{E1} and \eqref{E2} converge in law on  $ C_{t,x}^{1/8-, 1/2-}$ to the $\MM$-valued diffusion  associated to \eqref{e:Dirichlet_form-2}.
	\end{theorem}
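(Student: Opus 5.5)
The proof splits into two steps. First, convergence of the finite-dimensional distributions, which follows from Mosco convergence of the Dirichlet forms. Second, uniform tightness on $C_{t,x}^{1/8-,1/2-}$, which we take from the analogue of Theorem \ref{thmB}~\eqref{I1} in the non-degenerate setting. The underlying state spaces differ ($\MM_N$, $\check{\MM}_N$ versus $\MM$), so we embed all forms on $L^2(\mu)$-type spaces. To do this we use the Kuwae--Shioya framework of Mosco convergence along converging Hilbert spaces. The identification maps are pullback by the projections $\Pi_{\le N}$ (spectral case) and by piecewise-linear interpolation $I_N$ (Gr\"un--Rumpf case). The reference measures then converge: $\check\mu_N=(\Pi_{\le N})_\#\mu$ exactly, while $\mu_N$ is the Gaussian free field restricted to piecewise-linear functions, which converges weakly to $\mu$. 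By the Kuwae--Shioya theorem, Mosco convergence $\check\EE_N,\tilde\EE_N\to\EE$ yields strong convergence of the semigroups, $T^N_t\eta_N\to T_t\eta$ whenever $\eta_N\to\eta$ strongly. Since the processes are started in equilibrium and are Markov, iterating this gives convergence of expectations $\E[\eta_1(h_{t_1})\cdots\eta_k(h_{t_k})]$ for bounded cylinder $\eta_i$. Together with tightness, this identifies every weak limit point and proves the theorem.

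The Mosco convergence has two parts. For the \emph{limsup (recovery) part}, we take $\eta$ a smooth cylinder function as in \eqref{e:cyl_fct} and use $\eta\circ\Pi_{\le N}$, resp.\ the discrete analogue, as the recovery sequence. We must show that $\check g^h(\diff\eta,\diff\eta)\to g^h(\diff\eta,\diff\eta)$ in $L^1(\mu)$. In the spectral case the dual metric is $\int m(h)|\px \Pi_{\le N}p|^2$ with $p=\diff\eta|_h$, which converges by dominated convergence using \eqref{assumption_simplified} and continuity of $m$ along $\Pi_{\le N}h\to h$ uniformly $\mu$-a.s. In the discrete case the analogous role is played by the lumped-mass pairing in \eqref{E9}, where $p$ is replaced by an interpolant of its nodal values, and we use convergence of Riemann sums. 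Since cylinder functions form a core by definition of the closure, this suffices.

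For the \emph{liminf part}, take $\eta_N\to\eta$ weakly with $\sup_N\EE_N(\eta_N)<\infty$; we must show $\EE(\eta)\le\liminf\EE_N(\eta_N)$. Using $c\le m\le C$, each form is comparable to the corresponding flat form with $m\equiv1$. Up to constants, these flat forms are the Ornstein--Uhlenbeck-type gradient forms for the Gaussian measures, with gradients in $\dot H^{1}$. So $\diff\eta_N$ is bounded in $L^2(\mu;\dot H^1)$, and a weak limit exists. The key step is to identify this weak limit with $\diff\eta$. We would do this by Gaussian integration by parts against cylinder test fields: for Gaussian measures, weak and strong Sobolev spaces coincide, which settles the identification. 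Lower semicontinuity then follows from convexity. The form $\int\!\!\int m(h)|\px \xi|^2$ is convex in $\xi$, but its coefficient $m(h)$ varies with $N$ through $h$. We handle this by writing $m(h)\ge m(\text{fixed }h)$ on the limit space after a change of variables, and approximating $m$ uniformly on compacts using uniform convergence $\Pi_{\le N}h\to h$.

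We expect the main obstacle to be the liminf inequality in the Gr\"un--Rumpf case. There the metric \eqref{E9} uses a lumped inner product, and $\mu_N$ is not a projection of $\mu$. This forces us to compare discrete and continuous $\dot H^{-1}$-type norms and Gaussian integration by parts uniformly in $N$, with error terms controlled by the $C^{1/2-}$ regularity of typical $h$.
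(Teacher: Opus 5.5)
\textbf{Genuine gap: the liminf for the Gr\"un--Rumpf forms.} The gap is exactly the step you flag as the main obstacle. The liminf inequality for $\wt{\EE}_N$ is never established, so your argument covers only the spectral approximation \eqref{E2}.

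The premise behind the anticipated difficulty is also false. By Lemma \ref{lemma_is_ortho}, the recentered nodal interpolation $Q_N$ is the $\dot H^1(\T)$-orthogonal projection onto $\MM_N$: the $\dot H^1$ inner product of any $h$ with a piecewise linear function only involves the nodal values of $h$. Hence $\mu_N=\mu\circ Q_N^{-1}$ exactly, see \eqref{eqn1001}. This coupling does three things:
\begin{itemize}
\item it turns your discrete limsup step into plain dominated convergence via \eqref{E89};
\item it lets you pull everything back to $L^2(\mu)$;
\item it gives $\div_{\mu_N}(Q_NX)\to\div_\mu(X)$ strongly for cylinder vector fields $X$ (Lemma \ref{Lemma_div_conv}).
\end{itemize}
No comparison of discrete and continuous Gaussian integration by parts with H\"older error terms is needed.

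Even with the coupling, your convexity argument does not transfer. $\wt{g}^h$ is defined only implicitly through the variational problem \eqref{E9}. There is no representation $\wt{g}^h(q,q)=\int m(h)|\px(\cdots)|^2$ to which weak-times-strong lower semicontinuity could be applied. The one explicit estimate, \eqref{E89}, is an upper bound and so is useless for a liminf.

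\textbf{The missing idea: duality on the tangent side.}
\begin{itemize}
\item Take $X=\sum_i\zeta^i\dot h_i$ with $\zeta^i$ cylinder functions and $\dot h_i\in\dot H^1(\T)$. Young's inequality for the pair $(\wt{g}^h,\wt{g}_h)$ and Gaussian integration by parts under $\mu_N$ give $\wt{\EE}_N(\eta_N,\eta_N)\ge 2\int\eta_N\div_{\mu_N}(Q_NX)\,\d\mu_N-\int\wt{g}_{\cdot}(Q_NX,Q_NX)\,\d\mu_N$.
\item It then suffices to show $\limsup_N\wt{g}_{Q_Nh}(Q_N\dot h,Q_N\dot h)\le g_h(\dot h,\dot h)$ for $\mu$-a.e.\ $h$, with uniform integrability. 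This follows from \eqref{E9}:
\begin{itemize}
\item the lumped measures $\frac1N\sum_iQ_N\dot h(x_i)\delta_{x_i}$ (with halved boundary weights) converge to $\dot h$ strongly in $(\dot H^1(\T))^*$;
\item the potentials $p_N$ are bounded in $\dot H^1$ because $m\ge c$;
\item weak lower semicontinuity of $\int m(Q_Nh)|\px p_N|^2$ yields the bound.
\end{itemize}
\item Taking the supremum over $X$ gives $\liminf_N\wt{\EE}_N(\eta_N,\eta_N)\ge\EEw(\eta,\eta)$.
\item Under \eqref{assumption_simplified}, $\EEw=\EE$ by Proposition \ref{Prop_weak_is_strong}.
\end{itemize}
This is Theorem \ref{thm_mosco_abstract}~\eqref{Mosco_I2} as used in the paper. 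It treats both discretizations uniformly because it only needs an upper bound on the tangent metric.

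\textbf{Your spectral liminf.} Your route there is a valid alternative, because the cotangent metric is explicit: weak compactness of the pulled-back differentials in $L^2(\mu;\dot H^1)$, identification by Gaussian integration by parts, then lower semicontinuity. However, replace the sentence about ``$m(h)\ge m(\text{fixed }h)$ after a change of variables'' with the actual argument. Since $\sqrt{m}(\Pi_{\le N}h)\to\sqrt{m}(h)$ boundedly and a.e.\ while $\px\xi_N\rightharpoonup\px\xi$, one has $\sqrt{m}(\Pi_{\le N}h)\px\xi_N\rightharpoonup\sqrt{m}(h)\px\xi$ in $L^2(\mu\otimes\d x)$, and weak lower semicontinuity of the norm concludes.
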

	
	As for the spatial discretization, assuming that $m$ is continuously differentiable and that $m'$ is additionally polynomially bounded, one finds that the  Markov process associated to \eqref{E2} is a solution to the  problem that \eqref{Eq191}, with $\tilde{g}$ replaced by $\check g$,
	is a martingale
	for any flat chart $(h^\alpha)_\alpha$ of $\check{\MM}_N$.
	
	Classically, the latter can be reduced to linear observables if we also prescribe the quadratic variation, since then the martingality  for general $\eta$ follows by It\^o's formula. More precisely, after calculating the energy gradient, we equivalently ask that for any smooth $p$ satisfying Neumann boundary conditions
	\begin{align}\label{E5}\quad&
			\int_{\T} h_tp \,-\, 
		\int_{\T} h_0p   -\,\int_0^t   \int_{\T} m( h_s)\px^3h_s 
	 \px	\Pi_{\le N} p 	
			\,+\, \frac{\partial_x (D_N(2\pi  \cdot ))}{4}      m'(  h_s)  \partial_x \Pi_{\le N } p \,\d x \,\d s 
	\end{align}
	is a martingale with respect to the filtration generated by $h$ and has the quadratic variation process
	\begin{align} \label{E35}2 \int_0^t  \check{g}^{h_s}( p,p )\,\d s \,=\,    {2}
		\int_0^t \int_{\T} 
		m(h_s) | \px\Pi_{\le N} p|^2
		\,\d x\, \d s.
	\end{align}
We remark that the term involving the Dirichlet kernel
\begin{equation}\label{eqn_def_Dirichlet kernel}
	D_N (\theta)  = 1+ 2\sum_{k=1}^N\cos(k \theta) 
\end{equation}
corresponds to the coefficient $\partial_{\alpha'}(\check{g}^{\alpha'\alpha})$ in \eqref{Eq191}. Moreover, essentially due to the unitary action of shifts on periodic functions, the latter would precisely cancel if we assumed $O$ to be periodic. This is reflected by the Dirichlet kernel, and therefore the geometric correction term in \eqref{E5},  concentrating as $N\to\infty$  near the boundary of $O$,  where this shift invariance is broken.\footnote{For a  further discussion, see Subsection \ref{SS:ren_of_SPDE} below.}

	In  the recent theory of energy solutions to singular SPDEs like the KPZ equation, more general martingale problems are considered in which the term subtracted in \eqref{E5} is merely a limit of explicit additive functionals, cf.\ \cite{GJ_ARMA}. The need for  this relaxation comes from the limited regularity of solutions to singular SPDEs, and also the diffusion associated to \eqref{e:Dirichlet_form-2} does not allow for an evaluation of e.g.\
	\[\int_0^t \int_{\T} m( h_s)\px^3 h_s 
 \px p \,\d x	\,\d s ,
	\]
	since it lies only in $C^{1/8-,1/2-}_{t,x}$, $\P$-almost surely. Nevertheless,  solutions to such generalized martingale problems may  be shown to be unique \cite{GubPer_uniqueness} and the energy solution concept connects naturally to Dirichlet form theory, as observed in \cite{zhuzhu_phi43} in the context of the $\phi^4_3$-equation. In Theorem \ref{thm_characterization} we  characterize the diffusion associated to \eqref{e:Dirichlet_form-2} in the same spirit---we state a concise version here, for which we set
	\begin{align} \label{eqn_H3N}
		{H}^3_N(\T) \,=\, \{
		p\in H^3(\T) | \partial_x p (0 )  = \partial_x p(1) = 0
		\}.
	\end{align}
\begin{theorem}[Characterization in a simplified setting]
	\label{thmE} Assume \eqref{assumption_simplified} and that $m' $
	is continuous and bounded. 
	When started  from $\nu$ with $\frac{\d \nu}{\d \mu}\in L^2(\mu)$, the $\MM$-valued diffusion associated to \eqref{e:Dirichlet_form-2} is characterized as an energy solution. In particular, for every  $p\in {H}^3_N(\T)$, the process
	\begin{align}\begin{split}\label{E6}
			&
			\int_{\T} h_tp \,-\, 
			\int_{\T} h_0p \\&\quad  -\, \lim_{N\to \infty} \int_0^t   \int_{\T} m( \Pi_{\le N} h_s)\px^3\Pi_{\le N}h_s 
			 \px\Pi_{\le N} p \,+\, \frac{\partial_x (D_N(2\pi  \cdot ))}{4}     m'(\Pi_{\le N}  h_s)  \partial_x \Pi_{\le N }p    \,\d x	\,\d s 
		\end{split}
	\end{align}
	is a martingale with respect to the filtration generated by $h$ and has quadratic variation
	\[{2}
	\int_0^t \int_{\T} 
	m(h_s) |\px p|^2
	 \,\d x\, \d s.
	\]
\end{theorem}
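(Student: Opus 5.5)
We will prove Theorem \ref{thmE} by combining the Mosco convergence $\check{\EE}_N\to\EE$ from Theorem \ref{thmD} with the Lyons--Zheng forward--backward martingale decomposition. The plan has two parts: existence of the limit in \eqref{E6} together with the martingale property and quadratic variation, and then uniqueness of such energy solutions.

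\textbf{Step 1 (linear observables under $\EE$).} For $p\in H^3_N(\T)$ and $\eta_p(h)=\int_\T hp$, the observable $\eta_p$ lies in the domain of the closure of \eqref{e:Dirichlet_form-2}, with $\diff \eta_p = p$ and energy density $\int_\T m(h)|\px p|^2$. By Fukushima's decomposition,
\[
\eta_p(h_t)-\eta_p(h_0)=M^{p}_t+N^{p}_t,
\]
where $M^p$ is a martingale additive functional. By \eqref{e:Dirichlet_form-2} and the bounds \eqref{assumption_simplified}, its quadratic variation is $2\int_0^t\int_\T m(h_s)|\px p|^2$. Moreover, $N^p$ is a continuous additive functional of zero energy.

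\textbf{Step 2 (identifying $N^p$ as a limit).} $\eta_p$ is typically not in the domain of the generator, so we do not expect an integration by parts to give $N^p$ directly. Instead we approximate by the Galerkin generator. For the observable $\eta_p\circ\Pi_{\le N}$, the drift $\check{\L}_N\eta_p$ is explicit: it is the integrand in \eqref{E5}, obtained from \eqref{Eq191} with $\check g$, using that the energy gradient is $-\px^2 h$ and that $\partial_{\alpha'}\check g^{\alpha'\alpha}$ produces the Dirichlet-kernel term. Call this functional $\mathcal{A}_N(h)$.

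Our goal is to show that $\int_0^t \mathcal{A}_N(h_s)\,\d s\to N^p_t$ in $L^2$ under the stationary law, uniformly on compacts in time. The tool is the Kipnis--Varadhan / $H^{-1}$ estimate for reversible processes:
\[
\E_\mu\sup_{t\le T}\Bigl|\int_0^t F(h_s)\,\d s\Bigr|^2\lesssim T\,\|F\|_{-1}^2,
\qquad
\|F\|_{-1}=\sup_{\EE(\zeta,\zeta)\le1}\int F\zeta\,\d\mu .
\]
It is natural to write $N^p$ as $\int_0^t \mathcal{A}(h_s)\,\d s$ for a distribution-valued "generator of $\eta_p$" living in the dual of $D(\EE)$. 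Since $\int \zeta\,\L\eta_p\,\d\mu=-\EE(\zeta,\eta_p)$, it then suffices to show
\[
\sup_{\EE(\zeta,\zeta)\le1}\Bigl|\int \mathcal{A}_N\zeta\,\d\mu+\EE(\eta_p,\zeta)\Bigr|\to0 .
\]
Testing on cylinder functions $\zeta$ (dense in $D(\EE)$) reduces this to comparing $\int \mathcal{A}_N\zeta\,\d\mu$ with $-\EE(\eta_p\circ \Pi_{\le N},\zeta)$, computed in the $\mu$-integration by parts. Cylinder $\zeta$ depending on finitely many modes are handled by the Gaussian integration by parts that underlies \eqref{E5}. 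The error consists of the difference $m(\Pi_{\le N}h)$ versus $m(h)$ paired against $\px\Pi_{\le N}p\,\px\diff\zeta$. This is controlled by the continuity of $m'$, by \eqref{assumption_simplified}, and by the convergence $\Pi_{\le N}h\to h$ uniformly $\mu$-a.s. in $d=1$.

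\textbf{Step 3 (extension to initial laws $\nu\ll\mu$).} For $\frac{\d\nu}{\d\mu}\in L^2(\mu)$, convergence in probability transfers from $\P_\mu$ to $\P_\nu$ by Cauchy--Schwarz. The martingale property and the quadratic variation likewise transfer, because they are statements under $\P_x$ for quasi-every $x$.

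\textbf{Step 4 (characterization).} Conversely, we must show that any energy solution (with the appropriate stationarity or density bound, and the time-reversal property built into the definition) has the law of the Dirichlet-form diffusion. We would follow \cite{GubPer_uniqueness}: solve a backward Kolmogorov equation for cylinder test functions using the resolvent of $\EE$, and use the forward--backward decomposition to apply It\^o's formula to $u(t,h_t)$. This step, together with the uniform $H^{-1}$ control of the singular term $m(\Pi_{\le N}h)\px^3\Pi_{\le N}h$ in Step 2, is the main obstacle. For the latter we would first test against smooth $\zeta$ and exploit the gradient structure $m(h)\px^3h=\px(\ldots)$-type cancellation via the energy, and only then use density.
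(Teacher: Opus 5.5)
Your Steps 1--3 are essentially the paper's proof that the diffusion is an energy solution (Proposition \ref{Prop_DF_yields_ES}), with Kipnis--Varadhan in place of Lyons--Zheng. Two corrections apply there.

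First, the paper's notion of energy solution (Definition \ref{defi_energy_sol_iii'}) also requires incompressibility. It also requires the energy estimate in $\pv$ for $p>2$, not only in the supremum norm. You get the latter from Lyons--Zheng plus L\'epingle's inequality, after writing $\zeta=\L\eta+\lambda$.

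Second, the ``main obstacle'' you name, uniform $\D(\EE)^*$-control of $m(\Pi_{\le N}h)\px^3\Pi_{\le N}h$, is not an obstacle. The Gaussian integration by parts you already invoke gives $\int \mathcal{A}_N\zeta\,\d\mu=-\int\int_\T m(\Pi_{\le N}h)\,\px\Pi_{\le N}p\,\px\Pi_{\le N}\diff|_h\zeta\,\d x\,\d\mu(h)$. This is bounded by $C\|\px p\|_{L^2(\T)}\EE(\zeta,\zeta)^{1/2}$ uniformly in $N$, so no gradient-structure cancellation is needed. Note also that the error term contains the projection $\Pi_{\le N}$ acting on $\diff\zeta$, not only $m(\Pi_{\le N}h)-m(h)$.

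The genuine gap is Step 4, i.e.\ the half of ``characterized'' asserting that every energy solution has the law of the diffusion.
\begin{itemize}
\item Energy solutions here are neither assumed stationary nor time-reversible. You only have incompressibility, the $\pv$ energy estimate and the martingale problem for linear observables, so no forward--backward decomposition is available for an arbitrary energy solution.
\item Non-constant cylinder functions are typically not in $\D(\L)$ (Subsection \ref{SS:generator_is_diff}), and $T_{t_k-t}\zeta_k$ is not a cylinder function. A backward Kolmogorov argument on cylinder test functions therefore cannot be run directly.
\end{itemize}

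What is missing is It\^o's formula for $\Phi(\int_\T hq^1,\dots,\int_\T hq^k)$ under an energy solution. The drift $A^{[q]}$ in \eqref{E6} is only known as a limit in $\pv$ with $p>2$. Hence $\int\partial_i\Phi(\dots)\,\d A^{[q^i]}$ is not a Young integral against a path of finite $(2+\delta)$-variation.

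The paper's key step (Proposition \ref{Prop_ES_to_MP}) supplies this by real interpolation of two bounds:
\begin{itemize}
\item a rate $N^{-(1/2-\epsilon)}$ for the convergence in $\D(\EE)^*$, which is where Lipschitz continuity of $m$ (i.e.\ $m'$ bounded) and the H\"older regularity of $\mu$ enter;
\item an $N^{5/2}$ blow-up of $\int_0^\cdot\mathcal{A}_N(h_s)\,\d s$ in $C^{1/p'}$, from incompressibility and Gaussian moments.
\end{itemize}
This yields $A^{[q]}\in\ptv$ with $\tilde p\approx 12/7<2$, and convergence in that space. It\^o's formula for the Galerkin semimartingales $\int_\T h_0q^i+A^{[q^i]}_N+M^{[q^i]}$ then passes to the limit by the Young--Lo\`{e}ve estimate.

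Only after extending this by density to all of $\D(\EE)$ (Proposition \ref{gen_MP}, using incompressibility and the energy estimate) can one run the backward Kolmogorov argument with $T_{t_k-t}\zeta_k$ to identify the finite-dimensional marginals. Without such an ingredient your Step 4 does not close.
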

In particular, showing that the diffusion associated to \eqref{e:Dirichlet_form-2} is an energy solution follows readily from Fukushima's decomposition of the additive functional $	\int_{\T} h_tp $. A crucial step to show the reverse direction is to justify It\^o's formula in order to obtain a (generalized) semimartingale decomposition also for more general observables than the one in \eqref{E6}. For this we use that we understand the generator at least in its weak form  $\L\colon \D(\EE) \to \D(\EE)^*$, to then apply arguments developed in \cite{GubPer_generator} for the stochastic Burgers equation.

\setcounter{theorem}{0}
\renewcommand{\thetheorem}{\thesection.\arabic{theorem}}

\subsection{On the choice of discretization leading to the Gr\"un--Rumpf scheme}\label{SS:GR-scheme}
The precise choice of metric \eqref{E9} for the discretization in the   degenerate setting described in Subsection \ref{SS_tightness_dircscrete} is motivated by its interplay with the entropy function 
\[
S(h) = \frac{1}{N}\biggl( \frac{s(h(0)) + s(h(1))}{2}+\sum_{i=1}^{N-1}  s(h(x_i)) \biggr),\qquad s(h) = \int_1^h \int_1^r \frac{1}{m(r')}\,\d r'\,\d r,
\]
as discovered in \cite{gruen_rumpf}, see also \cite{GGKO21}. Index the intervals $I_\alpha=[x_i,x_{i+1}]$ by ${\alpha=1,\dots, N}$,  and write $\alpha+=i+1$ and $\alpha-=i$. Then, one may introduce the zigzag basis 
of $\mathcal{T}_h\MM_N^+$ through
\begin{align}\label{eqn_zz}
	\bar{\vp}_\alpha ( x_{\alpha \pm}) \,=\, \pm {N^{3/2}(1+\mathbf{1}_{\{0,N\}}(\alpha\pm) )} .
\end{align}
Taking  $\dot{h} = 	\bar{\vp}_\alpha $, the equation 
 \eqref{E9} may be solved explicitly, so that 
\[
	\wt{g}_h (\bar{\vp}_\alpha,\bar{\vp}_\alpha)\,=\, N \int_{x_{\alpha-}}^{x_{\alpha+}} \frac{1}{m(h)}\,=\, \fint_{I_\alpha} \frac{1}{m(h)},
\]
and $	\wt{g}_h (\bar{\vp}_\alpha,\bar{\vp}_{\alpha'}) = 0$ for $\alpha\ne \alpha'$. In particular, the metric tensor diagonalizes and may be inverted leading to 
\begin{align} \label{E11}
	\wt{g}^{\alpha\alpha}(h) = \wt{g}^h(\diff h^\alpha , \diff h^{\alpha}) \,&=\, \biggl(\fint_{I_\alpha} \frac{1}{m(h)} \biggr)^{-1}
	\,=\,  \frac{h^{\alpha+} - h^{\alpha-}      }{s'(h^{\alpha+}) -s'(h^{\alpha-})} ,
\end{align}
for the coordinates defined through  $h=1+\sum_{\alpha=1}^N h^\alpha \bar{\vp}_\alpha$. Introducing also
the finite difference operator $A^\alpha_i h^i = N(h^{\alpha+} - h^{\alpha-})$, where $h^i = h(x_i)$, we find that
\[
\partial_\alpha S(h)\, =\, \diff S|_h \bar{\vp}_\alpha \,= \,\delta_{\alpha\alpha'}A^{\alpha'} s'(h) /\sqrt{N}
\] 
and by  \eqref{E11} 
\begin{equation}\label{E12}
\wt{g}^{\alpha\alpha'}(h) \partial_\alpha S(h)= A^{\alpha'}  h/\sqrt{N}.
\end{equation}
To compute also the differential of the energy \[
E(h) \,=\, \frac{1}{2 N}\sum_{\alpha} ((Ah)^\alpha )^2,
\]
we introduce $(A^\top )^i_\alpha =
A_i^\alpha$, so that 
\begin{align}\label{eqn_honig}
	\partial_i E(h) \,=\, 
	\frac{1}{		N} 
	(A^\top A h)^i \qquad \text{and} \qquad 
	\partial_\alpha  = \sqrt{N} (A^{\top})^i_\alpha (1+\mathbf{1}_{\{0,N\}}(i)) \partial_i .
\end{align}
 Thereby, we may evaluate the pairing in \eqref{Eq191} as
\begin{align}
	\label{eqn43ubv}
	\partial_\alpha S \wt{g}^{\alpha \alpha'} \partial_{\alpha'}E(h) \,=\,&
 \frac{1}{N}{\biggl({2((A^\top A h)^0)^2
 		+2((A^\top A h)^N	)^2
 	}+\sum_{i=1}^{N-1} \bigl((A^\top A h)^i \bigr)^2 \biggr),}
\end{align}
{mimicking the dissipation in the entropy  estimate}
\[
\int_{\T}s(h_t)\, + \,  \int_0^t \int_{\T} |\Delta h_s|^2 \,\d x\, \d s
\,\le\, \int_{\T} s(h_0)
\]
for solutions to the thin-film equation \eqref{e:TFE}, used in \cite{BERNIS1990} for instance.
In the discrete case, this allows to obtain  estimates on 
$\sup_{t } S(h_t)$
for solutions to the numerical scheme suggested in \cite{gruen_rumpf} as well as solutions to the martingale problem \eqref{Eq191}, cf.\ \cite{GGKO21}. These estimates  then imply strict positivity, at least in the discrete setting.  This is precisely the starting point for \cite[Theorem 8.4]{GGKO21} and Theorem \ref{thmC}.
\subsection{Renormalization of SPDEs of  stochastic thin-film  type}
\label{SS:ren_of_SPDE}
 Independently, in \cite{DMES2005} and \cite{GruenMeckeRauscher2006}, the  SPDE
 \begin{equation}\label{e:STFE}
 	\partial_t h\,=\, -\nabla \cdot (m(h)\nabla \Delta  h ) \,+\,\sqrt{2 } \nabla \cdot \bigl(m^{1/2}(h)\xi\bigr) ,
 \end{equation} 
was suggested as a model for a thin liquid film driven by capillarity and  thermal fluctuations. While the authors of \cite{DMES2005} derived \eqref{e:STFE} from the fluctuation dissipation relation, the above equation was obtained in \cite{GruenMeckeRauscher2006} as a lubrication limit of the Landau--Lifshitz--Navier--Stokes equation.  The driving noise $\xi$ is assumed to be white in space and time and therefore \eqref{e:STFE} is already  in dimension $d=1$ a singular SPDE,
while in $d=2$ the above becomes even scaling-critical. 

If $m$ is taken non-degenerate as in \eqref{assumption_simplified}, Theorem \ref{thmD} yields insights into the need of renormalizing  the SPDE \eqref{e:STFE}. More precisely, using the martingale representation theorem\footnote{See \cite[Theorem 8.2]{dapratozabczyk}, up to possibly  enlarging the probability space.}, we find that solutions to \eqref{E5}--\eqref{E35}
 solve the SDE
\begin{equation}\label{Eq_SDE1_new}
	\d h \,=\, -\Pi_{\le N} ( \px (m (h) \px^3 h ))\, \d t\, -\,\frac{1}{4} \Pi_{\le N } \partial_x  \bigl(  m'(  h) \partial_x( D_N(2\pi  \cdot ))  \bigr) \, \d t \,+\, \sqrt{{2}} \Pi_{\le N}(\px (\sqrt{m}(h) \d W)) ,
\end{equation}
for a cylindrical Wiener process $W$. This SDE may be regarded as a spectral Galerkin approximation of \eqref{e:STFE}. 
Similarly, representing  \eqref{Eq191} as a stochastic integral yields, as shown in \cite{GGKO21}, that\footnote{The additional factors of the form ${(1+\mathbf{1}_{\{0,N\}}(i))}$ compared to \cite[eqn. (8.3)]{GGKO21} are due to defining $\bar{\vp}_\alpha$ as in \eqref{eqn_zz} ensuring that they are mean free and the resulting correction in the chain rule \eqref{eqn_honig}. Because of these factors, the resulting SDE \eqref{Eq_SDE2_new} does preserve the mean $\int_O h = \frac{h^0 + h^N }{2N} +\frac{1}{N} \sum_{i=1}^{N-1} h^i$.} 
\begin{align}\begin{split} \label{Eq_SDE2_new}\qquad 
		\d h^i\,=\,&(1+\mathbf{1}_{\{0,N\}}(i)) \biggl(- (A^{\top})^{i}_\alpha \wt{g}^{\alpha}_{\alpha '}(h) 
		A_{j}^{\alpha'} (1+\mathbf{1}_{\{0,N\}}(j))
		(A^{\top})^j_{\alpha''} A_k^{\alpha''}h^k \,+\, {\sqrt{N}} (A^{\top})^{i}_\alpha  \partial_{\alpha'}\widetilde{g}^{\alpha \alpha '}(h)\biggr) \d t
		\\& +\,  \sqrt{{2N}}(1+\mathbf{1}_{\{0,N\}}(i))(A^{\top})^i_\alpha \sqrt{\widetilde{g}}^\alpha_{\alpha'}(h)\d w^{\alpha'} ,
	\end{split}
\end{align}
where  $(w^\alpha)_\alpha$ are independent real-valued Brownian motions, and $\wt{g}^{\alpha \alpha'} = \wt{g}^\alpha_\gamma \delta^{\gamma \alpha'}$. Both approximations of \eqref{e:STFE} display  additional correction terms. To compare them, it is insightful to calculate from \eqref{eqn_zz} and \eqref{E11} that 
\begin{align}\label{eqn_AG}
 {\sqrt{N}} (A^{\top})^{i}_\alpha 	\partial_{\alpha' } \widetilde{g}^{\alpha \alpha '}(h) \,=\, N (A^{\top})^{i}_\alpha    \biggl(	\fint_{I_\alpha} \frac{1}{m(h)}  \biggr)^{-2} 	\fint_{I_\alpha} \frac{m'(h)}{m^2(h)}  \frac{\bar{\vp}_\alpha }{\sqrt{N}},
\end{align}
which we interpret as  a discrete approximation\footnote{This can be seen by an integration by parts in the last integral of \eqref{eqn_AG}.} of 
\begin{align}
	\begin{cases}
	A^\top\bigl( \frac{	N}{6}  m^2(h) \partial_x  \bigl(
		\frac{m'(h)}{m^2(h)}
		\bigr) \bigr) , &  \alpha\pm \notin \{0,N\} , \\A^\top\bigl( \frac{	N}{4}  m^2(h) \partial_x  \bigl(
		\frac{m'(h)}{m^2(h)}
		\bigr)  \pm   \frac{	N^2}{2}  m'(h)  \bigr)  , &  \alpha\pm \in \{0,N\} . \\
	\end{cases}
\end{align}
 Away from the boundary, the above coincides with the correction on a periodic domain, cf.\ \cite[Remark 9.1]{GGKO21}, and arises from the nonlinear interpolation of the mobility function. Near the boundary, however, an additional term emerges that is  similar to the correction in \eqref{Eq_SDE1_new}. This correction would also vanish on a periodic domain, so that we have identified two distinct mechanisms causing a non-trivial geometric correction  of approximations to \eqref{e:STFE}: Firstly, the spatially non-stationary nature of Neumann boundary conditions imposed in both approximations and, secondly, a non-flat approximation of the mobility as used  in the Gr\"un--Rumpf scheme \eqref{Eq_SDE2_new}.

 That different spatial discretizations of an SPDE can lead to the emergence of additional terms is consistent with observations from \cite{HMW}, where these are interpreted as a spatial version of the It\^o-to-Stratonovich correction in the context of stochastic Burgers equations. Regarding \eqref{e:STFE} they  are precisely the geometric coefficients of the form $\partial_{\alpha'}{\tilde{g}}^{\alpha'\alpha}$ and  $\partial_{\alpha'}{\check{g}}^{\alpha'\alpha}$, respectively,    in \eqref{Eq191} and  ensure, by construction, covariance of the SDEs under changes of variables and reversibility with respect to the underlying Gibbs measure. We conjecture that the counterterms arising in the study of \eqref{e:STFE} using regularity structures in  \cite{gvalani2023stochastic} serve the same purpose.

	\subsection{Challenges in identifying the generator domain}\label{SS:generator_is_diff} Next, let us explain why we limit our analysis to Dirichlet form techniques and do not work, for example, with the generator $\L$ defined through the integration by parts formula \eqref{e:generator}. 
	Since difficulties already arise in the non-degenerate setting from Subsection \ref{SS_non-deg}, we continue in that setting and rewrite \eqref{e:Dirichlet_form-2} as
	\begin{equation}
	\label{e:form_alt}
		\EE (\eta,\zeta ) \,=\, \int_{\MM} \diff \zeta_\cdot \grad \eta    \, \d  \mu.
	\end{equation}
	To calculate $\L \eta $, we would need to remove the differential from $\zeta$. While $\grad \eta $ is determined by the  metric structure, the latter relies only on the measure $\mu$. More precisely, we may characterize the $\mu$-divergence of a vector field $X$  via the formula 
	\begin{equation}\label{e:div}
		\int_{\MM} \diff \zeta_\cdot X \, \d  \mu \,=\, - \int_{\MM} \zeta \div_{\mu } X \,\d\mu,
	\end{equation}
	for   test functions $\zeta $. Combining this with \eqref{e:form_alt}, we find that \eqref{e:generator} yields in our situation
	\begin{equation}\label{e:gen_alt}
		\L \eta \,=\, \div_{\mu}  \grad \eta ,
	\end{equation}
so that at least the form of the generator is clear.  To find a concrete expression  for the $\mu$-divergence, we assume for simplicity that $X =\dot{h}$ is independent of $h$. Then,
	\begin{align}\label{e:1}
			\int_{\MM} \diff \zeta_\cdot \dot{h} \, \d  \mu\,=\, \lim_{t\to  0} 
			\int_{\MM} \frac{\zeta ( h + t \dot {h}) - \zeta(h)}{t} \, \d  \mu(h).
	\end{align}
	In light of the Cameron--Martin theorem, we have moreover
	\[
	\int_{\MM}  \zeta ( h + t \dot {h}) \,\d \mu (h) \,=\, \int_{\MM}  \zeta ( h )  \exp \biggl(  t \int_{\T} \partial_x h \partial_x\dot{h}  -\frac{ t^2 }{2} \int_{\T} |\px  \dot{h}|^2\biggr)\,\d \mu(h),
	\]
	so that \eqref{e:1} can be evaluated as
		\begin{align}&\lim_{t\to  0} 
		\int_{\MM}\zeta(h) \frac{1}{t} \biggl(
		\exp \biggl(  t \int_{\T} \partial_x h \partial_x\dot{h}   -\frac{ t^2 }{2} \int_{\T} |\px  \dot{h}|^2\biggr)
		\,-\,1
		\biggr) \, \d  \mu(h)
		\\& \quad =\, 
		\int_{\MM}\zeta(h)\biggl(   \int_{\T} \partial_x h \partial_x\dot{h} \biggr)\, \d  \mu(h),
	\end{align}
	which identifies 
	\begin{equation}\label{e:mu_div}
		\div_{\mu }\dot{h} \,=\, -  \int_{\T} \partial_x h \partial_x\dot{h}  .
	\end{equation}
	By the Paley--Wiener isomorphism, the  above makes sense as an element of $L^2(\mu)$ if and only if $\dot{h} \in H^1(\T)$. This regularity requirement on $\dot{h}$ is not expected to improve if we allow for $X$ to depend on $h$. This is exactly what limits us in identifying elements in the domain of the generator \eqref{e:gen_alt}: Starting for example from a cylinder function \eqref{e:cyl_fct}, the spatial regularity of
	\begin{align}
		\grad \eta \,=\, -\sum_{i=1}^k\partial_i \Phi \biggl(\int_{\T}  h p^1,\dots , \int_\T  h p^k\biggr)\partial_x ( m(h) \partial_x p^i)
	\end{align} is too low in order to evaluate its $\mu$-divergence. This is not  an artifact of choosing $\eta$ as a cylinder function, but applies  for example also to  $\eta(h) = \int_{\T} F(h) $ for which we have $\grad \eta = - \partial_x (m(h) \partial_x (F'(h)))$, so that the regularity  is again  limited by  $\px h $. 

	We also observe that, due to the singular nature of  \eqref{e:STFE}, this is expected by comparing to the theory of energy solutions \cite{GubPer_generator}: Elements of the generator domain are typically rough, since this allows for  cancellations with the ill-defined nonlinearities in the equation.

	\subsection{Mathematical works on stochastic thin-film equations with spatially colored noise}
	We close this introduction by reviewing so far unmentioned mathematical works on the analysis  of \eqref{e:STFE}  with a spatially colored noise $\xi$. This line of research was initiated in \cite{fischer_gruen_2018}, where the existence of probabilistically and analytically weak solutions to the equation was shown. In that work also the repulsive forces between solid and fluid molecules were included in the model, leading  to positivity preserving dynamics also in the continuum. It focused moreover on the case $m(h) = h^2$, in which the noise term in \eqref{e:STFE} becomes linear. Results in the nonlinear noise case were first obtained in \cite{dareiotis2021nonnegative} and in \cite{GG_LK} a finite speed of propagation was shown for sufficiently strong It\^o-to-Stratonovich type corrections, both without said positivity preserving mechanism. In dimension $d=2$, again for $m(h)=h^2$, weak solutions were shown to exist independently in \cite{metzger2022existence} and \cite{Sauerbrey_2021}. By further leveraging the repulsive forces between liquid and solid, global in time well-posedness in dimension $d=1$ was  obtained in \cite{Agresti_Sauerbrey}. 
\section{An Albeverio--R\"ockner type setup}
\subsection{Abstract setup and result}
In this section, we provide a version of the framework from \cite{AR_90_closability,AR_91} for Dirichlet forms of diffusion type tailored towards stochastic thin-film equations. We work under the following set of assumptions.
\begin{assumption}
	\label{Ass_abstract}
	We assume that spaces $ \mathcal{H}_{\pm 1}$ and $\mathcal X$ and mappings $V$, $\rho$, $\sigma$ and $g$ are given as follows.
\begin{enumerate}[(1)]
	\item\label{A1} Let $\mathcal{H}_{\pm 1}$ be separable Hilbert spaces and $\mathcal{X}$ a topological vector space   such that  
\begin{align}
	\mathcal{H}_1 \hookrightarrow \mathcal{X} \hookrightarrow\mathcal{H}_{-1}.
\end{align}
The former embedding is assumed to be such that the centered Gaussian measure $\mu$ with Cameron--Martin space $\mathcal{H}_1$ is a Radon measure on $\mathcal{X}$. We also assume that $\mathcal{H}_1$ is dense in $\mathcal{H}_{-1}$. 
\item \label{A2} Let $V\colon \mathcal X \to [C,\infty]$, with $C>-\infty$, be convex such that \begin{align}
	\d \wt{\mu} = \exp(-V)\d \mu
\end{align}  is   a probability measure on $\mathcal{X}$. We moreover impose that $\{V<\infty\}$ is open  and that $V$ is continuous on the latter set, in the topology of $\mathcal{X}$. 
\item\label{A3} We assume to be given another $\mathcal{X}$-continuous function $\rho \colon \{V<\infty\}\to (0,\infty)$ together with  $\sigma \in L^1(\wt{\mu})$ and a mapping $g^{\cdot}(\cdot,\cdot) \colon \{V<\infty\} \times \mathcal{H}_{-1}^*\times \mathcal{H}_{-1}^*\to \mathbb{R}$ such that for each $h\in \{V<\infty\}$, $g^h$ defines an inner product on $\mathcal{H}_{-1}^*$, depending measurably on $h$, with 
	\begin{align} \rho (h) |p|_{\mathcal{H}_{-1}^*}^2\le 
		g^h( p,p) \le \sigma (h) |p|_{\mathcal{H}_{-1}^*}^2.
	\end{align} 
\end{enumerate}
\end{assumption}
The above ensures that we may define the pre-Dirichlet form 
\begin{equation}\label{eqn_DF_abstract}
\EE(\eta,\eta ) \,=\, \int_{\mathcal X} g^h(\diff|_h \eta ,\diff|_h \eta  )\,\d\wt{\mu}(h),
\end{equation}
for 
\begin{align}\label{eqn_eta}
	 \eta\, = \,
	\Phi\bigl( (\cdot, p^1)_{\mathcal{H}_{-1}\times \mathcal{H}_{-1}^*} , \dots,(\cdot, p^k)_{\mathcal{H}_{-1}\times \mathcal{H}_{-1}^*}  \bigr) ,\qquad 
	p^i \in \mathcal{H}_{-1}^*,\,  \Phi \in C^1_b(\R^k),
\end{align}
where we define canonically 
\begin{align}
	\diff \eta\, =\, \sum_{i=1}^k \partial_i \Phi \bigl( (\cdot, p^1)_{\mathcal{H}_{-1}\times \mathcal{H}_{-1}^*} , \dots,(\cdot, p^k)_{\mathcal{H}_{-1}\times \mathcal{H}_{-1}^*}  \bigr) p^i. 
\end{align}
Since $g^h$ is bilinear, we may expand
\begin{align}
	\EE(\eta, \eta) \,=\, \int_\mathcal{X} g^{h}(p^i,p^j) \partial_i\Phi(\dots) \partial_j \Phi(\dots) \,\d\wt{\mu},
\end{align}
where the dots stand for the argument of $\Phi$. The displayed expression is finite by the boundedness of $\partial_i \Phi$ together with Assumption  \ref{Ass_abstract}~\eqref{A3}. We also observe that $\eta$ as in \eqref{eqn_eta} is a well-defined element of $L^2(\widetilde{\mu})$. Indeed, since $p^i \in \mathcal{H}_{-1}^*\subset \mathcal{X}^*$, each $(\cdot, p^i)_{\mathcal{H}_{-1}\times \mathcal{H}_{-1}^*} $ is a well-defined function on $\mathcal{X}$.  Composing these functions with the bounded function $\Phi$ gives an element of $L^2(\widetilde{\mu})$. We denote the collection of all $\eta$ as in \eqref{eqn_eta} in the following by ${\D}_0$ and refer for the  terminology from Dirichlet form theory appearing below to Appendix \ref{app:DFs}.

\begin{theorem}\label{thm_abstract}
	Under Assumption \ref{Ass_abstract}, $(\EE,\D_0)$ as  above is densely defined and closable on $L^2(\widetilde{\mu})$.  Its closure $(\EE,\D(\EE) )$ is a local, conservative Dirichlet form, which is quasi-regular in the topology of $\mathcal{H}_{-1}$, and admits in particular an associated $\mathcal{H}_{-1}$-valued diffusion. 
\end{theorem}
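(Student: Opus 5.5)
The plan is to follow the Albeverio--Röckner strategy: first closability, then the Dirichlet and local properties, then quasi-regularity via tightness of capacity.

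\emph{Density.} $\D_0$ is an algebra of bounded functions. It separates points of $\mathcal{X}$ because $\mathcal{H}_{-1}^*\subset\mathcal{X}^*$ and $\mathcal{H}_1$ is dense in $\mathcal{H}_{-1}$. It also generates the Borel $\sigma$-algebra of the support of the Radon measure $\wt\mu$. A monotone class argument then gives density of $\D_0$ in $L^2(\wt\mu)$.

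\emph{Closability.} The key reduction is to fix a direction and use closability of one-dimensional forms. Choose an orthonormal basis $(e_k)$ of $\mathcal{H}_{-1}^*$ whose Riesz representatives $k_k$ lie in $\mathcal{H}_1$; this is possible by density of $\mathcal{H}_1$ in $\mathcal{H}_{-1}$, followed by Gram--Schmidt. For $\eta\in\D_0$ write $\partial_{k}\eta(h)=\diff\eta|_h(k_k)$. The lower bound in Assumption~\ref{Ass_abstract}~\eqref{A3} gives
\[
\EE(\eta,\eta)\,\ge\,\int \rho\,\sum_k|\partial_k\eta|^2\,\d\wt\mu .
\]
It therefore suffices to show two things. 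First, each form $\eta\mapsto\int\rho|\partial_k\eta|^2\,\d\wt\mu$ is closable; sums of closable forms dominated by a closed form are then handled by the standard argument, namely passing to a subsequence and applying Fatou. Second, $\EE$ itself is lower semicontinuous along Cauchy sequences.

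For the first point, disintegrate $\wt\mu$ along the lines $h+s k_k$. Since $k_k$ lies in the Cameron--Martin space, the conditional densities of $\mu$ are Gaussian in $s$. The factor $e^{-V}$ is convex in $s$, hence log-concave, and $\rho$ is continuous and positive on the open set $\{V<\infty\}$. Consequently the line densities $\rho\,e^{-V}\times$(Gaussian) are positive and locally bounded away from zero on open intervals. This is the Hamza condition, which yields closability on each line, as in \cite{AR_90_closability}.

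Full closability then follows. Take $\eta_n\to0$ in $L^2$ with $\EE(\eta_n-\eta_m,\eta_n-\eta_m)\to0$. The vector fields $\diff\eta_n$ converge in $L^2(g\,\d\wt\mu)$ to some $\Xi$. The componentwise closability shows $\Xi\cdot k_k=0$ $\rho\wt\mu$-a.e.\ for every $k$, so $\Xi=0$ and $\EE(\eta_n,\eta_n)\to0$. I expect this step to be the main obstacle. The delicate point is measurability and positivity of the line densities, because $\{V<\infty\}$ is only open and $\rho$ degenerates at its boundary.

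\emph{Markov and local properties.} These come from the chain rule on $\D_0$. Composing $\Phi$ with normal contractions (smoothed) keeps us inside $\D_0$ and does not increase $g^h(\diff\eta,\diff\eta)$ pointwise. Hence the closure is a Dirichlet form. Locality holds because the energy density is pointwise in $\diff\eta$, and conservativeness holds because $1\in\D_0$ with $\EE(1,1)=0$.

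\emph{Quasi-regularity.} I would follow \cite{rock_schmu_tightness}. Pick a compact embedding $\mathcal{H}_1\hookrightarrow\widehat{\mathcal{H}}\hookrightarrow\mathcal{H}_{-1}$, chosen so that $\mu(\widehat{\mathcal{H}})=1$ (possible since $\mu$ is Radon). Consider $u(h)=\|h\|_{\widehat{\mathcal{H}}}$, approximated by suprema of finitely many linear functionals $(h,p)$ with $p$ ranging over a countable subset of $\mathcal{H}_{-1}^*$. These approximants lie in $\D(\EE)$ with $g^h(\diff u,\diff u)\le\sigma(h)\sup|p|^2_{\mathcal{H}_{-1}^*}$, which is bounded in $L^1(\wt\mu)$ because $\sigma\in L^1(\wt\mu)$. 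The sublevel sets of $u$ are compact in $\mathcal{H}_{-1}$. The usual capacity estimate $\mathrm{Cap}(u>R)\lesssim R^{-1}$ then gives tightness. The remaining quasi-regularity conditions hold because $\D_0$ is a countably generated set of $\mathcal{H}_{-1}$-continuous functions separating points. Existence of an associated diffusion then follows from the general theory recalled in Appendix~\ref{app:DFs}, using locality.
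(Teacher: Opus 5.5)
\paragraph{Where your argument holds and where it breaks.} Your density argument and your closability argument are fine. For density you use a monotone class argument instead of the paper's Wiener-chaos approximation. Your closability argument is the paper's Albeverio--R\"ockner reduction in substance. Your final step (the limit $\Xi$ of the differentials vanishes componentwise, hence $\Xi=0$) is a streamlined version of the paper's $\EE^\rho$ construction.

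\paragraph{The gap: the compact $\EE$-nest.} You claim that the approximants $\max_{j\le N}(\cdot,p_j)$ of $u=\|\cdot\|_{\widehat{\HH}}$ have energy density at most $\sigma\sup_j|p_j|^2_{\HH_{-1}^*}$, and that this supremum is finite. It cannot be finite. If $\sup_j|p_j|_{\HH_{-1}^*}=M<\infty$, then $\|h\|_{\widehat{\HH}}=\sup_j(h,p_j)\le M\|h\|_{\HH_{-1}}$. That is impossible when $\widehat{\HH}\hookrightarrow\HH_{-1}$ is compact and infinite-dimensional. Concretely, for $\|h\|^2_{\widehat{\HH}}=\sum_k\lambda_k^2(h,\vp_k)^2_{\HH_{-1}}$ the norming functionals include $\lambda_k(\cdot,\vp_k)_{\HH_{-1}}$, with $\lambda_k\to\infty$. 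So the energies blow up with $N$, and nothing shows $u\in\D(\EE)$ or gives a capacity bound for $\{u>R\}$.

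No other choice of $\widehat{\HH}$ repairs this. A function whose sublevel sets are compact in the infinite-dimensional space $\HH_{-1}$ is never $\HH_{-1}$-Lipschitz, since a sublevel set above the minimum would contain a ball. Hence its differential is unbounded in $\HH_{-1}^*$. With only $\sigma\in L^1(\wt\mu)$ you cannot integrate $\sigma|\diff u|^2_{\HH_{-1}^*}$. Your route does work under extra integrability of $\sigma$, as in Section 4, using $\|h\|^2_{\widehat\HH}$ with $\sum_k\lambda_k^4\sigma_k^2<\infty$. It does not work under Assumption~\ref{Ass_abstract}.

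\paragraph{The missing idea.} This is what \cite[Proposition 3.1]{rock_schmu_tightness} supplies: obtain compactness from total boundedness, using a family of $\HH_{-1}$-Lipschitz functions rather than one coercive function.
\begin{enumerate}
\item Fix $(y_j)$ dense in $\HH_{-1}$ and set $w_n(h)=\min_{j\le n}\|h-y_j\|_{\HH_{-1}}\wedge 1$.
\item Each $w_n$ is $1$-Lipschitz. After approximation by cylinder functions, $w_n\in\D(\EE)$ with $\EE(w_n,w_n)\le\int\sigma\,\d\wt\mu$. This is exactly where the upper bound in Assumption~\ref{Ass_abstract}~\eqref{A3} enters.
\item Since $w_n\downarrow 0$ pointwise, Banach--Saks gives Ces\`aro means of a subsequence tending to $0$ in $\D(\EE)$.
\item Because $(w_n)$ is decreasing, these means dominate the last term, so $\mathrm{Cap}(\{w_n>\epsilon\})\to 0$.
\item Choose $n_m$ with $\mathrm{Cap}(\{w_{n_m}>1/m\})\le \epsilon 2^{-m}$. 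Then $\bigcap_m\{w_{n_m}\le 1/m\}$ is closed and totally bounded, hence compact in $\HH_{-1}$, and its complement has capacity at most $\epsilon$.
\end{enumerate}

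\paragraph{A smaller point: locality.} Locality of the closure does not follow from the energy density being pointwise on $\D_0$. You need that the closed differential vanishes a.e.\ outside the support of the function. The paper obtains this from the line-wise absolutely continuous versions in Albeverio--R\"ockner.
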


\subsection{Comments on the proof}
The most challenging part of Theorem \ref{thm_abstract} is to prove that $(\EE,\D_0)$ is closable. For this, we rely on the abstract setup from \cite{AR_90_closability,AR_91}, where closability is shown instead by lifting \emph{Hamza's condition}, which characterizes closable forms in one dimension, to the infinite dimensional setting. The latter states that the quadratic form 
\begin{align}\label{eqn18263n}
	L^2(\R,\rho \d r) \supset \D_0 \to \R, \, \eta \mapsto \int_\R |\partial_r  \eta(r) |^2 \rho(r)  , 
\end{align}
defined on those functions in $C_c^\infty(\R)$ for which the above is finite,
admits a closed extension if and only if for $\d r$-a.e.\ $r\in \R$ with $\rho(r)>0$ we have $\int_{r-\epsilon}^{r+\epsilon} 1/\rho\, \d s <\infty$ for some $\epsilon>0$. 
The version \cite[Theorem 3.2]{AR_90_closability} of this for directional derivatives in infinite  dimensional state spaces and its generalization \cite[Theorem 3.1]{AR_91} regarding curved settings such as \eqref{eqn_DF_abstract} provide rather mild sufficient criteria for a given pre-Dirichlet form   to be closable. We argue that they are always satisfied in the quasi-Gaussian setting from above. In particular, the continuity properties of $V$ and $\rho$ as well as the  lower bound on $g$ from Assumption \ref{Ass_abstract} play a central role there.

The assertions that the closure $(\EE,\D(\EE))$ is  local and conservative are standard. Their proofs are included below for the reader's convenience. Quasi-regularity, a vital ingredient for the existence of an associated Markov process, follows from the main result of \cite{rock_schmu_tightness}. For this, the upper bound on $g$ from Assumption \ref{Ass_abstract} is essential. 

\subsection{Closability }\label{SS:closable}
Here, we show that $(\EE,\D_0)$ is densely defined and admits a closed extension. Let us already observe the former: Because $\mu$ is supported on $\HH_{-1}$, the embedding $\HH_1\subset\HH_{-1}$ is Hilbert--Schmidt and therefore compact, see \cite[Theorem 2.3.1]{Boga_Gaussian}. It admits in particular a singular value decomposition, which in this case constitutes an orthonormal basis $(\varphi_k)_{k\in \N}$ of $\HH_{-1}$, such that $(\sigma_k \varphi_k)_{k\in \N}$ is orthonormal in $\HH_{1}$ for $\sigma_k\searrow 0$. The dual basis defined by 
\begin{equation}
	\label{eqn_SVD}
	(h,p^k)_{\HH_{-1}\times \HH_{-1}^*} \,=\,
	(h,\varphi_k)_{\HH_{-1}}
\end{equation}
provides with $(p^k/\sigma_k)_{k\in \N}$ also the dual basis to $(\sigma_k\varphi_k)_{k\in \N}$ in $\HH_1$. Since polynomials along $(\sigma_k\varphi_k)_{k\in \N}$ are dense in $L^2(\mu)$, cf.\ \cite[Theorem 1.1.1]{Nualart_related}, and  we may approximate them with elements from $\D_0$ using dominated convergence, also  $\D_0$  is dense in $L^2(\mu)$. Finally, let $\zeta\in L^2(\wt{\mu})$, which we may assume to be bounded by the same reason. Then there exist $\eta_k \in \D_0$ which converge to $\zeta$ in $L^2(\mu)$ and therefore also in $L^2(\wt{\mu})$, since $\exp(-V(h))$ is bounded from above.

Regarding the latter assertion, as in \cite{AR_91}, we argue first that 
the operator 
\begin{align}\label{eqn37n}
	\diff \colon \D_0\subset L^2(\widetilde{\mu}) \to L^2(\rho \d \wt{\mu} ; \mathcal{H}_{-1}^*)
\end{align}
is closable.
\begin{lemma}\label{lemm_clos_gradn}
	The operator \eqref{eqn37n} is closable. 
\end{lemma}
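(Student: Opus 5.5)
The plan is to verify the closability criterion of \cite[Theorem 3.2]{AR_90_closability}, applied direction by direction. We reduce closability of the gradient \eqref{eqn37n} to closability of finitely many partial derivative operators along the basis $(\varphi_k)_{k\in\N}$ from \eqref{eqn_SVD}. Since $\rho(h)|\diff\eta|^2_{\HH_{-1}^*}=\rho(h)\sum_k |\partial_{\varphi_k}\eta|^2$, with $\partial_{\varphi_k}\eta=\diff\eta_\cdot\varphi_k$, the first step is a standard reduction. Let $\eta_n\to0$ in $L^2(\wt\mu)$ with $\diff\eta_n\to G$ in $L^2(\rho\,\d\wt\mu;\HH_{-1}^*)$. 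Then it suffices to show $G\cdot\varphi_k=0$ $\wt\mu$-a.e.\ for each $k$. This in turn follows once each operator $\partial_{\varphi_k}\colon\D_0\subset L^2(\wt\mu)\to L^2(\rho\,\d\wt\mu)$ is closable, because $\partial_{\varphi_k}\eta_n\to G\cdot\varphi_k$ in $L^2(\rho\,\d\wt\mu)$.

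To handle a fixed direction $\varphi=\varphi_k$, we disintegrate $\mu$ along the line $\R\varphi$. Since $\sigma_k\varphi_k$ lies in the Cameron--Martin space $\HH_1$, we can write $h=y+s\varphi$ with $s=(h,\varphi)_{\HH_{-1}}$. Under $\mu$, the variable $s$ is a centered Gaussian with variance $\sigma_k^{-2}$ times a constant, independent of the remainder $y$. The one-dimensional conditional densities of $\rho\,\d\wt\mu$ along the fibre through $y$ are then
\[
r_y(s)\,=\,\rho(y+s\varphi)\,e^{-V(y+s\varphi)}\,e^{-cs^2},\qquad
\tilde r_y(s)\,=\,e^{-V(y+s\varphi)}\,e^{-cs^2},
\]
up to constants. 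By the version of Hamza's condition in \cite[Theorem 3.2]{AR_90_closability}, closability follows if for a.e.\ $y$ the set $\{s: r_y(s)>0\}$ coincides $\d s$-a.e.\ with a set on which $1/r_y$ is locally integrable. We also need that $\tilde r_y$ vanishes a.e.\ outside this set, or at least that $\tilde r_y\,\d s\ll r_y\,\d s$, which holds since $\rho>0$ on $\{V<\infty\}$.

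The Hamza condition itself is checked using Assumption~\ref{Ass_abstract}. The set $\{V<\infty\}$ is open in $\mathcal X$, and $s\mapsto y+s\varphi$ is continuous into $\mathcal X$ because $\varphi\in\HH_1\hookrightarrow\mathcal X$. Hence $\{s: V(y+s\varphi)<\infty\}$ is open in $\R$, and it is exactly $\{r_y>0\}$ since $\rho>0$ there. It is moreover an interval by convexity of $V$. On this open set, $V(y+\cdot\,\varphi)$ and $\rho(y+\cdot\,\varphi)$ are continuous by \eqref{A2} and \eqref{A3}, and $\rho>0$. So $1/r_y$ is continuous, hence locally integrable, on $\{r_y>0\}$. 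This means every point of positivity is a regular point, which is precisely Hamza's condition.

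I expect the main obstacle to be the measure-theoretic bookkeeping rather than the analysis. First, the decomposition $h=y+s\varphi$ must be made rigorous on $\mathcal X$ (rather than only on $\HH_{-1}$), with $\mathcal X$-measurability. Second, the abstract criterion of \cite{AR_90_closability} is phrased for admissible directions with respect to a measure of the form $\rho\,\d\wt\mu$, and one must verify that the needed quasi-invariance or disintegration holds for $\wt\mu$, which is not Gaussian. I would try to do this by pushing the criterion through the density $e^{-V}\rho$ against the Gaussian $\mu$, for which shifts along $\varphi\in\HH_1$ are quasi-invariant by Cameron--Martin.
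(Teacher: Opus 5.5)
Your plan is essentially the paper's proof. You reduce to the single directions $\vp_k$ (the paper via sums of closable forms, you directly on the operator), split $\X$ along $\vp_k$ using the $\X$-continuous functional $(\cdot,\vp_k)_{\HH_{-1}}$, obtain the fibre densities by disintegrating $\mu$ and reweighting with $e^{-V}$, and verify Hamza's condition from openness of $\{V<\infty\}$, continuity of $V$, and continuity and positivity of $\rho$ there, so the bookkeeping you flag is resolved exactly as you suggest. Two small corrections: the variance of $s$ under $\mu$ is $\sigma_k^2$, not $\sigma_k^{-2}$, and for the mismatch between $L^2(\wt\mu)$ and $L^2(\rho\,\d\wt\mu)$ the paper uses the criterion of \cite[Corollary 2.5]{AR_91} (Hamza for the $\wt\mu$-fibre density plus a local lower bound on $\rho$), whereas in your formulation the compatibility actually needed is $r_y\,\d s\ll\tilde r_y\,\d s$, which is automatic from $r_y=\rho\,\tilde r_y$, rather than the reverse inclusion you state (which also holds here, so nothing breaks).
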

Before giving the proof of Lemma \ref{lemm_clos_gradn}, let us detail how it yields the closability of $(\EE,\D_0)$: 
Denoting the smallest closed extension of \eqref{eqn37n} by 
$(\diff^\rho,\D(\diff^\rho))$ we may define
\begin{align}\label{eqn40n}
	\EE^{\rho} ( \eta,\eta )\,=\, \int_{\mathcal{X}} g^h(\diff|^\rho_h \eta ,\diff|^\rho_h \eta ) \,\d\wt{\mu}(h),
\end{align}
defined on $\D(\EE^\rho)$ consisting of these   $\eta \in \D(\diff^\rho) $ for which \eqref{eqn40n} is finite. To convince ourselves that this defines a closed bilinear form,  let $\eta_k  $ be an $\EE^{\rho} $-Cauchy sequence converging to $F$ in  $L^2(\wt{\mu})$. Using that
\begin{equation}
	\EE^{\rho} ( \eta_k-\eta_l, \eta_k-\eta_l )\,\gtrsim\, \|\diff^{\rho} (\eta_k -\eta_l) \|_{ L^2(\rho \d\wt{\mu};\mathcal{H}_{-1}^* )}^2,
\end{equation}
by Assumption \ref{Ass_abstract}~\eqref{A3}, we deduce that $\diff^\rho \eta_k$ converges in $L^2(\rho\d \wt{\mu};\mathcal{H}_{-1}^*) $ to $\diff^\rho \eta $ by closedness. Up to passing to a subsequence, the latter convergence takes  place $\rho\widetilde{\mu}$-almost everywhere and, as $\rho >0$, $\wt{{\mu}}$-a.e., this entails also $\wt{\mu}$-a.e.\ convergence in $\mathcal{H}_{-1}^*$. Fatou's lemma implies then
\begin{equation}
	\int_{\mathcal{X}}g^h \bigl( \diff|^{\rho}_h (\eta -\eta_l ),
	\diff|^{\rho}_h (\eta -\eta_l )
	\bigr) \d\wt{\mu}(h)
	\,\le\,\liminf_{k\to\infty} \EE^{\rho} ( \eta_k-\eta_l, \eta_k-\eta_l ),
\end{equation}
and the latter can be diminished by taking $l$ large. In particular, we find that $\eta \in \D(	\EE^{\rho} ) $ and that $\EE^{\rho} (  \eta_l  - \eta,\eta_l -\eta) \to 0$, as $l\to\infty$. Thus, we obtain that $(\EE^{\rho},\D(	\EE^{\rho} ))$ is closed and since it clearly extends $(\EE ,\D_0)$, the latter is closable.

It remains to prove the lemma, for which we keep following \cite{AR_90_closability,AR_91}.

\begin{proof}[Proof of Lemma \ref{lemm_clos_gradn}]
	The first observation is that \eqref{eqn37n} is a closable operator iff the bilinear form
	\[
	\wt{\EE} (\eta,\eta) = \int_{\mathcal X}\|
	\diff \eta \|_{\mathcal{H}_{-1}^*}^2 \,{\rho}
	\d\wt{\mu},
	\] 
	initially defined on $\D_0$, 
	admits a closed extension on $L^2(\wt{\mu})$. Secondly, by the comments above \eqref{eqn_SVD} there exists an orthonormal basis $( \vp_k)_{k\in \N}$  of $\mathcal{H}_{-1}$ such that $(\sigma_k\vp_k)_{k\in \N}$ is orthonormal in  $\mathcal{H}_1$. Using the latter, we may measure
	\begin{equation}
	\|p \|_{\mathcal{H}_{-1}^*}^2  \,=\,\sum_{k\in \N} \bigl( p_\cdot \vp_k\bigr)^2.
	\end{equation}
	Applying the above to $p=\diff \eta$, we may represent $\diff \eta_\cdot \vp_k $ as the directional derivative
	\[
	\diff|_{h} \eta_\cdot \vp_k \,=\, \lim_{t\to 0} \frac{\eta(h+t\vp_k) -\eta(h)}{t}.
	\]
	Moreover, since the sum of closable forms is closable again, see \cite[Theorem 1.2]{AR_90_closability}, it suffices to show in the following  that
	\begin{equation}\label{eqn2}
		\wt{\EE}_k(\eta,\eta) \,=\, \int_{\X} \bigl(	\diff\eta_\cdot \vp_k\bigr)^2 \,{ \rho}\d \widetilde{\mu}    ,
		\qquad \eta\in \D_0,
	\end{equation}
	admits a closed extension on $L^2(\wt{\mu})$ for any $k\in \N$.
	
	As a first step, let us note that $
	\wt{\EE}_k$ respects $\tilde\mu$-equivalence classes. 
	The latter is relevant in  our case, since $\widetilde{\mu}$ does not necessarily have full support on $\X$ and it could in principle happen that two elements of $\D_0$, which don't coincide as functions on $\X$, lie in the same  $L^2(\widetilde{\mu})$-equivalence class. Then one needs to make sure that $\widetilde{\EE}_k$ assigns anyways a unique value. Suppose that $\eta = \zeta $, $\widetilde{\mu}$-a.e., for $\eta,\zeta \in \D_0$. Since both functions are continuous in the $\mathcal{X}$-topology, they coincide on $\{V<\infty\}\cap\supp(\mu)$. But since an infinitesimal shift in the direction  $ \vp_k$ respects this class by Assumption \ref{Ass_abstract}~\eqref{A2} and the Cameron--Martin theorem, we find that $\diff |_{h } \eta_\cdot   \vp_k = \diff |_{h } \zeta_{\cdot } \vp_k  $ for  $h\in\{V<\infty\}\cap\supp(\mu)$, and thus $\wt{\mu}$-a.e.\ $h$.
	
	Next, to prove that the well-defined $(\wt{\EE}_k,\D_0)$ is closable,  we check the following {Hamza-type condition} from \cite[Theorem 3.2]{AR_90_closability} and \cite[Corollary 2.5]{AR_91}, see  the comments below \eqref{eqn18263n}: 
	
	\emph{Let $\X_k $ be a complementing subspace  of $\spa \{ \vp_k  \}$ in $\X$, and let $\nu_k$ be a Markovian kernel such that
		\begin{equation}
			\int_{\X} \eta(h) \,\d\widetilde{\mu}(h) \,=\, \int_{\X_k} \int_{\R} \eta( h +r \vp_k )  \, \nu_k(h,\d r)\,\d{\pi_k}_{\#}\widetilde{\mu}(h),
		\end{equation}
		for all  measurable $\eta \colon\X \to [0,\infty) $, where $\pi_k\colon \X\to\X_k$ is the projection with $ \pi_k ( \vp_k  )=0$. Assume that for ${\pi_k}_{\#}\tilde{\mu}$-a.e.\ $h\in \X_k$, we have $\nu_k(h ,\d r)= \nu_k( h,r) \d r$ for a measurable function $\nu_k(h,\cdot)$ satisfying for $\d r$-a.e.\ $r\in \R$ with $\nu_k(h,r)>0$ that $\int_{r-\epsilon}^{r+\epsilon} 1/\nu_k(h,s) \d s <\infty$ for some $\epsilon>0$. If additionally 
		for ${\pi_k}_\#\widetilde{\mu} $-a.e.\ $h\in \X_k$ and $\d r$-a.e.\ $r\in \R$ with $\nu_k(h,r)>0$ and ${\rho}(h+r  \vp_k)>0$,  
		\begin{equation}\label{eqn38n}\mathrm{ess}\inf\bigl\{{\rho}(h +s  \vp_k  )\,\big|\, r -\epsilon \le s \le r +\epsilon\bigr\}\,>\,0,
		\end{equation}
		for some $\epsilon>0$, then $ (\wt{\EE}_k,\D_0)$ is closable on $L^2(\widetilde{\mu})$.} 
	
	To choose $\X_k$ appropriately, we leverage the property that $(h,\vp_k)_{\HH_1} = \sigma_k^{-2}(h,\vp_k)_{\HH_{-1}}$ for all $h\in \HH_1$.  Thereby, the latter functional is continuous on $\X$ and we can choose 
	\begin{equation}
		\X_k \,=\,  \bigl\{ h\in \X \big| (h,\vp_k)_{\mathcal{H}_1} = 0 \bigr\},
	\end{equation}
	in which case we have for the reference measure $\mu$ the Gaussian disintegration 
	\begin{align}
		\int_{\X} \eta(h) \,\d\mu\,&=\, \frac{1}{\sqrt{2\pi\sigma_k^2}}\int_{\X_k} \int_{\R} \eta( h \,+\,r  \vp_k ) \exp(-r^2/(2\sigma_k^2))  \,\d r \,\d{\pi_k}_{\#} {\mu}(h).
	\end{align}
	Applying the above with $\eta(h)$ replaced by $\eta(h) \exp(-V(h))$, we find that 
	\begin{align}\begin{split}\label{eqn39n}
			\int_{\X} \eta (h) \,\d \widetilde{\mu}(h) \,&=\,
			\int_{\X} \eta(h) \exp(-V(h)) \,\d\mu(h)\\&=\,\frac{1}{ \sqrt{2\pi\sigma_k^2}}\int_{\X_k} \int_{\R}\eta  ( h \,+\,r  \vp_k )
			\exp(-V(h+r\vp_k))  \exp(-r^2/(2\sigma_k^2))\,\d r\,\d{\pi_k}_{\#} {\mu}(h).
		\end{split}
	\end{align}
	Taking $\eta  = \mathbf{1}_{\{\pi_k h \in \Lambda_k \}}$ for a measurable $\Lambda_k \subset \X_k$, we deduce that 
	\begin{align}
		{\pi_k}_{\#} \widetilde{\mu}(\Lambda_k)
		\,&=\, \frac{1}{ \sqrt{2\pi\sigma_k^2}}\int_{\Lambda_k} \int_{\R}	\exp(-V(h+r\vp_k))   \exp(-r^2/(2\sigma_k^2))\,\d r\,\d{\pi_k}_{\#} {\mu}(h)\\&
		=\, \int_{\Lambda_k} \zeta_k(h)\,\d{\pi_k}_{\#} {\mu}(h),
	\end{align}
	in other words $ \d{\pi_k}_{\#} \widetilde{\mu} \,=\, \zeta_k \d{\pi_k}_{\#} {\mu} $,
	where 
	\begin{equation}
		\zeta_k (h) \,=\, \frac{1}{ \sqrt{2\pi\sigma_k^2}} \int_{\R}\exp(-V(h+r\vp_k)) \exp(-r^2/(2\sigma_k^2))\,\d r.
	\end{equation}
	Taking now $\eta$ in \eqref{eqn39n} arbitrary, we observe that
	\begin{align}
		\int_{\X} \eta(h) \,\d\widetilde{\mu}  \,=\, \frac{1}{ \sqrt{2\pi\sigma_k^2}}\int_{\X_k} \int_{\R}\eta( h \,+\,r  \vp_k ) \frac{\exp(-V(h+r\vp_k)) \exp(-r^2/(2\sigma_k^2)) }{\zeta_k (h)} \,\d r\,\d{\pi_k}_{\#}\widetilde{\mu}.
	\end{align}
	In consequence, we have identified
	\begin{align}		
		\nu_k(h ,r) \,=\,  \frac{\exp(-V(h+r\vp_k)) \exp(-r^2/(2\sigma_k^2)) }{\sqrt{2\pi\sigma_k^2}\,\zeta_k (h)}\mathbf{1}_{\{\zeta_k(h)>0\}},
	\end{align}
	for $\d{\pi_k}_{\#} \widetilde{\mu}$-a.e.\ $h\in \X_k$. 
	
	Since by Assumption \ref{Ass_abstract}~\eqref{A2}, for any $h \in \X_k$, the set of all $r$ such that $\exp(-V(h+r\vp_k))>0$ is a convex set, and since $V$ is continuous on $\{V<\infty\}$, we see that $\nu_k (h,\cdot)$ satisfies Hamza's condition, i.e., the first part of the above condition. To check also the second part, we assume that  $\nu_k(h,r)>0$, and therefore  $h+r\vp_k \in \{V<\infty\} $. As the latter set is open by  Assumption \ref{Ass_abstract}~\eqref{A2}, this also holds if we replace $r$ by $s\in [r-\epsilon,r+\epsilon]$. Since $\rho$ is positive and continuous on  $\{V<\infty\}$ by Assumption \ref{Ass_abstract}~\eqref{A3}, the claimed \eqref{eqn38n} follows.
\end{proof}
\subsection{Properties of $(\EE,\D(\EE))$}
 We denote the closure of  $(\EE,\D_0)$ by $(\EE,\D(\EE))$.  
Here, we collect additional properties of $(\EE,\D(\EE))$ and  start with the Markovian property, which  follows by an approximation argument.

\begin{proposition}\label{lemm_is_DFn}The closed bilinear form $(\EE ,\D(\EE))$ on $L^2(\widetilde{\mu})$ is a Dirichlet form.
\end{proposition}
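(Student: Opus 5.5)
We have to show the Markov property: for every $\eta\in\D(\EE)$ and every normal contraction, in particular the unit contraction $\eta^\# = (0\vee \eta)\wedge 1$, we have $\eta^\#\in\D(\EE)$ and $\EE(\eta^\#,\eta^\#)\le \EE(\eta,\eta)$. By the standard criterion (e.g.\ Ma--R\"ockner, Proposition I.4.10, cf.\ Appendix \ref{app:DFs}), it suffices to verify a smoothed version on the core $\D_0$. Concretely, for each $\epsilon>0$ we fix a smooth function $\psi_\epsilon\colon\R\to[-\epsilon,1+\epsilon]$ with $\psi_\epsilon(t)=t$ on $[0,1]$, $0\le \psi_\epsilon'\le 1$, and $|\psi_\epsilon(t)-\psi_\epsilon(s)|\le|t-s|$. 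We then check that for all $\eta\in\D_0$ we have $\psi_\epsilon\circ\eta\in\D(\EE)$ and $\EE(\psi_\epsilon\circ\eta,\psi_\epsilon\circ\eta)\le\EE(\eta,\eta)$.

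This check on cylinder functions is direct. If $\eta=\Phi((\cdot,p^1),\dots,(\cdot,p^k))$ with $\Phi\in C^1_b(\R^k)$, then $\psi_\epsilon\circ\eta=(\psi_\epsilon\circ\Phi)(\dots)$. Since $\psi_\epsilon\circ\Phi\in C^1_b(\R^k)$, we have $\psi_\epsilon\circ\eta\in\D_0$. Moreover, by the chain rule
\begin{equation}
	\diff|_h(\psi_\epsilon\circ\eta)\,=\,\psi_\epsilon'(\eta(h))\,\diff|_h\eta .
\end{equation}
Bilinearity of $g^h$ then gives the pointwise bound
\begin{equation}
	g^h\bigl(\diff|_h(\psi_\epsilon\circ\eta),\diff|_h(\psi_\epsilon\circ\eta)\bigr)\,=\,\psi_\epsilon'(\eta(h))^2\,g^h(\diff|_h\eta,\diff|_h\eta)\,\le\, g^h(\diff|_h\eta,\diff|_h\eta).
\end{equation}
Integrating against $\wt\mu$ yields the inequality on $\D_0$.

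The remaining step, which is the only delicate one, is to pass from $\D_0$ to the whole closure $\D(\EE)$. Here I would use the standard approximation argument rather than merely cite it. Let $\eta\in\D(\EE)$ and choose $\eta_n\in\D_0$ with $\eta_n\to\eta$ in the $\EE_1^{1/2}$-norm. Then $\psi_\epsilon\circ\eta_n\to\psi_\epsilon\circ\eta$ in $L^2(\wt\mu)$, because $\psi_\epsilon$ is Lipschitz. Also $\sup_n\EE_1(\psi_\epsilon\circ\eta_n,\psi_\epsilon\circ\eta_n)<\infty$ by the previous step. Since $(\EE,\D(\EE))$ is a closed form, the Hilbert space $(\D(\EE),\EE_1)$ is weakly sequentially compact on bounded sets. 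Hence a subsequence converges weakly in $\D(\EE)$, and the limit must be $\psi_\epsilon\circ\eta$. Weak lower semicontinuity of $\EE$ then gives
\begin{equation}
	\EE(\psi_\epsilon\circ\eta,\psi_\epsilon\circ\eta)\le\liminf_n\EE(\eta_n,\eta_n)=\EE(\eta,\eta).
\end{equation}
Finally, let $\epsilon\to0$. We have $\psi_\epsilon\circ\eta\to\eta^\#$ in $L^2(\wt\mu)$ with uniformly bounded energy, so the same weak compactness argument gives $\eta^\#\in\D(\EE)$ and $\EE(\eta^\#,\eta^\#)\le\EE(\eta,\eta)$. This is the Markov property, so $(\EE,\D(\EE))$ is a Dirichlet form.
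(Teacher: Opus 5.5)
Your proof is correct and follows essentially the same route as the paper: you smooth by $\psi_\epsilon$ on $\D_0$ via the chain rule, and you pass to general $\eta\in\D(\EE)$ by approximation, weak compactness in $(\D(\EE),\EE_1)$ and lower semicontinuity. The only difference is that the paper removes $\epsilon$ already on $\D_0$, by showing that $\psi_\epsilon\circ\eta$ is $\EE$-Cauchy (the difference of derivatives lives where $\eta\in(-\epsilon,0)\cup(1,1+\epsilon)$), which gives strong convergence to $\eta^\#$ in $\D(\EE)$; you instead send $\epsilon\to0$ at the end with a second weak-compactness argument, and both are fine.
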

\begin{proof} Firstly, we convince ourselves that for all $\eta \in \D_0$, it holds that $\zeta  = (\eta\vee 0) \wedge 1 \in \D(\EE)$ with $\EE(\zeta,\zeta )\le \EE(\eta,\eta)$. To this end, let $\phi_\epsilon$ be a smooth increasing function with Lipschitz constant $1$, such that $\phi_\epsilon(r) = r$ for $r\in [0,1]$, which is constant outside of $ [-\epsilon,1+\epsilon]$. Then $\zeta_\epsilon=\phi_\epsilon(\eta) \in \D_0$ and we have that $\zeta_\epsilon \to \zeta$ in $L^2(\wt{\mu})$ by the dominated convergence theorem.
	Meanwhile,  we have
	$
	\diff \zeta_\epsilon = \phi_\epsilon'(\eta) \diff \eta
	$, 
	so that 
	\begin{align}\EE(\zeta _\epsilon -\zeta_\delta,\zeta_\epsilon-\zeta_\delta) \,\le \,  \int_{\X} \mathbf{1}_{(-\epsilon,0) \cup (1,1+\epsilon) } (\eta(h)) g^h (\diff|_h \eta , \diff|_h \eta) \,\d\wt{\mu}(h),
	\end{align}
	for $0 <\delta <\epsilon $. Thereby, $\zeta_\epsilon$ is an $\EE$-Cauchy sequence, which implies that $\zeta_\epsilon\to \zeta$ in $\D(\EE)$. In particular, we have  that $\EE(\zeta,\zeta ) =\lim_{\epsilon\searrow0} \EE(\zeta_\epsilon,\zeta_\epsilon )\le   \EE(\eta,\eta)$ as desired. 
	
	Now take any $\eta \in \D(\EE)$. Then there exists $\D_0 \ni \eta_k \to \eta $ in $\D(\EE)$, as $k\to\infty$. Defining moreover $\zeta$ and $\zeta_k$ accordingly, we find that $\zeta_k \to \zeta $ in $L^2(\wt{\mu})$. By the boundedness of $\zeta_k$ in $\D(\EE)$, the latter convergence holds also weakly in said space. But then, using lower semicontinuity of the norm with respect to weak convergence, we deduce  
	\begin{align}
		\EE(\zeta,\zeta )\,\le\, \liminf_{k\to \infty}  \EE (\zeta_k,\zeta_k )\,\le\, \lim_{k\to \infty}  \EE (\eta_k,\eta_k )\,=\, \EE(\eta,\eta).
	\end{align}
	We conclude that $(\EE,\D(\EE))$ satisfies the Markovian property, i.e., it is a Dirichlet form. 
\end{proof}
To proceed, we observe some consequences of the proof of Lemma \ref{lemm_clos_gradn} in the following remark. 
\begin{remark}
	The approach from \cite{AR_90_closability,AR_91} to obtain that $\diff\colon \D_0 \subset L^2(\wt{\mu}) \to L^2(\rho\d\wt{\mu};\HH_{-1}^*) $ is closable reveals something about the resulting closure. Using the notation from  the proof of Lemma \ref{lemm_clos_gradn}, we have 
	\begin{equation}\label{eqn574n}
		\D(\diff^\rho) \,\subset \,
		\biggl\{
		\eta \in \bigcap_{k\in \N} \D(	\widetilde{\EE}_k)\,\Big|\,
		\sum_{k\in \N } \widetilde{\EE}_k(\eta,\eta )\,<\,\infty
		\biggr\},
	\end{equation}
	and with $p^k$ from \eqref{eqn_SVD}
	\begin{equation}\label{eqn575n}
		\diff^{\rho}\eta \,=\, \sum_{k\in \N}
		(\diff^\rho\eta_\cdot  \vp_k)  p^k \,\in \, L^2 (\rho\d \wt{\mu};\mathcal{H}_{-1}^*),
	\end{equation}
	where the individual summands of the latter series are well-defined for $\eta\in \D(	\widetilde{\EE}_k)$, respectively. Moreover,  by its construction \cite[Corollary 2.5]{AR_91}, the latter domain $\D(	\widetilde{\EE}_k)$ is contained in 
	\begin{align}
		\biggl\{
		\eta \in L^2(\widetilde{\mu}) \,\biggl|\,& \text{for $\d{\pi_k}_{\#} \widetilde{\mu}  $-a.e.\ $h\in \X_k$, $\eta_h (r) =  \eta(h+r \vp_k)$ has an absolutely continuous $\d r$-version}
		\\& \text{on $ \biggl\{r\in \R : \, \int_{r-\epsilon}^{r+\epsilon} 1/\nu_k{(h,s)}\,\d s<\infty \,\text{for some $\epsilon>0$} \biggr\}$ with derivative $\frac{\d \eta_h}{ \d r}$ }\\&
		\text{and } \int_{\X} \Bigl| \frac{\d \eta_{\pi_k (h)}} {\d r}  \bigl((h,\vp_k)_{\HH_{-1}}\bigr) \Bigr|^2 \,{\rho}(h)\,\d  \widetilde{\mu}(h)\,<\,\infty
		\biggr\},\label{eqn83n}
	\end{align}
	and we have $\diff|_h^\rho\eta_{\cdot}  \vp_k\,=\, \frac{\d \eta_{\pi_k (h)}}{ \d r} \bigl((h,\vp_k)_{\HH_{-1}}\bigr) $, for $\eta \in \D(	\widetilde{\EE}_k)$. As a consequence,   the {product rule} 
	\begin{equation}
		\diff^\rho(\eta\zeta)_\cdot \vp_k\,=\, \eta \diff^\rho \zeta_{\cdot}  \vp_k \,+\, 
		\zeta \diff^\rho  \eta_{\cdot}  \vp_k,\qquad \eta,\zeta \in L^\infty(\widetilde{\mu})\cap \D(	\widetilde{\EE}_k),
	\end{equation}
	holds. Moreover, the {local property} 
	\[
	\diff^\rho\eta_{\cdot}  \vp_k \,=\, 0 \qquad \text{$\widetilde{\mu}$-a.e.\ outside of $\supp (\eta)$,}
	\]
	holds for $\eta \in \D(	\widetilde{\EE}_k)$.
	By \eqref{eqn574n} and \eqref{eqn575n}, these properties transfer to $(\diff^\rho,\D(\diff^\rho))$.
\end{remark}

\begin{proposition}\label{prop:CarreduChampn}The  Dirichlet form $(\EE,\D(\EE))$ is 
	local, conservative  and satisfies
	\begin{equation}\label{eqn44n}
		2 \EE(\eta, \eta \zeta) \,-\, \EE(\eta^2,\zeta )\,=\, 2\int_{\X} \zeta g^h (\diff|_h^\rho \eta, \diff|_h^\rho \eta )\,\d\wt{\mu}(h),
	\end{equation}
	for all $\eta,\zeta \in L^\infty(\wt{\mu})\cap \D(\EE)$.
\end{proposition}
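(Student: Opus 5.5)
The plan is to read off all three properties from the explicit representation of the closure given by the Remark: $\D(\EE)\subset\D(\diff^\rho)$ and
\[
\EE(\eta,\zeta)=\int_{\X} g^h(\diff|^\rho_h\eta,\diff|^\rho_h\zeta)\,\d\wt\mu(h),\qquad \eta,\zeta\in\D(\EE).
\]
This holds because $(\EE^\rho,\D(\EE^\rho))$ from Subsection \ref{SS:closable} is a closed extension of $(\EE,\D_0)$. Hence the closure $(\EE,\D(\EE))$ is the restriction of $\EE^\rho$ to the $\EE^\rho$-closure of $\D_0$, and it is given by this formula. Polarization extends the formula from the diagonal to pairs $(\eta,\zeta)$.

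\emph{Carré du champ identity \eqref{eqn44n}.} First we prove that the product rule transfers: for $\eta,\zeta\in L^\infty(\wt\mu)\cap\D(\EE)$ we have $\eta\zeta\in\D(\EE)$ and $\diff^\rho(\eta\zeta)=\eta\diff^\rho\zeta+\zeta\diff^\rho\eta$.

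For $\eta,\zeta\in\D_0$ this is immediate, since $\eta\zeta\in\D_0$ by the classical product rule for $\Phi\Psi$.

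For general bounded $\eta,\zeta$, we approximate by $\eta_k,\zeta_k\in\D_0$ in $\D(\EE)$. Composing with a smooth bounded cutoff $\phi$ that equals the identity on $[-\|\eta\|_\infty,\|\eta\|_\infty]$ keeps us in $\D_0$ and makes the $\eta_k$ uniformly bounded. The cutoff preserves $\EE$-convergence, by a dominated convergence argument along an a.e.\ convergent subsequence, just as in the proof of Proposition \ref{lemm_is_DFn}. Then $\eta_k\zeta_k\to\eta\zeta$ in $L^2(\wt\mu)$, and $\EE(\eta_k\zeta_k,\eta_k\zeta_k)$ stays bounded by the uniform $L^\infty$ bounds. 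So a subsequence converges weakly in $\D(\EE)$, and identifying the limit of the gradients a.e.\ gives the product rule in the closure. Alternatively, one can use the coordinatewise product rule from the Remark directly, together with \eqref{eqn575n}.

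With the product rule in hand, \eqref{eqn44n} is pure algebra with bilinearity of $g^h$:
\[
2g^h(\diff\eta,\eta\diff\zeta+\zeta\diff\eta)-g^h(2\eta\diff\eta,\diff\zeta)=2\zeta\, g^h(\diff\eta,\diff\eta).
\]
Integrating against $\wt\mu$ gives \eqref{eqn44n}.

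\emph{Locality.} Let $\eta,\zeta\in\D(\EE)$ have disjoint supports. The local property of $\diff^\rho$ from the Remark gives $\diff^\rho\eta=0$ a.e.\ outside $\supp\eta$, and likewise for $\zeta$. So $g^h(\diff^\rho\eta,\diff^\rho\zeta)=0$ $\wt\mu$-a.e., and therefore $\EE(\eta,\zeta)=0$. If the appendix defines locality in the strong form (constant on a neighborhood of the support), the same argument applies, since $\diff^\rho$ of a function constant on a set vanishes a.e.\ there. This follows by applying the local property to $\eta-c$.

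\emph{Conservativeness.} It suffices to show $\mathbf 1\in\D(\EE)$ with $\EE(\mathbf 1,\mathbf 1)=0$. The constant function $\mathbf 1=\Phi(\cdot)$ with $\Phi\equiv1$ lies in $\D_0$ and has $\diff\mathbf 1=0$. Hence $\EE(\mathbf 1,\eta)=0$ for all $\eta$, which gives $\L\mathbf 1=0$ and $T_t\mathbf 1=\mathbf 1$.

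\emph{Main obstacle.} The delicate step is passing the product rule and the locality to the closure. This needs care because $\D(\EE)$ may be strictly smaller than $\D(\EE^\rho)$, so we cannot simply cite the Remark for $\EE^\rho$. The plan is to rely on uniformly bounded $\D_0$-approximations and on lower semicontinuity, as in Proposition \ref{lemm_is_DFn}, and to identify weak limits through a.e.\ convergence of $\diff^\rho$ in $L^2(\rho\,\d\wt\mu;\HH_{-1}^*)$.
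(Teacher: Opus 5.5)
Your proposal is correct and takes essentially the paper's route: you represent the closure through the closed extension $\EE^\rho$, read off locality and \eqref{eqn44n} from the local property and product rule of $\diff^\rho$ in the Remark, and get conservativeness from $\mathbf{1}_{\X}\in\D_0$ with $\EE(\mathbf{1}_{\X},\mathbf{1}_{\X})=0$. Your extra cutoff-and-approximation step only makes explicit that $\eta\zeta\in\D(\EE)$, which the paper uses tacitly (it is also the standard fact that $L^\infty(\wt\mu)\cap\D(\EE)$ is an algebra for any Dirichlet form); locality needs no such care, since the Remark's local property already holds on all of $\D(\diff^\rho)\supset\D(\EE)$.
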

\begin{proof}We keep following \cite[Theorem 3.1]{AR_91} to obtain the stated properties of $(\EE ,\D(\EE))$: That $(\EE,\D(\EE))$ is conservative, i.e., that the associated semigroup keeps $\mathbf{1}_{\X}$ invariant, follows from $\mathbf{1}_{\X}\in \D(\EE)$ and $\EE(\mathbf{1}_{\X}, \mathbf{1}_{\X})=0$. Indeed, this implies that $\mathbf{1}_{\X}$ lies in the kernel of the generator of $\EE$, yielding the desired conservative property. Regarding the other two properties, we recall that \eqref{eqn40n} is an extension of $(\EE,\D(\EE))$. Thereby, the local property of $\diff^\rho$ detailed in the preceding remark yields the local property of $\EE$. The product rule for $\diff^{\rho}$, stated in  the same remark, yields then that 
	\begin{align}2 \EE(\eta, \eta \zeta) \,&=\, 2  \int_{\X} 
	g^h\bigl( \diff|_h^\rho \eta ,
		\eta \diff|_h^\rho \zeta + \zeta \diff|_h^\rho \eta 
		\bigr)  \,\d\wt{ \mu}(h)
		\\
		\EE(\eta^2,\zeta )\,&=\, 
		{2}  \int_{\X} \eta g^h\bigl( \diff|_h^\rho \eta , \diff|_h^\rho \zeta \bigr) \,\d\wt{\mu}(h),
	\end{align}
	for $\eta,\zeta \in L^\infty(\wt{\mu})\cap \D(\EE)$.
	Subtraction  results in \eqref{eqn44n}.
\end{proof}
While not part of the statement of Theorem \ref{thm_abstract}, the  assertion \eqref{eqn44n} is useful in the following section since it implies that  $(\EE,\D(\EE))$ admits 
\begin{equation}\label{SFO}
\Gamma(\eta,\zeta ) (h) \,=\, 2g^h \bigl(\diff|_h^\rho \eta, \diff|_h^\rho \zeta\bigr)
\end{equation}
as square field operator, cf.\ \eqref{eqn45}.  

The remaining collected properties of $(\EE,\D(\EE))$ guarantee the conditions for the existence of an associated diffusion process  from Theorem \ref{thm_abstract}, besides the required  quasi-regularity. The properties \eqref{IB} and \eqref{IC} of a quasi-regular Dirichlet form, cf.\ Definition \ref{def_QR}, are on the one hand immediate by our choice of pre-domain $\D_0$. That there exists an $\EE$-nest of compact subsets of $\HH_{-1}$ follows, on the other hand, from \cite[Proposition 3.1]{rock_schmu_tightness}. This completes the proof of Theorem \ref{thm_abstract}.

	\section{Existence and regularity of the stochastic thin-film diffusion}\label{S:DF_in_inf_dimensinos}
	\subsection{Assumption and result of this section}
	In this section, we are concerned with the Dirichlet form \eqref{e:Dirichlet_form_+}, i.e.,
	\begin{equation}\label{eqn19}
		\EE (\eta,\zeta ) \,=\,\int_{\MM^+} g^h ( \diff|_h \eta ,\diff|_h \zeta ) \, \d  \mu^+(h)\,=\, 
		\int_{\MM^+} \int_{\T} m(h) \partial_x  \diff|_h \eta \partial_x \diff|_h \zeta  \,\d x\,  \d  \mu^+(h)
		.
	\end{equation} 
In order to apply the abstract Theorem  \ref{thm_abstract} to deduce the existence of an associated diffusion process, we impose a condition  tailored towards Assumption \ref{Ass_abstract} on the mobility function $m$, which we allow here to be non-local. 
	\begin{assumption}\label{ass:deg_m}
		Let $m\colon \{h\in C(\overline{\T})| h\ge 0 \} \to \{h\in C(\overline{\T})| h\ge 0 \}$ be measurable such that there exists $n\in [0,\infty)$ with
		\begin{align}\label{eq440}
		\sup_h 	\| m(h)\|_{C(\overline{\T})}\Big/
			(1 \vee 	\| h\|_{C(\overline{\T})})^{n}<\infty.
		\end{align}Moreover, we demand that there exists a continuous and increasing $\rho \colon (0,\infty) \to (0,\infty)$ such that 
		\begin{align}\label{eq440_lower}
			\inf_{x\in \T} m(h)(x) \,\ge \, \rho\Bigl( \inf_{x\in \T}h(x) \Bigr),
		\end{align}
	where we set $\rho(0)=0$.
	\end{assumption}
To work with a centered Gaussian measure, we translate the film height by $-\mathbf{1}_\T$. This gives  the following setup, where we recall the convention that dotted spaces refer to the subspace of mean-free functions or distributions.
	\begin{itemize}
		\item $\HH_1 = \dot{H}^{1}(\T)$, $\HH_{-1} = (\dot{H}^{1}(\T))^*$, $\mathcal{X} = \dot{C}(\overline{\T})$. 
		\item $V(h) = \log(\lambda) + \infty \mathbf{1}_{(-\infty,-1]}(\inf_{x\in \T} h (x))$, where 
		\begin{equation}\label{e:lambda}\lambda = \mu \bigl(
		\bigl\{ h \in \MM \,\big| \, h \,>\,  -\mathbf{1}_{\T} \bigr\}\bigr) \,>\, 0.
	\end{equation}	
\item Further, we define  for sufficiently large $C<\infty$  and $n$ and $\rho$ as in Assumption \ref{ass:deg_m}
\begin{equation}
	\sigma(h) = C
	(1 \vee 	\| 1+h\|_{C(\T)})^{n} ,\qquad 
	\rho(h) =  \rho\Bigl( 1+\inf_{x\in \T}h(x) \Bigr)
	,\qquad
	g^h( p,p) \,=\, \int_\T m(1+h) (\partial_x p)^2 .
\end{equation}
	\end{itemize}
Then  Assumption \ref{Ass_abstract} is satisfied and Theorem \ref{thm_abstract} applies. We point out that we exploited \eqref{ent_rep} to define $V$ in line with Assumption \ref{Ass_abstract}~\eqref{A2}. Then, by translating back by $\mathbf{1}_\T$, the results transfer  to the original state space $\MM^+$, which we equip in the following with the topology of $({H}^{1}(\T))^*$.

We  argue below that the resulting diffusion process has $C_{t,x}^{1/8-, 1/2-}$-regularity when started from equilibrium, see  \eqref{e:Holder} for the definition of this space. This  improves in particular its continuity in $({H}^{1}(\T))^*$ implied by the quasi-regularity in that space.  For this we apply the Lyons--Zheng decomposition, cf.\ Theorem \ref{LZ}, to the pointwise approximations $\psi_\epsilon*h(x)$ for a standard mollifier $(\psi_\epsilon)_{\epsilon>0}$. Together with the equivalence of moments for Gaussian measures and an application of the Kolmogorov--Chentsov theorem, this results in the space-time regularity assertion. The following summarizes the results.

	\begin{theorem}
		\label{thm_existence_deg_case}
	Under Assumption \ref{ass:deg_m}, the pre-Dirichlet form \eqref{eqn19} initially defined on
	\begin{align}\label{D0}
			\biggl\{
			\Phi\biggl(\int_{\T}  h p^1,\dots , \int_{\T}  h p^k \biggr) \,\bigg|\,
			p^i\in {H}^1(\T),\,\Phi \in C_b^1(\R^k)
			\biggr\}
	\end{align}  is closable on $L^2(\mu^+)$. Its closure $(\EE,\D(\EE))$ admits an associated $\MM^+$-valued diffusion process $((h_t)_{t\ge 0}, (P_z)_{z\in \MM^+})$, which satisfies
	\begin{equation}\label{statement_reg}
		h\in C_{t,x}^{1/8-, 1/2-}, \qquad P_{\mu^+}\text{-almost surely}.
	\end{equation}	
\end{theorem}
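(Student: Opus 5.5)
The proof has two parts: closability and existence, which follow from Theorem \ref{thm_abstract}, and regularity, which follows from a Lyons--Zheng argument. For the first part, I verify Assumption \ref{Ass_abstract} for the translated setup $\HH_1=\dot H^1(\T)$, $\HH_{-1}=(\dot H^1(\T))^*$, $\X=\dot C(\overline{\T})$. The mean-free Brownian-bridge-type field $\mu$ is Radon on $\dot C(\overline{\T})$, and $\dot H^1$ is dense in its dual. The potential $V$ is convex, because $\{\inf h>-1\}$ is convex. Its finiteness set $\{\inf h>-1\}$ is open in the sup-norm, $V$ is constant there, and $\lambda>0$ because a Gaussian measure gives positive mass to the ball of radius $1$ around $0$ in $C$. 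The function $\rho(1+\inf h)$ is continuous and positive on $\{V<\infty\}$. The bounds $\rho|p|^2\le g^h(p,p)\le\sigma|p|^2$ follow from \eqref{eq440}, \eqref{eq440_lower} and $\|\partial_xp\|_{L^2}\sim|p|_{\HH_{-1}^*}$. Finally, $\sigma\in L^1(\wt\mu)$ by Fernique. Theorem \ref{thm_abstract} then gives closability and an associated $\HH_{-1}$-valued diffusion. Since $\mu^+$ is concentrated on $\MM^+$, quasi-regularity lets me restrict the process to $\MM^+$, up to an exceptional set. The pre-domain \eqref{D0} differs from $\D_0$ only in using $p\in H^1$ rather than mean-free test functions; on mean-free $h$ the constant part of $p$ is invisible, so the two coincide.

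For the regularity, I start the process from $\mu^+$ and fix a mollifier $\psi_\epsilon$. For $x\in\T$, I consider $\eta(h)=\psi_\epsilon*h(x)=\int_\T h\,p$ with $p=\psi_\epsilon(x-\cdot)$, approximating it by elements of $\D_0$ via truncation, so that $\eta\in\D(\EE)$. The Lyons--Zheng decomposition (Theorem \ref{LZ}) writes $\eta(h_t)-\eta(h_s)=\frac12(M_t-M_s)-\frac12(\hat M_{T-s}-\hat M_{T-t})$ with forward and backward martingales of quadratic variation $\int\Gamma(\eta,\eta)(h_r)\,\d r$. By \eqref{SFO}, $\Gamma(\eta,\eta)=2\int m(h)|\partial_xp|^2\lesssim \sigma(h)\|\partial_x\psi_\epsilon\|_{L^2}^2\sim\sigma(h)\epsilon^{-3}$. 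BDG and stationarity then give
\[
\E_{\mu^+}|\psi_\epsilon*h_t(x)-\psi_\epsilon*h_s(x)|^q\lesssim_q |t-s|^{q/2}\epsilon^{-3q/2}\,\E_{\mu^+}[\sigma^{q/2}],
\]
where the last factor is finite by Gaussian moments. For the residual I use the spatial Hölder-$1/2$ regularity of $\mu$: since the marginal of $h_t$ is $\mu^+\le\lambda^{-1}\mu$, equivalence of Gaussian moments yields $\E|h_t(x)-\psi_\epsilon*h_t(x)|^q\lesssim\epsilon^{q/2}$ and $\E|h_t(x)-h_t(y)|^q\lesssim|x-y|^{q/2}$.

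Combining the two estimates, I get $\E|h_t(x)-h_s(x)|^q\lesssim \epsilon^{q/2}+|t-s|^{q/2}\epsilon^{-3q/2}$. Optimizing with $\epsilon=|t-s|^{1/4}$ yields $|t-s|^{q/8}$, and hence $\E|h_t(x)-h_s(y)|^q\lesssim(|t-s|^{1/4}+|x-y|)^{q/2}$. Kolmogorov--Chentsov in the parabolic metric on $[0,T]\times\T$ with $q\to\infty$ then gives $C^{\gamma/4,\gamma}$ for every $\gamma<1/2$, which is \eqref{statement_reg}; the modification is consistent with the $\HH_{-1}$-continuous paths. I expect the main obstacle to be the rigorous application of Lyons--Zheng. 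The difficulty is justifying that $\eta$, which is unbounded and defined only on $\X$, lies in $\D(\EE)$ with the claimed square field, and that the decomposition holds under $P_{\mu^+}$ with the reversed process. I would handle this by truncating to $\Phi_R(\eta)$, applying Proposition \ref{prop:CarreduChampn}, and letting $R\to\infty$ using the moment bounds.
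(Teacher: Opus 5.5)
Your proposal follows the paper's proof closely. You verify Assumption~\ref{Ass_abstract} for the translated setup and invoke Theorem~\ref{thm_abstract}. You then apply Lyons--Zheng to mollified point evaluations, balance $\epsilon^{1/2}$ against $\epsilon^{-3/2}|t-s|^{1/2}$ via $\epsilon=|t-s|^{1/4}$, and conclude with Kolmogorov--Chentsov.

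\textbf{The boundary step fails as written.} You take $p=\psi_\epsilon(x-\cdot)$ restricted to $\T$. When $\mathrm{dist}(x,\partial\T)<\epsilon$, this kernel loses mass: $m_x=\int_\T p$ tends to $\frac12$ as $x\to 0$. For mean-free $q\in\dot H^1(\T)$ one has $q(x)-\int_\T qp=(1-m_x)q(x)+O(\epsilon^{1/2}\|\partial_x q\|_{L^2})$. So $\|\delta_x-p\|_{(\dot H^1(\T))^*}$ stays of order one for $x\ll\epsilon$, and your residual bound $\E|h_t(x)-\psi_\epsilon*h_t(x)|^q\lesssim\epsilon^{q/2}$ breaks down near the boundary. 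This matters because \eqref{statement_reg} asserts H\"older continuity up to $\partial\T$.

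\textbf{How the paper handles it.} The paper convolves the even (Neumann) extension of $h$, i.e.\ it tests against the reflected kernel $\overline{\psi_\epsilon}(x,\cdot)$ from \eqref{eqn327}--\eqref{eqn8324}. This kernel has unit mass on $\T$ and is supported in $\{|x-z|\le\epsilon\}$. Hence $\|\delta_x-\overline{\psi_\epsilon}(x,\cdot)\|_{(\dot H^1(\T))^*}\lesssim\epsilon^{1/2}$ uniformly in $x$, while still $\|\overline{\psi_\epsilon}(x,\cdot)\|_{\dot H^1(\T)}\lesssim\lceil\epsilon\rceil\epsilon^{-3/2}$. Alternatively, for $x$ near the boundary you can compare with an interior point at distance $\lesssim\epsilon$ using your spatial estimate. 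With either fix the rest of your argument goes through. This includes your truncation route to $\eta\in\D(\EE)$ and the identification of its square field, which the paper takes for granted.

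\textbf{A smaller point.} That $\mu^+$ is concentrated on $\MM^+$ does not by itself let you restrict the process to $\MM^+$. Null sets need not be $\EE$-exceptional; only null open sets are automatically exceptional. Started from $\mu^+$, this is recovered afterwards from the $C(\overline{\T})$-continuity of the paths together with $h_t\sim\mu^+$ for rational $t$. The paper is equally brief on this point.
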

\subsection{Regularity of the associated diffusion}\label{SS:reg}
	It remains to show the additional regularity of the diffusion process provided  by Theorem \ref{thm_abstract}. As a simple first step, we recall the standard regularity result for the Brownian bridge.
	\begin{lemma}\label{lemma_reg}
		It holds that
		\begin{align}\label{e:Bspp-bound_MM+_cont}
			\int_{\MM^+} \|h\|_{C^\gamma(\T)}^p \,\d \mu^+ \,<\,\infty,
		\end{align}
		for each $\gamma\in (0,1/2)$ and $p\in (0,\infty)$.
	\end{lemma}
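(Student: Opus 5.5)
The plan is to reduce everything to the unconditioned Gaussian measure. By construction, $\mu^+$ is the law of $\mathbf{1}_\T + h$ with $h$ distributed according to $\mu$ (the mean-free Gaussian free field on $\T=(0,1)$ with Cameron--Martin space $\dot H^1(\T)$), conditioned on the event $\{h>-\mathbf{1}_\T\}$. By \eqref{e:lambda}, this event has probability $\lambda>0$. Hence for any nonnegative functional $F$ we have
\[
\int_{\MM^+} F\,\d\mu^+ \,\le\, \lambda^{-1}\int_{\MM} F(\mathbf{1}_\T+h)\,\d\mu(h).
\]
Since $\|\mathbf{1}_\T+h\|_{C^\gamma}\le 1+\|h\|_{C^\gamma}$, it therefore suffices to prove $\int \|h\|_{C^\gamma(\T)}^p\,\d\mu<\infty$ for every $\gamma\in(0,1/2)$ and every $p<\infty$. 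By H\"older's inequality we may also assume $p$ is large.

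Next I identify $\mu$ concretely. It is the law of $B-\int_\T B$, where $B$ is a Brownian motion on $[0,1]$ started at a point and the constant (additive) mode is removed. Equivalently, via the cosine expansion, $h=\sum_{k\ge1} \frac{\sqrt2}{\pi k}\xi_k\cos(\pi k x)$ with $\xi_k$ i.i.d.\ standard Gaussians. In either description we get the covariance increment bound
\[
\E_\mu|h(x)-h(y)|^2\,\lesssim\,|x-y| .
\]
For Brownian motion this is immediate. Subtracting the mean does not affect increments: $h(x)-h(y)=B(x)-B(y)$.

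By Gaussianity, all moments are then controlled: $\E|h(x)-h(y)|^q\lesssim_q |x-y|^{q/2}$ for every $q<\infty$. I then apply the Garsia--Rodemich--Rumsey inequality, or equivalently the Kolmogorov continuity theorem in its quantitative form, which gives moment bounds on the H\"older seminorm. Concretely,
\[
\E[h]_{C^\gamma}^q \,\lesssim\, \iint_{\T^2} \frac{\E|h(x)-h(y)|^q}{|x-y|^{\gamma q+2}}\,\d x\,\d y \,\lesssim\, \iint_{\T^2} |x-y|^{q(1/2-\gamma)-2}\,\d x\,\d y,
\]
which is finite once $q(1/2-\gamma)>1$. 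This is true for large $q$ because $\gamma<1/2$.

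For the sup part of the norm, I bound $\|h\|_{C(\overline{\T})}\le |h(0)|+[h]_{C^\gamma}$. The term $h(0)$ is Gaussian with finite variance, so all its moments are finite. Combining these bounds gives the claim; smaller $p$ follows from Jensen.

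No step is a real obstacle. The only point that needs care is the measure-theoretic justification that conditioning on a positive-probability event costs at most the factor $\lambda^{-1}$, and this is immediate from \eqref{e:lambda}.
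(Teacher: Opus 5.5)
Your proposal is correct and follows essentially the same route as the paper: reduce from $\mu^+$ to the Gaussian $\mu$ (you make the factor $\lambda^{-1}$ explicit), bound the second moment of increments by $|x-y|$, upgrade to all moments by Gaussianity, and conclude via Kolmogorov--Chentsov, of which your Garsia--Rodemich--Rumsey bound is the quantitative form. The only cosmetic difference is that you compute $\E_\mu|h(x)-h(y)|^2=|x-y|$ from the Brownian-motion representation. The paper instead writes this quantity as $\|\delta_x-\delta_y\|_{(\dot H^1(\T))^*}^2$ and bounds it using $H^1(\T)\hookrightarrow C^{1/2}(\T)$.
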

	\begin{proof}
		By construction of $\mu^+$, \eqref{e:Bspp-bound_MM+_cont} is a consequence of 
		\begin{align}\label{eqn72345}
			\int_{\MM} \|h\|_{C^\gamma(\T)}^p \,\d \mu \,<\,\infty,
		\end{align}
		which, by the Kolmogorov--Chentsov theorem, follows from 
		\begin{align}
			\biggl(	\int_{\MM} |h(x) -h(y)|^p \,\d\mu \biggr)^{1/p}\,\lesssim_p |x-y|^{1/2},\qquad 
			\biggl(	\int_{\MM} |h(x)|^p \,\d\mu \biggr)^{1/p}\,\lesssim_p 1,
		\end{align}
		for all $x,y\in \T$.
		For the former, e.g.,  we may use that by 
		\begin{equation}\label{Eq223}
			\int_{\MM}
			\langle h, p \rangle^2
			\,\d\mu(h) \,=\, \|p\|_{(\dot{H}^{1}(\T))^*}^{2}
		\end{equation} 
	and equivalence of Gaussian moments 
		\begin{align}\label{eqn36}
			\biggl(	\int_{\MM} |h(x) -h(y)|^p \,\d\mu \biggr)^{1/p} \,\eqsim_p\, \biggl(
			\int_{\MM} \langle h, \delta_x -\delta_y\rangle^2   \,\d\mu\biggr)^{1/2} \,=\, \|\delta_x -\delta_y\|_{(\dot{H}^{1}(\T))^*} \,\lesssim\, |x-y|^{1/2},
		\end{align}
		where the last bound follows from $H^1(\T)\hookrightarrow C^{1/2}(\T)$.
	\end{proof}
	\begin{proposition}\label{prop_reg_est}For the  diffusion process $((h_t)_{t\ge 0}, (P_z)_{z\in \MM^+})$ associated to $(\EE,\D(\EE))$, we have  
		\begin{equation}\label{Eq264}
			E_{\mu^+}^{1/p} \Bigl[
			\|h\|_{C^{\gamma/4,\gamma}((0,T)\times \T)}^p
			\Bigr] \,<\,\infty,
		\end{equation}
		for each $\gamma \in (0,1/2)$, $p\in (0,\infty)$ and $T<\infty$.
	\end{proposition}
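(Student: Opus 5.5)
We follow the route announced before Theorem \ref{thm_existence_deg_case}: use the Lyons--Zheng decomposition (Theorem \ref{LZ}) on the mollified point evaluations $\eta_{\epsilon,x}(h)=\psi_\epsilon*h(x)=\int_\T h\,\psi_\epsilon(x-\cdot)$, then interpolate with the spatial bound of Lemma \ref{lemma_reg}, and conclude with Kolmogorov--Chentsov in space-time.

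\emph{Time increments of a mollified observable.} For $p\in H^1(\T)$ the linear observable $\eta(h)=\int_\T hp$ lies in $\D(\EE)$ (approximate by $\Phi_k(\int hp)$ with $\Phi_k\in C_b^1$), and by \eqref{SFO} its square field is
\[
\Gamma(\eta,\eta)(h)=2\int_\T m(h)|\partial_x p|^2\le 2\sigma(h)\|\partial_x p\|_{L^2}^2 .
\]
Under $P_{\mu^+}$ the process is stationary and reversible, so the Lyons--Zheng decomposition writes $\eta(h_t)-\eta(h_s)=\frac12(M_t-M_s)-\frac12(\hat M_{T-s}-\hat M_{T-t})$ with a forward martingale $M$ and a backward martingale $\hat M$. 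Both have quadratic variation $\int\Gamma(\eta,\eta)(h_r)\,\d r$. Burkholder--Davis--Gundy, stationarity and the polynomial moments of $\|h\|_{C}$ under $\mu^+$ (Lemma \ref{lemma_reg}) then give
\[
E_{\mu^+}\bigl|\eta(h_t)-\eta(h_s)\bigr|^q\lesssim_q |t-s|^{q/2}\,\|\partial_x p\|_{L^2}^q .
\]
For $p=\psi_\epsilon(x-\cdot)$ we have $\|\partial_x p\|_{L^2}\lesssim\epsilon^{-3/2}$, hence $E|\psi_\epsilon*h_t(x)-\psi_\epsilon*h_s(x)|^q\lesssim |t-s|^{q/2}\epsilon^{-3q/2}$. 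Mean-freeness of $p$ is not needed, since the mass is conserved.

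\emph{Interpolation.} We split
\[
h_t(x)-h_s(x)=\bigl(h_t-\psi_\epsilon*h_t\bigr)(x)+\bigl(\psi_\epsilon*h_t-\psi_\epsilon*h_s\bigr)(x)-\bigl(h_s-\psi_\epsilon*h_s\bigr)(x).
\]
By stationarity and Lemma \ref{lemma_reg}, the outer terms are bounded in $L^q(P_{\mu^+})$ by $\epsilon^{\gamma'}$ for any $\gamma'<1/2$. Choosing $\epsilon=|t-s|^{1/4}$ gives $|t-s|^{1/2-3/8}=|t-s|^{1/8}$ for the middle term and $|t-s|^{\gamma'/4}$ for the outer ones. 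Altogether
\[
E|h_t(x)-h_s(x)|^q\lesssim |t-s|^{q\gamma'/4}.
\]
Combined with $E|h_t(x)-h_t(y)|^q\lesssim|x-y|^{q\gamma'}$, which is Lemma \ref{lemma_reg} at fixed time by stationarity, this yields moments of increments in the parabolic metric $|t-s|^{1/4}+|x-y|$ of order $\gamma'$ for every $q$. Kolmogorov--Chentsov on $[0,T]\times\T$ with $q$ large then gives \eqref{Eq264} for any $\gamma<\gamma'$. Finally we pass to the continuous modification, which coincides with the diffusion since its paths are already continuous in $(H^1)^*$.

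\emph{Main obstacle.} The delicate step is justifying Lyons--Zheng for the unbounded linear functional with the needed moment control. Concretely, this means checking that $\eta\in\D(\EE)$ with square field given by \eqref{SFO}, by approximating with $\Phi_k\circ\eta$ and using quasi-continuity. It also means that the BDG bound is applied to the quadratic variation $\int\sigma(h_r)\,\d r$, whose moments follow from stationarity and Jensen's inequality in time. If the abstract theorem is only available for bounded functions, the plan is to truncate with $\Phi_k$ first and pass to the limit by Fatou.
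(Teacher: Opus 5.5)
Your overall route is the paper's. You apply Lyons--Zheng to mollified point evaluations, control the middle term by BDG together with stationarity and the moment bounds of Lemma~\ref{lemma_reg}, choose $\epsilon=|t-s|^{1/4}$, and finish with Kolmogorov--Chentsov. The remaining differences are harmless. The paper bounds the outer terms via Gaussian moments and $\|\delta_x-\overline{\psi_\epsilon}(x,\cdot)\|_{(\dot{H}^1(\T))^*}\lesssim\epsilon^{1/2}$, which gives exactly $|t-s|^{1/8}$. Your H\"older-norm bound loses an arbitrarily small exponent, which costs nothing since only $\gamma<1/2$ is claimed. Your check that the linear observable lies in $\D(\EE)$ with square field \eqref{SFO} makes explicit a point the paper passes over.

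The step that fails as written is the estimate of the outer terms near $\partial\T$. You use the kernel $\psi_\epsilon(x-\cdot)$ simply restricted to $\T=(0,1)$. For $x$ within distance $\epsilon$ of the boundary its mass $\int_\T\psi_\epsilon(x-z)\,\d z$ is strictly less than one; it equals $1/2$ at $x=0$ for a symmetric mollifier. Hence $h(x)-\int_\T h\,\psi_\epsilon(x-\cdot)$ contains the term $h(x)\bigl(1-\int_\T\psi_\epsilon(x-\cdot)\bigr)$, which is of order one in $L^q(\mu^+)$ rather than $O(\epsilon^{\gamma'})$. The H\"older argument requires a nonnegative kernel of unit mass on $\T$ supported in $\{|z-x|\le\epsilon\}$. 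Since Kolmogorov--Chentsov needs the temporal increment bound uniformly in $x\in\T$, this has to be repaired.

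The paper convolves with the Neumann (even) extension of $h$, i.e.\ it uses the kernel $\overline{\psi_\epsilon}(x,\cdot)$ from \eqref{eqn8324}. This kernel has unit mass on $\T$, vanishes unless $|x-z|\le\epsilon$, and still satisfies $\|\overline{\psi_\epsilon}(x,\cdot)\|_{\dot{H}^1(\T)}\lesssim\lceil\epsilon\rceil\epsilon^{-3/2}$. Alternatively, you could centre the mollifier at a point $y$ with $|x-y|\le\epsilon$ and $\mathrm{dist}(y,\partial\T)\ge\epsilon$, paying one more spatial increment of size $\epsilon^{\gamma'}$. Relatedly, when $|t-s|^{1/4}$ is not small compared to the length of $\T$, either track the factor $\lceil\epsilon\rceil$ as the paper does or use the trivial bound $E_{\mu^+}|h_t(x)-h_s(x)|^q\lesssim_q 1$ from stationarity. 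With this fix your argument is complete.
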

	\begin{proof}
		Similarly to the proof of Lemma \ref{lemma_reg}, the assertion follows if we can verify that
		\begin{align}\label{eqn94271}
			E_{\mu^+} ^{1/p}
			|h_{t}(x)  - h_s(y)|^p
			\,\lesssim_{(p,T)}\, |t-s|^{1/8}+|x-y|^{1/2},\qquad 
			E_{\mu^+} ^{1/p}
			|h_{t}(x) |^p
			\,\lesssim_p\, 1,
		\end{align} 
		for all $0\le s\le t \le T$ and $x,y\in \T$. Focusing on the former estimate, we decompose
		\begin{align}\label{eqn94272}
			E_{\mu^+} ^{1/p}
			|h_{t}(x)  - h_s(y)|^p \,\lesssim_p\, E_{\mu^+} ^{1/p}
			|h_{t}(x)  - h_s(x)|^p \,+\, |x-y|^{1/2},
			\end{align}
		using \eqref{eqn36}. 
		
		Letting now $(\psi_\epsilon)_\epsilon$ be  a standard mollifier on $\R$ with $\supp \psi_\epsilon = [-\epsilon,\epsilon]$, we further split the temporal increment into
		\begin{align}&
			E_{\mu^+} ^{1/p}
			|h_{t}(x)  - h_s(x)|^p \\&\quad \le\,
			E_{\mu^+} ^{1/p}
			|h_{t}(x)  - \psi_\epsilon*h_t(x)|^p \,+\,
			E_{\mu^+} ^{1/p}
			|\psi_\epsilon*h_{t}(x)  - \psi_\epsilon*h_s(x)|^p
			\,+\, 
			E_{\mu^+} ^{1/p}
			|\psi_\epsilon*h_{s}(x)  - h_s(x)|^p.
		\end{align}
	If the support of $\psi_\epsilon(x-\cdot)$ extends beyond $\T$, we define the convolution in terms of the Neumann  extension of $h$, characterized by $h(2k+\cdot ) = h(2k-\cdot ) =h$ for $k\in \Z$. This may be expressed through the formulae
	\begin{align}&\label{eqn327}
	\psi_\epsilon*h(x) \,=\,\int_{\R} \psi_\epsilon(x-z) h(z) \,\d z\,=\,  \int_{\T} \overline{\psi_\epsilon}(x,z) h(z)\,\d z ,\\&  \overline{\psi_\epsilon}(x,z) \,=\,\sum_{k\in \Z} \psi_\epsilon(x-(2k +z))+ \psi_\epsilon(x-(2k -z)).
	\label{eqn8324}
	\end{align}
	Since $\overline{\psi_\epsilon}(x,z)=0$ unless $|x-z|\le \epsilon$, we find similarly to \eqref{eqn36} that
	\begin{align*}&
		E_{\mu^+} ^{1/p}
		|h_{t}(x)  - \psi_\epsilon*h_t(x)|^p \,=\, \biggl(
		\int_{\MM^+}  
		|h(x) - \psi_\epsilon*h (x)|^p
		\d\mu^+(h)
		\biggr)^{1/p}\\& \quad \lesssim_p \, 
		\biggl(
		\int_{\MM}  
		\langle h, \delta_x - \overline{\psi_\epsilon}(x,\cdot) \rangle^2
		\d\mu(h)
		\biggr)^{1/2}	\,	=\,
		\|\delta_x -  \overline{\psi_\epsilon}(x,\cdot )\|_{(\dot{H}^1(\T))^*}\,\lesssim\, \epsilon^{1/2},
	\end{align*}
	and the same estimate holds for $E_{\mu^+} ^{1/p}
	|\psi_\epsilon*h_{s}(x)  - h_s(x)|^p$.
	
	For the  increment of $\psi_\epsilon*h(x)$, we apply the Lyons--Zheng decomposition from Theorem \ref{LZ} to the additive functional $\eta(h_t) = \psi_\epsilon*h_t(x)$, which implies that
		\[
	\psi_\epsilon*h_t(x)\,-\, 
	\psi_\epsilon*h_0(x) \,=\, \frac{1}{2}M_t \,-\, \frac{1}{2} (\overline{M}_T\, - \, \overline{M}_{T-t} ),
	\]
	for a forward and backward martingale  $M_t$ and $\overline{M}_t$, respectively. Using the representation \eqref{eqn327}, we may evaluate the square field operator \eqref{SFO} as 
	\begin{align}
		\Gamma (\eta,\eta) \,=\,2\int_{\T} m(h)( \partial_z\overline{\psi_\epsilon}(x,\cdot) )^2\,\d z,
	\end{align}
	which appears in the quadratic variations
	\begin{align}&
		\langle M\rangle_t \,=\, 2 \int_0^t\int_{\T} m(h_\tau)( \partial_z\overline{\psi_\epsilon}(x,\cdot) )^2\,\d z \,\d \tau , \qquad 	\langle \overline{M}\rangle_t \,=\,
		2\int_0^t \int_{\T} m(h_{T-\tau})( \partial_z\overline{\psi_\epsilon}(x,\cdot) )^2\,\d z\, \d \tau.
	\end{align}
	This yields for $p\in [2,\infty)$ by convexity and the Burkholder--Davis--Gundy inequality
	\begin{align}&E_{\mu^+} ^{1/p}
		|\psi_\epsilon*h_{t}(x)  - \psi_\epsilon*h_s(x)|^p\,\le\, \frac{1}{2}
		E_{\mu^+}^{1/p}\bigl[
		|M_t - M_s
		|^p\bigr] \,+\, \frac{1}{2} E_{\mu^+}^{1/p}\bigl[
		|\overline{M}_{T-t} -\overline{ M}_{T-s}
		|^p\bigr], \\&\quad
		\lesssim_p \,
		 E_{\mu^+}^{1/p}\biggl[
		 \biggl( \int_s^t 
		 \int_{\T} m(h_\tau)( \partial_z\overline{\psi_\epsilon}(x,\cdot) )^2\,\d z
		 \,\d \tau\biggr)^{p/2}\biggr]\,+\, 
		 E_{\mu^+}^{1/p}\biggl[
		 \biggl(\int_s^t 
		 \int_{\T} m(h_{T-\tau})( \partial_z\overline{\psi_\epsilon}(x,\cdot) )^2\,\d z
		 \,\d \tau\biggr)^{p/2}\biggr]
		 \\&\quad\lesssim \, |t-s|^{1/2}\biggl( \int_{\MM^+}  \biggl(\int_{\T} m(h)( \partial_z\overline{\psi_\epsilon}(x,\cdot) )^2\,\d z\biggr)^{p/2} \,\d\mu^+(h)\biggr)^{1/p},
	\end{align}
	since $h_\tau \sim \mu^+$ for all $\tau\in [0,T]$. We may further bound the right-hand side by \eqref{eq440} and Lemma \ref{lemma_reg}, resulting in the estimate
	\begin{align}
		E_{\mu^+} ^{1/p}
		|\psi_\epsilon*h_{t}(x)  - \psi_\epsilon*h_s(x)|^p\,\lesssim_{p}\,|t-s|^{1/2}\| \overline{\psi_\epsilon}(x,\cdot)\|_{\dot{H}^1(\T)}.
	\end{align}
	Using that at most $2\lceil\epsilon\rceil$ summands in \eqref{eqn8324} are non-zero, we obtain then
	\[\|
	 \overline{\psi_\epsilon}(x,\cdot)\|_{\dot{H}^1(\T)}\,\le\, 2\lceil\epsilon\rceil\| 
	 {\psi_\epsilon}\|_{\dot{H}^1(\R)}\,\lesssim\,\lceil\epsilon\rceil\epsilon^{-3/2}
	\]
	by the triangle inequality.
	We combine the preceding estimates to 
	\begin{align}	E_{\mu^+} ^{1/p}
		|h_{t}(x)  - h_s(x)|^p\,\lesssim_p\,\epsilon^{1/2}\,+\,\lceil\epsilon\rceil\epsilon^{-3/2} |t-s|^{1/2}.
	\end{align}
	Choosing $\epsilon= |t-s|^{1/4}$, so that in particular $\lceil\epsilon\rceil \lesssim_T 1 $, we obtain
	\[
	E_{\mu^+} ^{1/p}
	|h_{t}(x)  - h_s(x)|^p\,\lesssim_{(p,T)}\, |t-s|^{1/8}.
	\]
	Together with \eqref{eqn94272}, we arrive at \eqref{eqn94271} and it remains to use the Kolmogorov--Chentsov theorem to conclude.
	\end{proof}

\section{Uniform tightness of diffusions on discretized state space}\label{S:discrete}
\subsection{Result of this section}
In this section, we obtain analogous results to the ones of Section \ref{S:DF_in_inf_dimensinos} in a spatially discretized setting, uniformly in the discretization parameter. More precisely, we consider the Dirichlet form introduced in \cite{GGKO21} and described in Subsection \ref{SS_tightness_dircscrete}: On the state space
\begin{equation}\label{eqn418_newnew}
	\MM^+_N \,=\, \bigl\{ h\in \MM^+ \,\big|\, \text{$h$ is linear on $[x_i,x_{i+1} ]$}\bigr\},
\end{equation}  
for $N+1$ equidistant  points  $0=x_0< \dots < x_N =1$, we consider 
\begin{equation}\label{e:Dirichlet_form_+_discr_new}
	\widetilde{\EE}_{N} (\eta, \eta )\,=\, 
	\int_{\MM_N^+} \wt{g}^h ( \diff|_h \eta ,\diff|_h \eta ) \, \d  \mu_{N}^+(h),
\end{equation} 
for $\eta \in C^{1}(\MM_N^+)$.  
The  metric and measure appearing were defined in \eqref{E9} and \eqref{defi_mu_N+}, respectively.
The  result of this section is the following.
	\begin{theorem}[Existence and tightness of diffusion processes]\label{thm_existence_deg__fin_case}
	Under Assumption \ref{ass:deg_m}, $(	\widetilde{\EE}_{N} ,C^{1}(\MM_N^+))$ is closable on $L^2(\mu_{N}^+)$ and  its closure $(\widetilde{\EE}_{N},\D(\widetilde{\EE}_{N}) )$ admits an associated $\MM_N^+$-valued  diffusion process $((h_{t} )_{t\ge 0}, (P_{N,z})_{z\in \MM_N^+})$. The laws  $( P_{\mu_{N}^+} \circ {h}^{-1})_{N\ge 2}$ are uniformly in $N$ tight on $
	C_{t,x}^{1/8-, 1/2-}$.
\end{theorem}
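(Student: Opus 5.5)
The plan is to mirror Section \ref{S:DF_in_inf_dimensinos}, first fitting the discrete form into Assumption \ref{Ass_abstract} and then rerunning the Lyons--Zheng argument of Proposition \ref{prop_reg_est} with $N$-independent constants.

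\emph{Existence.} After translating by $-\mathbf{1}_\T$, take $\mathcal{X}=\HH_1=\HH_{-1}$ to be the finite-dimensional space of mean-free piecewise linear functions on the grid. Take $\mu$ to be the centered Gaussian with density proportional to $\exp(-E)$, i.e.\ Cameron--Martin norm $\|\partial_x\cdot\|_{L^2}$ restricted to this space. Let $V$ be $\log\lambda_N$ plus the indicator potential of $\{\min_i h(x_i)\le -1\}$. This $V$ is convex and its finiteness set $\{V<\infty\}$ is open. Since $\MM_N^+$ is finite-dimensional, all norms are equivalent. For fixed $N$, $\wt g^h(p,p)$ is comparable to $|p|^2$ with bounds $\rho(1+\min_i h(x_i))$ from below and $C(1\vee\|1+h\|_\infty)^n$ from above. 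Here the constants may depend on $N$, which is harmless for closability. To see this, note that $\wt g^h$ is the restriction of $\int m(h)|\partial_x p|^2$ to the discrete test functions obtained by solving \eqref{E9}. Alternatively, use \eqref{E11}: in the zigzag basis $\wt g^{\alpha\alpha}=(\fint_{I_\alpha}1/m(h))^{-1}$, which lies between $\inf m$ and $\sup m$. Theorem \ref{thm_abstract} then gives closability on the cylinder functions. Since $C^1(\MM_N^+)$ functions can be approximated in $\wt\EE_N$-norm by $C^1_b$ cylinder functions (the space is finite-dimensional, so localize and truncate), the form on $C^1(\MM_N^+)$ has the same closure. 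This yields a diffusion, exactly as in Theorem \ref{thm_existence_deg_case}.

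\emph{Uniform moment bounds.} By the Kolmogorov--Chentsov theorem together with Arzel\`a--Ascoli, tightness on $C^{1/8-,1/2-}_{t,x}$ follows from \eqref{eqn94271} with constants uniform in $N$ (for $\gamma'<\gamma$). First, the static part. Under $\mu_N$, $h$ is the piecewise linear interpolation of a discrete Brownian bridge, so $E|h(x)-h(y)|^2\lesssim|x-y|$ uniformly in $N$. Gaussian equivalence of moments, together with $\lambda_N=\mu_N(h>-1)\ge c>0$ uniformly (using convergence of $\mu_N$ to $\mu$ and the fact that $\{h>-1\}$ is open with $\mu$-null boundary), gives the analogue of Lemma \ref{lemma_reg}. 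Second, the temporal part. Apply the Lyons--Zheng decomposition (Theorem \ref{LZ}) to $\eta(h)=\int_\T h\,\overline{\psi_\epsilon}(x,\cdot)$, or rather to its discrete analogue. The key quantity is the square field $2\wt g^h(\diff\eta,\diff\eta)$. To bound it uniformly, choose the linear observable $\eta(h)=\langle h,q\rangle_N$, where $\langle\cdot,\cdot\rangle_N$ is the trapezoidal pairing on the right of \eqref{E9} and $q$ is the piecewise linear interpolant of $\overline{\psi_\epsilon}(x,\cdot)$. Then $\diff\eta$ is represented by $q$ itself, and the variational characterization \eqref{E9} gives $\wt g^h(q,q)=\int m(h)|\partial_xq|^2$. (More generally, $\wt g^h(q,q)\le\int m(h)|\partial_x q|^2$ by minimality.) Hence $\wt g^h(q,q)\le\sup m(h)\,\|\overline{\psi_\epsilon}\|_{\dot H^1}^2$ uniformly in $N$. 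Combined with the uniform polynomial moments of $\|h\|_\infty$ and BDG, this gives $|t-s|^{1/2}\epsilon^{-3/2}$. The approximation error $|h(x)-\langle h,q\rangle_N|$ is bounded by $\epsilon^{1/2}$ via the same Gaussian argument, since $\|\delta_x-q\|$ in the dual of the discrete $\dot H^1$ norm is $\lesssim\epsilon^{1/2}+N^{-1/2}$. Choosing $\epsilon=|t-s|^{1/4}$ yields $|t-s|^{1/8}$. When $|t-s|^{1/4}<1/N$, I would instead take $\epsilon\sim 1/N$ and use the fact that the process lives in piecewise linear functions, so a sharper pointwise bound holds at grid scale.

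\emph{Main obstacle.} I expect the main obstacle to be this uniform-in-$N$ control: the uniform lower bound on $\lambda_N$, and the matching of the trapezoidal pairing with point evaluations and mollifiers at scales below the grid spacing, where the piecewise linear structure must be used to bridge $|t-s|$ small relative to $N^{-4}$.
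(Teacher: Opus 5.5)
Your existence argument matches the paper (Theorem \ref{thm_abstract} with $N$-dependent constants). The architecture of your tightness proof also matches: uniform H\"older moments of $\mu_N^+$ via a uniform lower bound on $\lambda_N$, Lyons--Zheng applied to a mollified point evaluation, and $\epsilon=|t-s|^{1/4}$. The gap is in your choice of observable.

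\emph{The normalization defect.} Take $q$ the nodal interpolant of $\phi=\overline{\psi_\epsilon}(x,\cdot)$, use the trapezoidal pairing, and set $c=\langle 1,q\rangle_N=\int_\T q$, the trapezoidal sum of $\phi$. Then
$h(x)-\langle h,q\rangle_N=(1-c)\,h(x)+c\,\bigl(h(x)-\langle h,q/c\rangle_N\bigr)$,
and only the second term is small. The number $c$ depends only on $N\epsilon$ and on the position of $x$ relative to the grid. It is in general bounded away from $1$ as long as $N\epsilon$ stays bounded; for $\epsilon=1/N$ and $x$ a node, $c=\epsilon\,\psi_\epsilon(0)$, which is not $1$ for a standard mollifier. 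Since $h(x)$ has variance of order one under $\mu_N$, your bound $\|\delta_x-q\|\lesssim\epsilon^{1/2}+N^{-1/2}$ is false in this regime.

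\emph{The fallback does not close the window.} Your sub-grid fallback, which you only sketch, does not fill this hole at exponent $1/8$. Made precise, it applies Lyons--Zheng to the exact point evaluation $h\mapsto h(x)$. That functional is represented in the trapezoidal pairing by a grid-scale hat $q_x$ with $\|\px q_x\|_{L^2}^2\lesssim N^3$, and one gets the bound $|t-s|^{1/2}N^{3/2}=(N\epsilon)^{3/2}|t-s|^{1/8}$. This is useful only while $N\epsilon$ is bounded. So, as written, neither argument covers the range where $N\epsilon$ is large but $|1-c|\lesssim\epsilon^{1/2}$ does not yet hold.

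\emph{A smaller inaccuracy.} The identity $\wt g^h(q,q)=\int_\T m(h)|\px q|^2$ fails unless $m(h)$ is constant on cells; compare the harmonic means in \eqref{E11}. The inequality you state next is all that is used, so this does not matter.

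\emph{How to repair it.} The simplest fix is to normalize, i.e.\ use $q/c$; for $\epsilon\ge 1/N$ the constant $c$ is bounded below. Then $h(x)-\langle h,q/c\rangle_N$ is a convex combination of increments $h(x)-h(x_i)$ over nodes within distance $\epsilon+1/N\le 2\epsilon$ of $x$. It is therefore $\lesssim\epsilon^{1/2}$ in $L^p(\mu_N^+)$. The case $|t-s|\le N^{-4}$ is then handled with $q_x$.

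\emph{How the paper avoids the issue.} It keeps the $L^2$ pairing $\eta(h)=\int_\T h\,\overline{\psi_\epsilon}(x,\cdot)$. This observable has mass exactly one and is a legitimate linear observable on $\MM_N^+$ for every $\epsilon>0$, whatever the grid. Its differential is the trapezoidal pairing with $\hat\rho*\overline{\psi_\epsilon}(x,\cdot)$. Hence \eqref{E89} and Young's convolution inequality bound the square field by a multiple of $\|m(h)\|_{C(\overline{\T})}\|\overline{\psi_\epsilon}(x,\cdot)\|_{\dot H^1(\T)}^2$, uniformly in $N$. The error $|h(x)-\psi_\epsilon*h(x)|\le\epsilon^{\gamma}\|h\|_{C^\gamma(\T)}$ is controlled by Lemma \ref{p1}, at the cost of an arbitrarily small loss in the exponent $1/8$. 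No distinction between scales above and below $1/N$ is needed, so the obstacle you single out never arises.
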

As in Section \ref{S:DF_in_inf_dimensinos}, the qualitative properties of the (pre-)Dirichlet form follow from Theorem \ref{thm_abstract}. To show  uniform tightness of the associated diffusion processes, we follow the proof of Proposition \ref{prop_reg_est} leveraging  additionally uniform properties of the involved  metrics and measures. 

\subsection{Estimates on the discrete metric tensor} 

To  show tightness, we need to bound the metric tensor \eqref{E9} of  differential forms  
\begin{align}\label{eqn924}
q_\cdot\dot{h} = \int_{\T}\dot{h} q ,
\end{align}
uniformly in $N$. For this we express the above as 
\begin{align}
	q_\cdot\dot{h} \,=\, 
	\frac{1}{N} \biggl(\frac{\dot{h}(0)\tilde{q}(0)  +\dot{h}(1)\tilde{q}(1) }{2}+\sum_{i=1}^{N-1}
	\dot{h}(x_i) \tilde{q}(x_i)
	\biggr),
\end{align}
where $\wt{q}$ is the convolution of (the Neumann extension of) $q$  with the hat function given by $\hat{\rho}(0) = N$ and $\hat{\rho}(\pm 1/N) = 0$. Based on this, we may use the Legendre formula  to deduce
\begin{align}
	\frac{1}{2}	\tilde{g}^{h} (q,q ) \,&=\, \sup_{\dot {{h}} \in \mathcal{T}_h\MM^+_N} \biggl\{	\frac{1}{N} \biggl(\frac{\dot{h}(0)\tilde{q}(0)  +\dot{h}(1)\tilde{q}(1) }{2}+\sum_{i=1}^{N-1}
	\dot{h}(x_i) \tilde{q}(x_i)
	\biggr)  \,-\, \frac{1}{2} \tilde{g}_{h}( \dot{{h} },\dot{{h} }) \biggr\}
	\\& = \, \sup_{\dot {{h}} \in \mathcal{T}_h\MM^+_N} \biggl\{   \int_{\T} m(h) \px  {p} \partial_x \tilde{q} \,-\, \frac{1}{2}  \int_{\T} m(h) |\px  {p}|^2  \biggr\},
\end{align}
where  ${p}$ is related to $\dot{h}$ via \eqref{E9}. By Young's inequality, the above yields the estimate
\begin{align}\label{E89}
\tilde{g}^{h} (q,q )\,\le\, 
\int_{\T} m(h) |\partial_x \tilde{q}|^2.
\end{align}
This estimate replaces the identity \eqref{E39872} in the following argument.

\subsection{Estimates on the equilibrium measures on discrete state space}
\label{SS:measure}
	The second ingredient for tightness is a collection of uniform bounds on the measures $\mu_N^+$. To this end, we relate them via
	\begin{equation}\label{e-discr:def_mu-1}
	\d \mu_N^+(h+\mathbf{1}_\T) \,=\, \exp( - V_N(h) ) \d\mu_N(h)
	\end{equation}
	for 
	\begin{align}\label{e-discr:def_mu}
		V_N(h) = \log(\lambda_N) + \infty \mathbf{1}_{(-\infty,-1]}\Bigl(\inf_{x\in \T} h (x)\Bigr)
	,\qquad \lambda_N \,=\, 	\mu_N\bigl(\bigl\{ h \in \MM_N \,\big| \, h \,>\,  -\mathbf{1}_{\T} \bigr\}\bigr),
	\end{align} analogous to the continuous setting
	to a centered Gaussian measure $\mu_N \propto \exp( -  E(h)) \d h$ on $\MM_N$. To uniformly control the latter, we connect the above to the continuous measure $\mu$ using the (recentered) nodal interpolation operator 
$Q_N \colon C(\T)\to\MM_N$ defined by
\begin{equation}\label{Eq101}
	Q_N  h \,=\, \sum_{i=0}^N \biggl(
	h(x_i) \,-\, \frac{1}{N}\biggl(\frac{h(0)+h(1)}{2}+ \sum_{k=1}^{N-1} h(x_k)
	\biggr)\biggr)
	\hat{\vp}_i,
\end{equation}
where $\hat{\vp}_i$ is the piecewise linear function satisfying $\hat{\vp}_i(x_j)=\delta_{ij}$. 
	We claim that
	\begin{equation}\label{eqn1001}
		\mu_{N}\,=\,  \mu \circ Q_N^{-1},
	\end{equation}
 which is a consequence of the following lemma.
\begin{lemma}\label{lemma_is_ortho}
	The nodal interpolation  operator $Q_N\colon \dot{H}^1(\T) \to \MM_N$ is an orthoprojection.
\end{lemma}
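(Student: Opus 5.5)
We show that $Q_N$ is a linear idempotent map onto $\MM_N$ that is self-adjoint with respect to the homogeneous inner product $(h,k)_{\dot H^1} = \int_\T \px h\,\px k$. Here $\MM_N$ denotes the mean-free piecewise linear functions on the grid. The relevant mean is the one fixed in \eqref{Eq101}, namely the trapezoidal mean $\frac{1}{N}\bigl(\frac{h(0)+h(1)}{2}+\sum_{k=1}^{N-1}h(x_k)\bigr)$. For a piecewise linear $h$ this coincides with $\int_\T h$, so $Q_N h$ is indeed mean-free.

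\textbf{Step 1: $Q_N$ is a projection onto $\MM_N$.} Take $h\in\MM_N$. Its nodal interpolant is $h$ itself, and its trapezoidal mean equals $\int_\T h=0$. Hence $Q_N h = h$. Since $Q_N$ maps into $\MM_N$ by construction, $Q_N$ is a linear projection onto $\MM_N$.

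\textbf{Step 2: the range is $\dot H^1$-orthogonal to the kernel.} It suffices to show $\int_\T \px(h-Q_Nh)\,\px k = 0$ for every $h\in\dot H^1(\T)$ and $k\in\MM_N$. On each interval $[x_i,x_{i+1}]$ the derivative $\px k$ is a constant $c_i$. The fundamental theorem of calculus then gives
\begin{equation}
\int_{x_i}^{x_{i+1}} \px (h - Q_N h)\, c_i \,=\, c_i\bigl[(h-Q_Nh)(x_{i+1})-(h-Q_Nh)(x_i)\bigr].
\end{equation}
This expression vanishes. The reason is that $h-Q_Nh$ takes the same value at every node, namely the trapezoidal mean of $h$; its node values are $h(x_i)-(h(x_i)-\mathrm{mean})$. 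Summing over $i$ gives the orthogonality.

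A projection whose range is orthogonal to its kernel is an orthoprojection. The well-definedness of $Q_N$ on $\dot H^1(\T)$ comes from $H^1\hookrightarrow C$. The mean-freeness of elements of $\dot H^1$ plays no role here, since only differences of node values enter.

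\textbf{Consequence \eqref{eqn1001}.} Let $\mu$ be the centered Gaussian measure with Cameron--Martin space $\dot H^1$, and let $P$ be an orthoprojection onto a finite-dimensional subspace. Then $\mu\circ P^{-1}$ is the standard Gaussian on the range of $P$, with density proportional to $\exp(-\frac12\|\cdot\|_{\dot H^1}^2) = \exp(-E)$. This is exactly $\mu_N$.

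The only delicate point is that $Q_N$ must extend measurably to the support $\dot C(\overline{\T})$ of $\mu$. This holds because nodal evaluation is continuous on $C(\overline{\T})$. The pushforward statement is then checked on the linear functionals $(Q_Nh,k)_{\dot H^1}=(h,k)_{\dot H^1}$ for $k\in\MM_N$. These are the Paley--Wiener integrals, which here are in fact continuous since $\px^2 k$ is a sum of Dirac masses. Their covariance equals $\|k\|_{\dot H^1}^2$.

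I expect no serious obstacle. The only care needed is with the boundary weights $\frac12$ in the mean, where the trapezoidal rule must be matched to the exact integral.
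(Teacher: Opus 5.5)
Your proof is correct and follows essentially the same route as the paper's. The paper likewise notes that $Q_N$ is a projection onto $\MM_N$ and that, for $h_2\in\MM_N$, the functional $(\cdot,h_2)_{\dot H^1(\T)}$ depends only on nodal values and is invariant under constant shifts; your interval-wise fundamental theorem of calculus computation spells exactly this out. Your remarks on the pushforward match the consequence $\mu_N=\mu\circ Q_N^{-1}$ that the paper draws in \eqref{eqn1001}.
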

\begin{proof}$Q_N$ defined through \eqref{Eq101} is clearly a projection onto $\MM_N$ so that it suffices to show that
	\[
	( Q_N h_1 ,  h_2 )_{\dot{H}^1(\T)}  \,=\, ( h_1 , h_2)_{\dot{H}^1(\T)}, \qquad h_1\in \dot{H}^1(\T), \, h_2\in \MM_N.
	\]
	But the above holds true, since the functional $(\cdot, h_2)_{\dot{H}^1(\T)}$ only depends on nodal values and is invariant under spatially constant shifts such as the one in \eqref{Eq101}. 
\end{proof}
Since we anyways need it in Section \ref{S:convergence_explicit}, when we  study convergence of Dirichlet forms,  we also recall an elementary lemma on the approximation of H\"older functions. Its proof  is postponed to  Appendix \ref{sec_holder}.
	\begin{lemma}\label{Lemma_numerical_approx-n}
		Let $h$ be mean-free and $0<s\le \tilde{s}<1$. Then  it holds  $
			\|Q_N h - h\|_{C^s(\T)} \le 4 
			N^{-(\tilde{s}-s)}\| h\|_{C^{\tilde{s}}(\T)}$.
	\end{lemma}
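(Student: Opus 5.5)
The estimate is purely deterministic, so I will prove it with a direct interpolation-error bound and not use anything probabilistic. Write $e = Q_N h - h$. Adding a constant to $h$ changes neither $Q_N h$ (which is recentered) nor the Hölder seminorm, so I first compare $e$ with the plain nodal interpolant $I_N h = \sum_i h(x_i)\hat{\vp}_i$. Then $Q_N h = I_N h - c$, where $c$ is the trapezoidal average of the nodal values. Since $h$ is mean-free, $|c| = |c - \int_\T h|$, and this is at most the maximal oscillation of $h$ over a cell, i.e.\ $|c|\le N^{-\tilde s}[h]_{C^{\tilde s}}$. Here I use that the trapezoidal rule is exact on each cell up to the oscillation $\sup_{|x-y|\le 1/N}|h(x)-h(y)|$.

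Next I bound $I_N h - h$ in the two relevant norms. For the sup norm: on a cell $[x_i,x_{i+1}]$ the interpolant is a convex combination of $h(x_i)$ and $h(x_{i+1})$, so
\[
|I_N h(x) - h(x)| \le \max_{j\in\{i,i+1\}} |h(x_j)-h(x)| \le N^{-\tilde s}[h]_{C^{\tilde s}}.
\]
For the seminorm $[I_Nh - h]_{C^s}$, I take $x,y$ and split into two cases.
\begin{itemize}
\item[(a)] If $|x-y|\le 1/N$, then $|e(x)-e(y)|\le |I_Nh(x)-I_Nh(y)| + |h(x)-h(y)|$. Since $I_N h$ is piecewise linear with slopes bounded by $N^{1-\tilde s}[h]_{C^{\tilde s}}$, it satisfies $|I_Nh(x)-I_Nh(y)|\le 2[h]_{C^{\tilde s}}|x-y|^{\tilde s}$. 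This uses the linear bound $N^{1-\tilde s}|x-y|\le |x-y|^{\tilde s}$ for $|x-y|\le 1/N$, plus a possible crossing of one node. Hence $|e(x)-e(y)|\le 3[h]_{C^{\tilde s}}|x-y|^{\tilde s}\le 3N^{-(\tilde s - s)}[h]_{C^{\tilde s}}|x-y|^{s}$.
\item[(b)] If $|x-y|>1/N$, I use $|e(x)-e(y)|\le 2\|I_Nh-h\|_\infty\le 2N^{-\tilde s}[h]_{C^{\tilde s}}\le 2N^{-(\tilde s-s)}[h]_{C^{\tilde s}}|x-y|^s$.
\end{itemize}

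Summing these contributions gives the claim with a constant of order $4$, depending on how the $C^s$ norm is normalized (sup plus seminorm). The constant $c$ only enters the sup part. The main care needed is in bookkeeping the constant in case (a) when $x,y$ straddle a node; splitting at that node gives the factor $2$. Everything else is routine, and getting the stated constant $4$ exactly may require taking the norm as the maximum of sup and seminorm.
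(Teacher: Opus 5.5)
Your argument is correct and is essentially the paper's proof. It uses the same reduction from $Q_N h$ to the plain nodal interpolant, with the recentering constant controlled by the trapezoidal rule and $\int h=0$, the same convex-combination bound on $\|I_N h-h\|_\infty$, and the same case split at $|x-y|=1/N$. The paper splits the short-distance case into same-cell and adjacent-cell subcases; your Lipschitz bound on $I_N h$ handles both at once.

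The only slip is in the constant. The factor $2$ in case (a) is spurious: a continuous piecewise linear function whose slopes are all bounded by $L=N^{1-\tilde s}[h]_{C^{\tilde s}}$ is $L$-Lipschitz on the whole interval, since splitting at the node gives $L|x-x_{i+1}|+L|x_{i+1}-y|=L|x-y|$. So case (a) gives $2N^{-(\tilde s-s)}[h]_{C^{\tilde s}}|x-y|^s$. With the usual norm (sup plus seminorm) you then get exactly $2+2=4$, with no need to change normalization. As written, your bookkeeping gives $2+3=5$ instead.
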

	
	Combining the above, we obtain a uniform version of Lemma \ref{lemma_reg}.

	\begin{lemma}\label{p1}
			It holds that
		\begin{align}\label{e:Bspp-bound_MM+}
		\sup_{N\ge 2}	\int_{\MM_N^+} \|h\|_{C^\gamma(\T)}^p \,\d \mu_N^+ \,<\,\infty,
		\end{align}
		for each $\gamma\in (0,1/2)$ and $p\in (0,\infty)$. 
	\end{lemma}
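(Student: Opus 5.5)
We reduce to the Gaussian measures $\mu_N$ and then to $\mu$ via \eqref{eqn1001}. Translating by $\mathbf{1}_\T$ as in \eqref{e-discr:def_mu-1}, we get
\[
\int_{\MM_N^+}\|h\|_{C^\gamma(\T)}^p\,\d\mu_N^+ \,\lesssim_p\, 1+\lambda_N^{-1}\int_{\MM_N}\|h\|_{C^\gamma(\T)}^p\,\d\mu_N .
\]
Here we used that $\exp(-V_N)\le \lambda_N^{-1}$ and that the $C^\gamma$ seminorm is translation invariant, while $\|\mathbf 1_\T\|$ is a constant. Two uniform bounds therefore suffice: a lower bound $\inf_{N\ge2}\lambda_N>0$ and an upper bound on the Gaussian moments uniformly in $N$.

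\emph{Gaussian moments.} By Lemma \ref{lemma_is_ortho} and \eqref{eqn1001}, $\int_{\MM_N}\|h\|_{C^\gamma}^p\,\d\mu_N=\int_{\MM}\|Q_Nh\|_{C^\gamma}^p\,\d\mu$. Fix $\tilde\gamma\in(\gamma,1/2)$. Lemma \ref{Lemma_numerical_approx-n} gives
\[
\|Q_Nh\|_{C^\gamma}\le \|h\|_{C^\gamma}+4N^{-(\tilde\gamma-\gamma)}\|h\|_{C^{\tilde\gamma}}\lesssim\|h\|_{C^{\tilde\gamma}} .
\]
The right-hand side has all moments under $\mu$ by \eqref{eqn72345} from the proof of Lemma \ref{lemma_reg}.

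\emph{Lower bound on $\lambda_N$.} Again by \eqref{eqn1001}, $\lambda_N=\mu(\{h:\, Q_Nh>-1\})$. Since $Q_Nh$ is piecewise linear, $\inf_x Q_Nh$ is attained at a node, where it equals $h(x_i)$ minus the discrete mean. That mean is a trapezoidal sum of the mean-free $h$, so it is bounded by $N^{-\gamma}\|h\|_{C^\gamma}$ up to a constant. Hence $\|Q_Nh-h\|_{C(\T)}\lesssim N^{-\gamma}\|h\|_{C^\gamma}$; alternatively one can read this off Lemma \ref{Lemma_numerical_approx-n} together with the mean-zero property. We then estimate
\[
\lambda_N\ge \mu\bigl(\{\inf_x h>-1/2\}\cap\{C N^{-\gamma}\|h\|_{C^\gamma}<1/2\}\bigr)\ge \mu(\{\inf h>-1/2\})-\mu\bigl(\{\|h\|_{C^\gamma}\ge N^{\gamma}/(2C)\}\bigr).
\]
The first term is a positive constant. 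Positivity holds because a Brownian bridge–type field has positive probability of staying in a small sup-ball; formally, the sup-norm ball lies in the support of the Gaussian measure $\mu$ on $\dot C(\overline\T)$, since $\dot H^1$ is dense there. The second term tends to $0$ by Chebyshev and the moment bound. So $\lambda_N\ge c>0$ for all $N\ge N_0$. For the finitely many $N<N_0$, each $\lambda_N>0$ holds directly, since $\mu_N$ is a nondegenerate Gaussian on the finite-dimensional space $\MM_N$ and the set $\{h>-1\}$ is open and contains $0$.

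The main obstacle is this uniform lower bound on $\lambda_N$. In particular one has to check carefully that the recentering in \eqref{Eq101} does not spoil the sup-norm approximation; if Lemma \ref{Lemma_numerical_approx-n} is only stated for Hölder seminorms, the sup-norm estimate needs the separate argument via the discrete mean given above.
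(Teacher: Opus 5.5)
Your proof is correct and follows the paper's argument. The reduction to $\mu$ is the same, via \eqref{e-discr:def_mu-1}, \eqref{eqn1001}, Lemma \ref{Lemma_numerical_approx-n} and Lemma \ref{lemma_reg}. The only difference is in the uniform lower bound on $\lambda_N$: the paper gets $\liminf_{N\to\infty}\lambda_N\ge\lambda>0$ directly by Fatou's lemma from the $\mu$-a.e.\ uniform convergence $Q_Nh\to h$, while you use a quantitative Chebyshev version of that convergence and also spell out positivity of the limiting probability and of $\lambda_N$ for small $N$, which the paper leaves implicit.
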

\begin{proof}
	Using  \eqref{e-discr:def_mu-1}--\eqref{e-discr:def_mu} and \eqref{eqn1001}, we may bound 
	\begin{align}
		\int_{\MM_N^+} \|h\|_{C^\gamma(\T)}^p \,\d \mu_N^+ \,\lesssim\, \frac{1}{\lambda_N}\biggl(
		1 + \int_{\MM} \|Q_Nh\|_{C^\gamma(\T)}^p \,\d \mu
		\biggr).
	\end{align}
	By means of Lemma \ref{Lemma_numerical_approx-n}, the above is, up to a constant,   further estimated  by 
	\begin{equation}\label{upper_bound_1}
	\frac{1}{\lambda_N}\biggl(
	1 + \int_{\MM} \|h\|_{C^\gamma(\T)}^p \,\d \mu
	\biggr),\end{equation}
	and by Lemma \ref{lemma_reg} it remains to control the constants $\lambda_N$. The same ingredients yield however
	\[\lambda_N = \mu \bigl(\bigl\{ h \in \MM \,\big| \, Q_N h \,>\,  -\mathbf{1}_{\T} \bigr\}\bigr)\]
	as well as
	\begin{align}\label{eqn1004}\liminf_{N\to\infty}
		\mathbf{1}_{ \{
			Q_N h >  -\mathbf{1}_{\T}
			\} } \,\ge\, \mathbf{1}_{ \{
			h >  -\mathbf{1}_{\T}
			\} }, \qquad \mu\text{-a.e.}
	\end{align}
	We obtain then by Fatou's lemma 
	\begin{align}
		\liminf_{N\to\infty} 	\lambda_{N}\,=\, 
		\liminf_{N\to\infty} \int_{\MM} 	\mathbf{1}_{ \{
			Q_N h >  -\mathbf{1}_{\T}
			\} } \,\d\mu\,\ge\, \int_{\MM} 	\mathbf{1}_{ \{
			h >  -\mathbf{1}_{\T}
			\} }\,\d\mu(h) \,=\, \lambda,
	\end{align}
	where $\lambda>0$ was defined in \eqref{e:lambda}. This controls also the prefactor from \eqref{upper_bound_1} uniformly in $N$, finishing the proof. 
\end{proof}

	\subsection{Uniform  estimates on the associated  diffusion processes}
	\label{SS:discr-reg}
	Equipped with the results of the preceding subsections, 	we  produce a uniform version of Proposition \ref{prop_reg_est}. 
	\begin{proposition}\label{proposition_unif_reg}For the  diffusion processes $((h_{t} )_{t\ge 0}, (P_{N,z})_{z\in \MM_N^+})$ associated to $(\wt\EE_N,\D(\wt\EE_N))$, we have  
		\begin{equation}\label{Eq264-unif}
		\sup_{N\ge 2}	E_{\mu_N^+} \Bigl[
			\|h\|_{C^{\gamma/4,\gamma}((0,T)\times \T)}^p
			\Bigr]^{1/p} \,<\,\infty,
		\end{equation}
		for each $\gamma \in (0,1/2)$, $p\in (0,\infty)$ and $T<\infty$.
	\end{proposition}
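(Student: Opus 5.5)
We follow the proof of Proposition \ref{prop_reg_est} line by line and make every constant independent of $N$. By the Kolmogorov--Chentsov theorem in its quantitative form, the moment bound \eqref{Eq264-unif} follows from the increment estimates
\begin{align}
	E_{\mu_N^+}^{1/p}|h_t(x)-h_s(y)|^p\,\lesssim_{(p,T)}\,|t-s|^{1/8}+|x-y|^{1/2},\qquad E_{\mu_N^+}^{1/p}|h_t(x)|^p\,\lesssim_p\,1,
\end{align}
with implicit constants independent of $N\ge 2$.

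\textbf{Stationarity and spatial increments.} The process started from $\mu_N^+$ is stationary, so $h_t\sim\mu_N^+$ for every $t$. Hence the second bound, and the spatial increment $E^{1/p}_{\mu_N^+}|h_t(x)-h_t(y)|^p\lesssim|x-y|^{1/2-}$, reduce to static moments of $\mu_N^+$. These are controlled uniformly in $N$ by Lemma \ref{p1}: one uses $\|h\|_{C^{\gamma}}$ for any $\gamma<1/2$, and at the end lowers $\gamma$ slightly, which is harmless since the target statement is for arbitrary $\gamma<1/2$. The same argument bounds the mollification error
\begin{align}
	E^{1/p}_{\mu_N^+}|h_t(x)-\psi_\epsilon*h_t(x)|^p\,\lesssim\,\epsilon^{\gamma}\,E^{1/p}_{\mu_N^+}\|h\|^p_{C^\gamma},
\end{align}
where $\psi_\epsilon*h$ uses the Neumann extension as in \eqref{eqn327}. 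This replaces the exact Gaussian computation of the continuum case with a bound uniform in $N$.

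\textbf{Time increments of the mollified field.} We apply the Lyons--Zheng decomposition (Theorem \ref{LZ}) to the additive functional $\eta(h)=\psi_\epsilon*h(x)=\int_\T\overline{\psi_\epsilon}(x,\cdot)h$. This is a linear functional, hence $C^1$ on $\MM_N^+$ and in the domain of $\wt\EE_N$. The square field operator is $2\wt g^h(q,q)$ with $q=\overline{\psi_\epsilon}(x,\cdot)$ viewed as the form \eqref{eqn924}. By \eqref{E89},
\begin{align}
	\wt g^h(q,q)\,\le\,\int_\T m(h)|\partial_x\tilde q|^2\,\le\,\|m(h)\|_{C}\,\|\partial_x\tilde q\|_{L^2}^2.
\end{align}
Convolving with the hat function, whose integral is $1$, does not increase the $L^2$ norm of the derivative, so $\|\partial_x\tilde q\|_{L^2}\le\|\partial_x q\|_{L^2}\lesssim\epsilon^{-3/2}$ (the Neumann extension commutes with this convolution up to reflection). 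Burkholder--Davis--Gundy and stationarity then give
\begin{align}
	E^{1/p}_{\mu_N^+}|\psi_\epsilon*h_t(x)-\psi_\epsilon*h_s(x)|^p\,\lesssim\,|t-s|^{1/2}\epsilon^{-3/2}\Bigl(\int\|m(h)\|_C^{p/2}\,\d\mu_N^+\Bigr)^{1/p}.
\end{align}
The last factor is uniformly bounded by \eqref{eq440} together with Lemma \ref{p1}. Choosing $\epsilon=|t-s|^{1/4}$ yields $|t-s|^{\gamma/4}$ for every $\gamma<1/2$, which suffices.

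\textbf{Main obstacle.} The delicate point is justifying that Theorem \ref{LZ} and the identification of the square field operator apply to the closure of $(\wt\EE_N,C^1(\MM_N^+))$. Here I would use Theorem \ref{thm_abstract} in the finite-dimensional setting, with $V_N$ from \eqref{e-discr:def_mu} and the lower bound on $\wt g$ from Assumption \ref{ass:deg_m}, in the same way as for \eqref{SFO}. A second check is the boundary case of the hat convolution near $x=0,1$, where the weights $1/2$ in \eqref{E9} must be matched by the Neumann extension so that the representation of $q$ as a discrete sum is exact.
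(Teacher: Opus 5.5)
Your proposal is correct and follows the paper's own proof essentially step by step. Both reduce via Lemma \ref{p1} to temporal increments at fixed $x$, split with a Neumann-extended mollifier, and treat the middle term by Lyons--Zheng with the metric bound \eqref{E89} and Young's convolution inequality for the hat function, choosing $\epsilon=|t-s|^{1/4}$. Your extra checks (that the boundary weights $1/2$ are reproduced exactly by the even extension, and that linear observables lie in $C^1(\MM_N^+)$ because $\MM_N^+$ is compact) are details the paper leaves implicit.
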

	\begin{proof}Since the proof is mostly analogous to the one of Proposition \ref{prop_reg_est}, we comment mainly on the necessary modification.
		We use here that it suffices to show that
		\begin{align}
			E_{\mu_N^+} ^{1/p}
			|h_{t}(x)  - h_s(y)|^p
			\,\lesssim_{(\delta, p,T)}\, |t-s|^{1/8-\delta}+|x-y|^{1/2-\delta},
		\end{align} 
		for all $0\le s\le t \le T$, $x,y\in \T$ and $\delta>0$. By  Lemma \ref{p1}, we can further reduce to showing 
		\begin{align}\label{eqn6}E_{\mu_N^+} ^{1/p}
			|h_{t}(x)  - h_s(x)|^p \,\lesssim_{(p,T)}\, |t-s|^{1/8-\delta}.
		\end{align}
		Taking $\psi_\epsilon$ 
		as  a standard mollifier  with $\supp \psi_\epsilon = [-\epsilon,\epsilon]$, we  split 
		\begin{align}&
			E_{\mu_N^+} ^{1/p}
			|h_{t}(x)  - h_s(x)|^p \\&\quad \le\,
			E_{\mu_N^+} ^{1/p}
			|h_{t}(x)  - \psi_\epsilon*h_t(x)|^p \,+\,
			E_{\mu_N^+} ^{1/p}
			|\psi_\epsilon*h_{t}(x)  - \psi_\epsilon*h_s(x)|^p
			\,+\, 
			E_{\mu_N^+} ^{1/p}
			|\psi_\epsilon*h_{s}(x)  - h_s(x)|^p,
		\end{align}
		where the convolution with respect to the even extension can be rephrased using 
		 $\overline{\psi_\epsilon}$ as in \eqref{eqn327}--\eqref{eqn8324}.
		 The first and last terms can be estimated using Lemma \ref{p1}, while for the second term we use the Lyons--Zheng decomposition, resulting in 
		 \begin{align}&E_{\mu_N^+} ^{1/p}
		 	|\psi_\epsilon*h_{t}(x)  - \psi_\epsilon*h_s(x)|^p\,\lesssim \, |t-s|^{1/2}\biggl( \int_{\MM_N^+}  \biggl(\int_{\T} m(h)( \partial_z\hat{\rho}*\overline{\psi_\epsilon}(x,\cdot) )^2\,\d z\biggr)^{p/2} \,\d\mu_N^+(h)\biggr)^{1/p},
		 \end{align}
		 based on the bound \eqref{E89} for \eqref{eqn924}. We recall that $\hat{\rho}$
		 is the 
		 hat function characterized by $\hat{\rho}(0) = N$ and $\hat{\rho}(\pm 1/N) = 0$. Since the convolution is defined with respect to the even extension of $\overline{\psi_\epsilon}$, 
		 we may use  \eqref{eq440}, Lemma \ref{p1} and Young's inequality for convolution to obtain the further bound
		  \begin{align}&E_{\mu_N^+} ^{1/p}
		 	|\psi_\epsilon*h_{t}(x)  - \psi_\epsilon*h_s(x)|^p\, \lesssim_{p}\,|t-s|^{1/2}\| \overline{\psi_\epsilon}(x,\cdot)\|_{\dot{H}^1(\T)}.
		 \end{align}
		 From here one can proceed as in Proposition \ref{prop_reg_est}.
	\end{proof}

	\section{Positivity  of the discretized diffusion and identification of its martingale problem}
	\subsection{Assumptions and results of this section}
	As mentioned in Subsection \ref{SS_tightness_dircscrete}, one of the contributions of \cite{GGKO21} is that solutions to the problem that 
	\eqref{Eq191} is a martingale for any 
		$\eta \in C^\infty(\MM_N^+)$ preserve positivity for  mobility functions $m$ with sufficient decay at $h=0$.    In particular, the  threshold $n\ge 3$ for $m(r) =r^n$ 
		was identified. 
		To see that there is some room for improvement, we consider the situation that  $N=1$ and $m(r)=r^n$, in which case the generator acts on $\eta (h)$, where $h=h(0)\in [0,2]$ is an affine transformation of the $\alpha$-coordinate, via  
		\begin{align}\begin{split}\label{Eq_gen_1d}4 &\Bigl( \frac{1}{\wt{g}(h)}  \partial_{hh} \eta(h)\,-\,\frac{\wt{g}'(h)}{ \wt{g}^2(h)}\partial_h\eta(h) \,+\, \frac{2(2-2h)}{\wt{g}(h)}\partial_h\eta(h)\Bigr),\\& \text{where}\qquad  \frac{1}{\wt{g}(h)}  =   \begin{cases}
				\frac{2-2h}{\log(2-h) - \log(h)  }, & n=1,\\
				(n-1)\frac{2h -2}{(2-h)^{1-n} - h^{1-n}  }, &  n\ne  1,
			\end{cases}
		\end{split}
		\end{align}
	cf.\ \eqref{E11}. 
		The Feller test, which we perform in Subsection \ref{SS_Feller} below, shows  that the associated Markov process preserves positivity even for $n\ge 2$; see Table \ref{table1}.  Therefore, we revisit in this section the  positivity of solutions to the martingale problem \eqref{Eq191} and its  relation  to the Dirichlet form \eqref{e:Dirichlet_form_+_discr}. We find the following.

	\begin{table}
		\begin{tabular}{|c|c|}\hline
			$0 \le n <2$ & \text{regular (accessible)}\\ \hline
			$2\le n< 3$ & \text{entrance (inaccessible)}\\ \hline
			$n\ge 3$ & \text{natural (inaccessible)} \\\hline
		\end{tabular}
		\vspace{.2cm}
		\caption{Feller's boundary classification for $N=1$.}\label{table1}
	\end{table}

	\begin{theorem}\label{thm_nonnegativity}
		Assume that $m$ is continuously differentiable and satisfies \eqref{ass_n}.
		\begin{enumerate}[(i)]
			\item\label{L2} If $n\ge 2$ and  $h$ is a solution to the martingale problem \eqref{Eq191} 
			with $\P(\forall i :  h_0(x_i) >0) = 1$, then  $\P(\forall i, t\ge 0 : h_t(x_i) >0) =1$. 	 
			\item \label{L3} If $n \ge 1$ and $m$ is continuously differentiable and increasing,
			 then the diffusion process $h$ associated to $(\wt{\EE}_N,\D(\wt{\EE}_N))$ solves the martingale problem \eqref{Eq191} when started from $\mu_N^+$.
		\end{enumerate}
	\end{theorem}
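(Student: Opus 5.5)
For part \eqref{L2} I would refine the entropy argument of \cite[Theorem 8.4]{GGKO21}. The functional used there is the discrete entropy $S(h)$ from Subsection \ref{SS:GR-scheme}, with $s''=1/m$. For $n\ge 3$ one has $s(r)\to\infty$ as $r\searrow 0$, whereas for $n\in[2,3)$ it stays bounded. So I would replace $S$ by a Lyapunov function that blows up at the boundary for all $n\ge 2$. The natural candidate is $\Psi(h)=\frac1N\sum_i w_i\,\psi(h(x_i))$, where $w_i$ are the trapezoidal weights and $\psi(r)=-\log r$ (or a power $r^{-\kappa}$ with small $\kappa>0$). The choice is guided by the $N=1$ Feller computation \eqref{Eq_gen_1d}: there the scale function diverges at $0$ exactly when $n\ge 2$, which suggests $\log$-type singularities.

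The key steps are as follows.
\begin{enumerate}[(a)]
\item Localise with stopping times $\tau_\delta=\inf\{t: \min_i h_t(x_i)\le\delta\}$ and $\tau^R=\inf\{t:\max_i h_t(x_i)\ge R\}$. On $[0,\tau_\delta\wedge\tau^R]$ the process stays in a compact subset of the interior, so the martingale problem \eqref{Eq191} applies to a smooth cut-off version of $\Psi$.
\item Compute $\widetilde\L_N\Psi$ from \eqref{eqn1}, using the diagonal form \eqref{E11} of $\wt g^{\alpha\alpha}$. This produces three contributions: the second order term $\wt g^{\alpha\alpha}\partial_{\alpha\alpha}\Psi$, the geometric drift $\partial_\alpha\Psi\,\partial_\alpha\wt g^{\alpha\alpha}$, and the energy term $-\partial_\alpha\Psi\,\wt g^{\alpha\alpha}\partial_\alpha E$.
\item Show $\widetilde\L_N\Psi\le C(1+\Psi)+C\,(\text{polynomial in }\max_i h(x_i))$ on the interior.
\item Conclude with Gronwall and optional stopping that $E\,\Psi(h_{t\wedge\tau_\delta\wedge\tau^R})$ is bounded uniformly in $\delta$. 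Combined with conservation of mass, which bounds $\max_i h$ by $N$ times the mass since $h\ge 0$, this gives $\tau^R=\infty$ for large $R$ and $\tau_\delta\to\infty$ as $\delta\to0$.
\end{enumerate}

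The main obstacle is step (c), and specifically the first two terms near a node where $h(x_i)\to0$. By \eqref{E11}, $\wt g^{\alpha\alpha}$ is the harmonic mean of $m$ over $I_\alpha$, which under \eqref{ass_n} behaves like $|h^{\alpha+}-h^{\alpha-}|/|s'(h^{\alpha+})-s'(h^{\alpha-})|$. I would bound this by $C\min(h^{\alpha\pm})^{\,n-1}\max(h^{\alpha\pm})$ and then check that $\psi''=r^{-2}$ against this factor is bounded once $n\ge2$; this is exactly where the threshold $n\ge 2$ should enter. The derivative $\partial_\alpha\wt g^{\alpha\alpha}$ should be handled with the explicit formula \eqref{eqn_AG}. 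The energy term should either have a good sign or be dominated using Young's inequality, in the spirit of \eqref{eqn43ubv}. If $-\log$ fails, I would fall back to $\psi=s+\kappa(-\log)$ to combine the dissipation from $S$ with the logarithmic barrier.

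For part \eqref{L3} I would use Fukushima's decomposition. For $\eta\in C_c^\infty$ of the interior of $\MM_N^+$, integration by parts in \eqref{e:Dirichlet_form_+_discr} is legitimate because $m\in C^1$, so $\eta\in\D(\widetilde\L_N)$ with generator \eqref{eqn1}. The task is then to extend this to all $\eta\in C^\infty(\MM_N^+)$, which requires showing that the boundary $\partial\MM_N^+$ has zero capacity. For $n\ge1$ I would construct cut-offs $\chi_k(h)=\phi(k^{-1}\log\min_i h(x_i))$, or a smoothed version of it, and show $\widetilde\EE_N(\chi_k,\chi_k)\to0$. This uses $\wt g^{\alpha\alpha}\lesssim m(\max h)$ near the boundary together with the Gaussian density of $\mu_N^+$ being bounded; the condition $n\ge1$ makes $\int r^{-2}m(r)\,r^{N-1}$-type integrals harmless. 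Polar boundary then gives that $\eta(h_t)-\eta(h_0)-\int_0^t\widetilde\L_N\eta(h_s)\,\d s$ is a martingale under $P_{\mu_N^+}$ for all such $\eta$, which is \eqref{Eq191}. I would use the monotonicity of $m$ when comparing the harmonic mean \eqref{E11} with $m$ at the endpoints in this capacity estimate.
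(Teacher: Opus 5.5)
\textbf{Part (i).} The reason you give for abandoning the entropy is false. Under \eqref{ass_n} with $n\ge2$ you have $1/m(r)\ge c\,r^{-n}$. Hence $s(r)\ge c\int_r^1\int_\rho^1 r'^{-n}\,\d r'\,\d\rho\to\infty$ as $r\searrow0$, already for $n=2$. For $m(r)=r^2$ one gets $s(r)=r-1-\log r$, which is your $-\log$ barrier up to an affine term that mass conservation makes constant. So $S(h)<\infty$ already forces $h(x_i)>0$ for all $i$, and your fallback with $\kappa=0$ is exactly the paper's proof.

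What restricted \cite{GGKO21} to $n\ge3$ was controlling higher moments of $S$, which requires the quadratic variation of the martingale part. The paper avoids this. By \eqref{E12}, $\partial_\alpha(\wt g^{\alpha\alpha}\partial_\alpha S)$ equals the constant $2N^2(N+1)$ for every $m$, and the energy term is the nonnegative dissipation \eqref{eqn43ubv}. Stopping at $\tau_R=\inf\{t: S(h_t)\ge R\}$ then gives $\E S(h_{T\wedge\tau_R})\le \E S(h_0)+2N^2(N+1)T$, hence $\P(\tau_R<T)\to0$. This needs a localization on $\{S(h_0)\le K\}$; your step (d) needs it too, since $\E\,\Psi(h_0)$ may be infinite.

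Replacing $s$ by $-\log$ breaks step (c). With your own bound $\wt g^{\alpha\alpha}\lesssim\min(h^{\alpha\pm})^{n-1}\max(h^{\alpha\pm})$, the second-order term has size $\max\cdot\min^{n-3}$. That is unbounded for $n<3$, so the check ``bounded once $n\ge2$'' is an arithmetic slip. The bound $\widetilde{\L}_N\Psi\le C(1+\Psi)$ could then only come from a sign-sensitive cancellation with $\partial_\alpha\Psi\,\partial_\alpha\wt g^{\alpha\alpha}$. This cancellation does occur for pure powers $m(r)=r^n$, but \eqref{ass_n} does not imply it. For $m(r)=r^n(2+\sin(1/r))$ with $n\in(2,1+\sqrt3)$, the combined term $\partial_\alpha(\wt g^{\alpha\alpha}\partial_\alpha\Psi)$ is bounded below by $c\,h(x_i)^{n-3}$ along a sequence $h(x_i)\searrow0$ (neighbouring nodes fixed).

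\textbf{Part (ii).} Showing that $\partial\MM_N^+$ has zero capacity cannot work for $n\in[1,2)$, because it is false there, e.g.\ for $m(r)=r^n$, which satisfies all hypotheses of (ii). Near a face $\{h(x_i)=0\}$ only the intervals adjacent to $x_i$ see $h(x_i)$. For them $\wt g^{\alpha\alpha}\sim h(x_i)^{n-1}$ (resp.\ $\sim1/\log(1/h(x_i))$ for $n=1$), while $\mu_N^+$ has a density bounded below. Transversally the form is therefore comparable to $\int|\partial_r\eta|^2r^{n-1}\,\d r$, and the face has positive capacity iff $\int_0r^{1-n}\,\d r<\infty$, i.e.\ $n<2$. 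This is the $N=1$ Feller picture from the paper: a regular boundary, with $C^1_c((0,2))$ not dense in $\D(\wt\EE_1)$.

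Concretely, your cut-offs have energy of order $k^{-2}\int_{e^{-2k}}^{e^{-k}}r^{n-3}\,\d r$, which diverges for $n<2$. The bound $\wt g^{\alpha\alpha}\lesssim m(\max h)$ gives no decay at the boundary at all. The weight $r^{N-1}$ treats the boundary as having codimension $N$, whereas it is a union of codimension-one faces. So for $n\in[1,2)$ the diffusion does reach $\partial\MM_N^+$, and the point of (ii) is that nevertheless no Skorokhod term appears.

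The missing idea is to integrate by parts directly for $\eta\in C^2(\MM_N^+)$, smooth up to the boundary, against every $\zeta\in C^1(\MM_N^+)$. On $\{h^i=0\}$ the direction $\bar\vp_\alpha$ is tangential unless $\alpha\pm=i$. For those $\alpha$, $\wt g^{\alpha\alpha}\lesssim(h^{\alpha+}-h^{\alpha-})/(\log h^{\alpha+}-\log h^{\alpha-})\to0$ because $n\ge1$, so all boundary terms vanish.

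The resulting drift is in general only in $L^1(\mu_N^+)$, and this is where monotonicity of $m$ enters, not in a capacity estimate. Writing $\wt g^{\alpha\alpha}=G(h^{\alpha-},h^{\alpha+})$ with $G$ increasing in each argument, one gets $|\partial_\alpha\wt g^{\alpha\alpha}|\lesssim_N\partial_1G+\partial_2G$, which is integrable. Consequently $\eta$ need not lie in $\D(\widetilde{\L}_N)$. The zero-energy part of Fukushima's decomposition is then identified through the Revuz measures of the positive and negative parts of the drift \cite[Corollary 5.4.1]{FOT}, together with an $L^1$-approximation. Even for $n\ge2$, where the boundary is polar and your localization gives a local martingale, this $L^1$ bound is what upgrades it to a true martingale under $P_{\mu_N^+}$.
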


	Part \eqref{L2} relies on controlling the entropy, cf.\ Subsection \ref{SS:GR-scheme}. While this is also the idea of \cite[Theorem 8.4]{GGKO21}, where higher moments of the entropy are controlled, we include the range  $n\in [2,3)$ by relying on  moments less than one only. In particular, we do not control the quadratic variation of emerging martingales, but instead eliminate them using a stopping time argument. Comparison with Table \ref{table1} shows that this result is optimal. 
	Part \eqref{L3} may be interpreted as a preservation of non-negativity result: There exists a solution to the martingale problem \eqref{Eq191} which is distributed according to $\mu_N^+$ for all times. In particular, no Skorokhod  term is necessary to prevent $h$ from becoming negative for the specified mobility functions, which is in contrast to, say, a Brownian motion. Its proof relies on an integration by parts formula for $\wt{\EE}_N$ together with  Dirichlet form  arguments.

	\subsection{Details on the Feller test}\label{SS_Feller}
	Here we assume that   $N=1$ and  $m(r)=r^n$. 
	 The solution to the martingale problem associated to the generator \eqref{Eq_gen_1d} solves  the stochastic differential equation 
		\begin{equation}\label{Eq_1d_SDE}
			\d h_t \,=\, 4\biggl( \frac{2(2-2h_t)}{\wt{g}(h_t)} \,-\,\frac{\wt{g}'(h_t)}{ \wt{g}^2(h_t)} \biggr)\,\d t\,+\, \sqrt{\frac{8}{ \wt{g}(h_t)}} \, \d w_t.
		\end{equation}
		Feller proposed in   \cite{feller_annals}  the transformation $h\mapsto s(h)$, where  $s$ satisfies 
		\[
		s'(h) \,=\, 
		\exp \biggl(
		-\int_1^h 
		2 (2-2r)\,-\, \frac{\wt{g}'(r)}{\wt{g}(r)}
		\,\d r
		\biggr),
		\] 
		which results by It\^o's formula in the driftless equation 
		\begin{equation}\label{Eq_sv}
			\d s(h_t) \,=\, s'(h_t) \sqrt{ \frac{8}{\wt{g}(h_t)}} \,\d w_t.
		\end{equation}
		The factorization
		\[
		s'(h) \,=\, 
		\exp \biggl(
		-\int_1^h 
		2 (2-2r) \,\d r \biggr) \exp \biggl(\int_1^h \frac{\wt{g}'(r)}{\wt{g}(r)}
		\,\d r
		\biggr)
		\]
		shows that the behavior of $s$ near the boundary is determined by the second term on the right-hand side. Assuming for simplicity that $n>1$, we have
		\begin{align}\label{Eq221}
			\wt{g}(h) \sim h^{1-n} \qquad \text{and thus also} \qquad s'(h) \sim h^{1-n},
		\end{align}
		for $h\sim 0$, by \eqref{Eq_gen_1d}. 
			Since the latter is not integrable for $n\ge 2$ and $s(h_t)$ satisfies the driftless equation \eqref{Eq_sv}, the boundary is \emph{inaccessible}, see \cite[Chapter 33]{kallenberg1997foundations}. 
			In the case $n<2$, the state space of $s(h_t)$ is a finite interval and its speed measure is 
			\[
			\frac{\wt{g} }{8(s')^2} \circ s^{-1}(x) \, \d x.
			\]
			Accordingly, the behavior of  $s(h_t)$ near $s(0)$ can be  deduced from the integral 
			\[
			\int_{s(0)}^{s(1)} 	\frac{ \wt{g} }{8 (s')^2} \circ s^{-1}(x) \,\d x \,=\, \frac{1}{8}\int_0^{1}
			\frac{\wt{g}(h) }{s'(h)}
			\,\d h.
			\]
			By  \eqref{Eq221} the latter is always finite, so that   \cite[Theorem 33.12]{kallenberg1997foundations} yields then that the boundaries of the new state space are \emph{accessible} 
			for $n<2$.
			We remark that this classification is consistent  with  the  possible choices of domain for the Dirichlet form \eqref{e:Dirichlet_form_+_discr}: Following \cite[Section 2.2.3]{Chen_Fukushima}, the above implies that the set of  $\eta \in C_c^1((0,2))$ is dense in $	\D (\wt{\EE}_1)$ for $n\ge 2$ while its closure is complemented by a two-dimensional subspace for $n<2$.
			
			Beyond the accessibility of the boundary points, Feller's test distinguishes between the finiteness of 
			\begin{align}
				-
				\int_0^1 \frac{\wt{g}(h)s(h)}{ s'(h)} \,\d h,
			\end{align}
			for which the critical threshold is $n=3$ by \eqref{Eq221}, see also \cite[Theorem 8.1.1]{EK}. The result is summarized in Table \ref{table1}, and may be stated in words as follows: For $n<2$ the Markov process with generator \eqref{Eq_gen_1d} may reach the boundary from the interior. For $n\ge 2$ it does not, while for $n\in [2,3)$ it may at least be started from there. 
	\subsection{The case $n\ge 2$}
		In this subsection, we verify part \eqref{L2} of Theorem \ref{thm_nonnegativity}, for which we recall the discrete entropy functional 
		\[
		S(h) = \frac{1}{N}\biggl( \frac{s(h(0)) + s(h(1))}{2}+\sum_{i=1}^{N-1}  s(h(x_i)) \biggr),\qquad s(h) = \int_1^h \int_1^r \frac{1}{m(r')}\,\d r'\,\d r,
		\]
		from Subsection \ref{SS:GR-scheme}. Due to the assumption that \eqref{ass_n} holds with $n\ge 2$, we observe that  
		\begin{align}
			S(h)<\infty \qquad \Rightarrow \qquad \forall i\,:\, h(x_i)>0,
		\end{align}
		so that a localization in $\omega$ of the following estimate yields the claim.
	\begin{proposition}\label{lem:improved_entropy_est}
		Under the assumptions of Theorem \ref{thm_nonnegativity}~\eqref{L2}, if $h$ is a solution to the martingale problem \eqref{Eq191}, we have   
		\begin{align}&
					\E \biggl[
					\sup_{t\le T}\biggl(  S(h_t)+
					\frac{1}{N}\int_0^t2 {((A^\top A h_s)^0)^2
						+2((A^\top A h_s)^N	)^2
					}+\sum_{i=1}^{N-1} \bigl((A^\top A h_s)^i \bigr)^2 \,\d s
					\biggr)^p\biggr] \\&\quad \le\,  \frac{1}{1-p}\biggl(
					\E [
					S(h_{0})
					] \,+\, {2N^2(N+1)T }
					\biggr)^p,
			\end{align}
		for all $p\in (0,1)$ and $T<\infty$.
	\end{proposition}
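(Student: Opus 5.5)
Apply It\^o's formula (equivalently, the martingale problem \eqref{Eq191}) to the entropy $S$, which is smooth in the interior of $\MM_N^+$. Since $S$ blows up at the boundary when $n\ge 2$, first localize. Let $\tau_k$ be the first time some nodal value $h_t(x_i)$ drops below $1/k$ or exceeds $k$. We may assume $\E[S(h_0)]<\infty$, since otherwise there is nothing to prove. Up to $\tau_k$ we may use a $C^\infty_c$ modification of $S$ that agrees with $S$ on $\{1/k\le h(x_i)\le k\}$. Then
\[
S(h_{t\wedge\tau_k}) - S(h_0) - \int_0^{t\wedge \tau_k}\widetilde{\L}_N S(h_s)\,\d s \,=\, M_{t\wedge \tau_k}
\]
is a martingale. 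Expanding $\widetilde{\L}_N S$ via \eqref{eqn1} gives three terms. The drift term $-\partial_\alpha S\,\wt g^{\alpha\alpha'}\partial_{\alpha'}E$ equals minus the dissipation in \eqref{eqn43ubv}; this uses \eqref{E12}, which gives $\wt g^{\alpha\alpha'}\partial_\alpha S = A^{\alpha'}h/\sqrt N$. The remaining two terms are $\wt g^{\alpha\alpha'}\partial_{\alpha\alpha'}S + \partial_\alpha S\,\partial_{\alpha'}\wt g^{\alpha'\alpha}$. Because of \eqref{E12} these combine to $\partial_{\alpha'}(\wt g^{\alpha'\alpha}\partial_\alpha S) = \partial_{\alpha'}(A^{\alpha'}h)/\sqrt N$, and this is a constant independent of $h$. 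Computing it with the zigzag basis \eqref{eqn_zz}, where $\partial_{\alpha'}h^{i} = \bar\vp_{\alpha'}(x_i)$, gives
\[
\partial_{\alpha'}(A^{\alpha'}h)/\sqrt N \,=\, \sum_{\alpha'} N\bigl(\bar\vp_{\alpha'}(x_{\alpha'+})-\bar\vp_{\alpha'}(x_{\alpha'-})\bigr)/\sqrt N,
\]
which is bounded by $2N^2(N+1)$ after accounting for the boundary factors. Writing $Z_t$ for $S(h_t)$ plus the integrated dissipation appearing in the statement, we obtain
\[
Z_{t\wedge\tau_k} \le S(h_0) + 2N^2(N+1)T + M_{t\wedge\tau_k}, \qquad t\le T.
\]

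The key point is to avoid controlling the quadratic variation of $M$. Note that $S\ge 0$, since $s$ is convex with minimum $0$ at $1$. Hence $Y_t := S(h_0)+C_T+M_{t\wedge\tau_k}$ is a nonnegative local martingale dominating $Z_{t\wedge\tau_k}\ge 0$, with $C_T = 2N^2(N+1)T$. For such processes Lenglart's domination inequality, in its form for $p\in(0,1)$, gives
\[
\E\Bigl[\sup_{t\le T} Z_{t\wedge\tau_k}^p\Bigr] \le \tfrac{1}{1-p}\,\E[S(h_0)+C_T]^p.
\]
Concretely, first condition on $\mathcal F_0$ and use that for a nonnegative supermartingale $Y$ one has $\P(\sup Y\ge \lambda\mid\mathcal F_0)\le Y_0/\lambda$. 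This comes from the stopping time $\sigma_\lambda=\inf\{t: Y_t\ge\lambda\}$ together with optional stopping, which is how the martingale is eliminated. Then integrate with the layer-cake formula:
\[
\E[\sup Y^p] \le \int_0^\infty \min(1, \E Y_0/\lambda^{1/p})\,\d\lambda = \tfrac{1}{1-p}(\E Y_0)^p.
\]
Conditioning on $\mathcal F_0$ followed by Jensen for the concave map $x\mapsto x^p$ gives exactly the stated constant.

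Finally let $k\to\infty$ using monotone convergence. The resulting uniform bound forces $\sup_{t\le T\wedge\tau_k} S(h_t)$ to be a.s. finite. Because $s(r)\to\infty$ as $r\to0$ for $n\ge2$ (as $\int_r^1 1/m \gtrsim r^{1-n}$ is not integrable), the process cannot reach a level $h(x_i)=1/k$ for all large $k$ before time $T$. The nodal values also stay bounded, as implied by the continuity of paths together with the bound on $S$. Therefore $\tau_k\to\infty$ and Fatou yields the claim. I expect the main obstacle to be the exact computation of the constant $\partial_{\alpha'}(A^{\alpha'}h)/\sqrt N$, including the boundary factors $(1+\mathbf{1}_{\{0,N\}})$ from \eqref{eqn_zz} and \eqref{eqn_honig}, so that it matches $2N^2(N+1)$. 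The Lenglart step itself should be standard once positivity of $S$ is used.
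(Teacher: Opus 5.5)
Your proposal is correct and is essentially the paper's proof. It applies \eqref{Eq191} to $S$ up to a localizing time and splits the generator into minus the dissipation \eqref{eqn43ubv} plus the constant $\partial_{\alpha'}(\wt g^{\alpha\alpha'}\partial_\alpha S)=2N^2(N+1)$; your zigzag computation gives exactly $N^2\sum_{\alpha}\bigl(2+\mathbf{1}_{\{0,N\}}(\alpha+)+\mathbf{1}_{\{0,N\}}(\alpha-)\bigr)=2N^2(N+1)$. It then removes the martingale by stopping at level $\lambda$ and applying Chebyshev--Markov, which is what your maximal inequality for the nonnegative martingale $Y$ amounts to, and integrates the tail bound.

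The remaining differences are cosmetic. The paper localizes with $\tau_R=\inf\{t\ge 0: S(h_t)\ge R\}$ and removes it via $\P(\tau_R<T)\le\P(\sup_{t\le T\wedge\tau_R}Z_t\ge R)\to 0$, rather than through nodal levels and $s(1/k)\to\infty$. Also, the standard form of Lenglart's inequality only gives the constant $\frac{2-p}{1-p}$, so the stated $\frac{1}{1-p}$ comes from your explicit layer-cake computation, not from Lenglart.
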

	\begin{proof}
		For $R<\infty$, we introduce the stopping time 
			\[
		\tau_R \,=\, \inf\{ t\ge 0 \,| \, S(h_t) \ge  R\},
		\]
		so that until $\tau_R$, we may take  $\eta = S$ in \eqref{Eq191}. 
		Thereby, using \eqref{eqn43ubv}, we find that
			\begin{align}\begin{split}\label{eqn66}&
						S(h_{t\wedge \tau_R})
						+
						\frac{1}{N}\int_0^{t\wedge \tau_R}2 ((A^\top A h_s)^0)^2
							+2((A^\top A h_s)^N	)^2
					 +\sum_{i=1}^{N-1} \bigl((A^\top A h_s)^i \bigr)^2 \,\d s
						\\&\quad =\, S(h_{0}) \,+\,  M_{t\wedge\tau_R}^R
						\,+\, \int_0^{t\wedge \tau_R}
						\partial_{\alpha'} (\wt{g}^{\alpha \alpha'} \partial_\alpha S)(h_s)\,\d s
						,\end{split}
			\end{align}
		for some martingale $M^R$. 	
		 As in \cite[eqs. (8.12)--(8.13)]{GGKO21} we use once more  \eqref{E12} together with \eqref{eqn_honig} to rewrite the last integral in light of  
		 \[\partial_{\alpha'} (\wt{g}^{\alpha \alpha'} \partial_\alpha S)\,=\, 
		  (1+\mathbf{1}_{\{0,N\}}(i)) \partial_i (A^\top A h)^i \,=\, 2N^2(N+1)
		 \]
		 as simply $2N^2(N+1)(t\wedge \tau_R)$. Using  martingality, we obtain after taking the expectation 
		 		\[
		\E \bigl[
		L({\sigma\wedge T \wedge \tau_R  })
		\bigr] \,\le \, \E [
		S(h_{0})
		] \,+\, {2N^2(N+1)T },
		\]
		where $L$ denotes the process on the left-hand side of \eqref{eqn66},
		for any  stopping time $\sigma$. Choosing $\sigma = \inf\{ t\ge 0 | L(t) \ge  \lambda \}$ for $\lambda\ge 0$,  we find 
		\begin{align}
			\P\Bigl(
			\sup_{t\le T\wedge \tau_R} L(t) \,\ge  \, \lambda
			\Bigr) \,\le\, \P\bigl(L({\sigma\wedge T \wedge \tau_R  }) \,\ge\, \lambda
			\bigr)\,\le \, \lambda^{-1}\bigl(
			\E [
			S(h_{0})
			] \,+\, {2N^2(N+1)T }
			\bigr) ,
		\end{align}
		by means of the Chebyshev--Markov inequality. This implies that
		\begin{align}
			\E \Bigl[
			\sup_{t\le T\wedge \tau_R} L^p(t)
			\Bigr] \, =\, \int_0^\infty  	\P\Bigl(
			\sup_{t\le T\wedge \tau_R} L(t) \,> \, \lambda^{1/p}
			\Bigr) \,\d \lambda \,\le \, r \,+\,\frac{ p}{1-p} \bigl(\E [
			S(h_{0})
			] \,+\, {2N^2(N+1)T }
			\bigr) r^{1-1/p},
		\end{align}
		for any $r\in (0,\infty)$.
		Optimizing over the latter yields 
		\[
		\E \Bigl[
		\sup_{t\le T\wedge \tau_R} L^p(t)
		\Bigr]\,\le\, \frac{1}{1-p}\bigl(\E [
		S(h_{0})
		] \,+\, {2N^2(N+1)T }
		\bigr)^p,
		\]
		and it remains to eliminate the stopping time $\tau_R$. 
		For this we use the preceding estimate to deduce 
		\[
		\P ( \tau_R < T ) \,\le \,  \P\Bigl(
		\sup_{t\le T\wedge \tau_R} L(t) \,\ge  \, R
		\Bigr) 
		\le\, \frac{R^{-p}}{(1-p)}\bigl(\E [
		S(h_{0})
		] \,+\, {2N^2(N+1)T }
		\bigr)^p\,\to\, 0,
		\]
		as $R\to \infty$, so that the claim follows by Fatou's lemma.
	\end{proof}

	\subsection{The case $n\ge 1$}
	Here we provide the proof of part \eqref{L3} of Theorem \ref{thm_nonnegativity}. Our argument is based on  the following lemma.
	
	\begin{lemma} \label{lemma_IBP_fd}
		Under the assumptions of Theorem \ref{thm_nonnegativity}~\eqref{L3}, we have for all $\eta\in C^2(\MM_N^+)$ and $\zeta \in C^1(\MM_N^+)$ 
			\begin{align}\label{eqn234}
				\wt{\EE}_{N}(\eta ,\zeta) \,=\,  -  \int_{\MM_N^+} \zeta 
				\Bigl(\tilde g^{\alpha \alpha '} \partial_{\alpha \alpha'} \eta \,+\,  (\partial_\alpha\tilde  g^{\alpha \alpha'} )\partial_{\alpha'}\eta \,-\, \tilde g^{\alpha\alpha '} (\partial_\alpha E) \partial_{\alpha'} \eta
				\Bigr) 
				\, \d \mu_{N}^+,
			\end{align}
		where the term in the brackets lies in $L^1(\mu_N^+)$. 
	\end{lemma}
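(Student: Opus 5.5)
The plan is to integrate by parts in the flat coordinates $(h^\alpha)_{\alpha=1}^N$ of $\MM_N^+$ from \eqref{eqn_zz}. In these coordinates $\d\mu_N^+ \propto \mathbf{1}_{\MM_N^+} e^{-E(h)}\,\d h^1\cdots \d h^N$ and $\wt{\EE}_N(\eta,\zeta) = \int \wt g^{\alpha\alpha'}\partial_\alpha\eta\,\partial_{\alpha'}\zeta\,\d\mu_N^+$. The set $\MM_N^+$ is a convex polytope with faces $\{h(x_i)=0\}$, so only two things need to be checked: the boundary terms vanish, and the resulting integrand is integrable.

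To deal with the boundary, I would cut off with $\chi_\delta(h)=\prod_i \phi(h(x_i)/\delta)$, where $\phi$ is smooth, equal to $0$ on $[0,1]$ and to $1$ on $[2,\infty)$. On $\supp\chi_\delta$ every nodal value is at least $\delta$, so $\wt g^{\alpha\alpha'}$ is $C^1$ there by \eqref{E11} and $m\in C^1$. The divergence theorem on $\{h(x_i)>\delta/2\}$ then gives \eqref{eqn234} with $\zeta$ replaced by $\chi_\delta\zeta$, plus an error term $\int \wt g^{\alpha\alpha'}\partial_\alpha\eta\,\zeta\,\partial_{\alpha'}\chi_\delta\,\d\mu_N^+$. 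This error is bounded by $\delta^{-1}\sup|\nabla\eta||\zeta|$ times $\int_{\{\delta\le\min_i h(x_i)\le 2\delta\}} \max_\alpha \wt g^{\alpha\alpha}\,\d\mu_N^+$. We have $\wt g^{\alpha\alpha}=(\fint_{I_\alpha}1/m(h))^{-1}\le \max_{I_\alpha} m(h)$, and since $m$ is increasing this is at most $m(\max(h^{\alpha+},h^{\alpha-}))$. So $\wt g^{\alpha\alpha}$ is bounded, but it need not be small near a face unless both neighbouring nodes are small.

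The error term is the main obstacle. I would first try to bound $\wt g^{\alpha\alpha}$ better, using the harmonic-mean structure \eqref{E11} with $m(r)\lesssim r^n$, $n\ge1$. If $h^{\alpha-}=\epsilon$ is small and $h^{\alpha+}=a$, then $\fint 1/m \gtrsim \frac{1}{a}\int_\epsilon^a r^{-n}\,\d r$, which for $n\ge 1$ gives $\wt g^{\alpha\alpha}\lesssim a/\log(a/\epsilon)$ (for $n=1$), and better for larger $n$. Hence $\wt g^{\alpha\alpha}\to0$ whenever an adjacent node tends to $0$. The density of $\mu_N^+$ is bounded near the faces, since $E$ is smooth, so the slab $\{\delta\le h(x_i)\le 2\delta\}$ has measure $O(\delta)$. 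This cancels the $\delta^{-1}$, and the logarithmic decay of $\wt g$ sends the error to zero as $\delta\to0$. This is exactly where $n\ge1$ and monotonicity enter; for $n<1$ the bound fails, consistent with accessibility. Only the components $\wt g^{\alpha\alpha'}\partial_{\alpha'}\chi_\delta$ with $\alpha$ adjacent to the small node contribute, since $\wt g$ is diagonal.

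Next I would show that the bracket lies in $L^1(\mu_N^+)$. The terms $\wt g\,\partial^2\eta$ and $\wt g\,\partial E\,\partial\eta$ are bounded on the compact set $\MM_N^+$. The term $\partial_\alpha\wt g^{\alpha\alpha}$ is computed as in \eqref{eqn_AG}. It is $(\fint 1/m)^{-2}\fint m'/m^2\,\bar\vp_\alpha/\sqrt N$, whose size near a face is roughly $\wt g^2/(m\epsilon)$. After the $\log$ estimate this is $\lesssim 1/(\epsilon\log^2(1/\epsilon))$ for $n=1$, which is integrable against $\d\epsilon$ near $0$, and more integrable for $n>1$ where $\wt g\lesssim \epsilon^{n-1}$. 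With integrability established, dominated convergence passes $\chi_\delta\zeta\to\zeta$ on the right-hand side and in the left-hand form, which proves \eqref{eqn234}.
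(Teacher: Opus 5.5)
Your treatment of the boundary term is correct, and more careful than the paper. The paper only asserts that boundary contributions vanish: the normal to the face $\{h(x_i)=0\}$ is orthogonal to $\bar\vp_\alpha$ unless $i\in\{\alpha\pm\}$, and $\wt g^{\alpha\alpha}$ is dominated by the logarithmic mean \eqref{eqn433}. Your cutoff $\chi_\delta$ together with the $O(\delta)$ slab estimate makes this rigorous. Note, however, that this step uses only $n\ge 1$, not monotonicity of $m$.

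\textbf{The gap} is in showing that the bracket lies in $L^1(\mu_N^+)$, i.e.\ that $\partial_\alpha\wt g^{\alpha\alpha}\in L^1(\mu_N^+)$. Your size estimate $\wt g^2/(m\epsilon)\lesssim 1/(\epsilon\log^2(1/\epsilon))$ is calibrated on $m(r)=r^n$. The hypotheses only give $m\in C^1$, $m'\ge 0$, and the upper bound $m(r)\lesssim_N r$ on $[0,2N]$. There is no lower bound on $m$ near $0$ and no pointwise control of $m'$.

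Write $\wt g^{\alpha\alpha}=G(h^{\alpha-},h^{\alpha+})$ with $G(a,b)=\bigl(\int_0^1 m((1-t)a+tb)^{-1}\,\d t\bigr)^{-1}$. Pointwise, monotonicity gives $0\le \partial_a G\le G^2/((b-a)m(a))$ for $a<b$. Combined with your upper bound $G\lesssim L(a,b)$ (the logarithmic mean), this is not integrable once $m(a)\ll a$. For instance $m(r)=e^{-1/r}$ satisfies every assumption and makes this bound grow exponentially as $a\to 0$. Your one-variable integration in $\epsilon$ also does not cover the corner where $h^{\alpha-}$ and $h^{\alpha+}$ vanish simultaneously.

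The missing idea, which the paper uses, is to avoid pointwise bounds altogether. Since $m$ is increasing, $G$ is increasing in each argument, so $\partial_1G,\partial_2G\ge 0$. By the chain rule, $|\partial_\alpha\wt g^{\alpha\alpha}|\lesssim_N(\partial_1G+\partial_2G)(h^{\alpha-},h^{\alpha+})$. The measure $\mu_N^+$ has a bounded density, and this quantity depends only on two nodal values in $[0,2N]$. Fubini and the fundamental theorem of calculus then give
\[
\int_0^{2N}\int_0^{2N}\bigl(\partial_1G+\partial_2G\bigr)(y_1,y_2)\,\d y_1\,\d y_2\,\le\,4N\,G(2N,2N)\,<\,\infty .
\]
This is where monotonicity actually enters. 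With this replacement, the rest of your argument, including the final dominated-convergence step as $\delta\to 0$, goes through.
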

	Before proving the lemma, we show how it entails the claim by standard Dirichlet form arguments.
		\begin{proof}[Proof of Theorem \ref{thm_nonnegativity}~\eqref{L3}]
		 Applying  Fukushima's decomposition, Theorem \ref{thm_fukushima},  to $\eta \in C^\infty(\MM_N^+)$, we find that 
		 \begin{align}
		 	\eta(h_t) \,-\, \eta(h_0) \,=\,  M_t \,+\, N_t,
		 \end{align}
		 for a martingale additive functional of finite energy  $(M_t)_{t\ge 0}$ and a continuous additive functional of zero energy $(N_t)_{t\ge 0}$. When the diffusion is started from $\mu_N^+$, $M_t$ is moreover a true martingale, and it remains to identify $N_t$ as the temporal integral in \eqref{Eq191}. To this end, we use \cite[Corollary 5.4.1]{FOT} together with  Lemma \ref{lemma_IBP_fd} to deduce  that $N_t = N_t^+ -  N_t^- $ for positive continuous additive functionals  $(N^{\pm}_t)_{t\ge 0}$ with Revuz measures  
		\[
		\zeta^{\pm} \d \mu_{N}^+,\qquad\text{for}\qquad \zeta^{\pm}\,=\,
	\Bigl(\tilde g^{\alpha \alpha '} \partial_{\alpha \alpha'} \eta \,+\,  (\partial_\alpha \tilde g^{\alpha \alpha'} )\partial_{\alpha'}\eta \,-\, \tilde g^{\alpha\alpha '} (\partial_\alpha E) \partial_{\alpha'} \eta
	\Bigr) ^{\pm},
		\]
		respectively, see \cite[Section 5.1]{FOT} for this terminology. Now  let 
		$ \zeta_k^\pm \in L^2(\mu_{N}^+)$ be non-negative such that $\zeta_k^\pm \nearrow \zeta^\pm$ in $L^1(\mu_{N}^+)$, so that on the one hand  
		\[
		E_{\mu_N^+ }\biggl[
		\biggl| N^{\pm}_t    - \int_0^t 
		\zeta_k^\pm ( h_s ) \,\d s\biggr|
		\biggr]\,\le\, (1+t) \|\zeta^\pm - \zeta^\pm_k \|_{L^1(\mu_{N}^+) }\,\to \, 0,
		\]
		as $k\to\infty$, uniformly on bounded time intervals
		by \cite[Lemma 5.1.9]{FOT}. On the other hand, using invariance of the measure $\mu_{N}^+$ for $h$, we have that
		\[
		E_{\mu_N^+ }\biggl[\sup_{t\le T}
		\biggl|  \int_0^t \zeta^\pm  (h_s) - 
		\zeta_k^\pm ( h_s ) \,\d s \biggr|
		\biggr] \,\le\, T \|\zeta^\pm - \zeta^\pm_k \|_{L^1(\mu_{N}^+) } \,\to\, 0,
		\] identifying $N_t$ as the desired expression. 
	\end{proof}
	
	\begin{proof}[Proof of Lemma \ref{lemma_IBP_fd}]
		We need to justify the  integration by parts formula 
		\eqref{eqn234}. Since mass is fixed as $1$, we have $h^i \le 2N$ for all $i$ at fixed discretization length $1/N$, so that  \eqref{ass_n} holds also with $n=1$. Then we see that the diagonal $\wt{g}^{\alpha \alpha'}$ from \eqref{E11} satisfies the bound 
		\begin{align}\label{eqn433}
	\tilde 	g^{\alpha \alpha}(h) \,\lesssim\, \biggl(\fint_{I_\alpha} \frac{1}{h}\,\d x \biggr)^{-1} 
		\,=\,  	\frac{h^{\alpha+} - h^{\alpha - }}{\log(h^{\alpha+}) -\log (h^{\alpha-})},
		\end{align} 
		which tends to $0$ if either $h^{\alpha+}$ or $h^{\alpha-}$ go to $0$. As the normal vector at 	\[  \bigl\{h\in 
		\partial\MM_N^+\, \big|\, h^i =0 \bigr\},
		\]
		is perpendicular to $\bar{\vp}_{\alpha}$ unless $i  = \alpha \pm $, boundary contributions vanish therefore and 
		it remains to argue that the integrand on the right-hand side of \eqref{eqn234} is integrable.  In light of the bound \eqref{eqn433}, the latter depends only on whether  $\partial_\alpha \tilde g^{\alpha \alpha} 
		\in L^1(\mu_N^+)$. Since $m$ is increasing, the  implicitly defined function  $ G(h^{\alpha-},h^{\alpha+})=\tilde g^{\alpha \alpha} $ is increasing in each argument. This implies that 
		\begin{align}
			|\partial_{\alpha}\tilde g^{\alpha \alpha}| \lesssim_N \partial_1 G(h^{\alpha-},h^{\alpha+}) + \partial_2 G(h^{\alpha-},h^{\alpha+})
		\end{align}
	and the right-hand side is $\mu_N^+$-integrable, since 
	\begin{align}
			\int_0^{2N} \int_0^{2N}  \partial_1 G(y_1 ,y_2) + \partial_2 G(y_1 ,y_2) \,  \d y_1\d y_2\, \lesssim_N\,  G(2N,2N) \,<\,\infty.
	\end{align}
	\end{proof}

	\section{Mosco convergence for non-degenerate metrics}\label{Sec_mosco_abstr}
	\subsection{Setup and result}Throughout this section, we impose the following. 
		\begin{assumption}\label{ass_Mosco_abstr}Let $\HH_{\pm 1}$ and $g$ be as in Assumption \ref{Ass_abstract} with $V=0$. Moreover, for each $N\ge 2$, we assume the same for finite dimensional  spaces $  \HH_N   \subset \HH_1$ and respective  inner products $g_N$. We impose further that the orthogonal projections $P_N \colon \HH_1 \to \HH_N$ converge strongly to the identity on $\HH_1$.
		\end{assumption}
		We assume that $\mu$ and $\mu_N$ are the Gaussian measures with Cameron--Martin spaces $\HH_1$ and $\HH_N$, respectively.  Introducing also the set of test functions 
					\begin{align}\label{eqn_CC_0}
						\CC_0 \,=\, \biggl\{
					\Phi\bigl( (\cdot, p^1)_{\mathcal{H}_{1}\times \mathcal{H}_{1}^*} , \dots,(\cdot, p^k)_{\mathcal{H}_{1}\times \mathcal{H}_{1}^*}  \bigr) ,\, 
					p^i \in \mathcal{H}_{1}^*,\,  \Phi \in C^1_b(\R^k)\biggr\} 
		\end{align} 
		we have that $\CC_0 \subset L^2(\mu)$ is dense, because it contains $\D_0$,  cf.\ Theorem \ref{thm_abstract}. Moreover, since under $\mu_N$ the vector 
		\[
		(\cdot, p^1)_{\mathcal{H}_{1}\times \mathcal{H}_{1}^*} , \dots,(\cdot, p^k)_{\mathcal{H}_{1}\times \mathcal{H}_{1}^*}
		\]
		is jointly Gaussian with covariances $(P_N^*p^i ,P_N^* p^j)_{\HH_1^*}\to (p^i , p^j)_{\HH_1^*} $, we see that   $\|\zeta\|_{L^2(\mu_N)} \to \|\zeta\|_{L^2(\mu)}$ for all $\zeta\in \CC_0$. We remark that we identify here and below $\HH_N^*$ with those elements of $\HH_{1}^*$ that vanish on the $\HH_{1}$-orthocomplement of $\HH_{N}$,  so that the adjoint projection is $P_N^*\colon \HH_{1}^* \to \HH_N^*, p\mapsto (P_N \cdot, p)_{\mathcal{H}_{1}\times \mathcal{H}_{1}^*} $. In particular, we may identify $L^2(\mu_N)$ as a sequence of Hilbert spaces converging to $L^2(\mu)$ in the sense of Appendix \ref{App:Mosco}. 
		
		The aim of  this section is to give a sufficient condition for the Mosco convergence of the closures of 
		\begin{align}
			\EE_N(\eta,\eta )\,=\, \int_{\X} g_N^h(\diff|_h \eta, \diff|_h \eta  ) \,\d \mu_N (h),
			\qquad
			\text{to}\qquad 
			\EE (\eta,\eta )\,=\, \int_{\X} g^h(\diff|_h \eta, \diff|_h \eta  ) \,\d \mu (h),
		\end{align}
		initially defined for $\eta \in \D_0$,
		on $L^2(\mu_N)$ and $L^2(\mu)$, respectively.
		 For an intermediate step, we also need the weak Dirichlet form 
			\begin{align}\frac{1}{2}\EEw (\eta,\eta ) \,=\, \sup \biggl\{
				 \int_{\X} \eta \div_\mu(X) \,\d \mu - \frac{1}2 \int_{\X}  
				g_\cdot(X,X)\,
				\d\mu \,\bigg|\, 
				X = \sum_{i=1}^k \zeta^i \dot{h}_i,\, \zeta^i \in \CC_0,\, \dot{h}_i \in \HH_1   
				\biggr\},
			\end{align}
		defined on $\D(\EEw)$, which consists of all $\eta\in L^2(\mu)$ for which the above is finite.
		Here,  $g_h$ is the dual inner product of $g^h$, defined on $\HH_{-1}$ and 
		\begin{align}\label{eqn_divmu}
			\div_{\mu}(X) \,=\, \sum_{i=1}^k\diff {\zeta^i}_{\cdot} \dot{h}_i \,-\, \zeta^i(\cdot, \dot{h}_i)_{\HH_1},
		\end{align}
	for vector fields
	\begin{align}\label{eqn_XX} 
		X = \sum_{i=1}^k\zeta^i \dot{h}_i,\qquad \zeta^i \in \CC_0, \, \dot{h}_i \in \HH_1.
		\end{align}
	We notice that the latter defines an element of $L^2(\mu)$ independent of the representation of $X$, since 
	\begin{align}\int_\X \eta \div_\mu(X) \,\d \mu \,+\, \int_\X \diff \eta_{\cdot } X \,\d\mu \,=\,\sum_{i=1}^k\biggl(  
		\int_\X \diff(\eta \zeta^i )_\cdot \dot{h}_i \,\d\mu - \int_{\X} \eta \zeta^i (\cdot ,\dot{h}_i)_{\HH_1} \,\d\mu  \biggr)
		 \,=\, 0,
		\label{GI_eqn}
	\end{align}
	 for each $\eta \in \CC_0$ by the Cameron--Martin theorem, cf.\ \eqref{e:mu_div}. 
	
	\begin{remark}\label{rem_weak_to_strong}It always holds that $\EEw \le \EE $, where as usual we set $\EE(\eta,\eta)=\infty$ for $\eta\notin \D(\EE)$. Indeed, integrating the pointwise inequality 
		\[
		\diff \eta|_{h\cdot} X_h \,\le\,\frac{1}{2} g^h(\diff|_h\eta ,\diff|_h\eta)\, +\, \frac{1}{2} g_h (X_h ,X_h)
		\]
		with respect to $\mu$ and using \eqref{GI_eqn} yields the desired  bound  on $\D_0$. It extends then to $\D(\EE)$ by density.
		\end{remark}
	Using the above, we identify an upper and a lower bound for the sequence of Dirichlet forms $\EE_N$ for suitably converging  metric tensors. We remark that while the projection $P_N$ is defined pointwise only on ${\HH}_1$, it can be defined  $\mu$-a.e.\ on $\X$ by the Paley--Wiener isomorphism.
		\begin{theorem}\label{thm_mosco_abstract}
			 Under Assumption \ref{ass_Mosco_abstr} we have the following:
			\begin{enumerate}[(i)]
				\item\label{Mosco_I1} If  for all $p \in \HH_{-1}^*$, for $\mu$-a.e.\ $h\in \X$
				\begin{align}\label{eqn_ass_convergence}\limsup_{N\to\infty} g_N^{ P_Nh}(P_N^* p,P_N^*  p) \,\le\, g^h(p,p),
				\end{align}
				and $  g_N^{P_N \cdot }(P_N^*  p, P_N^* p) $ is uniformly $ \mu $-integrable, then 
				\begin{align}\label{eqn_Mosco_upper}
					\limsup_{N\to\infty} \EE_N (\eta ,\eta ) \,\le\, \EE(\eta, \eta ),
				\end{align}
				for all $\eta \in \D_0$. 
				\item\label{Mosco_I2} If for all $\dot{h} \in \HH_1$, for $\mu$-a.e.\ $h \in \X$, 
				\begin{align}
					\limsup_{N\to\infty} g_{N, P_Nh} (P_N\dot{h} , P_N\dot{h}) \,\le\, g_h(\dot{h},\dot{h})
				\end{align}
				and $g_{N, P_N\cdot } (P_N\dot{h} , P_N\dot{h})$  is uniformly $ \mu $-integrable, then 
				\begin{align}\label{lower_bound_eqn}
					\liminf_{N\to\infty} \EE_N(\eta_N,\eta_N ) \,\ge\, \EEw(\eta,\eta),
				\end{align}
				for all $L^2(\mu_N) \ni \eta_N$ converging weakly to $  \eta \in L^2(\mu)$.
			\end{enumerate}
		\end{theorem}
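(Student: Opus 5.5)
Both parts rest on the same device: rewrite the $N$-th problem as an integral against $\mu$, using that $\mu_N=\mu\circ P_N^{-1}$ (the Gaussian with Cameron--Martin space $\HH_N$ is the push-forward of $\mu$ under the orthoprojection, as in \eqref{eqn1001}). Part \eqref{Mosco_I1} then reduces to dominated convergence. Part \eqref{Mosco_I2} needs a recovery of $\EEw$ from test vector fields, which is where the real work lies.

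\emph{Upper bound.} Fix $\eta=\Phi((\cdot,p^1),\dots,(\cdot,p^k))\in\D_0$. On $\HH_N$ one has $(P_Nh,p^i)=(h,P_N^*p^i)$ and $\diff\eta|_{P_Nh}$ restricted to $\HH_N$ equals $\sum_i\partial_i\Phi(\dots)P_N^*p^i$. Hence
\[
\EE_N(\eta,\eta)=\int_\X \sum_{i,j} g_N^{P_Nh}(P_N^*p^i,P_N^*p^j)\,\partial_i\Phi((P_Nh,p^\cdot))\,\partial_j\Phi((P_Nh,p^\cdot))\,\d\mu(h).
\]
By polarization and Cauchy--Schwarz, each integrand is bounded by $\|\nabla\Phi\|_\infty^2$ times a sum of $g_N^{P_N h}(P_N^*q,P_N^*q)$ over finitely many $q\in\spa\{p^i\}$. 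These are uniformly integrable by hypothesis. Since $(P_Nh,p^i)\to(h,p^i)$ in $L^2(\mu)$, we may pass to a subsequence converging $\mu$-a.e., and continuity of $\partial_i\Phi$ together with \eqref{eqn_ass_convergence} gives a pointwise limsup. To keep the cross terms under control, I apply the hypothesis to $q=\sum_i\partial_i\Phi\, p^i$ only after freezing the coefficients. Concretely, the integrand is $g_N^{P_Nh}(P_N^*q_h,P_N^*q_h)$ with $q_h$ ranging in a finite-dimensional space. On that space the $g_N$ quadratic forms are uniformly bounded, so a limsup bound on a countable dense set of $q$ upgrades to uniform-on-compacts control, which suffices. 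Reverse Fatou under uniform integrability then yields \eqref{eqn_Mosco_upper}. The subsequence argument applies to every subsequence, so the full limsup is controlled.

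\emph{Lower bound.} Let $\eta_N\rightharpoonup\eta$ and take a test field $X=\sum_i\zeta^i\dot h_i$. Define $X_N=\sum_i\zeta^i\circ P_N\,P_N\dot h_i$ on $\HH_N$. By pointwise Young, as in Remark~\ref{rem_weak_to_strong},
\[
\tfrac12\EE_N(\eta_N,\eta_N)\ \ge\ \int \eta_N\div_{\mu_N}(X_N)\,\d\mu_N-\tfrac12\int g_{N,\cdot}(X_N,X_N)\,\d\mu_N .
\]
This uses the Gaussian integration by parts \eqref{GI_eqn} on $\HH_N$, valid first for $\eta_N\in\D_0$ and then by closure. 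The key point is that $\div_{\mu_N}(X_N)$ evaluated along $P_Nh$ equals
\[
\sum_i \diff\zeta^i|_{P_Nh}P_N\dot h_i-\zeta^i(P_Nh)\,(P_Nh,\dot h_i)_{\HH_1}.
\]
This is exactly a function in $\CC_0$-type form of $P_N h$, and I will check that it converges strongly to $\div_\mu X$ in the sense of Appendix~\ref{App:Mosco}. For this I use that $P_N\dot h_i\to\dot h_i$ in $\HH_1$, that $(P_Nh,\dot h_i)\to(h,\dot h_i)$ in every $L^p(\mu)$, and that the relevant Gaussian covariances converge. Strong times weak convergence gives convergence of the first integral. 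For the second integral, expand $g_N(X_N,X_N)$ bilinearly with bounded coefficients $\zeta^i\zeta^j$ and apply the hypothesis with the uniform integrability. Here the hypothesis only gives a limsup, which is the favourable direction since the term enters with a minus sign; the cross terms are handled by the same polarization trick as in the upper bound.

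Taking $\liminf$ and then the supremum over $X$ yields \eqref{lower_bound_eqn}. I expect the main obstacle to be making the strong convergence $\div_{\mu_N}X_N\to\div_\mu X$ and the pairing with weakly convergent $\eta_N$ rigorous in the varying-Hilbert-space framework. The polarization step that converts the diagonal limsup hypothesis into control of mixed terms is delicate as well.
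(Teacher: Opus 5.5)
Your proposal is correct and follows essentially the paper's route. In part (i) both arguments use the push-forward $\mu_N=\mu\circ P_N^{-1}$, $\mu$-a.e.\ convergence along sub-subsequences, uniform integrability and reverse Fatou. In part (ii) both use the finite-dimensional Young/Gaussian integration-by-parts inequality, strong convergence of $\div_{\mu_N}(P_NX)$ to $\div_\mu(X)$ (the paper's Lemma~\ref{Lemma_div_conv}), weak--strong pairing, the limsup bound on the $g_N$-term, and a supremum over $X$.

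The one variation is in part (i), where the paper splits into $I_1(N)$ and $I_2(N)$ with a $(1+\delta)$ factor. You instead absorb the $N$-dependence of the coefficients $\partial_i\Phi$ through equicontinuity of the quadratic forms on $\spa\{p^1,\dots,p^k\}$. This also makes explicit the countable-dense-set step that the paper uses only implicitly: applying the ``for each $p$, $\mu$-a.e.\ $h$'' hypothesis to covectors that depend on $h$.
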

		To deduce Mosco convergence from Theorem \ref{thm_mosco_abstract}, it remains to identify the upper and lower bound with each other. We achieve this for non-degenerate and uniformly bounded metric tensors, essentially by comparing to the flat case.
		\begin{proposition}\label{Prop_weak_is_strong}
			Under Assumption \ref{ass_Mosco_abstr}, if there are $0<c\le C<\infty$ such that
			\begin{equation}\label{comparable_to_flat}
				c|p|_{\HH_{-1}^*}^2  \, \le \,
				g^h ( p,p) \,\le \, 
				C|p|_{\HH_{-1}^*}^2   ,
			\end{equation}
			for all $h\in \X$, then $(\EE,\D(\EE)) =(\EEw, \D(\EEw))$. In particular, if the conditions of Theorem \ref{thm_mosco_abstract} \eqref{Mosco_I1} and \eqref{Mosco_I2} are satisfied,  $\EE_N $ converges to $\EE$ in Mosco sense.
		\end{proposition}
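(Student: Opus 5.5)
By Remark \ref{rem_weak_to_strong} we already know $\EEw\le\EE$, so it remains to show $\D(\EEw)\subset\D(\EE)$ with $\EE(\eta,\eta)\le\EEw(\eta,\eta)$. The plan is a comparison with the flat Gaussian case. For the flat form $\EEf(\eta,\eta)=\int_\X|\diff\eta|^2_{\HH_{-1}^*}\,\d\mu$, classical Malliavin calculus says that weak and strong Sobolev spaces coincide. Concretely, the closure of $(\EEf,\D_0)$ is $\mathbb{D}^{1,2}$, and the Malliavin derivative is the adjoint of the divergence, defined on elementary vector fields $X=\sum_i\zeta^i\dot h_i$ by \eqref{eqn_divmu}.

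The first step is to show that $\D(\EEw)\subset\mathbb{D}^{1,2}$ by means of \eqref{comparable_to_flat}. Let $\eta\in\D(\EEw)$. The dual metric satisfies $g_h(X,X)\le c^{-1}|X|_{\HH_{-1}}^2$. Testing the supremum with $tX$ and optimizing over $t$ gives
\[
\int_\X\eta\,\div_\mu(X)\,\d\mu\,\le\,\EEw(\eta,\eta)^{1/2}\Bigl(\int_\X g_\cdot(X,X)\,\d\mu\Bigr)^{1/2}\,\lesssim\,\EEw(\eta,\eta)^{1/2}\|X\|_{L^2(\mu;\HH_{-1})}.
\]
By Riesz representation there is then a unique $D\eta\in L^2(\mu;\HH_{-1}^*)$ with $\int\eta\div_\mu X=-\int D\eta_\cdot X$ for all elementary $X$, since these fields are dense in $L^2(\mu;\HH_{-1})$. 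This is the weak-derivative characterization of $\mathbb{D}^{1,2}$: the divergence is closed, and the derivative is its adjoint on the Gaussian space. Hence $\eta\in\mathbb{D}^{1,2}$. Moreover, $\EEw(\eta,\eta)=\sup_X\{-2\int D\eta_\cdot X-\int g(X,X)\}$, and density of the $X$'s in $L^2(\mu;\HH_{-1})$ with the upper bound on $g_h$ shows that this equals $\int g^h(D\eta,D\eta)\,\d\mu$. The supremum is attained pointwise at $X=-g^{\cdot}(D\eta,\cdot)$ viewed as a vector, which lies in $L^2$ and can be approximated.

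The second step is to show that $\mathbb{D}^{1,2}\subset\D(\EE)$ with $\EE(\eta,\eta)=\int g^h(D\eta,D\eta)\,\d\mu$. Take $\eta_k\in\D_0$ with $\eta_k\to\eta$ in $\mathbb{D}^{1,2}$; this is possible because $\D_0$ is a core for the flat form. The upper bound in \eqref{comparable_to_flat} makes $\eta_k$ $\EE$-Cauchy, and it gives $\EE(\eta_k,\eta_k)\to\int g^h(D\eta,D\eta)\,\d\mu$. Closedness of $\EE$ then yields the claim, and one also checks via the lower bound that $\diff^\rho\eta=D\eta$. Combining the two steps gives $\EE=\EEw$ on equal domains.

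The Mosco convergence then follows directly. Theorem \ref{thm_mosco_abstract}\eqref{Mosco_I2} provides the liminf inequality with $\EEw=\EE$. Theorem \ref{thm_mosco_abstract}\eqref{Mosco_I1}, applied to the constant sequence $\eta\in\D_0$, provides recovery sequences on the core $\D_0$. A diagonal argument extends these to all of $\D(\EE)$, using that $\|\cdot\|_{L^2(\mu_N)}\to\|\cdot\|_{L^2(\mu)}$ on $\CC_0$.

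The main obstacle is the first step, namely identifying the weak form with $\int g^h(D\eta,D\eta)$. The key input there is that $\D_0$ and the elementary vector fields are rich enough in the Gaussian setting: closedness of the divergence and the core property of $\D_0$ for $\mathbb{D}^{1,2}$. Both are standard and can be cited from Nualart.
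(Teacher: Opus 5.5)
There is a genuine gap, and it lies in how you identify the flat form.

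\textbf{The norm mismatch.} In this paper $\EEf(\eta,\eta)=\int_\X|\diff\eta|^2_{\HH_{-1}^*}\,\d\mu$ measures the differential in $\HH_{-1}^*$. The classical Malliavin energy measures it in the dual of the Cameron--Martin space, i.e.\ in $|\cdot|_{\HH_1^*}$. These norms are far from equivalent, since $|p^k|_{\HH_{-1}^*}=1$ while $|p^k|_{\HH_1^*}=\sigma_k\to0$. On the Hermite basis this gives $\EEf(\mathbf{H_a},\mathbf{H_a})=\sum_k a_k/\sigma_k^2$, which equals $\sum_k(\pi k)^4a_k$ in the thin-film case. The Malliavin energy is instead the number operator $\sum_k a_k$. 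Hence the closure of $(\EEf,\D_0)$ is not $\mathbb{D}^{1,2}$. It is the strictly smaller space of $\sum_{\mathbf a}\lambda_{\mathbf a}\mathbf{H_a}$ with $\sum_{\mathbf a}\lambda_{\mathbf a}^2\sum_k a_k\sigma_k^{-2}<\infty$.

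\textbf{Your first step survives.} The Riesz argument gives a weak derivative $D\eta\in L^2(\mu;\HH_{-1}^*)\subset L^2(\mu;\HH_1^*)$. The classical weak-equals-strong theorem on Wiener space then gives $\eta\in\mathbb{D}^{1,2}$. Note that this uses that elementary fields are a core for the divergence, not merely that the divergence is closed. Your Legendre-duality identification $\EEw(\eta,\eta)=\int_\X g^h(D\eta,D\eta)\,\d\mu$ is also correct.

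\textbf{Your second step fails.} Choosing $\eta_k\in\D_0$ with $\eta_k\to\eta$ in $\mathbb{D}^{1,2}$ only yields $\diff\eta_k\to D\eta$ in $L^2(\mu;\HH_1^*)$. That does not control $\EE(\eta_k-\eta_l,\eta_k-\eta_l)\le C\|\diff(\eta_k-\eta_l)\|^2_{L^2(\mu;\HH_{-1}^*)}$. So you have no $\EE$-Cauchy sequence, and hence no inclusion $\D(\EEw)\subset\D(\EE)$.

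\textbf{What is missing.} You need $\D_0$ to be dense, in the $\HH_{-1}^*$-graph norm, in the set of $\eta$ whose weak derivative lies in $L^2(\mu;\HH_{-1}^*)$. In other words, you need weak equals strong for this weighted flat form. This is exactly Lemma \ref{lemma_flat}, and it is not contained in the classical results you cite. The paper proves it by computing $\EE$ and $\EEw$ on Hermite polynomials, via $\div_\mu(\mathbf{H_b}\vp_k)=-\sigma_k^{-1}\sqrt{b_k+1}\,\mathbf{H_{b+e_k}}$ and the orthonormal basis $(\mathbf{H_b}\vp_k)$ of $L^2(\mu;\HH_{-1})$.

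The essential input absent from your proposal is that $(\vp_k)$ diagonalizes $\HH_1$ and $\HH_{-1}$ simultaneously. With it, you could also repair your second step by conditional expectations onto $\sigma((\cdot,p^1),\dots,(\cdot,p^n))$. This works because the projection onto $\spa\{p^1,\dots,p^n\}$ is orthogonal in both $\HH_1^*$ and $\HH_{-1}^*$.

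Once this lemma is supplied, your route works and is in one respect tidier than the paper's. The paper gets $\D(\EE)=\D(\EEw)$ by comparison with the flat case and checks equality of values only on $\D_0$; you identify both forms with $\int_\X g^h(D\eta,D\eta)\,\d\mu$ on the whole domain. Your treatment of the Mosco statement is fine. Recovery sequences on $\D_0$ suffice by definition, so the diagonal argument is not needed.
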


	\subsection{Proof of the upper bound}
	We verify the first part of Theorem \ref{thm_mosco_abstract}. 
	\begin{proof}[Proof of Theorem \ref{thm_mosco_abstract}~\eqref{Mosco_I1}] As $\mu_N=  \mu \circ P_N^{-1} $, we may write for $\eta\in \D_0$ and $\delta>0$
		\begin{align}&\limsup_{N\to \infty}	\EE_N(\eta,\eta )  \,=\,\limsup_{N\to\infty} \int_{\X}g_N^{P_Nh}(\diff|_{P_N h} \eta ,\diff|_{P_Nh}  \eta )
			 \,\d\mu(h)\\&\quad \le C_\delta \lim_{N\to\infty } 
			 \int_{\X}g_N^{P_Nh}(\diff|_{P_N h} \eta -   \diff|_{h} \eta, \diff|_{P_N h} \eta -   \diff|_{h} \eta )
			 \,\d\mu(h)\\&\qquad +(1+\delta) \limsup_{N\to\infty}  \int_{\X}g_N^{P_Nh}( \diff|_{h} \eta,   \diff|_{h} \eta )
			 \,\d\mu(h)\\&\quad =:\, C_\delta\lim_{N\to\infty}I_1(N)\,+\, (1+\delta)\limsup_{N\to\infty}  I_2(N),
		\end{align} 
	where we  do not distinguish between $P_N^*p $ and $p$, since the former is just the restriction of the latter. 
	
	Regarding $I_1$, we assume that $\eta$ is of the form \eqref{eqn_eta}, so that one can expand
	\begin{align}\label{eqn2736}
		I_1(N) \,=\,\sum_{i,j=1}^k \int_{\X} \Bigl( \partial_i\Phi( l( P_Nh )) - \partial_i\Phi( l( h )) \Bigr)\Bigl( \partial_j\Phi( l( P_Nh )) - \partial_j\Phi( l( h )) \Bigr) g_N^{P_N h }(p^i,p^j) \,\d \mu(h),
	\end{align}
	if one abbreviates 
	\[
	l \,=\, \bigl( (\cdot, p^1)_{\mathcal{H}_{-1}\times \mathcal{H}_{-1}^*} , \dots,(\cdot, p^k)_{\mathcal{H}_{-1}\times \mathcal{H}_{-1}^*}  \bigr). 
	\]
	Then, for any subsequence, by the Paley--Wiener isomorphism, we have $l(P_N\cdot) \to l$ in $L^2(\mu)$ and therefore, up to passing to a further subsequence, also $\mu$-almost everywhere. Since by assumption the integrand of \eqref{eqn2736} is additionally uniformly $\mu$-integrable, it follows from the continuity of $\partial_i\Phi$ that $I_1(N)\to 0$ along this subsequence. Since this argument may be repeated for any subsequence of $I_1(N)$, we find that also the original sequence tends to $0$. 
	
	To deal with $I_2$, we may by assumption find for each $\epsilon>0$ a $K<\infty$ such that
	\[I_2(N) \le\int_{\X} K\wedge g_N^{P_Nh}( \diff|_{h} \eta,   \diff|_{h} \eta ) 
	\,\d\mu(h) \,+\,\epsilon,
	\]
	for all $N$. Using \eqref{eqn_ass_convergence}, we deduce then by means of (the reverse) Fatou's lemma that
	\[
	\limsup_{N\to \infty} I_2(N)  \le\int_{\X} K\wedge g^{h}( \diff|_{h} \eta,   \diff|_{h} \eta ) 
	\,\d\mu(h) \,+\,\epsilon\,\le\, \EE(\eta ,\eta) +\epsilon.
	\]
	
	All in all, we have shown that 
	\[
	\limsup_{N\to \infty}	\EE_N(\eta,\eta ) \le (1+\delta)\bigl(   \EE(\eta ,\eta) +\epsilon \bigr),
	\]
	and since $\delta,\epsilon>0$ are arbitrary, \eqref{eqn_Mosco_upper} follows. 
\end{proof}
	\subsection{Proof of the lower bound}
	In the following, we also need $\div_{\mu_N}(X)$, which we define as \eqref{eqn_divmu} for $\HH_N$-valued vector fields. 
Then, the $\mu_N$-divergences converge strongly.
	\begin{lemma}For $X$ as in \eqref{eqn_XX}, we have $L^2(\mu_N)\ni \div_{\mu_N}(P_N X) \to \div_\mu(X) \in L^2(\mu)$.\label{Lemma_div_conv}
	\end{lemma}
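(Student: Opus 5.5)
We would compute both divergences explicitly from \eqref{eqn_divmu} and compare them term by term. Write $X=\sum_{i=1}^k\zeta^i\dot h_i$ with $\zeta^i\in\CC_0$ and $\dot h_i\in\HH_1$, so that $P_NX=\sum_i\zeta^i P_N\dot h_i$. This gives
\begin{align}
\div_{\mu_N}(P_NX)\,=\,\sum_{i=1}^k \diff\zeta^i_\cdot P_N\dot h_i\,-\,\zeta^i\,(\cdot,P_N\dot h_i)_{\HH_1},
\end{align}
viewed as a function on $\X$ and evaluated $\mu_N$-a.e. Since $\mu_N=\mu\circ P_N^{-1}$, strong convergence in the sense of Appendix \ref{App:Mosco} should reduce to a statement under the single measure $\mu$. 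Concretely, we aim to show that the composition $\div_{\mu_N}(P_NX)\circ P_N$ converges to $\div_\mu(X)$ in $L^2(\mu)$, where $P_N$ is extended $\mu$-a.e.\ by Paley--Wiener. The reason this suffices is that, for every $\zeta\in\CC_0$, the composition $\zeta\circ P_N$ converges to $\zeta$ in $L^2(\mu)$, again by Paley--Wiener and dominated convergence. Hence $\zeta$ serves as an approximating sequence in the definition of strong convergence, and an $L^2(\mu)$-comparison of the pullbacks gives $\|\div_{\mu_N}(P_NX)-\zeta\|_{L^2(\mu_N)}\to\|\div_\mu(X)-\zeta\|_{L^2(\mu)}$.

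The first key step handles the transport term. Write $\zeta^i=\Phi(l)$ with $l=((\cdot,p^1),\dots,(\cdot,p^m))$. Then
\[
(\diff\zeta^i_\cdot P_N\dot h_i)\circ P_N\,=\,\sum_j\partial_j\Phi(l(P_N h))\,(P_N\dot h_i,p^j).
\]
Here $(P_N\dot h_i,p^j)\to(\dot h_i,p^j)$ because $P_N\to\mathrm{id}$ strongly on $\HH_1$. Along subsequences, $l(P_N h)\to l(h)$ $\mu$-a.e.\ by Paley--Wiener. The functions $\partial_j\Phi$ are bounded and continuous, so dominated convergence gives $L^2(\mu)$ convergence to $\diff\zeta^i_\cdot\dot h_i$. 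The subsequence argument then upgrades this to convergence of the full sequence, exactly as in the treatment of $I_1$ in the proof of the upper bound.

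The second key step handles the multiplication term, which is the only delicate point. On $\HH_N$ we have $(P_Nh,P_N\dot h_i)_{\HH_1}=(h,P_N\dot h_i)_{\HH_1}$, so the pullback of $(\cdot,P_N\dot h_i)_{\HH_1}$ is the Paley--Wiener functional $W(P_N\dot h_i)$. We then use the Paley--Wiener isometry:
\[
\|W(P_N\dot h_i)-W(\dot h_i)\|_{L^2(\mu)}=\|P_N\dot h_i-\dot h_i\|_{\HH_1}\to0.
\]
To pass this through the product, we split
\[
\zeta^i(P_N\cdot)\,W(P_N\dot h_i)-\zeta^iW(\dot h_i)\,=\,\zeta^i(P_N\cdot)\bigl(W(P_N\dot h_i)-W(\dot h_i)\bigr)+\bigl(\zeta^i(P_N\cdot)-\zeta^i\bigr)W(\dot h_i).
\]
The first term vanishes because $\zeta^i$ is bounded. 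The second vanishes by dominated convergence along subsequences, using that $W(\dot h_i)\in L^2(\mu)$. Combining this with the first key step yields the claim.

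The main obstacle I expect is the bookkeeping needed to match the abstract notion of strong convergence with $L^2(\mu)$ convergence of pullbacks. In particular, we must check that $\div_{\mu_N}$, which is a priori defined on $\HH_N$ or $\mu_N$-a.e., pulls back consistently along $P_N$. We must also check that the identification of $L^2(\mu_N)$ through $\CC_0$ is compatible with the isometric embedding $L^2(\mu_N)\ni\eta\mapsto\eta\circ P_N\in L^2(\mu)$. Once this embedding is fixed, the argument reduces to the two estimates above.
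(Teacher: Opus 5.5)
Your proof is correct. It differs from the paper's in how the limit is taken, though both arguments start from the same explicit formulas for $\div_{\mu_N}(P_NX)$ and $\div_\mu(X)$ and rest on the same two facts:
\begin{itemize}
\item the linear term $(\cdot,P_N\dot h_i)_{\HH_1}$ can be traded for $(\cdot,\dot h_i)_{\HH_1}$ under $\mu_N$ (your identity $(P_Nh,P_N\dot h_i)_{\HH_1}=(h,P_N\dot h_i)_{\HH_1}$ is the pulled-back form of this);
\item $P_N\dot h_i\to\dot h_i$ in $\HH_1$.
\end{itemize}

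The paper checks the two defining conditions of strong convergence directly. It observes that every quantity involved is an expectation of a function of finitely many linear functionals. These are jointly Gaussian under $\mu_N$, with covariances converging to those under $\mu$, as in the remark below \eqref{eqn_CC_0}. You instead couple all measures on $(\X,\mu)$ via $\mu_N=\mu\circ P_N^{-1}$ and prove $L^2(\mu)$-convergence of the pullbacks. This is the device the paper itself uses in the proof of Theorem~\ref{thm_mosco_abstract}~\eqref{Mosco_I1}.

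What each route buys:
\begin{itemize}
\item The paper's version is shorter, works purely at the level of finite-dimensional laws, and never needs a $\mu$-a.e.\ extension of $P_N$ to $\X$.
\item Yours gives the stronger statement of norm convergence under the isometric embedding $L^2(\mu_N)\ni\eta\mapsto\eta\circ P_N\in L^2(\mu)$. It delivers both conditions at once and bounds the linear term explicitly by $\|P_N\dot h_i-\dot h_i\|_{\HH_1}$.
\end{itemize}

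The bookkeeping you flag is harmless. The pullback is an isometry precisely because $\mu_N=\mu\circ P_N^{-1}$. Moreover, $(P_Nh,p)_{\HH_1\times\HH_1^*}$ coincides $\mu$-a.e.\ with the Paley--Wiener functional of $P_N^*p$, so the pullback of $\zeta\in\CC_0$, viewed in $L^2(\mu_N)$, is exactly $\zeta\circ P_N$.

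One small precision: $\div_\mu(X)$ is unbounded and hence not in $\CC_0$. The approximating sequence in the characterization of strong convergence is therefore some $\CC_0\ni\zeta_k\to\div_\mu(X)$ in $L^2(\mu)$, not a fixed $\zeta$. Your comparison $\|\div_{\mu_N}(P_NX)-\zeta\|_{L^2(\mu_N)}\to\|\div_\mu(X)-\zeta\|_{L^2(\mu)}$ for every $\zeta\in\CC_0$, together with density of $\CC_0$, supplies exactly this.
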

\begin{proof}
	We need to check that for all $\eta \in \CC_0$, we have 
	\begin{align}\begin{split}\label{eqn82164321866213}
			&
		(\eta , \div_{\mu_N}(P_N X))_{L^2(\mu_N)} \to (\eta, \div_\mu (X))_{L^2(\mu)},
		\\
		&\| \div_{\mu_N}(P_N X) \|_{L^2(\mu_N)}\to \| \div_{\mu}( X) \|_{L^2(\mu)}, 
		\end{split}
\end{align}
as $N\to\infty$. 
For this we use that for $X$ as in \eqref{eqn_XX}, we may evaluate 
\begin{align}
	\div_\mu (X)  \,=\,   \sum_{i=1}^k\diff {\zeta^i}_{\cdot} \dot{h}_i \,-\, \zeta^i(\cdot, \dot{h}_i)_{\HH_1} ,
\end{align}
and the latter summands further as
\begin{align}&
	\diff \zeta|_{\cdot} X \,-\, \zeta (\cdot, X)_{\HH_1}  \, =\,\sum_{i=1}^k \partial_i \Phi\bigl( (\cdot, p^1)_{\mathcal{H}_{1}\times \mathcal{H}_{1}^*} , \dots,(\cdot, p^k)_{\mathcal{H}_{1}\times \mathcal{H}_{1}^*}  \bigr) (X,p^i)_{\mathcal{H}_{1}\times \mathcal{H}_{1}^*}- \zeta (\cdot, X)_{\HH_1} ,
\end{align}
if 
\[
	\zeta \,=\,\Phi\bigl( (\cdot, p^1)_{\mathcal{H}_{1}\times \mathcal{H}_{1}^*} , \dots,(\cdot, p^k)_{\mathcal{H}_{1}\times \mathcal{H}_{1}^*}  \bigr).
\] 
The summands making up $\div_{\mu_N}(P_N X)$ differ only slightly:  they are of the form 
\[\sum_{i=1}^k\partial_i \Phi\bigl( (\cdot, p^1)_{\mathcal{H}_{1}\times \mathcal{H}_{1}^*} , \dots,(\cdot, p^k)_{\mathcal{H}_{1}\times \mathcal{H}_{1}^*}  \bigr) (P_NX,p^i)_{\mathcal{H}_{1}\times \mathcal{H}_{1}^*}- \zeta (\cdot,P_N X)_{\HH_1}.\]
It remains to use that $(\cdot, X)_{\HH_1} = (\cdot, P_NX)_{\HH_1}$ in $L^2(\mu_N)$ and that $P_NX \to X$ in $\mathcal{H}_{1}$ to deduce \eqref{eqn82164321866213} following  the line of arguments below \eqref{eqn_CC_0}.
\end{proof}
\begin{proof}[Proof of Theorem \ref{thm_mosco_abstract}~\eqref{Mosco_I2}]
Let	$L^2(\mu_N) \ni \eta_N \rightharpoonup \eta \in L^2(\mu)$ and $X$ as in \eqref{eqn_XX}. Then, by applying Remark \ref{rem_weak_to_strong} in the finite dimensional setting, we can bound 
\begin{align}
	\EE_N (\eta_N,\eta_N) \ge  2
	\int_{\X} \eta_N \div_{\mu_N}(P_N X) \,\d\mu_N \,-\, \int_{\X} g_{N,\cdot} (P_NX,P_NX)\,\d\mu_N,
\end{align}
so that in light of Lemma \ref{Lemma_div_conv}
\begin{align}
	\liminf_{N\to\infty} \EE_N (\eta_N,\eta_N) \ge  2
	\int_{\X} \eta \div_{\mu}(X) \,\d\mu \,-\, \limsup_{N\to\infty}\int_{\X} g_{N,\cdot} (P_N X,P_N X)\,\d\mu_N.
\end{align}
Analogously to the proof of part \eqref{Mosco_I1}, we deduce from the assumptions  that 
\[
\limsup_{N\to\infty}\int_{\X} g_{N,\cdot} (P_N X,P_N X)\,\d\mu_N \,\le\, \int_{\X} g_{\cdot} ( X, X)\,\d\mu.
\]
Taking the supremum over $X$ results in \eqref{lower_bound_eqn}.
\end{proof}
	\subsection{Weak equals strong for non-degenerate metrics} To prove Proposition \ref{Prop_weak_is_strong}, we introduce some extra notation: We denote the Hermite polynomials by \begin{equation}\label{Eq62}
		H_a(x) =  \frac{(-1)^a}{a!} \exp(x^2/2) \frac{\d^a}{\d x^a} \exp(-x^2/2), \quad a\in \N_0.
	\end{equation}
Then, if $( p^k/\sigma_k)_{k\in \N}$ is the orthonormal basis of $\HH_{1}^*$, such that $ p^k  $ is orthonormal in $ \HH_{-1}^*$ constructed in \eqref{eqn_SVD}, we obtain by 
\begin{equation}\label{Eq60}
\mathbf{H_a} \,=\, \sqrt{\mathbf{a}!}\prod_{k\in \N} H_{a_k}((\cdot,  p^k/\sigma_k )_{{\HH}_1 \times \HH_1^*} ),
\end{equation}
indexed by finitely supported multiindices $\mathbf{a} \in \N_0^{\N}$,  an orthonormal basis of $L^2(\mu)$, see \cite[Proposition 1.1.1]{Nualart_related}. We first consider the flat case.
	\begin{lemma}\label{lemma_flat}
			Under  Assumption \ref{ass_Mosco_abstr}, if $
			g^\cdot ( p,p) =
			|p|_{\HH_{-1}^*}^2   $, then $(\EE,\D(\EE)) =(\EEw, \D(\EEw))$.
	\end{lemma}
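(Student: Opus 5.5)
Both forms will be diagonalized in the Hermite basis \eqref{Eq60}, and the two resulting quadratic expressions will be shown to agree. Since $\EEw\le \EE$ by Remark \ref{rem_weak_to_strong}, it suffices to prove $\EE(\eta,\eta)\le \EEw(\eta,\eta)$ for every $\eta\in\D(\EEw)$, which includes showing $\eta\in\D(\EE)$. In the flat case $g^h(p,p)=|p|^2_{\HH_{-1}^*}$ we have $g_h(\dot h,\dot h)=|\dot h|^2_{\HH_{-1}}$. With the orthonormal basis $(\vp_k)$ of $\HH_{-1}$ from \eqref{eqn_SVD}, the form becomes
\[
\EE(\eta,\eta)=\sum_k\int (\diff\eta_\cdot\vp_k)^2\,\d\mu .
\]
The directional derivative along $\vp_k=\sigma_k\cdot(\sigma_k\vp_k)$ acts on the coordinate $(\cdot,p^k/\sigma_k)$ as $\sigma_k\,\partial$. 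Since $H_a'=H_{a-1}$ with the normalization \eqref{Eq62}, this derivative maps $\mathbf{H_a}$ to $\sigma_k\sqrt{a_k}\,\mathbf{H_{a-e_k}}$. For cylinder polynomials we therefore get $\EE(\eta,\eta)=\sum_{\mathbf a}\bigl(\sum_k\sigma_k^2a_k\bigr)\hat\eta_{\mathbf a}^2$, where $\hat\eta_{\mathbf a}=(\eta,\mathbf{H_a})_{L^2(\mu)}$.

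I then identify the closure. Let $\mathcal{D}$ be the set of $\eta\in L^2(\mu)$ with $\sum_{\mathbf a}(\sum_k\sigma_k^2a_k)\hat\eta_{\mathbf a}^2<\infty$. I claim that $\D(\EE)=\mathcal{D}$ and that $\EE$ is given there by this weighted sum. Finite Hermite truncations of $\eta\in\mathcal D$ converge in $\EE$-norm. Each Hermite polynomial lies in $\D(\EE)$, because it is approximated by elements of $\D_0$ through smooth bounded truncations of polynomials in finitely many Gaussian coordinates, and the relevant convergence follows from dominated convergence with Gaussian moments. This gives $\mathcal D\subset\D(\EE)$. The reverse inclusion follows from closedness together with Fatou on coefficients, since the multiplier form is closed.

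For the weak form, I test with vector fields $X=\sum_{k\le K}\zeta^k\,\sigma_k^{-1}\vp_k\cdot\sigma_k^2$, that is, fields pointing along $\vp_k\in\HH_1$ with coefficients $\zeta^k$ equal to finite Hermite combinations. These coefficients lie in $\CC_0$ only up to approximation, but $\div_\mu$ and $\int g(X,X)$ are continuous under $L^2$-convergence of Hermite-polynomial coefficients with bounded degree, so the supremum in $\EEw$ may be taken over them. Integrating by parts, $\div_\mu(\zeta\vp_k)$ is $-\delta$ applied along $\vp_k$. In the Hermite basis this is the creation operator: $\mathbf{H_a}\vp_k\mapsto-\sigma_k\sqrt{a_k+1}\,\mathbf{H_{a+e_k}}$, which is the adjoint of the derivative computed above. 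Consequently $\int\eta\,\div_\mu(X)\,\d\mu=-\sum_k\int \diff\eta_\cdot\vp_k\,\zeta^k\,\d\mu$ at the level of coefficients, even when $\eta$ is only in $L^2(\mu)$. Moreover $\int g(X,X)\,\d\mu=\sum_k\|\zeta^k\|_{L^2}^2$.

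It remains to optimize. I choose $\zeta^k=-$ (the coefficient truncation of the formal derivative $\sigma_k\sqrt{a_k}\hat\eta_{\mathbf a}$ placed at $\mathbf{H_{a-e_k}}$), restricted to $|\mathbf a|\le M$ and $k\le K$. The supremum then yields $\frac12\EEw(\eta,\eta)\ge\frac12\sum_{|\mathbf a|\le M}\bigl(\sum_{k\le K}\sigma_k^2a_k\bigr)\hat\eta_{\mathbf a}^2$. Letting $M,K\to\infty$ by monotone convergence gives $\eta\in\mathcal D$ and $\EE\le\EEw$.

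The main obstacle is the bookkeeping in this last step: the test fields must be admissible, meaning their coefficients lie in $\CC_0$ or are approximable by such coefficients with convergence of both terms, and the pairing $\int\eta\,\div_\mu X$ must really reduce to the coefficient formula for a general $\eta\in L^2(\mu)$. For the former I first check the identity for $X$ with $\CC_0$-coefficients via \eqref{GI_eqn}. I then approximate polynomial coefficients by bounded $C^1_b$ truncations, using the hypercontractive $L^p$ bounds on Hermite polynomials to keep $\div_\mu(X)$ converging in $L^2(\mu)$.
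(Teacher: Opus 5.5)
Your argument is essentially the paper's: expand in the Hermite basis \eqref{Eq60}, show $\mathbf{H_a}\in\D(\EE)$ with $\EE$ diagonal, observe that $\div_\mu(\mathbf{H_b}\vp_k)$ is a creation operator adjoint to $\eta\mapsto\diff\eta_\cdot\vp_k$, and close with Remark \ref{rem_weak_to_strong}. The only difference is in the last step. You bound $\EEw$ from below with explicit truncated test fields. The paper instead identifies $\EEw(\eta,\eta)$ as the squared norm of the extended functional $X\mapsto\int\eta\,\div_\mu(X)\,\d\mu$ on $L^2(\mu;\HH_{-1})$ and reads it off by Parseval in the basis $(\mathbf{H_b}\vp_k)_{\mathbf b,k}$. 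Your explicit treatment of the non-$\CC_0$ polynomial coefficients fills in a step the paper leaves implicit.

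You do have a scaling slip. The vector $\sigma_k\vp_k$ is the $\HH_1$-normalized one, so $\vp_k=\sigma_k^{-1}(\sigma_k\vp_k)$. The derivative along $\vp_k$ therefore acts as $\sigma_k^{-1}\partial$ on the coordinate $(\cdot,p^k/\sigma_k)$, not $\sigma_k\partial$. The correct formulas are:
\begin{itemize}
\item $\diff\mathbf{H_a}_\cdot\vp_k=\sigma_k^{-1}\sqrt{a_k}\,\mathbf{H_{a-e_k}}$;
\item the weights are $\sum_k a_k/\sigma_k^2$ as in \eqref{eqn73}, i.e.\ $\sum_k(\pi k)^4a_k$ in the setting of \eqref{eqn112};
\item $\div_\mu(\mathbf{H_b}\vp_k)=-\sigma_k^{-1}\sqrt{b_k+1}\,\mathbf{H_{b+e_k}}$.
\end{itemize}
Correspondingly, the test field should simply be $X=\sum_{k\le K}\zeta^k\vp_k$. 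For this field $\int g_\cdot(X,X)\,\d\mu=\sum_k\|\zeta^k\|^2_{L^2(\mu)}$ holds because $(\vp_k)_k$ is orthonormal in $\HH_{-1}$. Your field $\sum_k\zeta^k\sigma_k\vp_k$ does not satisfy that identity.

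The wrong factor enters gradient and divergence symmetrically, so the adjointness and the conclusion $\EE=\EEw$ survive. Your description of $\D(\EE)$ with weights $\sigma_k^2a_k$ is, however, incorrect as stated.
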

\begin{proof}
	In light of Remark \ref{rem_weak_to_strong}, it suffices to show that $\EEw \ge \EE$. For this, we recall 
	for the Hermite polynomials from \eqref{Eq62} that $H_1'(x) =H_0(x)$ as well as 
	\begin{align} (a+1)H_{a+1}(x) = xH_a(x)- H_a'(x) \label{eqn_rec_relation}
	\end{align}
 and thus it follows by induction 
\begin{align}\begin{split}\label{Eq68-n}
		H_{a+1}'(x) \,=\,& \frac{1}{a+1} \bigl(H_a(x)\,+ \,x H_a '(x) \,-\, H_{a}''(x)\bigr)
		\\=\, & \frac{1}{a+1} \bigl(
		H_a(x)\,+ \,x H_{a-1} (x) \,-\, H_{a-1}'(x) \bigr) \,=\, H_a(x).
	\end{split}
\end{align}
Consequently, we have 
\begin{align}\label{Eq16_newn} 
	\diff \mathbf{H_a}\,=\,
	\sqrt{ \mathbf{a}!}\sum_{k\in \N: a_k\ne 0} H_{a_k-1}((\cdot,  p^k/\sigma_k )_{{\HH}_1 \times \HH_1^*} ) \biggl(
	\prod_{j\in \N\setminus \{ k\}} H_{a_j}((\cdot,  p^j/\sigma_j )_{{\HH}_1 \times \HH_1^*} )
	\biggr) { p^k}/{\sigma_k}. 
\end{align}
	 Approximating $\mathbf{H_a}$ by functions from $\D_0$, we find thereby that $\mathbf{H_a} \in \D(\EE)$ with 
	 \begin{align}
	 	\EE(\mathbf{H_a},\mathbf{H_b})\,=\, \int_{\X} g^h(\diff \mathbf{H_a},\diff \mathbf{H_b}) \,\d\mu\,=\, \delta_{\mathbf{ab}}  \sum_{k\in \N}
	 	a_k /\sigma_k ^2,
	 \end{align} 
 and consequently 
 \begin{align}\label{eqn73}\eta \,=\, 
 	\sum_{\mathbf{a}}\lambda_{\mathbf{a}} \mathbf{H_a} \in \D(\EE) \qquad \iff \qquad \EE(\eta,\eta )\,=\, 	\sum_{\mathbf{a}} \lambda_{\mathbf{a}}^2
 	\biggl(\sum_{k\in\N}
 	a_k /\sigma_k ^2\biggr)\,<\,\infty.
 \end{align}

Now let $\eta = 
\sum_{\mathbf{a}}\lambda_{\mathbf{a}} \mathbf{H_a}  \in \D(\EEw)$ for which our aim is to verify condition \eqref{eqn73}. 
To this end, we observe that 
\begin{align}\label{eqn63}X\mapsto 
\int_\X \eta \div_\mu(X) \,\d\mu 
\end{align}
extends to a continuous linear functional on $L^2(\mu;\HH_{-1})$ with norm $ \EEw^{1/2}(\eta,\eta ) $. 
 From the dual basis $(\vp_k)_{k\in \N}$   to $(p^k)_{k\in \N}\subset (\HH_{-1})^*$ in $\HH_{-1}$, cf.\ \eqref{eqn_SVD}, we may construct moreover the orthonormal basis
\begin{align}(
	\mathbf{H_b} \vp_k )_{\mathbf{b},k}
\end{align}
of $L^2(\mu; \HH_{-1})$. Based on the latter, we may measure 
\begin{align}\label{eqn275}
	 \EEw(\eta,\eta ) \,=\, 
	 \sum_{ \mathbf{b},k}  \biggl(\int_\X \eta \div_\mu( \mathbf{H_b} \vp_k ) \,\d\mu\biggr)^2  .
\end{align}
We evaluate the appearing $\mu$-divergence as
\begin{align}
	\div_{\mu}(\mathbf{H_b} \vp_k  ) \,=&\, \diff \mathbf{H_b}_\cdot \vp_k \,-\, \mathbf{H_b} (\cdot, \vp_k )_{\HH_1}\\
 =&\, 
\frac{ \sqrt{\mathbf{b!}}}{\sigma_k} H_{b_k-1} ((\cdot,  p^k/\sigma_k )_{{\HH}_1 \times \HH_1^*} ) \biggl(
\prod_{j\in \N\setminus \{ k\}} H_{b_j}((\cdot,  p^j/\sigma_j )_{{\HH}_1 \times \HH_1^*} )
\biggr)\\& \,- \, \frac{ \sqrt{\mathbf{b!}}}{\sigma_k}  (\cdot,  p^k/ \sigma_k)_{\HH_1\times \HH_1^*} 
\prod_{j\in \N }
 H_{b_j}((\cdot,  p^j/\sigma_j )_{{\HH}_1 \times \HH_1^*} ),
\end{align}
using \eqref{Eq16_newn} and that $(\cdot,\vp_k)_{{\HH}_1} = \sigma_k^{-1} (\cdot,  p^k/ \sigma_k)_{\HH_1\times \HH_1^*}$. By the recurrence relation \eqref{eqn_rec_relation} of Hermite polynomials and \eqref{Eq68-n}, the above means nothing but 
\begin{align}\div_{\mu}(\mathbf{H_b} \vp_k )  \,=\, - \frac{\sqrt{b_{k}+1}}{\sigma_k}\mathbf{H_{b+e_k}} .
\end{align}
Inserting this in \eqref{eqn275} results in 
\begin{align}	\sum_{\mathbf{a}} \lambda_{\mathbf{a}}^2\biggl(
	\sum_{k\in \N} a_k/\sigma_k^2
	\biggr)
	\,=\,
	\sum_{ k\in \N } \sigma_k^{-2} \biggl(\sum_{\mathbf{b}}(b_k+1) \lambda_{\mathbf{b+e_k}}^2\biggr) \,=\, \EEw(\eta,\eta ) .
\end{align}
By \eqref{eqn73} we conclude that $\eta\in \D(\EE)$ with $\EE(\eta,\eta)=\EEw(\eta,\eta)$, completing the proof.
\end{proof}
	\begin{proof}[Proof of Proposition \ref{Prop_weak_is_strong}]
		This time we operate under the assumption \eqref{comparable_to_flat} instead of the equality assumed in Lemma \ref{lemma_flat}. From Remark \ref{rem_weak_to_strong} we have already that $\EEw \le \EE$. Moreover, since passing from the flat to a uniformly equivalent metric results in equivalent  weak and strong  Dirichlet forms, we  have $\D(\EE) = \D(\EEw)$ by Lemma \ref{lemma_flat}. It remains to show that $\EEw(\eta,\eta) = \EE(\eta,\eta)$ for $\eta \in \D_0$. For this, we use that  the Gaussian integration by parts formula \eqref{GI_eqn} implies
		\begin{align}
			\label{eqn824}
			\frac{1}{2}\EEw (\eta,\eta ) \,\ge  \,
			\int_{\X} \diff \eta_\cdot X\, \d \mu - \frac{1}2 \int_{\X}  
			g_\cdot(X,X)\,
			\d\mu ,
		\end{align}
	for all $X$ of the form \eqref{eqn_XX}. Since $\diff \eta \in L^2(\mu; \HH_{-1}^*)$, its $h$-wise dual $ \grad \eta (h) = g^h(\cdot, \diff|_h \eta) $ lies in $ L^2(\mu; \HH_{-1} )$. Thus, there exist  $X_k \to \grad \eta $ in $L^2(\mu; \HH_{-1} )$ and by inserting these in \eqref{eqn824}, we find  
	\begin{align}&\int_{\X} \diff \eta_\cdot X_k\, \d \mu \,\to\, \int_{\X} \diff \eta_\cdot \grad\eta\, \d \mu \,=\, \EE(\eta,\eta),
		\\&
		 \int_{\X}  
		g_\cdot(X_k,X_k)\,
		\d\mu \,\to\,  \int_{\X}  
		g_\cdot(\grad \eta,\grad\eta)\,
		\d\mu \,=\, \EE(\eta,\eta),
	\end{align}
	using once more the equivalence \eqref{comparable_to_flat}.
	\end{proof}

	\section{Convergence to non-degenerate  stochastic thin-film type diffusions}
	\subsection{Assumption and results of this section}\label{S:convergence_explicit}
	Here,  we prove the convergence of Markov processes in finite dimensional state spaces to a  stochastic thin-film type diffusion in a simplified setting: Firstly, we remove the non-negativity constraint $h\ge 0$ in order to work in the Gaussian setting imposed in Assumption \ref{ass_Mosco_abstr}. Since the fixed mass is thereby arbitrary by translation, we may in the following demand that $\int h = 0$ instead  of $1$, i.e., we work over the linear space $\MM$ from \eqref{e:M_example_flat}. Secondly, to identify the weak and strong Dirichlet form by means of Proposition \ref{Prop_weak_is_strong}, we assume also the following.
	\begin{assumption}\label{assumption_mosco_concrete}
				Let $m\colon  C(\overline{\T})\to  C(\overline{\T})$ be continuous such that 
				\begin{equation}\label{eqn_m_unif}
					0<c\le m(h) \le C<\infty,
				\end{equation}
				for all $h\in C(\overline{\T})$.
	\end{assumption}
	Although we now restrict ourselves to non-degenerate mobility functions, we again allow them to be non-local, as in Sections \ref{S:DF_in_inf_dimensinos}--\ref{S:discrete}.
	The additional continuity assumption is to ensure suitable convergences of discretized metric tensors. 
	
	 The conclusions of Theorems \ref{thm_existence_deg_case}--\ref{thm_existence_deg__fin_case} clearly remain valid when passing to this simplified setting and transfer also to the pre-Dirichlet form \eqref{E2}, i.e.,
	\begin{equation}\label{eqn:spectral_DF_section}
			\check{\EE}_N (\eta,\eta ) \,=\,\int_{{\check{\MM}_N}} \check{g}^h ( \diff|_h \eta  ,\diff|_h \eta ) \, \d  \check{\mu}_N(h)
	\end{equation}
defined over the vector space $\check{\MM}_N$ spanned by the first $N$ cosine modes. We recall that the appearing metric is defined in \eqref{eqn_spectral_Galerkin_metric} and refer for details on adaptions to this case to Appendix \ref{App:spectral}. Then, since the nodal interpolation operators $Q_N$ from \eqref{Eq101} as well as the cosine projections $\Pi_{\le N}$, cf.\ \eqref{eqn73343}, converge strongly to the identity in $\dot{H}^1(\T)$,  Assumption \ref{ass_Mosco_abstr} is in both cases  satisfied. For the following we recall that the measures $\mu_N$ and $\mu$ are versions of $\mu_N^+$ and $\mu^+$ on the linear space $\MM$ and  therefore centered Gaussians just as $\check{\mu}_N$. 
	\begin{theorem}\label{thm_convergence}
		Under Assumption \ref{assumption_mosco_concrete}, the Dirichlet forms $\wt{\EE}_N$ and $\check{\EE}_N$ defined on $L^2(\mu_N)$ and $L^2(\check{\mu}_N)$, respectively, converge in Mosco sense to the Dirichlet form $\EE$ on $L^2(\mu)$.  Their associated diffusions converge in law on $C^{1/8-,1/2-}_{t,x}$ when started from equilibrium.
	\end{theorem}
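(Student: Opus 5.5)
The plan is to deduce Mosco convergence from Theorem \ref{thm_mosco_abstract} together with Proposition \ref{Prop_weak_is_strong}. We work on the linear space $\MM$ with $\HH_1=\dot H^1(\T)$ and $\HH_{-1}=(\dot H^1(\T))^*$. For the two discretizations we take $\HH_N=\MM_N$ with $P_N=Q_N$ (an orthoprojection by Lemma \ref{lemma_is_ortho}), and $\HH_N=\check\MM_N$ with $P_N=\Pi_{\le N}$. For $\Pi_{\le N}$ the orthogonality in $\dot H^1$ holds because the cosines diagonalize the Neumann Laplacian.

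By Assumption \ref{assumption_mosco_concrete}, the continuous metric $g^h(p,p)=\int m(h)|\px p|^2$ satisfies \eqref{comparable_to_flat}, so Proposition \ref{Prop_weak_is_strong} reduces everything to verifying the two hypotheses of Theorem \ref{thm_mosco_abstract}. Uniform integrability is immediate in both parts. By \eqref{E89} and its spectral analogue, $g_N^{h}(P_N^*p,P_N^*p)\le C\|\px\tilde p\|_{L^2}^2\lesssim \|p\|_{H^1}^2$, uniformly in $h$ and $N$. The dual metric $g_{N,h}(P_N\dot h,P_N\dot h)$ is likewise bounded by $c^{-1}$ times the corresponding flat quantity, which converges. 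So the whole difficulty lies in the pointwise limsup conditions.

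\textbf{Step 1 (primal condition, part \eqref{Mosco_I1}).} We need $g_N^{P_Nh}(P_N^*p,P_N^*p)\to g^h(p,p)$ for $\mu$-a.e.\ $h$.
\begin{itemize}
\item By Lemma \ref{Lemma_numerical_approx-n} and Lemma \ref{lemma_reg}, $\mu$-a.s.\ $h\in C^\gamma$, and hence $P_Nh\to h$ uniformly. For $\Pi_{\le N}$ this needs a separate argument, e.g.\ Gaussian moments of $\|\Pi_{\le N}h-h\|_{C^{\gamma'}}$ together with Borel--Cantelli along the full sequence, or via Fej\'er-type bounds.
\item Continuity of $m$ then gives $m(P_Nh)\to m(h)$ in $C(\overline\T)$.
\item In the spectral case, $\check g^{P_Nh}(p,p)=\sup_{\tilde p\in\Pi_{\le N}}\{2\int\tilde p\,p\text{-pairing}-\int m|\px\tilde p|^2\}$, a Galerkin approximation of the variational problem defining the metric. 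Its convergence follows from uniform ellipticity and density of $\Pi_{\le N}H^1$ in $H^1$: the Galerkin solutions converge in $\dot H^1$ (C\'ea's lemma), and the coefficient perturbation is controlled by $\|m(P_Nh)-m(h)\|_\infty$.
\item In the nodal case we use the same Legendre representation. The quadrature pairing $\frac1N\sum\dot h(x_i)\tilde q(x_i)$ equals $\int\dot h\, q$ for the hat-convolved $\tilde q$. We need $\tilde q\to q$ in $H^1$ (true for the Neumann-extended $q\in H^1$) together with density of piecewise linears in $H^1$ to obtain convergence of the discrete Galerkin problem.
\end{itemize}

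\textbf{Step 2 (dual condition, part \eqref{Mosco_I2}).} Here $g_{N,P_Nh}(P_N\dot h,P_N\dot h)$ is the minimal $\int|j|^2/m(P_Nh)$ under the discrete constraint. We get the limsup inequality by choosing a competitor: take $j=m\px p$ from the continuum problem, project it appropriately, and use $P_N\dot h\to\dot h$ in $\dot H^1$. For the spectral case it is cleanest to note that $g_N$ restricted to $\HH_N$ is the inverse of the Galerkin matrix. Since the Galerkin solution converges in $\dot H^1$, the dual values converge as well, giving equality in the limit and in particular the limsup. The nodal case is handled similarly, with the lumped-mass pairing replacing the $L^2$ pairing; its consistency error is $O(1/N)$ in the appropriate norm for $H^1$ data.

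\textbf{Step 3 (convergence of processes).} Mosco convergence yields strong convergence of semigroups in the sense of Kuwae--Shioya (Appendix \ref{App:Mosco}), and hence convergence of finite-dimensional distributions of the stationary processes; the initial laws converge because $\mu_N=\mu\circ P_N^{-1}\to\mu$. Tightness on $C^{1/8-,1/2-}_{t,x}$ comes from Proposition \ref{proposition_unif_reg} and its spectral analogue, so the finite-dimensional distributions identify the limit.

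The main obstacle is the almost-sure convergence $P_Nh\to h$ in $C(\overline\T)$ for the spectral projection, together with the uniform handling of the nodal quadrature in the variational problems. If pointwise convergence is delicate, I would first pass along subsequences, since the proof of Theorem \ref{thm_mosco_abstract} only uses a.e.\ convergence along subsequences of subsequences.
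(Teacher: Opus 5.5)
Your proposal is correct and follows essentially the paper's route: you verify the hypotheses of Theorem \ref{thm_mosco_abstract} for $Q_N$ and $\Pi_{\le N}$, identify $\EEw=\EE$ via Proposition \ref{Prop_weak_is_strong}, and combine the resulting semigroup convergence (Theorem \ref{Mosco_theorem_app}) with the uniform tightness from Theorem \ref{thm_existence_deg__fin_case}.

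Your Step 1 is heavier than necessary, since part \eqref{Mosco_I1} only needs the $\limsup$. That bound is immediate from the estimate \eqref{E89} you already quote, and in the spectral case from the exact identity $\check g^{h}(q,q)=\int_\T m(h)|\partial_x\Pi_{\le N}q|^2$ in \eqref{eqn_explicit_formula_for_metric_galerkin}, so no Galerkin or C\'ea argument is needed for the cotangent metric. Two smaller differences: the paper obtains the lower-bound criterion by Legendre duality with a weakly convergent subsequence of $p_N$ rather than by a competitor, and it gets $\Pi_{\le N}h\to h$ from the deterministic estimate \eqref{e4} rather than from Borel--Cantelli.
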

Below, we verify the remaining  convergence criteria from Theorem \ref{thm_mosco_abstract} for the Gr\"un--Rumpf type  metric tensor, leading to the Mosco convergence for this approximation. We also comment on how the  weak convergence of the associated Markov processes can be deduced. Details on adaptions for  the spectral Galerkin approximations are again deferred to Appendix \ref{App:spectral}.  
\subsection{Verification of criteria  for the upper bound of $\wt{\EE}_N$}
Let $p\in \dot{H}^1(\T)$. Then we have the estimate 
\begin{align}
	\tilde{g}^{Q_N h} (p,p )\,\le\, 
	\int_{\T} m(Q_N h) | \partial_x \hat{\rho}^N *  {p}|^2
\end{align} 
from \eqref{E89}, where $\hat{\rho}^N$ is the piecewise linear approximate identity with $\hat{\rho}^N(0) = N$ and $\hat{\rho}^N(\pm 1/N) = 0$, and the convolution is with respect to the even extension of $p$. In particular, we have that $\partial_x \hat{\rho}^N *  {p} \to \partial_x p$ in $L^2(O)$. Since $Q_Nh \to h$ in $C(\overline{O})$ for  $\mu$-a.e.\ $h \in C(\overline{O})$ by Lemma \ref{lemma_reg} and Lemma \ref{Lemma_numerical_approx-n}, we deduce that $m(Q_Nh )\to m(h)$, $\mu$-a.e., using Assumption \ref{assumption_mosco_concrete}. Combining these observations yields the desired bound
\[
\limsup_{N\to\infty} 	\tilde{g}^{Q_N h} (p,p ) \,\le\, 	\int_{\T} m( h) | \partial_x p|^2\,=\, g^h(p,p),
\]
for $\mu$-a.e.\ $h$, i.e., the first part of Theorem \ref{thm_mosco_abstract}~\eqref{Mosco_I1}. 
Using the upper bound from \eqref{eqn_m_unif} we deduce by the same ingredients also that
\[
	\tilde{g}^{Q_N h} (p,p )\,\le\, 
\int_{\T} m(Q_N h) | \partial_x \hat{\rho}^N *  {p}|^2
\,\le\, C \|\partial_x p \|_{L^2(\T)}^2,
\]
which implies the required uniform integrability. 
\subsection{Verification of criteria for the lower bound  of $\wt{\EE}_N$} Let $\dot{h}\in \dot{H}^1(O)$ and, for now,  $h\in \dot{C} (\overline{O})$. By \eqref{E9}, there exists for each $N$ a ${p}_N \in \dot{H}^1(\T)$ such that 
\begin{align}&
\frac{1}{2}\tilde 	g_{Q_Nh}(Q_N \dot{h}, Q_N \dot{h} ) \\&\quad=\,\frac{1}{N}\biggl( \frac{{p}_N (0)Q_N \dot{h}(0)+ {p}_N (1)Q_N\dot{h}(1) }{2}\,+\, \sum_{i=1}^{N-1} {p}_N (x_i) Q_N\dot{h}(x_i) 
	\biggr) \,-\, \frac{1}{2}\int_{\T} m(Q_N h) |\partial_x  {p}_N  |^2. \label{eqn11}
\end{align}
Since the sequence of Radon measures 
\begin{equation}\label{eqn7}
{\frac{1}{N}\biggl(\frac{Q_N \dot {h}(0)\delta_{0}+Q_N\dot {h}(1)\delta_{1}}{2}+ \sum_{i=1}^{N-1} Q_N\dot {h}(x_i)\delta_{x_i}\biggr)}
\end{equation}
is uniformly bounded and 
\[
\frac{1}{N}
\biggl( \frac{q (0)Q_N\dot{h}(0)+ q (1)Q_N\dot{h}(1) }{2}\,+\, \sum_{i=1}^{N-1}q (x_i) Q_N\dot{h}(x_i) 
\biggr)\,\to\, \int_\T q \dot{h}
\]
for all $q\in C(\overline{\T})$, we deduce that \eqref{eqn7} tends to $\dot{h}$ weakly as measures and therefore in $(\dot{H}^{1}(O))^*$, as $N\to\infty$. Moreover, by elliptic regularity theory, $ {p}_N$ is
bounded in $\dot{H}^1(O)$, and, up to passing to a subsequence, we may assume that 
\[
\partial_x {p}_N \rightharpoonup \partial_x {p},\qquad N\to\infty
\]
for some ${p}\in \dot{H}^1(O)$
in $L^2(O)$.  Since $m(Q_Nh ) \to m( h)$ in $C(\overline{\T})$, we conclude that also $\sqrt{m}(Q_Nh)\partial_x {p}_N \rightharpoonup \sqrt{m}(h) \partial_x {p}$ in $L^2(O)$. Combining all these convergences, we may deduce from \eqref{eqn11} that
\begin{align}&
\limsup_{N\to\infty}\frac{1}{2}	\tilde g_{Q_Nh}(Q_N \dot{h}, Q_N \dot{h} ) \,=\,\int_\T {p} \dot{h}\,-\,\frac{1}{2}\liminf_{N\to\infty} \int_\T \bigl| \sqrt{m}(Q_Nh) \partial_x {p}_N \bigr|^2  \\&\quad \le\, \int_\T {p} \dot{h}\,-\, \frac{1}{2} \int_\T \bigl| \sqrt{m}(h) \partial_x {p} \bigr|^2 .
\end{align}
The latter is bounded from above by $\frac{1}{2}g_h(\dot{h},\dot{h})$ and since this can be done starting from any subsequence in $N$, it holds along the original one. Because $\mu$ is supported on $\dot{C}(\overline{\T})$, the first part of Theorem \ref{thm_mosco_abstract}~\eqref{Mosco_I2} follows. 

If $c$ is the lower bound from \eqref{eqn_m_unif}, the preceding observations yield moreover
\begin{align}&
\tilde g_{Q_Nh}(Q_N \dot{h}, Q_N \dot{h} ) \,= \,
\int_{\T} m(Q_N h) |\partial_x {p}_N  |^2 \\&\quad\lesssim_c \,
\biggl\|  {\frac{1}{N}\biggl(\frac{Q_N\dot {h}(0)\delta_{0}+Q_N\dot {h}(1)\delta_{1}}{2}+ \sum_{i=1}^{N-1} Q_N\dot {h}(x_i)\delta_{x_i}\biggr)} \biggr\|_{(\dot{H}^{1}(O))^*}^2\,
\lesssim \,  
\|\dot{h}\|_{C(\T)}^2, 
\end{align}
so that uniform integrability follows too.
\subsection{Convergence of the associated diffusions}
The above, in combination with Proposition \ref{Prop_weak_is_strong},  yields Mosco convergence of the Dirichlet forms $\wt{\EE}_N$ to $\EE$ on the varying Hilbert spaces $L^2(\mu_N)$ tending to $L^2(\mu)$. To complete the proof of Theorem \ref{thm_convergence}, it remains to argue that also the associated diffusions converge,  when started from equilibrium. To this end, we deduce from Theorem \ref{thm_existence_deg__fin_case} that their laws are tight on $C^{1/8- ,1/2-}_{t,x}$ and therefore converge along a subsequence. In the following, we characterize their limit point uniquely as the law of the diffusion associated to $(\EE,\D(\EE))$, so that the desired convergence follows  along the original sequence by a subsequence-subsequence argument. For this purpose, let $(T_{N,t})_{t>0}$ and $(T_t)_{t>0}$ be the respective associated semigroups. By Theorem \ref{Mosco_theorem_app}, we deduce from the Mosco convergence of $\wt{\EE}_N$ to $\EE$, that $T_{N,t} \eta_N $ converges to $T_{t} \eta$ for every $ \eta_N \in L^2(\mu_N)$  converging strongly to $\eta\in L^2(\mu)$ and $t>0$. In particular, for any  $\eta_0\in \CC_0$, we have that $T_{N,t} \eta_0 \to T_{t} \eta_0 $, strongly, as $N\to \infty$. In other words, there exists $\CC_0 \ni \zeta_k \to T_t \eta_0$ in $L^2(\mu)$ such that 
\[
\lim_{k\to \infty} \limsup_{N\to\infty} 
\|  \zeta_k - T_{N,t} \eta_0 \|_{L^2(\mu_N)} \,=\, 0.
\]
Consequently, for any  $\eta_1 \in \CC_0$, we deduce by boundedness that also 
\[
\lim_{k\to \infty} \limsup_{N\to\infty} 
\| \eta_1 \zeta_k -  \eta_1 T_{N,t} \eta_0 \|_{L^2(\mu_N)} \,=\, 0.
\]
Since  by the same reason  $\eta_1 \zeta_k \to \eta_1 T_t\eta_0$ in $L^2(\mu)$, we conclude that $ \eta_1 T_{N,t} \eta_0 \in L^2(\mu_N)$ converges strongly to $\eta_1 T_t\eta_0\in L^2(\mu )$. An iteration of this argument yields that 
\begin{equation}\label{conv_of_sems}
L^2(\mu_N) \,\ni\, \eta_k T_{N,t_{k-1}} \bigl(
\eta_{k-1} T_{N, t_{k-2}} ( ... T_{N,t_0} \eta_0)
\bigr) \,\to\,  \eta_k T_{t_{k-1}} \bigl(
\eta_{k-1} T_{ t_{k-2}} ( ... T_{t_0} \eta_0)
\bigr) \,\in\,
L^2(\mu) , 
\end{equation}
strongly, for any $ \eta_{0},\dots, \eta_k \in \CC_0$ and $t_0,\dots , t_{k-1} >0$. 

To connect the above to the associated diffusions, we introduce time instances $ s_l = \sum_{i=1}^l  t_{k-i}$, for which we may write by the tower property of conditional expectation
\begin{align}&
E_\mu 
\biggl[\prod_{l=0}^{k} \eta_{k-l} (h_{s_l}) \biggr] \,=\, E_{\mu } \biggl[
\eta_k (h_{0})
E_{\mu } 
\biggl[\prod_{l=1}^{k} \eta_{k-l} (h_{s_l}) 
\bigg| \mathfrak{F}_{0}
\biggr]
\biggr]\,=\, E_{\mu } \biggl[
\eta_k (h_{0})E_{\mu } \biggl[
 \dots E_{\mu }
\biggl[ \eta_{0} (h_{s_k}) 
\bigg| \mathfrak{F}_{s_{k-1}}
\biggr]\dots \bigg| \mathfrak{F}_{0}
\biggr]
\biggr]\\&\quad=\, 
E_{\mu } \biggl[
\eta_k (h_{0})E_{\mu } \biggl[
\dots 
T_{t_0} \eta_0(h_{s_{k-1}})
\dots \bigg| \mathfrak{F}_{0}
\biggr]
\biggr]\,=\, 
E_{\mu} \bigl[
\eta_k T_{t_{k-1}} \bigl(
\eta_{k-1} T_{ t_{k-2}} ( ... T_{t_0} \eta_0)
\bigr)(h_0)
\bigr].
\end{align}
But this is nothing but the integral of the right-hand side of \eqref{conv_of_sems} against the equilibrium measure, and by repeating the argument for the left, we conclude that
\begin{align}\label{Eq1400000}
	E_{N,\mu_N}\biggl[\prod_{l=0}^{k} \eta_{k-l} (h_{s_l}) \biggr] \,\to \,
	E_\mu 
	\biggl[\prod_{l=0}^{k} \eta_{k-l} (h_{s_l}) \biggr],
\end{align}
as $N\to \infty$. Since  the marginals of the limiting distribution coincide thereby with the finite-dimensional marginals of the diffusion associated to $(\EE,\D(\EE))$, this yields the claim.

	\section{Non-degenerate  stochastic thin-film type diffusions  as energy solutions}
	\subsection{Assumptions and results of this section} It is the aim of this section to give an alternative description of the diffusion process associated to the Dirichlet form $(\EE,\D(\EE))$ in terms of energy solutions to a stochastic thin-film type SPDE 
		\begin{equation}\label{EqSTFE}
		\partial_t u\,=\, -\partial_x(m(u)\partial_x^3 u ) \,+\,\sqrt{2 } \partial_x (m^{1/2}(u)\xi) .
	\end{equation}
	To this end, we work throughout in the non-degenerate situation investigated in the previous section, i.e., on the linear space $\MM$ equipped with the Gaussian measure $\mu$ and under Assumption \ref{assumption_mosco_concrete}. Moreover, we impose additionally that
	\begin{equation}\label{Eq220}
		m\in C^1_b(\R),
	\end{equation}
	and in particular that $m$ is a local function of $h$. 
	 To define energy solutions, we introduce also the $p$-variation norm 
	\[
	\|f\|_{\pv, T}\,=\,\biggl( \sup_{0 =t_0 <\dots<t_k =T} \sum_{l=1}^k |f(t_l) -f(t_{l-1})|^p\biggr)^{1/p},
	\]of a function $f$,
	which bounds in particular the supremum in time whenever $f(0)=0$. We let $\D(\EE)^*$ be the dual space of $(\D(\EE),(\cdot,\cdot)_{L^2(\mu)} +\EE)$ under the pairing in $L^2(\mu)$, see Subsection \ref{ss:dual_space_info} for more information. 
	
	\begin{definition}\label{defi_energy_sol_iii'}
		Let $\frac{\d \nu}{ \d \mu}\in L^2(\mu)$ be a probability density. Then, a $(\dot{H}^1(O))^*$-continuous stochastic process $h$  is called an {energy solution} to \eqref{EqSTFE}  with initial distribution  
		$\nu$ if $h(0)\sim \nu$ and the following  conditions are satisfied.
		\begin{enumerate}[(i)]
			\item \label{Item_inc_New} \emph{(Incompressibility)}
			We have the estimate 
			$
			\E [|\eta(h_t)|] \,\lesssim \, \|\eta \|_{L^2(\mu)}
			$
			for all $\eta \in L^2(\mu)$ and $t\ge 0$.
			\item \label{Item_energy_est_New} \emph{(Energy estimate)} For any $p>2$ we have
			\begin{align}\label{Eq142_New}
				\E\biggl[\biggl\|
				\int_0^{\cdot} \eta (h_s)\,\d s
				\biggr\|_{\pv, T}
				\biggr] \,\lesssim_p\, (T^{1/2}+T)\|\eta \|_{\D(\EE)^*},
			\end{align}
			for all $\eta \in L^2(\mu)$.
			\item \label{Item_MP_New} \emph{(Martingale problem)}
			For all $p \in \dot{H}^3_N(\T)$
			\begin{align}\begin{split}\label{Eq158_New}
					&
					\int_\T  h_t  p \,-\, \int_\T h_0 p \\&\quad-\, \lim_{N\to \infty} \int_0^t  \int_{\T} m( \Pi_{\le N} h_s)\px^3\Pi_{\le N}h_s 
					 \px\Pi_{\le N} p\,+\, \frac{\partial_x (D_N(2\pi  \cdot ))}{4}   m'(\Pi_{\le N}  h_s)  \partial_x \Pi_{\le N }p  \,\d x \,\d s   
				\end{split}
			\end{align}
			is a martingale with respect to the filtration generated by $h$ and has  quadratic variation
			\[2
			\int_0^t \int_\T 
			m(h_s) |\px p|^2 \,\d x \,\d s 
			.
			\]
		\end{enumerate}
	\end{definition} 
We remark that the conditions  \eqref{Item_inc_New} and  \eqref{Item_energy_est_New} guarantee that the limit in \eqref{Eq158_New} is
attained in $L^1(\Omega; C([0,T]))$, cf.\ the discussion below Lemma \ref{Lemma_conv_gens}. The following is the main result of this section.
		\begin{theorem}\label{thm_characterization}
			Let Assumption \ref{assumption_mosco_concrete} and \eqref{Eq220} hold, and $\frac{\d \nu}{ \d \mu}\in L^2(\mu)$ be a probability density. Then, the diffusion associated to $(\EE,\D(\EE))$ is an energy solution when started from $\nu$. Conversely, any energy solution started from $\nu$ coincides in law with said diffusion. 
	\end{theorem}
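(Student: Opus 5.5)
The proof has two directions: existence (the diffusion is an energy solution) and uniqueness in law (every energy solution is that diffusion).

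\emph{Existence.} Let $h$ be the diffusion associated to $(\EE,\D(\EE))$, started from $\nu$ with $\frac{\d\nu}{\d\mu}\in L^2(\mu)$.

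Incompressibility \eqref{Item_inc_New} follows from invariance of $\mu$ and Cauchy--Schwarz:
\[
\E|\eta(h_t)|=\int T_t|\eta|\,\frac{\d\nu}{\d\mu}\,\d\mu\le \Bigl\|\frac{\d\nu}{\d\mu}\Bigr\|_{L^2(\mu)}\|\eta\|_{L^2(\mu)}.
\]

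For the energy estimate \eqref{Item_energy_est_New}, I first work under $P_\mu$ and then transfer to $\nu$ using the bounded density and incompressibility. Given $\eta\in L^2(\mu)$, solve $(1-\L)\zeta=\eta$ weakly, so that $\zeta\in\D(\EE)$ with $\|\zeta\|_{\D(\EE)}\lesssim\|\eta\|_{\D(\EE)^*}$. The Lyons--Zheng decomposition (Theorem \ref{LZ}) then writes
\[
\int_0^t(\zeta-\eta)(h_s)\,\d s
\]
as half the difference of a forward and a backward martingale. Their quadratic variations are controlled by $\Gamma(\zeta,\zeta)$ from \eqref{SFO}. The $p$-variation of these martingales, for $p>2$, is bounded via BDG/Lépingle by $T^{1/2}\EE(\zeta,\zeta)^{1/2}$. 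The remaining term $\int_0^t\zeta(h_s)\,\d s$ is bounded in $1$-variation by $T\|\zeta\|_{L^2}$.

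For the martingale problem \eqref{Item_MP_New}, apply Fukushima's decomposition (Theorem \ref{thm_fukushima}) to the linear observable $\int_\T hp$. The martingale part has quadratic variation $\int_0^t\Gamma\,\d s=2\int_0^t\int m(h)|\px p|^2$. The zero-energy part $N_t$ has to be identified with the limit in \eqref{Eq158_New}. To do this, I would compute $\L_N$ applied to $\int h\Pi_{\le N}p$ by integrating by parts in the finite-dimensional Gaussian setting (Appendix \ref{App:spectral}); this produces exactly the integrand of \eqref{E5}, i.e.\ the drift of \eqref{Eq_SDE1_new}. The key step is then to show that these functions converge in $\D(\EE)^*$ to the weak generator $\L\bigl(\int hp\bigr)$, by testing against cylinder functions and using the Mosco convergence established in Theorem \ref{thm_convergence}. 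This is the content of Lemma \ref{Lemma_conv_gens}. Once that holds, the energy estimate turns convergence in $\D(\EE)^*$ into convergence of the additive functionals in $L^1(\Omega;C([0,T]))$, and the limit is identified with $N_t$.

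\emph{Uniqueness.} I expect this to be the main obstacle. I would follow \cite{GubPer_generator}: show that any energy solution satisfies, for every $\zeta$ in a suitable core $\mathcal{C}\subset\D(\L)$,
\[
\zeta(h_t)-\zeta(h_0)-\int_0^t\L\zeta(h_s)\,\d s=\text{martingale},
\]
and then conclude by duality, since the Kolmogorov backward equation determines the one-dimensional marginals, and the Markov property yields all finite-dimensional ones. The justification of Itô's formula is where the work lies. For $\zeta$ a cylinder function in finitely many $\int hp^i$ with $p^i\in\dot H^3_N$, Itô's formula applies to the semimartingales from \eqref{Item_MP_New} up to the $N$-limit. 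The cross-variations give the second-order terms, and the limiting drifts give $\L$ in weak form.

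Since cylinder functions are not in $\D(\L)$, I would instead use the resolvent: for $\eta\in\CC_0$ take $\zeta=(\lambda-\L)^{-1}\eta$. I then approximate $\zeta$ in $\D(\EE)$ by cylinder functions $\zeta_k$ for which $\L\zeta_k\to\L\zeta$ in $\D(\EE)^*$. Incompressibility and the energy estimate pass the martingale relation to the limit, where the quadratic variations are controlled by $\EE(\zeta_k-\zeta)$ via BDG.

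This gives $E_\nu\bigl[e^{-\lambda t}\zeta(h_t)\bigr]$ and hence the resolvent $\E\int_0^\infty e^{-\lambda t}\eta(h_t)\,\d t=\int(\lambda-\L)^{-1}\eta\,\d\nu$. The same holds after conditioning on $\FF_s$, using incompressibility for conditional laws with $L^2$ densities. This identifies the law with that of the diffusion.
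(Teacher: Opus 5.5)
\textbf{Main gap: the It\^o step in the uniqueness half.} You single out the justification of It\^o's formula as the hard part, but then you only assert that ``It\^o's formula applies to the semimartingales from \eqref{Item_MP_New} up to the $N$-limit.'' The processes $\int_\T h_t q$ are not known to be semimartingales. Their drift $A^{[q]}$ is only an $L^1(\Omega;C([0,T]))$-limit of the absolutely continuous $A^{[q]}_N$, and Definition \ref{defi_energy_sol_iii'} controls it only in $\pv$ for $p>2$.

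If you apply It\^o to $S_N=\int_\T h_0 q+A_N+M$ and let $N\to\infty$, you must show $\int_0^t\bigl(\partial_i\Phi(S_N)-\partial_i\Phi(\int_\T hq)\bigr)\,\d A^{[q^i]}_N\to 0$. The integrand has only $(2+\delta)$-variation. A Young--Lo\`eve argument therefore needs $A_N$ to be bounded and convergent in $\tilde p$-variation with $\tilde p<2$. The crude bound (sup norm times total variation) fails: $\|A_N-A\|_\infty$ decays only like $N^{-1/2+}$, while the total variation of $A_N$ is only controlled by a large power of $N$.

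The missing idea is this upgrade to $\tilde p<2$. The paper obtains it from three ingredients. First, a quantitative rate $N^{-(1/2-\epsilon)}$ for $A_N\to A$ in $L^1(\Omega;\pv)$, via the energy estimate and a quantified version of Lemma \ref{Lemma_conv_gens} using that $m$ is Lipschitz. Second, a blow-up bound $N^{5/2}$ for $A_N$ in $L^r(\Omega;C^{1/p'}([0,T]))$, via incompressibility, Gaussian moments and the $N\log N$ bound on $\partial_x D_N$. Third, real interpolation between these two spaces, which embeds into $\tilde p$-variation with $\tilde p$ close to $12/7$, both for $A^{[q]}$ and for the convergence $A_N\to A^{[q]}$.

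Without this you have no martingale identity for nonlinear cylinder functions, so your resolvent argument has nothing to start from. Once it is in place, the remaining steps work: density in $\D(\EE)$, $\zeta=(\lambda-\L)^{-1}\eta$, and conditioning on $\FF_s$ directly from the martingale property (you do not need the Markov property) identify the law.

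\textbf{Existence half.} This matches the paper up to harmless variants. Solving $(1-\L)\zeta=\eta$ instead of writing $\eta=\L\zeta+\lambda$ with $\zeta$ mean-free works equally well for the energy estimate. Using the finite-dimensional generator on $\int_\T h\,\Pi_{\le N}p$, viewed in $L^2(\mu)$, can replace the auxiliary forms \eqref{eqn_spectral_whole_space}. The density $\frac{\d\nu}{\d\mu}$ is only in $L^2(\mu)$, so the transfer from $P_\mu$ is by Cauchy--Schwarz, not by boundedness.

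One step is misattributed. The convergence of these drifts to $\L(\int_\T hp)$ must hold in the norm of $\D(\EE)^*$, i.e.\ uniformly over the unit ball of $\D(\EE)$. This does not follow from the Mosco convergence of Theorem \ref{thm_convergence}, which controls resolvents and semigroups on $L^2$, whereas $\L(\int_\T hp)\notin L^2(\mu)$. It needs the direct estimate of $\Pi_{\le N}\partial_x(m(\Pi_{\le N}h)\partial_x\Pi_{\le N}p)-\partial_x(m(h)\partial_x p)$ in $L^2(\mu;(\dot H^1(O))^*)$ by dominated convergence, as in Lemma \ref{Lemma_conv_gens}.
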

	This is proved with the help of the auxiliary Dirichlet forms
	\begin{align}\label{eqn_spectral_whole_space}
		\check{\EE}_N(\eta, \eta) \,=\, \int_{\MM} 
		\int_O m(\Pi_{\le N} h)\bigl| \partial_x \Pi_{\le N}\diff|_h\eta \bigr|^2 \,\d x
		\,\d\mu(h)\,+\, \int_{\MM} 
		\int_O \bigl| \partial_x  \Pi_{> N} \diff|_h\eta \bigr|^2 \,\d x
		\,\d\mu(h),
	\end{align}
where $\Pi_{>N} p = p-\Pi_{\le N }p $. These are designed in a way that allows for a computation of their generators $\check{\L}_N$ in contrast to the generator of $\EE$, cf.\ Subsection \ref{SS:generator_is_diff}. We also remark that the above is a slight abuse of notation, since we defined $\check{\EE}_N$ originally as the form \eqref{E2} on $L^2(\check{\mu}_N)$, but that the two forms agree on observables which only depend on the first $N$ cosine modes of $h$ by \eqref{eqn_explicit_formula_for_metric_galerkin}. In particular, since the forms $\check{\EE}_N$ and ${\EE}$ are both comparable to the flat form
\begin{align}
	\EEf(\eta, \eta )\,=\,  \int_{\MM} 
		\int_O \bigl| \partial_x \diff|_h\eta \bigr|^2 \,\d x
		\,\d\mu(h),
\end{align}
we can compare their generators in $\D(\EE)^* = \D(\EEw)^* = \D(\EEf)^*$. We show moreover in Subsection \ref{approx_of_generators} that $\check{\L}_N\eta \to\L\eta$ in $\D(\EE)^*$, for all $\eta \in \D(\EE)$.  This, together with an explicit identification of $\check{\L}_N$, readily allows us to show that the diffusion associated to $(\EE,\D(\EE))$ constitutes an energy solution to \eqref{EqSTFE} in Subsection \ref{SS:diff_is_es}.

The strategy for the reverse direction is more involved. In essence, one has to verify that any energy solution to \eqref{EqSTFE} solves a generalized martingale problem similar to Definition \ref{defi_energy_sol_iii'}~\eqref{Item_MP_New}, but with cylinder functions as observables. This requires in particular the justification of It\^o's formula, despite the last term in \eqref{Eq158_New} not being of bounded variation. For this we follow in Subsection \ref{more_gen_MP} the ideas from \cite[Section 4.4]{GubPer_generator} and identify the limiting integral using Young integration and regularity of $h$. This allows us to characterize the marginals of an energy solution as the right-hand side of \eqref{conv_of_sems}, i.e., the marginals of the diffusion associated to $(\EE, \D(\EE))$, in Subsection \ref{ss:uniqueness}.
	\subsection{Description of the dual space $\D(\EE)^*$}
	\label{ss:dual_space_info}
	We start by describing the dual space $\D(\EE)^*$, for which we take $(\vp_k)_{k\in \N}$, $(p^k)_{k\in \N}$ and $(\sigma_k)_{k\in \N}$ as constructed in and above \eqref{eqn_SVD}. In this particular case, we can calculate that 
	\begin{align}\label{eqn112}
		\vp_k(x) \,=\,\sqrt{2}\pi k \cos(\pi k x) ,\qquad
		p^k \,=\, \frac{\sqrt{2}}{\pi k} \cos(\pi k x) ,\qquad 
		\sigma_k \,=\, (\pi k )^{-2}.
	\end{align}
	Then, if we let $(\mathbf{H_a})_\mathbf{a}$ be the orthonormal basis of $L^2(\mu)$ defined in \eqref{Eq60}, it follows from \eqref{eqn73} that $\D(\EE)$ is complete when equipped with
	\begin{equation}\label{Eq140}
		(\eta,\eta)_{L^2(\mu)} \,+\, 
		\EEf(\eta,\eta) \,=\, \sum_{\mathbf{a}}
		\lambda_\mathbf{a}^2\biggl(1+\sum_{k\in \N}
	a_k/\sigma_k^2 \biggr)
		,\qquad  \eta\,=\,  \sum_{\mathbf{a}}\lambda_{\mathbf{a}} \mathbf{H_a}.
	\end{equation}
	As the embedding $\D(\EE)\hookrightarrow L^2(\mu)$ is continuous and dense, we have $L^2(\mu) \hookrightarrow \D(\EE)^*$. The latter space can  be described as the relaxed eigenfunction expansions
	\begin{equation}\label{Eq281}
		{\D}(\EE)^*\,=\, 
		\biggl\{
		\zeta \,=\, \sum_{\mathbf{a}}\nu_{\mathbf{a}} \mathbf{H_a}\bigg|
		\sum_{\mathbf{a}}\nu_\mathbf{a}^2\biggl(1+\sum_{k\in \N}
		(\pi k )^4 a_k \biggr)^{-1}<\infty
		\biggr\}
	\end{equation}
	if we extend the $L^2(\mu)$-inner product by
	\[
	\llangle \eta, \zeta \rrangle\,=\, \sum_{\mathbf{a}} \lambda_\mathbf{a} \nu_\mathbf{a}.
	\]
	With this identification we have also that
		\begin{equation}\label{Eq1433}
	 \bigl\{\eta \in \D(\EE)\,\big|\,
		(\eta, \mathbf{1}_\MM)_{L^2(\mu)}  = 0
		\bigr\}^* \,\cong \,  \bigl\{\zeta \in \D(\EE)^*\,\big|\,
		\llangle \mathbf{1}_\MM,\zeta \rrangle   = 0
		\bigr\}.
	\end{equation}
Due to the spectral gap of $\EEf$, cf.\ \eqref{Eq140}, the mean-zero subspace of $\D(\EE)$ appearing on the left-hand side of \eqref{Eq1433} can be equivalently equipped with $\EEf$ itself, and by equivalence also with $\EE$ or $\check{\EE}_N$, for any $N\in \N$. In particular, the weak forms of the respective  generators, e.g.,  
\begin{align}
	\L_{\text{flat}} \eta \,=\, -\EEf(\eta , \cdot ) \,\in \, \D(\EE)^*,\qquad \eta \in \D(\EE),
\end{align}
 induce isomorphisms 
\begin{equation}\label{Eq143}
\L_{\text{flat}} \colon  \bigl\{\eta \in \D(\EE)\,\big|\,
(\eta, \mathbf{1}_\MM)_{L^2(\mu)}  =  0
\bigr\}\,\to \,  \bigl\{\zeta \in \D(\EE)^*\,\big|\,
\llangle \mathbf{1}_\MM,\zeta \rrangle   = 0
\bigr\},
\end{equation}
and analogously for $\L$ and $\check{\L}_N$.

	\subsection{Approximate generators and convergence}\label{approx_of_generators}
	We first start with a formula for the generator $\check{\L}_N$, for which we recall the space \eqref{eqn_H3N}, 
	and that 
	\[
		\dot{H}^3_N(\T) \hookrightarrow \dot{H}^1(O) \hookrightarrow 
		(\dot{H}^1(O))^*
	\]
	is a Gelfand triple when $	\dot{H}^3_N(\T) $ is equipped with the inner product 
	\[
	( p,q)_{\dot{H}^3_N(\T)} \,=\, \int_{\T} \partial_x^3 p \partial_x^3 q.
	\] 
	In particular, since $\Pi_{\le N}$ is orthogonal in $\dot{H}^1(O)$ and $
	(\dot{H}^1(O))^* $, it is also  an orthogonal projection  in $	\dot{H}^3_N(\T) $. 
	We also recall the Dirichlet kernel $D_N$ defined in \eqref{eqn_def_Dirichlet kernel}.

	  \begin{lemma}\label{Lemma_generator_approx_problem} Let
		\begin{equation}\label{Eq11_New}
			\eta(h) \,=\,  
			\Phi\biggl(\int_{\T}  h q^1,\dots , \int_{\T}  h q^k \biggr),
		\end{equation}
		for $q^i\in H_N^3(\T)$  and $\Phi\in C_b^2(\R^k)$. Then $\eta \in  \D(\check{\L}_{N})$ with
		\begin{align} \begin{split}\label{Eq74}
				\check{\L}_{N} \eta \,=\,&	 \sum_{i=1}^k
				\partial_i	\Phi\biggl(\int_{\T}  \cdot  q^1,\dots , \int_{\T}  \cdot  q^k \biggr)
				\bigl( \cdot, 
				\partial_x^2 \Pi_{>N}q^i
				\bigr)_{\dot{H}^1(O)  }
				\\&  +\,\sum_{i,j=1}^k \partial_{ij} \Phi\biggl(\int_{\T}  \cdot  q^1,\dots , \int_{\T}  \cdot  q^k \biggr) \int_\T  \partial_x \Pi_{>N} q^i \partial_x \Pi_{>N}  q^j
				\\& +\,
				\sum_{i=1}^k
				\partial_i\Phi\biggl(\int_{\T}  \cdot  q^1,\dots , \int_{\T}  \cdot  q^k \biggr)
				\bigl( \cdot,  \Pi_{\le N} \partial_x  (m( \Pi_{\le N} \cdot)  \partial_x  \Pi_{\le N} q^i)
				\bigr)_{\dot{H}^1(O)  }
				\\&
				+\, \frac{1}{4}
				\sum_{i=1}^k
				\partial_i\Phi\biggl(\int_{\T}  \cdot  q^1,\dots , \int_{\T}  \cdot  q^k \biggr) 
				\int_O \partial_x ( D_N(2\pi  \cdot ))    m'(\Pi_{\le N}  \cdot)  \partial_x \Pi_{\le N } q^i 
				\\& +\, 
				\sum_{i,j=1}^k 
				\partial_{ij} 
				\Phi\biggl(\int_{\T}  \cdot  q^1,\dots , \int_{\T}  \cdot  q^k \biggr)  \int_\T  
				m(\Pi_{\le N} \cdot)  \partial_x \Pi_{\le N} q^i \partial_x \Pi_{\le  N}  q^j .
			\end{split}
		\end{align}
	\end{lemma}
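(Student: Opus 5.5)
The plan is to verify the integration by parts identity $\check{\EE}_N(\eta,\zeta) = -(\check{\L}_N\eta,\zeta)_{L^2(\mu)}$ for all $\zeta\in\D_0$ (or $\CC_0$), with $\check{\L}_N\eta$ the right-hand side of \eqref{Eq74}, and to check that this right-hand side lies in $L^2(\mu)$. Since $\D_0$ is a core for $\check{\EE}_N$ (it is the closure of its restriction to $\D_0$, and comparable to $\EEf$), this gives $\eta\in\D(\check{\L}_N)$ together with the formula. First I split $\check{\EE}_N$ into the flat high-mode part $\int\int_O \partial_x\Pi_{>N}\diff\eta\,\partial_x\Pi_{>N}\diff\zeta$ and the low-mode part weighted by $m(\Pi_{\le N}h)$. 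For the cylinder function $\eta$ we have $\diff\eta=\sum_i\partial_i\Phi(\dots)q^i$.

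\textbf{High modes.} Write $\int_O\partial_x\Pi_{>N}q^i\,\partial_x\Pi_{>N}\diff\zeta = \diff\zeta_\cdot\dot h_i$ with $\dot h_i=-\partial_x^2\Pi_{>N}q^i$, interpreted through the $\dot H^1$–$(\dot H^1)^*$ pairing. Here $q^i\in H^3_N$ is used: it gives $\partial_x^2\Pi_{>N}q^i\in H^1$ with the correct Neumann structure, so $\dot h_i\in\HH_1$. Then I apply the Gaussian integration by parts \eqref{GI_eqn} / Cameron--Martin with the vector field $X=\partial_i\Phi(\dots)\dot h_i$. The divergence $\div_\mu X$ produces the drift term $(\cdot,\partial_x^2\Pi_{>N}q^i)_{\dot H^1}$, which is the Paley--Wiener functional and lies in $L^2(\mu)$. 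The derivative falling on $\partial_i\Phi$ produces the second-order term $\partial_{ij}\Phi\int\partial_x\Pi_{>N}q^i\partial_x\Pi_{>N}q^j$. Care with signs is needed, but this is the flat Ornstein--Uhlenbeck computation.

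\textbf{Low modes.} Write the integrand as $\diff\zeta_\cdot X$ with
\[
X(h)=-\sum_i\partial_i\Phi(\dots)\,\Pi_{\le N}\partial_x\bigl(m(\Pi_{\le N}h)\partial_x\Pi_{\le N}q^i\bigr).
\]
This is a finite combination of cosine modes $\cos(\pi k\cdot)$, $k\le N$, with coefficients depending on $h$ only through $\Pi_{\le N}h$. It is therefore a smooth vector field on the finite-dimensional space $\check{\MM}_N$, and the measure $\mu$ factorizes as $\check{\mu}_N\otimes(\text{high modes})$. I compute $\div_\mu X$ by the finite-dimensional formula $\div_\mu X=\sum_{k\le N}\partial_{\vp_k}X^k - X^k(\cdot,\vp_k)_{\HH_1}$, using $m\in C^1_b$ and approximating $X$ by fields of the form \eqref{eqn_XX}.

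The Gaussian part $-X^k(\cdot,\vp_k)_{\HH_1}$ gives the term $(\cdot,\Pi_{\le N}\partial_x(m\,\partial_x\Pi_{\le N}q^i))_{\dot H^1}$. The derivative of $\partial_i\Phi$ gives the $\partial_{ij}\Phi\int m\,\partial_x\Pi_{\le N}q^i\partial_x\Pi_{\le N}q^j$ term. The derivative of $m(\Pi_{\le N}h)$ along the basis directions produces a geometric term of the form
\[
\sum_{k\le N}\int_O m'(\Pi_{\le N}h)\,\bigl(\Pi_{\le N}\text{-}\text{basis}_k\bigr)\,\partial_x e_k\,\partial_x\Pi_{\le N}q^i,
\]
with $e_k=\sqrt2\cos(\pi k\cdot)$. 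Using $\sum_{k\le N}(\partial_x e_k)\,e_k$ summed against the normalisation of \eqref{eqn112}, one needs the identity
\[
\sum_{k=1}^N 2\cos(\pi kx)\,\pi k\sin(\pi kx) = -\tfrac14\,\partial_x\bigl(D_N(2\pi x)\bigr)
\]
up to sign. This holds because $2\cos\sin=\sin(2\pi kx)$ and $\partial_xD_N(2\pi x)=-4\pi\sum_k k\sin(2\pi kx)$. This yields the Dirichlet kernel term.

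\textbf{Main obstacle.} I expect this last step to be the hardest: tracking the precise normalisation of $\vp_k,p^k,\sigma_k$ and the sign conventions for $\div_\mu$ so that exactly $\frac14\partial_x D_N(2\pi\cdot)$ appears. Finally, each term is in $L^2(\mu)$, since $\Phi\in C^2_b$, $m,m'$ are bounded, and the linear functionals are Gaussian. Hence $\check{\L}_N\eta\in L^2(\mu)$, which completes the identification.
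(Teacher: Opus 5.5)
Your proposal is correct and follows essentially the same route as the paper's proof in Appendix~\ref{App:proof_IBP}. You split $\check{\EE}_N$ into its high- and low-mode parts, integrate by parts in $x$ using the Neumann conditions, and apply the Gaussian integration by parts \eqref{GI_eqn} to the two resulting vector fields; the $m'$-part of the divergence then collapses to $\frac14\partial_x D_N(2\pi\cdot)$ via your trigonometric identity, which holds exactly, not merely up to sign. One small correction: the low-mode field is already exactly of the form \eqref{eqn_XX}, because the paper writes its coefficients as $C^1_b$ functions $\Psi_{i,l}$ of $(\int h q^j)_j$ and $(\int h p^l)_{l\le N}$ using $m\in C^1_b$, so no approximation or finite-dimensional factorization is needed; moreover these coefficients depend on all of $h$ through $\partial_i\Phi$, not only on $\Pi_{\le N}h$, and that dependence is exactly what produces the $\partial_{ij}\Phi$ term.
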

	 The proof of Lemma \ref{Lemma_generator_approx_problem} amounts to a direct justification of the integration by parts formula 
	\begin{align}
		(\check{\L}_N\eta, \zeta )_{L^2(\mu)} \,=\, -\check{\EE}_N (\eta, \zeta), \qquad \zeta \in \D_0,
	\end{align}
	for $\eta$ as in \eqref{Eq11_New} 
	and is deferred to Appendix \ref{App:proof_IBP}.
	
	We also remark that, by the equivalence to the flat setting and the construction of the Dirichlet form closure from Subsection \ref{SS:closable},  we know that $\D(\EE) = \D(\diff)$ is the domain of the smallest closed extension of the operator $\diff \colon \D_0 \subset L^2(\mu) \to L^2(\mu; \dot{H}^1(\T))$. In particular, the differential of each $\eta \in \D(\EE)$ may be realized as some $\diff \eta \in L^2(\mu; \dot{H}^1(\T))$ and we can write the Dirichlet forms also for such $\eta $ as
	\begin{align}
		\EE (\eta ,\eta )\,=\, \int_{\MM} \int_O
		m(h)\bigl| \partial_x \diff|_h\eta \bigr|^2 \,\d x
		\,\d\mu(h)
	\end{align}  and \eqref{eqn_spectral_whole_space}. 
	\begin{lemma}\label{Lemma_conv_gens}
		We have $\check{\L}_N \eta \to \L \eta $ in $\D(\EE)^*$ for all $\eta \in \D(\EE)$.
	\end{lemma}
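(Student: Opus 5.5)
We aim to show $\check{\L}_N\eta \to \L\eta$ in $\D(\EE)^*$ for $\eta\in\D(\EE)$. Since both are weak-form generators, $\check{\L}_N\eta = -\check{\EE}_N(\eta,\cdot)$ and $\L\eta = -\EE(\eta,\cdot)$, the dual norm of the difference is
\begin{align}
	\|\check{\L}_N\eta - \L\eta\|_{\D(\EE)^*} \,=\, \sup_{\zeta\in \D(\EE),\, \|\zeta\|_{\D(\EE)}\le 1} \bigl|\check{\EE}_N(\eta,\zeta) - \EE(\eta,\zeta)\bigr|.
\end{align}
We realize the differentials as $\diff\eta,\diff\zeta \in L^2(\mu;\dot{H}^1(\T))$, which is possible by the remark preceding the lemma, and use the representation \eqref{eqn_spectral_whole_space}. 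Writing $\partial_x\diff\zeta = \partial_x\Pi_{\le N}\diff\zeta + \partial_x\Pi_{>N}\diff\zeta$, where the two pieces are $L^2(\T)$-orthogonal because the cosine modes diagonalize the $\dot H^1$ inner product, we get
\begin{align}
	\check{\EE}_N(\eta,\zeta)-\EE(\eta,\zeta) \,=\, \int_{\MM}\int_\T \Bigl( m(\Pi_{\le N}h)\,\partial_x\Pi_{\le N}\diff\eta + \partial_x\Pi_{>N}\diff\eta - m(h)\,\partial_x\diff\eta\Bigr)\,\partial_x\diff\zeta \,\d x\,\d\mu(h),
\end{align}
after adding zero cross terms with $\partial_x\Pi_{\le N}\diff\zeta$ and $\partial_x\Pi_{>N}\diff\zeta$, respectively. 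Note that $m(\Pi_{\le N}h)\partial_x\Pi_{\le N}\diff\eta$ paired with $\partial_x\Pi_{>N}\diff\zeta$ is not zero in general. To avoid this, we instead write the difference directly as $\int\int (A_N - m(h)\partial_x\diff\eta)\cdot\partial_x\diff\zeta$ with $A_N = \Pi^{(1)}_{\le N}[m(\Pi_{\le N}h)\partial_x\Pi_{\le N}\diff\eta] + \partial_x\Pi_{>N}\diff\eta$. Here $\Pi^{(1)}_{\le N}$ denotes the $L^2$ projection onto the derivatives of the first $N$ cosines, i.e.\ onto sines, which is an $L^2$-contraction.

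By Cauchy--Schwarz, and since $\|\partial_x\diff\zeta\|_{L^2(\mu;L^2)}^2 = \EEf(\zeta,\zeta)\lesssim \|\zeta\|_{\D(\EE)}^2$, it suffices to show
\begin{align}
	\int_\MM \bigl\| A_N - m(h)\partial_x\diff|_h\eta\bigr\|_{L^2(\T)}^2 \,\d\mu(h)\,\to\,0.
\end{align}
We split $A_N - m(h)\partial_x\diff\eta$ into three pieces:
\begin{enumerate}[(a)]
	\item $\partial_x\Pi_{>N}\diff\eta$;
	\item $\Pi^{(1)}_{\le N}\bigl[m(\Pi_{\le N}h)(\partial_x\Pi_{\le N}\diff\eta - \partial_x\diff\eta)\bigr]$;
	\item $\Pi^{(1)}_{\le N}\bigl[(m(\Pi_{\le N}h)-m(h))\partial_x\diff\eta\bigr] - \Pi^{(1)}_{>N}\bigl[m(h)\partial_x\diff\eta\bigr]$.
\end{enumerate}
Pieces (a) and (b) are bounded pointwise in $h$ by a constant times $\|\partial_x\Pi_{>N}\diff|_h\eta\|_{L^2}$, using the bound $m\le C$ and contractivity. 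This tends to zero for each $h$, and is dominated by $\|\partial_x\diff|_h\eta\|_{L^2}\in L^2(\mu)$, so dominated convergence applies. The tail $\Pi^{(1)}_{>N}$ of the fixed $L^2$ function $m(h)\partial_x\diff\eta$ likewise goes to zero pointwise with the same domination.

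The main obstacle is the term $(m(\Pi_{\le N}h)-m(h))\partial_x\diff\eta$, whose control rests on showing $m(\Pi_{\le N}h)\to m(h)$ in a strong enough sense $\mu$-a.e. To get this, we will argue $\Pi_{\le N}h\to h$ in $C(\overline{\T})$ for $\mu$-a.e.\ $h$. One route is a Fej\'er-type bound; more simply, we use that $\Pi_{\le N}$ is bounded $C^{\gamma'}\to C^{\gamma}$ up to a $\log N$ factor, with the gain $N^{-(\gamma'-\gamma)}$, together with Lemma~\ref{lemma_reg}. If this is delicate, it suffices to have $\Pi_{\le N}h\to h$ $\mu$-a.e.\ pointwise in $x$ along a subsequence, which follows from $L^2(\mu\otimes\d x)$ convergence. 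Continuity and boundedness of $m$, from Assumption~\ref{assumption_mosco_concrete} and \eqref{Eq220}, then give $m(\Pi_{\le N}h)\to m(h)$ boundedly a.e.\ in $(h,x)$. Dominated convergence in $\mu\otimes\d x$ against $|\partial_x\diff\eta|^2$ then sends this term to zero. We pass through subsequences and close with a subsequence-of-subsequence argument, since the limit is independent of the subsequence.
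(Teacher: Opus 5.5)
Your argument is correct and is essentially the paper's proof. After Cauchy--Schwarz against $\|\zeta\|_{\D(\EE)}\le 1$, you reduce to a $\zeta$-independent quantity and send it to zero by dominated convergence, using the same three-term splitting:
\begin{itemize}
\item the tail $\partial_x\Pi_{>N}\diff\eta$;
\item the mismatch $\partial_x\Pi_{\le N}\diff\eta-\partial_x\diff\eta$;
\item the difference $m(\Pi_{\le N}h)-m(h)$ together with the high-mode tail.
\end{itemize}
The only change is that you phrase it in $L^2$ with the sine projection, where the paper works in $(\dot H^1(O))^*$.

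Drop your first displayed identity, which you yourself flag as wrong. Your subsequence argument via $\mu\otimes\d x$-a.e.\ convergence of $\Pi_{\le N}h$ is valid. It is a slightly more elementary substitute for the uniform convergence of $\Pi_{\le N}h$ that the paper implicitly uses, but it relies on $m$ being local as in \eqref{Eq220}.
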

\begin{proof}
Let $\eta \in \D(\EE)$ and $\diff \eta$  the realization of its differential in $L^2(\mu; \dot{H}^1(\T))$. We need to show that 
\[\sup_{\substack{\zeta \in \D_0 \\ \|\zeta \|_{\D(\EE)} \le 1 }}
\check{\EE}_N  ( \eta ,\zeta) - \EE(\eta ,\zeta)  \,\to\, 0,
\]
as $N\to\infty$. For this we write
\begin{align}&
	\check{\EE}_N  ( \eta ,\zeta) - \EE(\eta ,\zeta) \\&\quad=\,  \int_{\MM}  
 \Bigl( \partial_x \bigl(
	m(h)  \partial_x \diff|_h\eta\bigr) \,-\,  	\Pi_{\le N}  \partial_x \bigl(	m(\Pi_{\le N}h) \partial_x \Pi_{\le N} \diff|_h\eta \bigr) ,    \diff|_h \zeta \Bigr)_{(\dot{H}^1(O ))^* \times \dot{H}^1(O)} 
	\,\d\mu(h)\\&\qquad -\, 
	\int_{\MM}  \Bigl( 	\Pi_{> N}  \partial_x^2 \Pi_{> N} \diff|_h\eta  ,    \diff|_h \zeta \Bigr)_{(\dot{H}^1(O ))^* \times \dot{H}^1(O)} 
	\,\d\mu(h)
	\\&\quad \lesssim \, \biggl(
	\int_{\MM}  
	\Bigl\| \partial_x \bigl(
	m(h)  \partial_x \diff|_h\eta\bigr) \,-\,  	\Pi_{\le N}  \partial_x \bigl(	m(\Pi_{\le N}h) \partial_x \Pi_{\le N} \diff|_h\eta \bigr) \Bigr\|_{(\dot{H}^1(O ))^* }^2 
	\,\d\mu(h)\biggr)^{1/2} \\&\qquad+\, \biggl( 
	\int_{\MM}  
	\Bigl\| 	\Pi_{> N}  \partial_x^2 \Pi_{> N} \diff|_h\eta  \Bigr\|_{(\dot{H}^1(O ))^* }^2 
	\,\d\mu(h)\biggr)^{1/2}\,=\, I_1(N) \,+\, I_2(N),
\end{align}
for $\|\zeta \|_{\D(\EE)}\le 1$,
 by the Cauchy--Schwarz inequality. For $I_2$, we use that the operators $ \Pi_{>N } \partial_x^2 \Pi_{>N}\colon \dot{H}^1(\T) \to (\dot{H}^1(\T) )^* $ are uniformly in $N$ bounded and converge pointwise to $0$, which implies that  $I_2(N) \to 0$ by dominated convergence. Also the operators $ 	\Pi_{\le N}  \partial_x (	m(\Pi_{\le N}h) \partial_x \Pi_{\le N} \cdot )\colon \dot{H}^1(\T) \to (\dot{H}^1(\T) )^* $ are uniformly bounded and
 we have
 \begin{align}&\|
 	\Pi_{\le N}  \partial_x (	m(\Pi_{\le N}h) \partial_x \Pi_{\le N} p )
 	-   \partial_x  (
 	m(h)  \partial_x p  )\
 	\|_{(\dot{H}^1(O))^*}
 	\\&\quad \le 
 	\|
 		m(\Pi_{\le N}h) \partial_x \Pi_{\le N} p \,-\, 	m(h)  \partial_x p 
 	\|_{L^2(O)}\,+\, \| \Pi_{> N}\partial_x  (
 	m(h)  \partial_x p  ) \|_{(\dot{H}^1(O))^*}
 	\\&\quad \le 
 	\bigl\|
 \bigl(	m(\Pi_{\le N}h) \,-\, 	m(h) \bigr) \partial_x p 
 	\bigr\|_{L^2(O)}\,+\, \bigl\| 	m(\Pi_{\le N}h) \bigl(\partial_x \Pi_{\le N} p \,-\, 	 \partial_x p \bigr) \bigr\|_{L^2(O)}
 	\\&\qquad +\, \bigl\| \Pi_{> N}\partial_x  (
 	m(h)  \partial_x p  ) \bigr\|_{(\dot{H}^1(O))^*}\,\to\, 0,
 \end{align}
 for all $p\in \dot{H}^1(O)$ and $\mu$-a.e.\ $h\in C(\overline{\T})$. Therefore, also  $I_1(N) \to 0$ by the dominated convergence theorem, and the claim follows.	
\end{proof}
	The previous lemma is important in the interpretation of condition \eqref{Item_MP_New} from Definition \ref{defi_energy_sol_iii'}.
	Indeed, Lemma \ref{Lemma_generator_approx_problem} precisely describes $\check{\L}_{N}\eta $ for $\eta$ of the form \eqref{Eq11_New} as \eqref{Eq74}. Moreover, since $\check{\L}_N$ is closed, it generalizes straightforwardly to $\eta$ for which $\Phi$ and its derivatives have  polynomial growth. Now we can use that
	\begin{align}\label{Eq161}
		\partial_i\Phi (\dots) 	\bigl( \cdot, 
		\partial_x^2 \Pi_{>N}q^i
		\bigr)_{\dot{H}^1(O)  }  \,&\to\, 0,\\ \label{Eq162} 
		\partial_{ij} \Phi (\dots ) \int_\T  \partial_x \Pi_{>N} q^i \partial_x \Pi_{>N}  q^j\, &\to\, 0,\\ \label{Eq163} 
		\partial_{ij}\Phi (\dots)\int_\T  
		m(\Pi_{\le N} \cdot)  \partial_x \Pi_{\le N} q^i \partial_x \Pi_{\le  N}  q^j
		\,&
		\to\, 
		\partial_{ij}\Phi (\dots) \int_\T  
		m( \cdot)  \partial_x  q^i \partial_x  q^j, 
	\end{align} 
	in $L^2(\mu)$, where  the last limit follows by dominated convergence. By Lemma \ref{Lemma_conv_gens}
	\begin{equation}\label{Eq307}\lim_{N\to\infty}
		\sum_{i=1}^k 
		\partial_i\Phi (\dots) \biggl( \bigl( \cdot,  \Pi_{\le N} \partial_x  (m( \Pi_{\le N} \cdot)  \partial_x  \Pi_{\le N} q^i)
		\bigr)_{\dot{H}^1(O)  } 
		+\, \frac{1}{4}
		\int_O  \partial_x (D_N(2\pi  \cdot ))   m'(\Pi_{\le N} \cdot)  \partial_x \Pi_{\le N } q^i \biggr)
	\end{equation}
	exists {therefore} in $\D(\EE)^*$ and hence also the limit in \eqref{Eq158_New} in $L^1(\Omega; C([0,T]))$ due to Definition \ref{defi_energy_sol_iii'}~\eqref{Item_energy_est_New}.

	\subsection{Diffusions are energy solutions}\label{SS:diff_is_es}
	Here we prove that the diffusion process associated to $(\EE,\D(\EE))$ constitutes an energy solution to \eqref{EqSTFE}. In addition to the generator convergence proved above, we rely on the Fukushima and Lyons--Zheng decompositions from Dirichlet form theory as well as L\'epingle's inequality, which asserts that for all $p>2$ and $q \in [1,\infty)$
	\begin{align}\label{lepingle}
		\E^{1/q} [ \| M\|_{{\pv,T}}^q ] \eqsim_{(p,q)} \E^{1/q} \Bigl[\sup_{t\le T} |M_t|^q\Bigr] ,
	\end{align}  
	for martingales with $M_0 = 0$, cf.\ \cite{lepingle}. 
	\begin{proposition}\label{Prop_DF_yields_ES}
		Let $\frac{\d \nu}{ \d \mu}\in L^2(\mu)$ be a probability density and   $((h_t)_{t\ge 0}, (P_z)_{z\in \MM})$  be the diffusion associated to
		$(\EE,\D(\EE))$. Then, under $P_\nu$, $h$ is an energy solution to \eqref{EqSTFE}  starting from $\nu$ in the sense of Definition \ref{defi_energy_sol_iii'}.
	\end{proposition}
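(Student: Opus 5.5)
We verify the three conditions of Definition \ref{defi_energy_sol_iii'} in turn, together with the continuity requirement. Continuity in $(\dot{H}^1(O))^*$ is already provided by Theorem \ref{thm_abstract}, and $h_0\sim\nu$ holds by construction.

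\emph{Incompressibility.} Since $\mu$ is invariant, $P_\nu\circ h_t^{-1}=(T_t\frac{\d\nu}{\d\mu})\,\mu$. Hence, by Cauchy--Schwarz and $L^2(\mu)$-contractivity of $T_t$,
\[
\E_\nu|\eta(h_t)|\le \|T_t\tfrac{\d\nu}{\d\mu}\|_{L^2(\mu)}\|\eta\|_{L^2(\mu)}\le \|\tfrac{\d\nu}{\d\mu}\|_{L^2(\mu)}\|\eta\|_{L^2(\mu)}.
\]

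\emph{Energy estimate.} We follow the standard Kipnis--Varadhan / Lyons--Zheng route, reducing first to the stationary case: by Cauchy--Schwarz, $E_\nu[Z]\le \|\frac{\d\nu}{\d\mu}\|_{L^2(\mu)}E_\mu[Z^2]^{1/2}$, so it suffices to bound second moments under $P_\mu$. We split $\eta=\llangle\eta,\mathbf 1\rrangle+\eta_0$. The constant part contributes $T|\llangle \eta,\mathbf 1\rrangle|\le T\|\eta\|_{\D(\EE)^*}$, which is the source of the $T$ term. For the mean-free part, the isomorphism \eqref{Eq143} (for $\L$) gives $\phi\in\D(\EE)$, mean-free, with $\L\phi=\eta_0$ and $\EE(\phi,\phi)^{1/2}\lesssim\|\eta_0\|_{\D(\EE)^*}$. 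We then apply the Lyons--Zheng decomposition, Theorem \ref{LZ}, to the additive functional $\phi(h_t)$:
\[
\int_0^t\eta_0(h_s)\,\d s=\tfrac12\bigl(M_t+\overline{M}_{T}-\overline{M}_{T-t}\bigr)
\]
with forward/backward martingales $M,\overline M$ of quadratic variation $\int\Gamma(\phi,\phi)$. L\'epingle's inequality \eqref{lepingle} together with BDG then yields the $p$-variation bound $\lesssim T^{1/2}\EE(\phi,\phi)^{1/2}$, using stationarity and $\int\Gamma(\phi,\phi)\,\d\mu=2\EE(\phi,\phi)$. Since $p$-variation is invariant under time reversal, the backward martingale term is handled by applying L\'epingle to $\overline{M}$ on the reversed time interval. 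For a general $\eta\in L^2(\mu)$ we approximate in $L^2(\mu)$; both sides are continuous in this approximation.

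\emph{Martingale problem.} Fix $p\in \dot H^3_N$ and let $\eta(h)=\int h p$. This is in $\D(\EE)$, but it is not bounded, so we use cutoffs if the polynomial growth is not directly allowed in the Fukushima theorem. Fukushima's decomposition, Theorem \ref{thm_fukushima}, gives
\[
\eta(h_t)-\eta(h_0)=M_t+N_t,
\]
where $M$ is a martingale additive functional with $\langle M\rangle_t=\int_0^t\Gamma(\eta,\eta)(h_s)\,\d s=2\int_0^t\int m(h_s)|\partial_xp|^2\,\d x\,\d s$ by \eqref{SFO}. The main obstacle is identifying the zero-energy part $N$. The generator $\L\eta$ exists only in $\D(\EE)^*$, so $N_t$ is not of the form $\int_0^t \L\eta(h_s)\,\d s$ with integrable integrand. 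To handle this, we write $N_t=\lim_{N\to\infty}\int_0^t\check\L_N\eta(h_s)\,\d s$ and justify the limit as follows. By Lemma \ref{Lemma_generator_approx_problem} (with linear $\Phi$), $\check\L_N\eta$ is an explicit $L^2(\mu)$ function whose terms, after \eqref{Eq161}, are the first one plus the integrand of \eqref{Eq158_New}. By Lemma \ref{Lemma_conv_gens}, $\check\L_N\eta\to\L\eta$ in $\D(\EE)^*$. The energy estimate just proved, applied to $\check\L_N\eta-\check\L_M\eta$, shows that $\int_0^\cdot\check\L_N\eta(h_s)\,\d s$ is Cauchy in $L^1(\Omega;C([0,T]))$. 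It remains to check that its limit is $N_t$. For this we use the resolvent approximation $\phi_N$ solving $(1-\check\L_N)\phi_N=(1-\L)\eta$-type identities, or more directly the fact that the Lyons--Zheng representation from the previous step, applied to $\L\eta$, coincides with $N_t$: in the stationary case, $N_t=\tfrac12(-M_t+\overline M_T-\overline M_{T-t})$ for the Fukushima martingales, which matches the forward--backward representation of $\int_0^t\L\eta$ obtained through the isomorphism \eqref{Eq143}. The residual term from \eqref{Eq161} vanishes in $\D(\EE)^*$ and therefore drops out of the limit.
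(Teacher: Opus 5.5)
Your proposal is correct and follows the paper's proof. Incompressibility comes from symmetry and contractivity of $T_t$. The energy estimate comes from writing the integrand as $\L\phi$ plus a constant and controlling $N^{[\phi]}$ through the Fukushima and Lyons--Zheng decompositions together with L\'epingle and Burkholder--Davis--Gundy. The martingale problem comes from identifying $N^{[\eta]}$ as $\lim_{N}\int_0^t\check{\L}_N\eta(h_s)\,\d s$ via Lemma \ref{Lemma_conv_gens}.

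Your identification step is cleanest as the paper phrases it, and the resolvent detour is unnecessary. Set $\eta_N=\L^{-1}\check{\L}_N\eta\in\D(\L)$, so that $\int_0^t\check{\L}_N\eta(h_s)\,\d s=N^{[\eta_N]}_t$. Since $\eta_N\to\eta$ (mean-free part) in $\D(\EE)$ and $\eta\mapsto N^{[\eta]}$ is linear and bounded by \eqref{Eq145}, it follows that $N^{[\eta_N]}\to N^{[\eta]}$ in $L^1(\Omega;C([0,T]))$.
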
 
	\begin{proof}
		We set $\eta_0 = \frac{\d \nu}{ \d \mu}$ and estimate for $\zeta \in L^2(\mu)$
		\[
		E_\nu[|\zeta(h_t)|] \,=\, \int_{\MM} E_{\cdot}[|\zeta (h_t)|] \eta_0 \,\d \mu 
		\,\le\, \|T_t|\zeta|\|_{L^2(\mu)} \|\eta_0\|_{L^2(\mu)}
		\]
		using H\"older's inequality. 
		Since $T_t$ is contractive, we have already verified part \eqref{Item_inc_New} of Definition \ref{defi_energy_sol_iii'}. 
		
		For $\eta\in \D(\EE)$ we insert the decomposition from Theorem \ref{LZ} into the one of Theorem \ref{thm_fukushima} to obtain that 
		\begin{equation}\label{Eq144}
			N^{[\eta]}_t \,=\, -\frac{1}{2}M_t^{[\eta]} \,-\,  \frac{1}{2}( \overline{M}_{T}^{[\eta]} - \overline{M}_{T-t}^{[\eta]}  ) 
		\end{equation}
		for the zero-energy part of Fukushima's decomposition $
		\eta(h_t) - \eta(h_0) \,=\, N_t^{[\eta]}\,+\, M_t^{[\eta]}$. By \eqref{SFO}, the  forward martingale $M^{[\eta]}_t$ and backward martingale $\overline{M}_{t}^{[\eta]}$ have quadratic variation processes 
		\begin{align}&
	2	\int_0^t g^{h_s}(  \diff \eta(h_s) ,\diff \eta  (h_s) ) \, \d s
		\quad \text{and}\quad
2	\int_0^t
	g^{h_{T-s}}(  \diff \eta(h_{T-s}) ,\diff \eta  (h_{T-s}) )  \, \d s,
\end{align}
respectively. By means of H\"older's inequality, L\'epingle's inequality \eqref{lepingle}, and the Burkholder--Davis--Gundy inequality, 
 we therefore deduce  that
\begin{align}\begin{split}
\label{Eq145}&
E_\nu\Bigl[\| N_t^{[\eta]}\|_{\pv,T}\Bigr] \,\le\, E_{\mu}^{1/2}\biggl[\frac{1}{2}\| M_t^{[\eta]}\|_{\pv,T}^2 \,+\, \frac{1}{2} \| \overline{M}_{T}^{[\eta]} - \overline{M}_{T-t}^{[\eta]}  \|_{\pv,T}^2 \biggr]  \|\eta_0\|_{L^2(\mu)}\\& \quad \lesssim_p
\biggl(
\int_0^T
\int_{\MM} 
g^{h}(  \diff \eta(h) ,\diff \eta  (h) ) \,\d\mu(h)
\,\d t
\biggr)^{1/2} \|\eta_0\|_{L^2(\mu)} \,\lesssim_{\nu} \, T^{1/2} \EE(\eta,\eta)^{1/2},
\end{split}
\end{align}
for $p>2$. As follows from the comments below \eqref{Eq1433},  we can write any $\zeta \in L^2(\mu)$ as $\zeta = \L \eta + \lambda$ for some $\lambda\in \R$ and $\eta\in \D(\L)$, such that $\|\zeta\|_{\D(\EE)^*} \eqsim |\lambda |+ \|\eta\|_{\D(\EE)}$. With this choice we have 
\[
\int_0^t \zeta(h_s)\,\d s \,=\, N_t^{[\eta]} \,+\,\lambda t,
\]
 and using $\|\lambda t\|_{\pv,T} =|\lambda |T$,  Definition \ref{defi_energy_sol_iii'}~\eqref{Item_energy_est_New} follows by \eqref{Eq145}.

Using once more that $\L$ is an isomorphism between the mean-free observables in $\D(\EE)$ and $\D(\EE)^*$, we deduce from Lemma \ref{Lemma_conv_gens} that $\eta_N = \L^{-1}\check{\L}_{N} \eta\to \L^{-1}{\L} \eta = \eta$ in $\D(\EE)$ for mean-free $F$. This implies that $N^{[\eta_N]} \to N^{[\eta]}$ in $L^1(P_\nu; C([0,T]))$ by \eqref{Eq145}, i.e.,
\begin{align}
N^{[\eta]}_t\,=\,\lim_{N\to\infty} \int_0^t \check{\L}_{N} \eta(h_s) \,\d s ,
\end{align}
in said space. 
By Lemma \ref{Lemma_generator_approx_problem} and its generalization to polynomial $\Phi$, see the comments below Lemma \ref{Lemma_conv_gens}, we 
can represent the latter for $\eta(h) =\int_O h p$  and 
 $p\in \dot{H}^3_N(\T)$   furthermore as
\begin{align}&
\lim_{N\to\infty}
\int_0^t 
	\bigl( h_s, 
\partial_x^2 \Pi_{>N}p
\bigr)_{\dot{H}^1(O)  } 
\\&\,+\, \lim_{N\to\infty}\int_0^t \int_{\T} m( \Pi_{\le N} h_s)\px^3\Pi_{\le N}h_s 
\px\Pi_{\le N} p\,+\, \frac{\partial_x (D_N(2\pi  \cdot ))}{4}    m'(\Pi_{\le N}  h_s)  \partial_x \Pi_{\le N }p .
\end{align} Based on the convergence \eqref{Eq161}, we find that
\begin{align}&
\lim_{N\to\infty} E_\nu \biggl[\sup_{t\le T} \biggl| 
\int_0^t 
\bigl( h_s, 
\partial_x^2 \Pi_{>N}p
\bigr)_{\dot{H}^1(O)  } \,\d s\biggr|  \biggr] \,\le\, \|\eta_0\|_{L^2(\mu)} 
\lim_{N\to\infty}  \int_0^T  E_\mu^{1/2}\biggl[ \biggl| 
\bigl( h_s, 
\partial_x^2 \Pi_{>N}p
\bigr)_{\dot{H}^1(O)  } \biggr|^2  \biggr]\,\d s
\\&\quad \lesssim_T\,  \|\eta_0\|_{L^2(\mu)} 
\lim_{N\to\infty}\bigl\| (\cdot , \partial_x^2 \Pi_{>N}p  )_{\dot{H}^1(O)  }\bigr\|_{L^2(\mu)} \,=\, 0,
\end{align}
so that 
we arrive at Definition \ref{defi_energy_sol_iii'}~\eqref{Item_MP_New} by inserting  the above in Fukushima's decomposition of $\eta(h)$, cf.\ Theorem \ref{thm_fukushima}. 
\end{proof}
\subsection{Energy solutions solve a generalized martingale problem for $\L$}\label{more_gen_MP}
Next, we execute the most delicate part of the proof of Theorem \ref{thm_characterization}, namely that energy solutions  satisfy a generalized martingale problem for cylinder functions, analogous to  Definition \ref{defi_energy_sol_iii'}~\eqref{Item_MP_New}. 
\begin{proposition}\label{Prop_ES_to_MP}Let $\frac{\d \nu}{ \d \mu}\in L^2(\mu)$ be a probability density, let  $h$ be an energy solution  to \eqref{EqSTFE}  starting from $\nu$ in the sense of Definition \ref{defi_energy_sol_iii'} and let $\eta$ be as in \eqref{Eq11_New} for $q^i\in H_N^3(\T)$  and $\Phi\in C_b^2(\R^k)$. 
	Then
	\begin{align}\begin{split}\label{Eq158_New2}
			&
			\eta(h_t)\,-\, 	\eta(h_0 )\\&\quad  -\, \lim_{N\to \infty} \Biggl(\int_0^t 
			\sum_{i=1}^k 
			\partial_i\Phi \biggl(\int_{\T}  h_s q^1,\dots , \int_{\T}  h_s q^k \biggr) \\&\qquad \qquad\times \int_\T m( \Pi_{\le N} h_s)\px^3\Pi_{\le N}h_s 
			\px\Pi_{\le N} q^i\,+\, \frac{\partial_x (D_N(2\pi  \cdot ))}{4}   m'(\Pi_{\le N}  h_s)  \partial_x \Pi_{\le N }q^i\Biggr)
			\\& \quad -\, 
			\int_0^t
			\sum_{i,j=1}^k
			\partial_{ij}\Phi \biggl(\int_{\T}  h_s q^1,\dots , \int_{\T}  h_s q^k \biggr) \int_\T 
			m(h_s) \px q^i \partial_x q^j 
		\end{split}
	\end{align}
	is a martingale with respect to the filtration generated by $h$.
\end{proposition}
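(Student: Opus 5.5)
We plan to follow \cite[Section 4.4]{GubPer_generator}: apply a classical It\^o formula to the linear observables $X^i_t=\int_\T h_t q^i$, whose semimartingale structure is only generalized, and identify the resulting drift integral as a Young-type limit. Write
\[
X^i_t - X^i_0 \,=\, M^i_t + A^i_t,\qquad A^i_t \,=\, \lim_{N\to\infty}\int_0^t \check{\L}_N \eta^i(h_s)\,\d s,\qquad \eta^i(h)=\int_\T hq^i.
\]
By Definition \ref{defi_energy_sol_iii'}~\eqref{Item_MP_New}, $M^i$ is a continuous martingale with $\langle M^i,M^j\rangle_t = 2\int_0^t\int_\T m(h_s)\px q^i\px q^j$; this follows from the quadratic variation by polarization, using $q^i\pm q^j$. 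By the discussion below Lemma \ref{Lemma_conv_gens}, $A^i$ is the $L^1(\Omega;C([0,T]))$-limit of $A^{i,N}_t=\int_0^t\check{\L}_N\eta^i(h_s)\,\d s$, and by Definition \ref{defi_energy_sol_iii'}~\eqref{Item_energy_est_New} together with Lemma \ref{Lemma_conv_gens} this convergence even holds in $L^1(\Omega;\pv)$ for every $p>2$. Here the terms \eqref{Eq161} drop out in the limit, so that $A^i$ coincides with the limit appearing in \eqref{Eq158_New}.

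Next we expand $\Phi(X_t)-\Phi(X_0)$ along partitions $0=t_0<\dots<t_K=t$ by a second-order Taylor expansion:
\[
\sum_l \partial_i\Phi(X_{t_l})(M^i_{t_{l+1}}-M^i_{t_l}) + \sum_l \partial_i\Phi(X_{t_l})(A^i_{t_{l+1}}-A^i_{t_l}) + \tfrac12\sum_l \partial_{ij}\Phi(X_{t_l})\,\Delta X^i\Delta X^j + R.
\]
The first sum converges to the stochastic integral $\int\partial_i\Phi(X)\,\d M^i$, which is a martingale. The quadratic-variation sum requires that cross terms involving $A$ vanish. Since $X^i$ has finite $p$-variation for some $p<4$ (from the $C^{1/8-}$-type bound, or more directly because $M$ has $(2+)$-variation and $A$ has finite $p$-variation for $p>2$), we get $\sum|\Delta A\,\Delta M|\to 0$ and $\sum|\Delta A|^2\to 0$, provided $A$ has finite $p$-variation with $p<4$ and is continuous. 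Then only $\langle M^i,M^j\rangle$ survives, which yields the last term of \eqref{Eq158_New2}. The remainder $R$ is bounded by $\sum|\Delta X|^{2+\delta}$ for $\Phi\in C^2_b$ with modulus of continuity, and it vanishes since $X$ has continuous finite $2$-variation up to the martingale part. Here we use uniform continuity of $\partial_{ij}\Phi$ on compacts combined with $\max|\Delta X|\to0$.

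The main obstacle is the drift Riemann sum $\sum_l\partial_i\Phi(X_{t_l})\Delta A^i$. The integrand $\partial_i\Phi(X)$ has only finite $(2+)$-variation and $A^i$ finite $(2+)$-variation, so $1/p+1/p'<1$ fails and plain Young integration is not available. We would try first the Gubinelli--Perkowski trick: note that $\partial_i\Phi(X_{t_l})\,\Delta A^{i,N}$ on $[t_l,t_{l+1}]$ equals $\int_{t_l}^{t_{l+1}}\partial_i\Phi(X_{t_l})\check{\L}_N\eta^i(h_s)\,\d s$, and compare it with $\int_{t_l}^{t_{l+1}}(\partial_i\Phi(X)\check{\L}_N\eta^i)(h_s)\,\d s$. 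The latter is again an additive functional of an $L^2(\mu)$-observable. Its $\D(\EE)^*$-norm is controlled by $\|\partial_i\Phi(X)\|$ in a multiplier sense times $\|\check{\L}_N\eta^i\|_{\D(\EE)^*}$, because multiplication by a cylinder function with bounded derivative is bounded on $\D(\EE)$ by the product rule, hence also on the dual. The energy estimate \eqref{Eq142_New} then gives $L^1$-control of its $p$-variation uniformly in $N$, so the limit $N\to\infty$ exists and equals the asserted drift in \eqref{Eq158_New2}. The discrepancy $\int_{t_l}^{t_{l+1}}(\partial_i\Phi(X_s)-\partial_i\Phi(X_{t_l}))\check{\L}_N\eta^i(h_s)\,\d s$ must be shown to vanish as the mesh tends to zero. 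For this we would apply the energy estimate on each subinterval, after Taylor-expanding $\partial_i\Phi(X_s)-\partial_i\Phi(X_{t_l})$ and freezing the increment, in the spirit of a sewing lemma with exponents $1/p+1/2>1$ obtained from the improved regularity of $X$ (Hölder $1/2-$ for the martingale part).

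Finally, the martingale property with respect to the filtration generated by $h$ passes to the limit via the $L^1$ convergences: the Riemann sums converge in $L^1(\Omega;C([0,T]))$, and martingality is preserved under $L^1$ limits.
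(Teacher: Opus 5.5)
\textbf{Gap: the proposal never gets better than $(2+)$-variation for the drift $A^i$, and that is not enough.}

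Several parts of your setup are sound:
\begin{itemize}
\item the decomposition $\int_\T h_tq^i-\int_\T h_0q^i=M^i_t+A^i_t$;
\item the covariation of $M^i,M^j$ obtained by polarization;
\item the remark that multiplication by $\partial_i\Phi(X)$ is bounded on $\D(\EE)$, hence on $\D(\EE)^*$. This gives existence of the limit in \eqref{Eq158_New2}, which is exactly the paper's use of \eqref{Eq307}.
\end{itemize}
The trouble is that the only regularity of $A^i$ you use is what Definition~\ref{defi_energy_sol_iii'}~\eqref{Item_energy_est_New} provides, namely finite $p$-variation for $p>2$. With that alone neither of your limit passages closes.

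\emph{The quadratic-variation step.} Finite $p$-variation with $p<4$, or with any $p>2$, does not give $\sum_l|\Delta A^i|^2\to0$ or $\sum_l\Delta A^i\Delta M^j\to0$; a Brownian path is a counterexample. It also does not give the boundedness of $\sum_l|\Delta X|^2$ that your remainder estimate needs. All of these require $p<2$.

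\emph{The sewing step.} Your argument for $\sum_l\int_{t_l}^{t_{l+1}}(\partial_i\Phi(X_s)-\partial_i\Phi(X_{t_l}))\,\d A^{i,N}_s$ sits exactly at the borderline. For $p>2$ one has $1/p+1/2<1$, so the exponent condition you state is false. Freezing at the random $X_{t_l}$ would in addition require energy estimates conditional on $\FF_{t_l}$, i.e.\ a Markov property. An energy solution is not known to have this; it is what uniqueness is supposed to deliver.

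\emph{A smaller point.} The $C^{1/8-}$ bound you invoke is a property of the Dirichlet-form diffusion, not of an arbitrary energy solution.

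\textbf{The missing idea: an improved variation bound for the drift.} This is the core of the paper's proof. $A^{[q]}$ has finite $\tilde p$-variation with $\tilde p<2$, and the Galerkin drifts $A^{[q]}_N$ converge to it in $L^1(\Omega;\ptv([0,T]))$. The paper obtains this by real interpolation between $L^1(\Omega;\pv([0,T]))$ and $L^r(\Omega;C^{1/p'}([0,T]))$:
\begin{itemize}
\item In the first space, $A^{[q]}_N\to A^{[q]}$ at rate $N^{-(1/2-\epsilon)}$. This comes from the energy estimate plus a bound on the $\D(\EE)^*$-distance between $\check{\L}_N$ and $\L$ applied to $\int_\T hq$.
\item In the second space, $A^{[q]}_N$ blows up at most like $N^{5/2}$. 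This comes from incompressibility plus Gaussian moments.
\item Taking $N\eqsim2^{-l/3}$ in the $K$-functional places $A^{[q]}$ in the $(\theta,1)$-space for $\theta<1/6$. That space embeds into $\ptv$ with $\tilde p\approx12/7$.
\end{itemize}
The paper then applies the classical It\^o formula to the genuine semimartingales $S^{[q^i]}_N=\int_\T h_0q^i+A^{[q^i]}_N+M^{[q^i]}$. It lets $N\to\infty$ using the Young--Lo\`eve estimate, which applies since $1/\tilde p+1/(2+\delta)>1$, together with uniform integrability from $\Phi\in C^2_b$.

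Once such a $\tilde p<2$ bound is inserted, your Riemann-sum route would also work. The cross terms would vanish, and the drift sums would converge as Young integrals, identified with $\lim_N\int\partial_i\Phi(X)\,\d A^{i,N}$ by continuity. Without it, the argument does not go through.
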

\begin{proof}
The existence of the limit in \eqref{Eq158_New2} follows from the convergence \eqref{Eq307} in $\D(\EE)^*$ together with Definition \ref{defi_energy_sol_iii'}~\eqref{Item_energy_est_New}. It remains to verify martingality of the resulting process, which we achieve in several steps by justifying an application of It\^o's formula to \eqref{Eq158_New}. Throughout we set
\begin{equation}\label{Eq267}
A^{[q]} \,=\, 
\lim_{N\to\infty} \int_0^\cdot \int_\T m( \Pi_{\le N} h_s)\px^3\Pi_{\le N}h_s 
\px\Pi_{\le N} q\,+\, \frac{\partial_x (D_N(2\pi  \cdot ))}{4}    m'(\Pi_{\le N}  h_s)  \partial_x \Pi_{\le N }q ,
\end{equation}
for $q\in H^3_N(\T)$, which is  attained in $L^1(\Omega; C([0,T]))$ and also $L^1(\Omega; \pv([0,T]))$ for $p>2$ by the preceding comments. We also recall the remark made below \eqref{E35}, namely that the second term in the above integrand doesn't arise in a spatially periodic situation. Therefore, we refer to this term in the following  as the boundary contribution. 

\emph{Step 1. (Quantified version of \eqref{Eq267})}
 We start by deriving a rate for the convergence \eqref{Eq267}. For this, we combine Definition \ref{defi_energy_sol_iii'}~\eqref{Item_energy_est_New}, Lemma \ref{Lemma_generator_approx_problem}, Lemma \ref{Lemma_conv_gens} and \eqref{Eq161} to deduce 
 \begin{align}&
	\E\biggl[\biggl\|
\int_0^\cdot \int_\T m( \Pi_{\le N} h_s)\px^3\Pi_{\le N}h_s 
\px\Pi_{\le N} q\,+\, \frac{\partial_x (D_N(2\pi  \cdot ))}{4}    m'(\Pi_{\le N}  h_s)  \partial_x \Pi_{\le N }q  \,-\, A^{[q]} 
	\biggr\|_{\pv, T}
	\biggr] \\&
	\quad =\,  \lim_{M\to\infty} 
	\E\biggl[
	\biggl\|
	\int_0^\cdot \int_\T m( \Pi_{\le N} h_s)\px^3\Pi_{\le N}h_s 
	\px\Pi_{\le N} q\,+\, \frac{\partial_x (D_N(2\pi  \cdot ))}{4}    m'(\Pi_{\le N}  h_s)  \partial_x \Pi_{\le N }q \\& \qquad 
	- \int_0^\cdot \int_\T m( \Pi_{\le M} h_s)\px^3\Pi_{\le M}h_s 
	\px\Pi_{\le M} q\,+\, \frac{\partial_x (D_M(2\pi  \cdot ))}{4}    m'(\Pi_{\le M} h_s )  \partial_x \Pi_{\le M }q
	\biggr\|_{\pv,T}
	\biggr]
	\\&\quad \lesssim_{p,T}\,  \lim_{M\to\infty} \biggl\|
 \int_\T m( \Pi_{\le N} \cdot )(\px^3\Pi_{\le N}\cdot ) 
	\px\Pi_{\le N} q\,+\, \frac{\partial_x (D_N(2\pi  \cdot ))}{4}    m'(\Pi_{\le N}  \cdot )  \partial_x \Pi_{\le N }q \\& \qquad 
	- \int_\T m( \Pi_{\le M} \cdot )(\px^3\Pi_{\le M}\cdot )
	\px\Pi_{\le M} q\,+\, \frac{\partial_x (D_M(2\pi  \cdot ))}{4}    m'(\Pi_{\le M} \cdot )  \partial_x \Pi_{\le M }q
	\biggr\|_{\D(\EE)^*}
	\\&\quad =\,  \biggl\|
	\int_\T m( \Pi_{\le N} \cdot )(\px^3\Pi_{\le N}\cdot ) 
	\px\Pi_{\le N} q\,+\, \frac{\partial_x (D_N(2\pi  \cdot ))}{4}    m'(\Pi_{\le N}  \cdot)  \partial_x \Pi_{\le N }q 
	\,-\, \L \eta
	\biggr\|_{\D(\EE)^*},
\end{align}
for  $\eta(h )=\int_\T h q $. To this end, we consider for $\zeta \in \D_0$ with $\|\zeta\|_{\D(\EE)} \le 1$, the tested
\begin{align}&
	\int_{\MM} 
	\biggl(
	\int_\T m( \Pi_{\le N} \cdot )(\px^3\Pi_{\le N}\cdot ) 
	\px\Pi_{\le N} q\,+\, \frac{\partial_x (D_N(2\pi  \cdot ))}{4}    m'(\Pi_{\le N}  \cdot)  \partial_x \Pi_{\le N }q 
	\,-\, \L \eta
	\biggr)\zeta
	\,\d\mu(h)
	\\& \quad =\, \EE(\eta, \zeta) \,+\, \int_{\MM}  
	\int_O  \Pi_{\le N}\partial_x( m(\Pi_{\le N} h)  \partial_x \Pi_{\le N}
q)
	\diff|_h\zeta  \,\d x
	\,\d\mu(h)
	\\&\quad \lesssim \, 
	\biggl(
	\int_{\MM}  
	\Bigl\|	\Pi_{\le N}  \partial_x \bigl(	m(\Pi_{\le N}h) \partial_x \Pi_{\le N} q \bigr)\,-\,  \partial_x \bigl(
	m(h)  \partial_x q\bigr)  \Bigr\|_{(\dot{H}^1(O ))^* }^2 
	\,\d\mu(h)\biggr)^{1/2}
	,
\end{align}
where we  reverted the calculations regarding $I_1$ in \eqref{Eq111} from Appendix \ref{App:proof_IBP} in the  equality. Using the triangle inequality, we may estimate the latter by 
\begin{align}&
	\biggl(
	\int_{\MM}  
	\Bigl\|	\Pi_{\le N}  \partial_x \bigl(	m(\Pi_{\le N}h) \partial_x \Pi_{\le N} q \bigr)\,-\, 	\Pi_{\le N} \partial_x \bigl(
	m(h)  \partial_x 	\Pi_{\le N} q\bigr)  \Bigr\|_{(\dot{H}^1(O ))^* }^2 
	\,\d\mu(h)\biggr)^{1/2}
	\\&\quad + 
	\biggl(
	\int_{\MM}  
	\Bigl\|		\Pi_{\le N} \partial_x \bigl(
	m(h)  \partial_x 	\Pi_{\le N} q\bigr)  \,-\,  	\Pi_{\le N} \partial_x \bigl(
	m(h)  \partial_x  q\bigr) \Bigr\|_{(\dot{H}^1(O ))^* }^2 
	\,\d\mu(h)\biggr)^{1/2} 
	\\&\quad + 
	\biggl(
	\int_{\MM}  
	\Bigl\|	\Pi_{\le N} \partial_x \bigl(
	m(h)  \partial_x  q\bigr) \,-\, \partial_x \bigl(
	m(h)  \partial_x  q\bigr)  \Bigr\|_{(\dot{H}^1(O ))^* }^2 
	\,\d\mu(h)\biggr)^{1/2} \,=\, J_1 \,+\, J_2\,+\, J_3,
\end{align}
for which we proceed separately. 

Regarding the first term, we use the boundedness of $\Pi_{\le N}\partial_x$  from $L^2(O)$ to $(\dot{H}^1(O))^*$, H\"older's inequality, and the Lipschitz continuity of $m$   by \eqref{Eq220} to estimate
\begin{align}J_1 \,&\lesssim \, 	\biggl(
	\int_{\MM}  
	\|	 	m(\Pi_{\le N}h)  - m(h) \|_{L^2(O)}^2 \| \partial_x \Pi_{\le N} q\|_{C(\overline{O})}^2 
	\,\d\mu(h)\biggr)^{1/2} \\& \lesssim_m\,  \biggl(
		\int_{\MM}  
	\|	 	\Pi_{\le N}h  - h \|_{L^2(O)}^2 
	\,\d\mu(h)\biggr)^{1/2}\| \partial_x \Pi_{\le N} q\|_{C(\overline{O})}.
\end{align} 
By the regularity \eqref{eqn72345} and the Sobolev embedding $H^3 (O) \hookrightarrow C^1({O})$ we may further bound this by 
\begin{align}C_\epsilon N^{-(1/2-\epsilon)} \| q\|_{H^3_N(O)},
	\label{eqnq}
\end{align}
for any $\epsilon>0$. 
For $J_2$, we arrive at the same upper bound based on 
\begin{align}
	J_2  \,\lesssim \, 	\biggl(
	\int_{\MM}  
	\|	 	 m(h) \|_{C(\overline O)}^2 \| \partial_x \Pi_{\le N} q - \partial_x q \|_{L^2(O)}^2 
	\,\d\mu(h)\biggr)^{1/2}.
\end{align}
Lastly, denoting the sine projection operator by $\Pi^\text{sine}_{\le N}$, we also have 
\begin{align}J_3 \,&\lesssim \, 	\biggl(
	\int_{\MM}  
	\|	 \Pi^\text{sine}_{\le N}  m(h) \partial_x q -  m(h) \partial_x q \|_{L^2(O)}^2
	\,\d\mu(h)\biggr)^{1/2}
	\\& \lesssim \, N^{-(1/2-\epsilon)} 	\biggl(
	\int_{\MM}  
	\|	 m(h) \partial_x q   \|_{C^{1/2-\epsilon}(O)}^2
	\,\d\mu(h)\biggr)^{1/2} ,
\end{align}
which is estimated by \eqref{eqnq}  using additionally the algebra property of H\"older spaces. All in all, we have derived that
\begin{align}\begin{split}\label{Eq271}
		&
	\E\biggl[\biggl\|
	\int_0^\cdot \int_\T m( \Pi_{\le N} h_s)\px^3\Pi_{\le N}h_s 
	\px\Pi_{\le N} q\,+\, \frac{\partial_x (D_N(2\pi  \cdot ))}{4}    m'(\Pi_{\le N}  h_s)  \partial_x \Pi_{\le N }q \,-\, A^{[q]} 
	\biggr\|_{\pv, T}
	\biggr]  \\&\quad \lesssim_{(\epsilon,m,p,q,T)}\, N^{-(1/2 -\epsilon)}  .
	\end{split}
\end{align}

\emph{Step 2. (Quantified explosion in H\"older spaces)} Next, based on the estimate from  part \eqref{Item_inc_New} of Definition \ref{defi_energy_sol_iii'} we quantify the explosion of the  right-hand side of \eqref{Eq267} in more regular spaces. From the aforementioned property of energy solutions, we deduce firstly that also
\begin{equation}\label{Eq276}
\E [|\zeta(h_t)|^r] \,\lesssim \, \|\zeta \|_{L^{2r}(\mu)}^r,\qquad r\ge 1, \, t\ge 0.
\end{equation} Regarding the first integrand in \eqref{Eq267}, this yields 
\begin{align}
\E\biggl[\biggl|
\int_s^t \int_\T m( \Pi_{\le N} h_\tau)\px^3\Pi_{\le N}h_\tau 
\px\Pi_{\le N} q  \biggr|^r
\biggr]
\, &\lesssim_{(m,q,r)} \,|t-s|^{r-1}\int_s^t \E \bigl[ \|\px^3\Pi_{\le N}h_\tau \|_{L^2(\T)}^r \bigr] \,\d \tau
\\&
 \lesssim_r   |t-s|^r  \biggl( \int_{\MM} \| \px^3 \Pi_{\le N}h\|_{L^2(\T)}^2 \,\d\mu(h) \biggr)^{r/2} ,
\end{align}
using additionally H\"older's inequality and equivalence of moments for Gaussian random variables. Applying \eqref{Eq223} we can calculate the latter quantity to be
\begin{align} \int_{\MM} \biggl\| \sum_{1\le k \le N} \partial_x^3 p^k \int_{\T} h \vp_k  \biggr\|_{L^2(O)}^2 \,\d\mu(h)  \,=\,  \sum_{1\le k \le N} (\pi k)^4 \|\vp_k\|_{(\dot H^1(O))^*}^2 \,\eqsim \, N^5,
\end{align}
so that
\begin{align}\label{Eq272}
\E\biggl[\biggl|
\int_s^t \int_\T m( \Pi_{\le N} h_\tau)\px^3\Pi_{\le N}h_\tau 
\px\Pi_{\le N} q  \biggr|^r
\biggr] \,\lesssim_{(m,q,r)} \, N^{5r/2}|t-s|^r.
\end{align}

For the second integrand in \eqref{Eq267}, we obtain after integrating by parts and using \eqref{Eq220} that
\begin{align}\E\biggl[\biggl|\int_s^t \int_\T \frac{ \partial_x (D_N(2\pi  \cdot ))}{4}   m'(\Pi_{\le N}  h_\tau)  \partial_x \Pi_{\le N }q   \biggr|^r
	\biggr]\,\lesssim_{(m,q,r)}\, \bigl( |t-s| \| \partial_x D_N(2\pi \cdot)\|_{L^1(O)}\bigr)^r
\end{align}
 Due to the $N\log N$-divergence of the term involving the Dirichlet kernel, see \cite[Lemma 2.5]{sheng1990extension}, this can, up to a constant, be estimated by the right-hand side of \eqref{Eq272} as well. Combining yields 
by the Kolmogorov--Chentsov theorem  that
\begin{align}\begin{split}\label{Eq301}&
\E\biggl[\biggl\|\int_0^\cdot 
\int_\T m( \Pi_{\le N} h_s)\px^3\Pi_{\le N}h_s 
\px\Pi_{\le N} q\,+\, \frac{\partial_x (D_N(2\pi  \cdot ))}{4}    m'(\Pi_{\le N}  h_s)  \partial_x \Pi_{\le N }q \biggr\|_{C^{1/p'}([0,T])}^r
\biggr]^{1/r}\\&\quad \lesssim_{(m,p',q,r,T)}\, N^{5/2},\end{split}
\end{align}
for any $r,p'\in (1,\infty)$.

\emph{Step 3. (Interpolation)} 
Next, we consider real interpolation between the spaces
\[
L^{1}(\Omega; \pv([0,T]))
\quad
\text{ and }\quad L^{r}(\Omega; C^{1/p'}([0,T])),
\]
for which the $K$-functional
is defined as 
\[
K(\sigma, A^{[q]}) \,=\, \inf_{A^{[q]} = B+C}\bigl(\|B\|_
{L^{1}(\Omega; \pv([0,T]))}+\sigma \|C\|_{L^{r}(\Omega; C^{1/p'}([0,T]))}\bigr),
\]
see \cite{berghlofstroom}. Then the real interpolation norm is given by
\begin{equation}\label{Eq302}
\|A^{[q]}\|_{\theta,1}\,\eqsim\, \sum_{l=-\infty}^\infty 2^{-l\theta}K(2^l, A^{[q]}),
\end{equation}
and we bound for $l\ge  0$ simply $K(2^{l}, A^{[q]})\le \|A^{[q]}\|_
{L^{1}(\Omega; \pv([0,T]))}$. For $l<0$, on the other hand, we take $N\eqsim 2^{{-l/3}}$ and  then estimate
\begin{align}\begin{split}\label{Eq303}&
	K(2^l, A^{[q]}) \\& \quad\le\, \biggl\| \int_0^\cdot 
	\int_\T m( \Pi_{\le N} h_s)\px^3\Pi_{\le N}h_s 
	\px\Pi_{\le N} q\,+\, \frac{\partial_x (D_N(2\pi  \cdot ))}{4}    m'(\Pi_{\le N}  h_s)  \partial_x \Pi_{\le N }q  \,-\, A^{[q]} \biggr\|_{	L^{1}(\Omega; \pv([0,T]))}
	\\&\qquad +\,2^l \biggl\| 	\int_0^\cdot 
	\int_\T m( \Pi_{\le N} h_s)\px^3\Pi_{\le N}h_s 
	\px\Pi_{\le N} q\,+\, \frac{\partial_x (D_N(2\pi  \cdot ))}{4}    m'(\Pi_{\le N}  h_s)  \partial_x \Pi_{\le N }q  \biggr\|_{	L^{r}(\Omega; C^{1/p'}([0,T]))}
	\\& \quad  \lesssim_{(\epsilon,m,p',q,r,T)}\, 2^{(1/2-\epsilon)l/3}\,+\, 2^{l - 5l/6}\,\lesssim\, 2^{(1/2-\epsilon)l/3},
\end{split}\end{align}
based on the previously obtained \eqref{Eq271} and \eqref{Eq301}. In particular, we found that
\begin{equation}\label{Eq311}
A^{[q]}\,\in \, \bigl(	
L^{1}(\Omega; \pv([0,T])), L^{r}(\Omega; C^{1/p'}([0,T]))
\bigr)_{\theta,1}
\end{equation}
for $\theta\in (0,1/6)$, since  choosing $\epsilon$ sufficiently small makes \eqref{Eq302}  finite for this range. 
Now we use \cite[Section 1.18.4]{triebel_95} together with \cite[Theorem 2.1]{bergh_Vp} to deduce that
\begin{align}\begin{split}\label{eqn22}&
\bigl(	
L^{1}(\Omega; \pv([0,T])), L^{r}(\Omega; C^{1/p'}([0,T]))
\bigr)_{\theta,1} \,\hookrightarrow \, 
L^{1} 
\bigl(	 \Omega ;
\bigl(	 {B}^{1/p}_{p,\infty}((0,T)),  B^{1/p'}_{p,\infty}((0,T))
\bigr)_{\theta,1}\bigr)
\\&
\quad \hookrightarrow \, 
L^{1} 
(	 \Omega ;
{B}_{p,1}^{1/\tilde{p}}((0,T))
)\, 	\hookrightarrow \, 
L^{1} 
(	 \Omega ;
{B}_{\tilde{p},1}^{1/\tilde{p}}((0,T))
),
\end{split}
\end{align}
where we assume that $p'\le p$ and
\[
\frac{1}{\tilde{p}}\,=\, (1-\theta)\frac{1}{p} \,+\, \theta\frac{1}{p'}.
\]
The last space in the chain \eqref{eqn22} embeds into $L^{1}(\Omega; \ptv([0,T]))$ by the other inclusion from \cite[Theorem 2.1]{bergh_Vp}. Taking then $\theta \sim 1/6$, $p'\sim 1$ and $p\sim 2$ results in  $\tilde{p}\sim 12/7<2$.

In combination with \eqref{Eq311}, this yields improved $ \ptv([0,T])$-regularity of $A^{[q]}$. To strengthen \eqref{Eq267} accordingly,  we estimate for $2^l > N^{-3}$
\begin{align}&
K\biggl(2^l,\int_0^\cdot 
\int_\T m( \Pi_{\le N} h_s)\px^3\Pi_{\le N}h_s 
\px\Pi_{\le N} q\,+\, \frac{\partial_x (D_N(2\pi  \cdot ))}{4}    m'(\Pi_{\le N}  h_s)  \partial_x \Pi_{\le N }q   \,-\, A^{[q]} 
\biggr)\\& \quad \lesssim_{(\epsilon,m,p,q,T)}\, 2^{(1-\lambda)(1/2-\epsilon)l/3}N^{-\lambda (1/2-\epsilon)},
\end{align}
by \eqref{Eq271} for any $\lambda \in (0,1)$. For $2^{l}\le N^{-3}$ we use \eqref{Eq301} and \eqref{Eq303} to deduce that
\begin{align}
&K\biggl( 2^l,\int_0^\cdot 
\int_\T m( \Pi_{\le N} h_s)\px^3\Pi_{\le N}h_s 
\px\Pi_{\le N} q\,+\, \frac{\partial_x (D_N(2\pi  \cdot ))}{4}    m'(\Pi_{\le N}  h_s)  \partial_x \Pi_{\le N }q    \,-\, A^{[q]} 
\biggr)
\\&\quad \le \, K\biggl( 2^l,\int_0^\cdot 
\int_\T m( \Pi_{\le N} h_s)\px^3\Pi_{\le N}h_s 
\px\Pi_{\le N} q\,+\, \frac{\partial_x (D_N(2\pi  \cdot ))}{4}    m'(\Pi_{\le N}  h_s)  \partial_x \Pi_{\le N }q 
\biggr) \,+\,K( 2^l, A^{[q]})
\\&\quad \lesssim_{(\epsilon,m,p',q,r,T)}\, 2^{l}N^{5/2}\,+\, 2^{(1/2-\epsilon)l/3}\,\le\,2^{(1-\lambda)l/6}N^{-\lambda/2}\,+\,  2^{(1-\lambda)(1/2-\epsilon)l/3 }N^{-\lambda(1/2-\epsilon)},
\end{align}
instead.  By reducing as above to $l<0$ and choosing $\lambda$ sufficiently small, we deduce that the limit \eqref{Eq267} is attained in \eqref{Eq311} for admissible $(p,p',r,\theta)$ and therefore also in $L^{1}(\Omega; \ptv([0,T]))$ for $\tilde{p}\sim 12/7$.

\emph{Step 4. (Recovering It\^o's formula)} Having established the improved convergence similar to \cite[Corollary 3.17]{GubPer_uniqueness}, we can essentially follow \cite[Lemma 4.14]{GubPer_generator} to arrive at \eqref{Eq158_New2}. Indeed, the latter relies on applying It\^o's formula to the semimartingales
\begin{equation}\label{Eq275}
S_{N,t}^{[q^i]}\,=\, \int_\T h_0 q^i \,+\, \int_0^t  \int_{\T} m( \Pi_{\le N} h_s)\px^3\Pi_{\le N}h_s 
\px\Pi_{\le N} q^i \,+\, \frac{\partial_x (D_N(2\pi  \cdot ))}{4}    m'(\Pi_{\le N}  h_s)  \partial_x \Pi_{\le N }q^i 
\,+\, M_t^{[q^i]},
\end{equation}
where 
\begin{equation}\label{Eq310}
M_t^{[q^i] }\,=\, \int_\T h_t q^i \,-\,  \int_\T h_0 q^i \,-\, A_t^{[q^i]}
\end{equation}
are martingales with quadratic variation
\[
\langle M^{[q^i] },
M^{[q^j] }\rangle_t \,=\,  2
\int_0^t \int_\T 
m(h_s) \px q^i  \px q^j ,\]
as follows by Definition \ref{defi_energy_sol_iii'}~\eqref{Item_MP_New} and  polarization. 
This yields, for any $N$, that
\begin{align}\begin{split}
	\label{Eq277}
	&
	\Phi(S_{N,t}^{[q^1]},\dots , S_{N,t}^{[q^k]} )\,-\, 	\eta(h_0)\,  -\, \int_0^t 
	\sum_{i=1}^k
	\partial_i\Phi (S_{N,s}^{[q^1]},\dots , S_{N,s}^{[q^k]} ) \,\d A^{[q^i]}_{N,s}
	\\& \quad -\, 
	\int_0^t
	\sum_{i,j=1}^k 
	\partial_{ij}\Phi(S_{N,s}^{[q^1]},\dots , S_{N,s}^{[q^k]} )   \int_\T 
	m( h_s)  \px q^i \px q^j 
\end{split}
\end{align}
is a local martingale with quadratic variation
\[
2\sum_{i,j=1}^k
\int_0^t (\partial_i\Phi\partial_j\Phi ) (S_{N,s}^{[q^1]},\dots , S_{N,s}^{[q^k]} )   \int_\T 
m( h_s)  \px q^i \px q^j  ,
\]
where we use the notation
\[
A^{[q^i]}_{N,t} \,=\, \int_0^t  \int_{\T} m( \Pi_{\le N} h_s)\px^3\Pi_{\le N}h_s 
\px\Pi_{\le N} q^i \,+\, \frac{\partial_x (D_N(2\pi  \cdot ))}{4}    m'(\Pi_{\le N}  h_s)  \partial_x \Pi_{\le N }q^i  .
\]
Since by assumption $\Phi \in C^2_b(\R^k)$, 
\eqref{Eq277} is a uniformly integrable family of martingales. Up to passing to a subsequence, we have moreover that 
\begin{align}\begin{split}
	\label{Eq333}&
	\biggl(
	\Phi(S_{N,t}^{[q^1]},\dots , S_{N,t}^{[q^k]} )\,-\, 	\eta(h_0)\,  -\, 
	\int_0^t
	\sum_{i,j=1}^k 
	\partial_{ij}\Phi(S_{N,s}^{[q^1]},\dots , S_{N,s}^{[q^k]} )   \int_\T 
	m( h_s)  \px q^i \px q^j  \biggr)
	\\&\qquad \to\, \biggl(\eta(h_t) \,-\, 	\eta(h_0) \, -\, 
	\int_0^t
	\sum_{i,j=1}^k
	\partial_{ij}\Phi \biggl(\int_{\T}  h_s q^1,\dots , \int_{\T}  h_s q^k \biggr) \int_\T 
	m(h_s) \px q^i \partial_x q^j  \biggr)	,	\end{split}
\end{align}
in $C([0,T])$, $\P$-a.s., due to the convergence \eqref{Eq267} in $L^1(\Omega;C([0,T]))$.  Invoking the previous step, we can  similarly conclude that 
$A_N^{[q^i]} \to A^{[q^i]} $  in  $\tilde{p}\text{-}\mathrm{Var}([0,T])$  and thus 
$S_{N}^{[q^i]} \to \int_\T h q^i$ in $(2+\delta)\text{-}\mathrm{Var}([0,T])$, $\P$-a.s., for  all $\delta>0$.  
In light of the regularity of $\Phi$, the latter implies that
also
\begin{equation}\label{Eq306}
\partial_i\Phi (S_{N}^{[q^1]},\dots , S_{N}^{[q^k]} ) \,\to\, 
\partial_i\Phi \biggl(\int_\T h q^1 ,\dots ,\int_\T h q^k \biggr)
\end{equation}
in $(2+\delta)\text{-}\mathrm{Var}([0,T])$, $\P$-almost surely, for all such $\delta$. We conclude that along this subsequence
\[
\biggl|\sum_{i=1}^k 
\int_0^t
\partial_i\Phi (S_{N,s}^{[q^1]},\dots , S_{N,s}^{[q^k]} )  \,-\, \partial_i\Phi \biggl(\int_\T h_s q^1 ,\dots ,\int_\T h_s q^k \biggr)  \,\d A^{[q^i]}_{N,s} \biggr| \,\to\, 0,
\]
$\P$-a.s.,
as $N\to\infty$ 
in light of  the  Young--Lo\`{e}ve estimate \cite[Theorem 6.8]{friz_victoir}. Together with \eqref{Eq333} and  the preservation of martingality by uniform integrability of \eqref{Eq277}, the proof is complete.
\end{proof}
We deduce  a version of Proposition \ref{Prop_ES_to_MP}  for general $\eta \in \D(\EE)$. 
\begin{proposition}\label{gen_MP}
Let $\frac{\d \nu}{ \d \mu}\in L^2(\mu)$ be a probability density and  $h$ be an energy solution  to \eqref{EqSTFE}  starting from $\nu$ in the sense of Definition \ref{defi_energy_sol_iii'}. Then, for all $\eta\in \D(\EE)$  and $\D(\L)\ni \eta_N \to \eta$ in $\D(\EE)$
\begin{align}\label{Eq141_New}
\eta(h_t)\,-\, \eta(h_0)\,-\, \lim_{N\to \infty} \int_0^t 
\L \eta_N(h_s)\,\d s
\end{align}
is a martingale with respect to the filtration generated by $h$. 
\end{proposition}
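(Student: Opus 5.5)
The plan is to pass from the cylinder functions of Proposition \ref{Prop_ES_to_MP} to general $\eta\in\D(\EE)$ by density, using the energy estimate and incompressibility from Definition \ref{defi_energy_sol_iii'} to control all terms.

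\emph{Step 1: the limit in \eqref{Eq141_New} exists and does not depend on the approximation.} For $\eta_N,\eta_M\in\D(\L)$, Definition \ref{defi_energy_sol_iii'}~\eqref{Item_energy_est_New} gives
\[
\E\Bigl[\Bigl\|\int_0^\cdot (\L\eta_N-\L\eta_M)(h_s)\,\d s\Bigr\|_{\pv,T}\Bigr]\,\lesssim_p\,(T^{1/2}+T)\,\|\L(\eta_N-\eta_M)\|_{\D(\EE)^*}.
\]
The weak generator $\L\colon\D(\EE)\to\D(\EE)^*$ is bounded, since $\EE$ is comparable to $\EEf$ (cf.\ \eqref{Eq143}). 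Hence the right-hand side is at most $\|\eta_N-\eta_M\|_{\D(\EE)}$. The limit therefore exists in $L^1(\Omega;C([0,T]))$, and interlacing two approximating sequences shows it is independent of the choice of $(\eta_N)$. Together with incompressibility, which makes $\eta\mapsto \eta(h_t)$ continuous from $L^2(\mu)$ to $L^1(\Omega)$, the whole process in \eqref{Eq141_New} depends continuously on $\eta\in\D(\EE)$ in $L^1(\Omega;C([0,T]))$. Martingality is preserved under such $L^1$-limits, since conditional expectations are $L^1$-continuous.

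\emph{Step 2: martingality for cylinder functions.} Take $\eta$ as in \eqref{Eq11_New} with $q^i\in H^3_N(\T)$ and $\Phi\in C^2_b(\R^k)$. By Lemma \ref{Lemma_generator_approx_problem}, $\eta\in\D(\check{\L}_N)$, and by Lemma \ref{Lemma_conv_gens}, $\check{\L}_N\eta\to\L\eta$ in $\D(\EE)^*$. Using the isomorphism \eqref{Eq143}, set $\eta_N=\L^{-1}\check{\L}_N\eta$, adding constants to fix the means. Then $\eta_N\in\D(\L)$ only in the weak sense, so I instead apply Step 1 with the functionals $\check{\L}_N\eta$ directly: the energy estimate makes sense for any element of $\D(\EE)^*$ approximated by $L^2$ elements. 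With the convergences \eqref{Eq161}--\eqref{Eq163} and \eqref{Eq307}, the limit $\lim_N\int_0^t\check{\L}_N\eta(h_s)\,\d s$ is exactly the drift appearing in \eqref{Eq158_New2}. Proposition \ref{Prop_ES_to_MP} then gives martingality for this class. For an arbitrary sequence $\D(\L)\ni\eta_N\to\eta$, Step 1 shows the limit is the same, because $\L\eta_N\to\L\eta$ in $\D(\EE)^*$ as well.

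\emph{Step 3: density.} It remains to show that such cylinder functions are dense in $\D(\EE)$. Since $\D(\EE)=\D(\EEf)$ with equivalent norms, it suffices to argue for the flat form. There, $\D_0$ is dense by definition of the closure. One first approximates $p^i\in H^1$ by elements of $H^3_N$ (e.g.\ cosine truncations), which is a convergence in $\dot H^1$ and hence of the cylinder function in $\D(\EEf)$ by dominated convergence. One then approximates $\Phi\in C^1_b$ by $C^2_b$ functions with uniformly bounded, locally uniformly convergent gradients. Steps 1 and 2 then conclude the argument.

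The main obstacle is Step 2: making rigorous that the drift obtained in Proposition \ref{Prop_ES_to_MP} coincides with the abstract limit $\lim\int\L\eta_N$. This requires the energy estimate to extend from $L^2(\mu)$ integrands to limits in $\D(\EE)^*$, which I would handle by density of $L^2(\mu)$ in $\D(\EE)^*$ together with the bound in Step 1.
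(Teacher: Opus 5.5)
Your proposal is correct and follows the paper's proof essentially step by step: the limit is well defined via the energy estimate and the boundedness of $\L\colon\D(\EE)\to\D(\EE)^*$; martingality is reduced, by density and $L^1(\Omega)$-stability of martingales, to cylinder functions with $q^i\in H^3_N(\T)$ and $\Phi\in C^2_b$; and the drift is identified through $\L^{-1}\check{\L}_N\eta$, Lemmas \ref{Lemma_generator_approx_problem} and \ref{Lemma_conv_gens}, \eqref{Eq161}--\eqref{Eq163}, and Proposition \ref{Prop_ES_to_MP}. Two small corrections: since $\check{\L}_N\eta\in L^2(\mu)$ is mean-free, $\L^{-1}\check{\L}_N\eta$ in fact lies in the strong domain $\D(\L)$ by \eqref{Eq42}, so your detour is unnecessary though harmless; and incompressibility only gives convergence of $\eta^{(l)}(h_t)$ in $L^1(\Omega)$ for each fixed $t$, not in $L^1(\Omega;C([0,T]))$, but fixed-time convergence is all that martingality requires.
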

\begin{proof}Similarly to the discussion below Lemma \ref{Lemma_conv_gens}, we see that the limit in \eqref{Eq141_New} 	exists independently of the approximating sequence  $\eta_N$ in $L^1(\Omega; C([0,T]))$ by combining Definition \ref{defi_energy_sol_iii'}~\eqref{Item_energy_est_New} with the continuity of $\L\colon \D(\EE) \to \D(\EE)^*$. Accordingly, we write 
\[\int_0^t 
\L \eta (h_s)
\,\d s \,=\, \lim_{N\to \infty} \int_0^t 
\L \eta_N(h_s)
\,\d s
\]
for $\eta\in \D(\EE)$ and \eqref{Eq142_New} implies the bound
\begin{equation}\label{Eq282}
\E\biggl[
\sup_{t\le T} \biggl|\int_0^t  \L  \eta (h_s) \,\d s \biggr|
\biggr] \,\lesssim \,(T^{1/2}+T) \|\eta \|_{\D(\EE)}.
\end{equation}

Regarding \eqref{Eq141_New}, we argue that it suffices to check the claim only for 
$\eta$ as in  \eqref{Eq11_New} with  $q^i\in H_N^3(\T)$  and $\Phi\in C_b^2(\R^k)$. Indeed, since the latter are dense in $\D(\EE)$ by construction, we can find a sequence of such $\eta^{(l)}$ that converges to $\eta$ in $\D(\EE)$.  
Using \eqref{Eq282} we see then that
\[\int_0^t
\L \eta^{(l)}(h_s) \,\d s \,\to\, 
\int_0^t
\L\eta (h_s) \,\d s ,
\]
as $l\to\infty$
in $L^1(\Omega)$. Additionally, due to Definition \ref{defi_energy_sol_iii'}~\eqref{Item_inc_New}, we have
\[
\eta^{(l)}(h_t) -\eta^{(l)}(h_0)\, \to  \, \eta (h_t) -\eta(h_0)
\]
in $L^1(\Omega)$. Since martingality is preserved under limits in $L^1(\Omega)$, the above reduction is justified. 

Continuing with $\eta$ as in \eqref{Eq11_New}, we obtain from Lemma \ref{Lemma_generator_approx_problem} that 
$\eta \in \D(\check{\L}_{N})$ and thus $\tilde{\eta}_N = \L^{-1} \check{\L}_{N} \eta \in \D(\L)$. Writing $\eta = \tilde{\eta} + \lambda$ for $(\tilde{\eta},\mathbf{1}_\MM)_{L^2(\mu)}= 0$ and $\lambda\in \R$, it follows $ \L\tilde{\eta}_N \to \L \tilde{\eta} $ in $\D(\EE)^*$ by Lemma \ref{Lemma_conv_gens} and therefore $ \eta_N = \tilde{\eta}_N  + \lambda  \to \eta$ in $\D(\EE)$.
Using this as approximating sequence, we may by Lemma \ref{Lemma_generator_approx_problem} identify the limit in 
\eqref{Eq141_New} as 
\begin{align}\begin{split}
& \lim_{N\to \infty} \biggl(\int_0^t  \sum_{i=1}^k
\partial_i	\Phi\biggl(\int_{\T}   h_s q^1,\dots , \int_{\T}  h_s q^k \biggr)
\bigl( h_s, 
\partial_x^2 \Pi_{>N}q^i
\bigr)_{\dot{H}^1(O)  } 
\\& \qquad\qquad +\,\sum_{i,j=1}^k \partial_{ij} \Phi\biggl(\int_{\T}  h_s  q^1,\dots , \int_{\T}  h_s  q^k \biggr) \int_\T  \partial_x \Pi_{>N} q^i \partial_x \Pi_{>N}  q^j 
\\& \qquad \qquad +\, 	\sum_{i=1}^k
\partial_i\Phi\biggl(\int_{\T}  h_s  q^1,\dots , \int_{\T}  h_s  q^k \biggr)\int_\T m( \Pi_{\le N} h_s)\px^3\Pi_{\le N}h_s 
\px\Pi_{\le N} q^i
\\&\qquad\qquad
+\, \frac{1}{4}	\sum_{i=1}^k
\partial_i\Phi\biggl(\int_{\T}  h_s  q^1,\dots , \int_{\T}  h_s  q^k \biggr)\int_\T {\partial_x (D_N(2\pi  \cdot ))}   m'(\Pi_{\le N}  h_s)  \partial_x \Pi_{\le N }q^i
\\& \qquad\qquad +\,
\sum_{i,j=1}^k 
\partial_{ij} 
\Phi\biggl(\int_{\T}  h_s  q^1,\dots , \int_{\T}  h_s  q^k \biggr)  \int_\T  
m(\Pi_{\le N} h_s)  \partial_x \Pi_{\le N} q^i \partial_x \Pi_{\le  N}  q^j 
\,\d s\biggr).
\end{split}\end{align}
Together with \eqref{Eq161}--\eqref{Eq163}\noeqref{Eq162} and  Definition \ref{defi_energy_sol_iii'}~\eqref{Item_inc_New},  
martingality of \eqref{Eq141_New} may be deduced from Proposition~\ref{Prop_ES_to_MP}.
\end{proof}
\subsection{Uniqueness of energy solutions}\label{ss:uniqueness}
Here, we  conclude from the preceding results the uniqueness in law of energy solutions to \eqref{EqSTFE}. Combining it with
Proposition \ref{Prop_DF_yields_ES} yields that any energy solution coincides in law with the diffusion process associated to $(\EE,\D(\EE))$, completing the proof of Theorem \ref{thm_characterization}.
\begin{corollary}Let $\frac{\d \nu}{ \d \mu}\in L^2(\mu)$ be a probability density and   $h$ be an energy solution  to \eqref{EqSTFE}  starting from $\nu$ in the sense of Definition \ref{defi_energy_sol_iii'}. Then, we have 
\begin{align}\label{Eq149}
\E\biggl[\prod_{l=0}^{k} \zeta_l (h_{t_l}) \biggr] \,=\, \int_{\MM}
\zeta_0T_{t_1 - t_0} \bigl(  \zeta_1 T_{t_2-t_1} (  \dots  T_{t_k - t_{k-1}} \zeta_k  )   \bigr)
\,\d \nu,
\end{align}
for
$0=t_0<\dots<t_k$ and bounded $\zeta_l\in L^2(\mu)$.
\end{corollary}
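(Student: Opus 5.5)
The plan is to argue by induction on $k$, using the generalized martingale problem from Proposition \ref{gen_MP} to identify one conditional expectation at a time, starting from the last time instance. The basic claim is: for every $s<t$, every bounded $\zeta\in L^2(\mu)$ and every bounded $\mathfrak{F}_s$-measurable $Z$,
\[
\E[Z\zeta(h_t)] \,=\, \E[Z\,T_{t-s}\zeta(h_s)].
\]
Iterating this from $t_k$ backward, with $Z=\prod_{l<k}\zeta_l(h_{t_l})$, replaces $\zeta_{k-1}(h_{t_{k-1}})\zeta_k(h_{t_k})$ by $(\zeta_{k-1}T_{t_k-t_{k-1}}\zeta_k)(h_{t_{k-1}})$. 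The new function is again bounded and lies in $L^2(\mu)$ because $T$ is Markovian. After $k$ steps one is left with $\E[\Psi(h_0)]$, and $h_0\sim\nu$ gives \eqref{Eq149}.

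To prove the basic claim, fix $t$ and consider the space-time function $u(r,\cdot)=T_{t-r}\zeta$ for $r\in[s,t]$. First reduce to nice $\zeta$: take $\zeta$ in the range of $T_\epsilon$, or of the resolvent, so that $u(r)\in\D(\L)$ with $\partial_r u=-\L u$, and $r\mapsto u(r)$ is smooth in $\D(\EE)$ on $[s,t-\epsilon]$. The passage back to general bounded $\zeta$ uses incompressibility, Definition \ref{defi_energy_sol_iii'}~\eqref{Item_inc_New}. It gives $\E|\xi(h_r)|\lesssim\|\xi\|_{L^2(\mu)}$, so $L^2(\mu)$-approximations of $\zeta$ and the strong continuity of $T$ pass to the limit in both sides.

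For such $u$, take a partition $s=r_0<\dots<r_n=t$. Apply Proposition \ref{gen_MP} to each $u(r_{j+1})\in\D(\L)$; then $\int\L u(r_{j+1})(h_\tau)\,\d\tau$ is an honest additive functional, and this gives
\[
\E\bigl[Z\bigl(u(r_{j+1})(h_{r_{j+1}})-u(r_{j+1})(h_{r_j})\bigr)\bigr]=\E\Bigl[Z\int_{r_j}^{r_{j+1}}\L u(r_{j+1})(h_\tau)\,\d\tau\Bigr].
\]
Add to this the time increment
\[
u(r_{j+1})(h_{r_j})-u(r_j)(h_{r_j})=\int_{r_j}^{r_{j+1}}\partial_r u(r)(h_{r_j})\,\d r=-\int_{r_j}^{r_{j+1}}\L u(r)(h_{r_j})\,\d r .
\]
The sum over $j$ telescopes to $\E[Z(u(t)(h_t)-u(s)(h_s))]$. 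The error is a Riemann-sum discrepancy between $\int\L u(r_{j+1})(h_\tau)\,\d\tau$ and $\int\L u(r)(h_{r_j})\,\d r$, which should vanish as the mesh goes to zero. Since $u(t)=\zeta$, this proves the claim.

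\textbf{Main obstacle.} The key difficulty is controlling that error. I would bound it using only incompressibility and the energy estimate \eqref{Eq142_New}, since $\L u(r)$ need not be bounded pointwise. Split the error into two parts.

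The first part is $\int(\L u(r_{j+1})-\L u(\tau))(h_\tau)\,\d\tau$. It is bounded via incompressibility by $\sup_{|r-r'|\le\delta}\|\L u(r)-\L u(r')\|_{L^2(\mu)}$ times $(t-s)$. This tends to zero for regularized $\zeta$, because $T$ is analytic on $L^2(\mu)$.

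The second part compares $\int_{r_j}^{r_{j+1}}\L u(\tau)(h_\tau)\,\d\tau$ with $\int_{r_j}^{r_{j+1}}\L u(\tau)(h_{r_j})\,\d\tau$. It is controlled by incompressibility plus the $L^1$-continuity of $\tau\mapsto\xi(h_\tau)$ for fixed $\xi\in L^2(\mu)$. I would obtain that continuity by first approximating $\xi$ with cylinder functions, for which continuity follows from the path continuity of $h$ in $(\dot H^1(\T))^*$, and then extending by density using incompressibility. If this pointwise-in-time comparison proves awkward, the fallback is a resolvent formulation: show $\E[Z\int_s^\infty e^{-\alpha(r-s)}(\alpha-\L)\eta(h_r)\,\d r]=\E[Z\eta(h_s)]$ for $\eta\in\D(\L)$, directly from Proposition \ref{gen_MP}. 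Then invert Laplace transforms and use the path continuity of $h$ to identify $\E[Z\zeta(h_t)]$ with $\E[Z\,T_{t-s}\zeta(h_s)]$.
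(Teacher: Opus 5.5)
Your proposal is correct and follows essentially the paper's route. You induct on $k$ via the one-step identity $\E[\zeta(h_t)\,|\,\mathfrak{F}_s]=T_{t-s}\zeta(h_s)$, obtained from Proposition \ref{gen_MP} applied to the backward space-time function $r\mapsto T_{t-r}\zeta$ (for regularized $\zeta$) by a temporal discretization controlled by incompressibility, and then approximate general bounded $\zeta$; the only difference is that you carry out the discretization explicitly (incompressibility plus path continuity already suffice there, so neither the energy estimate nor your resolvent fallback is needed), whereas the paper delegates that step to \cite[Lemma A.3]{GubPer_generator}.
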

\begin{proof}Having established Proposition \ref{gen_MP},
we follow the argument in \cite[Section 4.2, Section 4.4]{GubPer_generator}, which we recall for the reader's convenience. Firstly, we observe that if $\eta\in \D(\L)$ no approximating sequence $\eta_N$ is needed in Proposition \ref{gen_MP} and hence
\begin{equation}\label{Eq147}
\eta(h_t)\,-\, \eta(h_0)\,-\, \int_0^t 
\L\eta(h_s)
\,\d s
\end{equation}
is a martingale.
More generally, if $\eta \colon [0,T]\to \D(\L)$ is continuous with $ \partial_t\eta \in C([0,T];L^2(\mu))$, then  
\[
\eta(t,h_t)\,-\, \eta(0,h_0)\,-\, \int_0^t 
(\partial_t + \L)\eta(s,h_s)\,\d s
\]
is a martingale, as follows from a temporal discretization procedure together with Definition \ref{defi_energy_sol_iii'}~\eqref{Item_inc_New}, see \cite[Lemma A.3]{GubPer_generator}. Finally, since the base case is clear, we can proceed by induction and assume that \eqref{Eq149} holds for time instances  $0=t_0<\dots <t_{k-1}$ and bounded $\zeta_l\in L^2(\mu)$. Then, assuming at first   $\zeta_{{k}}\in \D(\L)$ we set $\eta(t) =  T_{t_k-t}\zeta_{k}$ on  $[0,t_k]$ so that $(\partial_t + \L)\eta = 0$. By our previous considerations,
\[
T_{t_k-t}\zeta_{k}(h_t) - T_{t_k}\zeta_{k}(h_0)
\]
is a martingale and therefore
\[
\E[\zeta_{k}(h_{t_k}) | \mathfrak{F}_{t_{k-1}}^{h} ] \,=\, T_{t_k-t_{k-1}}\zeta_{{k}}(h_{t_{k-1}}).
\]
Consequently, we obtain  \eqref{Eq149} also for $k+1$ time instances by the tower property of conditional expectations and the general case  $\zeta_{{k}}\in L^2(\mu)$ follows by approximation.
\end{proof}

	\appendix

	\section{Prerequisites from Dirichlet form theory}\label{app:DFs}
	In this appendix we review some foundational notions from Dirichlet form theory, which are used throughout this manuscript. 
	\subsection{Dirichlet forms and diffusion processes}
	Let us first introduce some basic notions from Dirichlet form theory and refer to \cite{FOT,ma2012introduction} for a complete introduction to the topic. For now, let  $(\X,\nu)$ be a $\sigma$-finite measure space. Then a (in this manuscript always symmetric) Dirichlet form is defined as follows.
	\begin{definition}[Dirichlet form]
		Let $\D(\EE)\subset L^2(\nu)$ be a dense linear subspace. A bilinear mapping $\EE\colon \D(\EE) \times \D(\EE) \to \R$ is called a Dirichlet form if $(\EE,\D(\EE))$ is 
		\begin{enumerate}[(i)]
			\item \emph{symmetric:} for all $\eta,\zeta \in \D(\EE)$ we have $\EE(\eta,\zeta) = \EE(\zeta,\eta )$, 
			\item \emph{non-negative definite:} for all $\eta \in \D(\EE)$ we have $\EE(\eta,\eta) \ge 0 $,  
			\item \emph{closed:} $\D(\EE)$ is a Hilbert space with the inner product $(\eta,\zeta)_{L^2(\nu)} + \EE (\eta ,\zeta)$,  
			\item \emph{Markovian:} For all $\eta\in \D(\EE)$, we have $\zeta  = (\eta\vee 0) \wedge 1 \in \D(\EE)$ with $\EE(\zeta,\zeta )\le \EE(\eta,\eta)$.
		\end{enumerate}
	\end{definition}
	In this case, making \eqref{e:generator} more precise, the generator of $(\EE,\D(\EE))$ is the densely defined, non-positive definite, self-adjoint operator $(\L,\D(\L))$ on $L^2(\nu)$  characterized by
	\begin{equation}\label{Eq42}
		\D(\L) \,=\, \bigl\{
		\eta \in \D(\EE)\,\big|\, \exists \L \eta\in L^2(\nu)\,\forall \zeta \in \D(\EE) :
		\EE(\eta,\zeta) \,=\, -(\L\eta,\zeta)_{L^2(\nu)} 
		\bigr\}.
	\end{equation}
	Conversely, one can recover $(\EE,\D(\EE))$ from $(\L,\D(\L))$ using spectral calculus via
	\begin{equation}\label{Eq233}
		\D(\EE) \,=\, \D(\sqrt{-\L}),\qquad \EE(\eta,\zeta)\,=\, ( \sqrt{-\L}\eta, \sqrt{-\L}\zeta)_{L^2(\nu)}.
	\end{equation}
	By semigroup theory, or again by spectral calculus, one may define $T_t = \exp(t \L)$, which is a bounded operator on $L^2(\nu)$ for any $t>0$. Moreover, it is a contraction, i.e., $\|T_t\eta\|_{L^2(\nu)} \le \|\eta \|_{L^2(\nu)}$. The family $(T_t)_{t>0}$ is a strongly continuous semigroup of operators, which means that $
	T_t T_s \,=\, T_{t+s}$ for $s,t>0$ and $\lim_{t\searrow 0 } T_t \eta =\eta $ in $L^2(\nu)$ for all $\eta \in L^2(\nu)$.
	The generator $\L$ can be recovered from the family $(T_t)_{t>0}$ using \eqref{e:generation}. More precisely, we have
	\begin{equation}
		\D(\L) \,=\, \Bigl\{
		\eta \in L^2(\nu)\,\Big|\, \exists \L \eta\,=\,   \lim_{t\searrow 0}  \frac{1}{t} (T_t\eta - \eta ) \, \in \, L^2(\nu)
		\Bigr\}.
	\end{equation}
	The Markovian property of $(\EE,\D(\EE))$ is precisely what guarantees that the semigroup $(T_t)_{t>0}$ is Markovian, which means that $0\le\eta \le 1$, $\nu$-a.e., implies that also $0\le T_t\eta \le 1$, $\nu$-a.e., for any $t>0$.

	\emph{Next, we assume that $\X$ is a Polish space}, in order to impose suitable path properties on Markov processes. We equip it  with its Borel $\sigma$-field, on which we assume  $\nu$ to be defined, without further mention. Since Polish spaces are a subclass of Lusin and therefore also Suslin spaces, this falls into the scope of the introduction to Markov processes from \cite[Appendix A.2]{FOT} and \cite[Section IV.1]{ma2012introduction}. Moreover, we restrict our discussion to conservative diffusion processes on $\X$, i.e., diffusion processes with infinite life time, which circumvents further subtleties, and \emph{we assume for simplicity that $\nu(\X)=1$.}
	\begin{definition}[Conservative diffusion process]\label{defi_markov_proc}
		Let $(\Omega, \mathfrak{A} )$ be a measurable space, $(Z_t)_{t\ge 0}$ be an $\X$-valued stochastic process with continuous paths and $(P_z)_{z\in\X}$ a family of probability measures on $(\Omega,\mathfrak{A} )$. Then $(\Omega, \mathfrak{A}, (Z_t)_{t\ge 0} , (P_z)_{z\in\X} )$ is called a conservative diffusion process if there exists a right-continuous filtration  $ \mathfrak{F}$   such that $(Z_t)_{t\ge 0}$ is $\mathfrak{F}$-adapted and the following holds: 
		\begin{enumerate}[(i)]
			\item \label{defi_MP_i1}For each $t\ge 0$ and Borel measurable $B\subset \X$, the mapping $z\mapsto P_z(\{Z_t \in B\}) $ is Borel measurable.
			\item For each $z\in \X$ we have $P_z(Z_0 =z) = 1$.
			\item For each $t\ge 0$, there exists a mapping $\theta_t \colon \Omega \to\Omega$ such that $Z_s \circ\theta_t  = Z_{s+t}$ for all $s\ge 0$.
			\item \label{defi_MP_i4} For any probability measure $\mu $ on $\X$, Borel measurable $B\subset \X$ and  $\mathfrak{F}$-stopping time $\tau<\infty$, we have
			\begin{align}
				P_\mu \bigl(\{Z_{\tau +s}\in B\} \,\big|\,\mathfrak{F}_\tau \bigr) \,=\, P_{Z_\tau} ( \{Z_s\in B\}),
			\end{align}
			$P_\mu$-a.s.,
			where $ P_\mu(\Lambda) = \int_{\X} P_z(\Lambda)\, \d\mu$.
		\end{enumerate}
	\end{definition}
	If $(\Omega, \mathfrak{A}, (Z_t)_{t\ge 0} , (P_z)_{z\in\X} )$ is a conservative diffusion process, then the filtration $\FF$ can be taken to be its natural (or minimum completed admissible) filtration 
	\begin{equation}\label{e:filtr}
		\FF_t \,=\, \bigcap_{\mu }\overline{\sigma \{ Z_s | s\in [0,t]\}}^{P_\mu},
	\end{equation}
	where the intersection is taken over all probability measures on $\X$. Accordingly, one can also restrict the underlying $\sigma$-field to the canonically defined $\mathfrak{F}_\infty$, which we do in the following without saying.
	Moreover, we say sometimes that $( (Z_t)_{t\ge 0} , (P_z)_{z\in\X} )$ is a conservative diffusion process, completely  suppressing the underlying measure space. \emph{Throughout this manuscript, we also  refer to a conservative diffusion process  as a diffusion  process or just as a diffusion.} 
	
	In any case, each  diffusion process induces a transition function $p_t \eta(z) = E_z[F(Z_t)]$ for bounded and Borel measurable $\eta \colon \X \to \R$. By property \eqref{defi_MP_i1} of Definition \ref{defi_markov_proc}, the above defines again a Borel measurable mapping  and  using \eqref{defi_MP_i4} one finds the semigroup property $p_t (p_s \eta ) = p_{t+s}\eta $. Let's assume 
	that $(p_t)_{t>0}$ is $\nu$-symmetric, i.e., that
	\[
	\int_{\X} \eta (p_t \zeta) \,\d\nu \,=\, \int_{\X} (p_t\eta)  \zeta \,\d\nu 
	\]
	for any $t>0$ and non-negative Borel measurable $\eta,\zeta \colon \X\to \R$. Then by Jensen's inequality, we find
	\[
	\int_{\X} |p_t \eta|^2 \,\d\nu \,=\, \int_{\X} |E_z [\eta(Z_t)]|^2 \,\d\nu (z) \,\le\, 
	\int_{\X} E_z [\eta^2(Z_t)] \,\d\nu (z) \,=\, \int_{\X} p_t (\eta^2)\,\d\nu\,=\, \int_{\X} \eta^2\,\d\nu,
	\] 
	for bounded, Borel measurable $\eta$ and any $t>0$. Thereby, one can extend  $p_t$ to a contraction operator $T_t$ on $L^2(\nu)$. \emph{If the latter defines the Markovian semigroup of a Dirichlet form $(\EE,\D(\EE))$, we say that the  diffusion process $( (Z_t)_{t\ge 0} , (P_z)_{z\in\X} )$ and $(\EE,\D(\EE))$ are associated to each other.} In this case the diffusion may be analyzed by means of $(\EE,\D(\EE))$, as we do throughout this manuscript, and vice versa.

	A classical question in Dirichlet form theory is to find necessary and sufficient conditions under which a given Dirichlet form admits an associated  diffusion process (or is associated to other types of Markov processes). Since the foundational work towards the end of the last century, these conditions are nowadays well-understood, see, e.g., \cite[Chapter IV]{ma2012introduction}, and a central role is played by the associated capacity
	\begin{equation}
		\Ca (O) \,=\, \inf \bigl\{ \EE(\eta, \eta )\,+\, \|\eta \|^2_{L^2(\nu)} \,\big|\, 
		\eta \in \D(\EE), \, \eta \ge 1_O \; \text{$\nu$-a.e.}
		\bigr\},
	\end{equation}
	for an open subset $O\subset \X$ and a given  Dirichlet form $(\EE,\D(\EE))$ on $L^2(\nu)$. Then, for a general subset  $S\subset \X$, one sets 
	\[
	\Ca (S) \,=\, \inf \bigl\{ \Ca (O) \,\big|\, 
	S\subset O, \; \text{$O$ open}
	\bigr\}.
	\]
	One calls moreover a set $N\subset\X$ \emph{$\EE$-exceptional} if $\Ca(N) = 0$ and an increasing sequence of closed sets $(S_k)_{k\in \N}$ is called an \emph{$\EE$-nest} if $\Ca(\X\setminus S_k) \to 0$, as $k\to\infty$. If a property holds outside an \emph{$\EE$-exceptional} subset of $\X$, we say that it holds \emph{$\EE$-quasi everywhere}.  Similarly, a function $\eta\colon \X \to\R$ is called \emph{$\EE$-quasi continuous}, if there exists an $\EE$-nest such that  $\eta|_{S_k}$ is continuous for each $k\in \N$.
	\begin{definition}[Quasi-regularity]\label{def_QR}
		Let $ (\EE,\D(\EE))$ be a Dirichlet form on $L^2(\X,\nu)$. Then $(\EE,\D(\EE))$ is called quasi-regular if 
		\begin{enumerate}[(i)]
			\item \label{IA} there exists an $\EE$-nest consisting of compact sets, 
			\item \label{IB} there exists a dense subset of $(\D(\EE), (\cdot,\cdot)_{L^2(\nu)} + \EE )$ whose elements have $\EE$-quasi continuous $\nu$-versions,
			\item \label{IC} there exists $\{  \eta_k| k\in \N\} \subset \D(\EE)$, such that each $\eta_k$ has a $\EE$-quasi continuous $\nu$-version $\tilde{\eta}_k $, and an $\EE$-exceptional $N\subset \X$, such that $\{\tilde{\eta}_k|k\in \N\}$ separates the points of $\X \setminus N$.
		\end{enumerate}
	\end{definition}
	\begin{remark}[Regular Dirichlet forms]
		A more classical class of Dirichlet forms is the class of regular Dirichlet forms, which form a subset of the quasi-regular ones. Regularity is defined when  $\X$ is locally compact and  $\nu$ has full support. Then, a Dirichlet form $(\EE,\D(\EE))$ is regular if there exists a set $ \mathcal{C} \subset \D(\EE)\cap C_c(\X)$ which is dense in both $\D(\EE)$ and $C_c(\X)$. In this case, the set $\mathcal{C}$ is called a core of $(\EE,\D(\EE))$.
	\end{remark}
	We define  the support of a measurable  $\eta \colon \X \to\R$ as the smallest closed set outside which the measure $|\eta | \d\nu$ vanishes. Then, that a Dirichlet form $ (\EE,\D(\EE))$  on $L^2(\X,\nu)$ has the \emph{local property} refers to the fact that $\EE(\eta,\zeta) = 0$ for $\eta,\zeta \in \D(\EE)$ with disjoint support. Moreover, $(\EE,\D(\EE))$ is called conservative, if its associated semigroup satisfies $T_t\mathbf{1}_{\X} =\mathbf{1}_{\X}$ for all $t>0$.  
	\begin{theorem}[Albeverio, Ma, R\"ockner: Existence of an associated diffusion]\label{thm:qr_implies_ex}
		Let $ (\EE,\D(\EE))$ be a quasi-regular, local and conservative Dirichlet form on $L^2(\nu)$. Then, there exists an associated  diffusion process.
	\end{theorem}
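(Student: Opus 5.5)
This is the classical existence theorem of Albeverio--Ma--R\"ockner, so the plan follows the standard route through Ray--Knight compactification. We do not reprove it from scratch.

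\emph{Step 1: reduce to a compact state space.} Using quasi-regularity, property \eqref{IA} gives an $\EE$-nest $(S_k)_k$ of compact sets. Properties \eqref{IB}--\eqref{IC} give countably many quasi-continuous functions $\tilde\eta_k$ separating points off an exceptional set. From these we build a countable, point-separating algebra $\mathcal{C}$ of bounded quasi-continuous functions. We close it under the resolvent $G_\alpha=\int_0^\infty e^{-\alpha t}T_t\,\d t$ for rational $\alpha$ and under lattice operations, so that it is a Ray cone. Its Gelfand compactification $\overline{\X}$ is a compact metrizable space into which $\bigcup_k S_k$ embeds homeomorphically on each $S_k$. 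On $\overline{\X}$ the transferred Dirichlet form becomes regular, since $\mathcal{C}$ extends continuously and is dense in both $\D(\EE)$ and $C(\overline{\X})$.

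\emph{Step 2: apply the regular theory and transfer back.} Fukushima's theorem for regular Dirichlet forms on the compact space yields a $\nu$-symmetric Hunt process on $\overline{\X}$ whose semigroup is a quasi-continuous version of $T_t$. We then show that $\overline{\X}\setminus\bigcup_k S_k$ is exceptional: $\Ca(\X\setminus S_k)\to 0$, and capacity is unchanged under the transfer. By the quasi-everywhere hitting characterization, the process started outside an exceptional set never visits $\overline{\X}\setminus\bigcup_k S_k$. Restricting to $\X$, with trivial dynamics on the exceptional set, gives a right process on $\X$ associated with $(\EE,\D(\EE))$.

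\emph{Step 3: continuity and infinite lifetime.} The local property of $\EE$ implies, via the Beurling--Deny/LeJan argument, that the jumping measure and the killing measure vanish. Hence the paths are continuous on $[0,\zeta)$ with no jumps at the lifetime $\zeta$. Conservativity, $T_t\mathbf{1}_\X=\mathbf{1}_\X$, gives $P_z(\zeta=\infty)=1$ for quasi-every $z$. We remove the remaining exceptional set by making those points traps. This leaves the diffusion conditions of Definition \ref{defi_markov_proc}.

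\emph{Main obstacle.} The hard part is Step 1. Constructing the Ray cone so that the compactification is metrizable, and proving that capacities and exceptional sets are preserved under the embedding, is delicate in the non-locally-compact setting. Given this, we cite \cite[Chapter IV, Theorem 3.5 and Chapter V, Theorem 1.11]{ma2012introduction} for Steps 1--3 rather than redo them.
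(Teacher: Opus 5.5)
Your outline matches the paper's treatment: the paper gives no argument of its own and defers to \cite[Chapter IV]{ma2012introduction}, together with the local compactification method of \cite[Chapter VI]{ma2012introduction} for passing from locally compact to Polish state spaces, so citing these results is the right move. One correction to Step 3: the local property alone does not force the killing measure to vanish (for instance $\int |u'|^2\,\d x+\int k\,u^2\,\d x$ is local but has killing; only strong locality excludes it), so here it is conservativeness, which gives infinite lifetime quasi-everywhere, that rules out killing, and your conclusion still holds for that reason.
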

	Due to the elaborate state of Dirichlet theory, we only state this specific condition for the existence of an associated Markov process as it is sufficient for our purposes. For more advanced versions allowing also for Markov processes with jumps and including  converse directions, we refer the interested reader to the aforementioned literature. We also remind ourselves that while the exposition from \cite{FOT} concerns mostly Dirichlet forms on locally compact state spaces, one can transfer these results to Polish (and more general) state spaces using the local compactification method detailed in \cite[Chapter VI]{ma2012introduction} if $(\EE,\D(\EE))$ is quasi-regular.
	\subsection{Decompositions of additive functionals}\label{app:decomp}
	We continue by stating two results relating additive functionals of a Markov process to an associated Dirichlet form. To this end, we continue in the setting of Theorem \ref{thm:qr_implies_ex}: $\X$ is a Polish space equipped with a measure $\nu$ of unit mass, and  $(\EE,\D(\EE))$ is a quasi-regular, local and conservative Dirichlet form on $L^2(\nu)$. Moreover, we let $( (Z_t)_{t\ge 0} , (P_z)_{z\in\X} )$ be an associated  diffusion process and recall the filtration $\FF$ defined in \eqref{e:filtr} and the shift operator $\theta_t$ from Definition \ref{defi_markov_proc}.
	\begin{definition}[Additive functional]An $\FF$-adapted, extended real-valued stochastic process $(A_t)_{t\ge 0}$ is called an additive functional of $( (Z_t)_{t\ge 0} , (P_z)_{z\in\X} )$, if there exists an $\EE$-exceptional $N\subset \X$  and $\Lambda\in  \FF_\infty$ with $P_z(\Lambda ) = 1$ for all $z\in \X\setminus N$ and $\theta_t \Lambda \subset \Lambda$, such that the following holds on $\Lambda$:
		\begin{enumerate}[(i)]
			\item $(A_t)_{t\ge 0}$ is a càdlàg, real-valued process,
			\item $A_0 =0$ and $A_{t+s} = A_t +A_s \theta_t$ for all $s,t\ge 0$.
		\end{enumerate}
	\end{definition}
	The sets $N$ and $\Lambda$ as above are called the \emph{exceptional set} and \emph{defining set} of an additive functional $(A_t)_{t\ge 0}$. Two additive functionals are identified with each other if there exists a common defining set on which they coincide. An additive functional is called continuous if there exists a defining set on which it has continuous paths.
	
	A specific class of interest is given by 
	\begin{equation}\label{e:AF_eta}
		A^{[{\eta}]}_t \,=\, 	\tilde{\eta }(Z_t ) \, - \,\tilde{\eta }(Z_0 ) ,
	\end{equation}
	where $\tilde{\eta}$ is an $\EE$-quasi-continuous $\nu$-version of $\eta\in \D(\EE)$. We remark that the latter always exists and that the additive functional is independent of the choice of this $\EE$-quasi-continuous version. 
	Moreover, since $( (Z_t)_{t\ge 0} , (P_z)_{z\in\X} )$ is a  diffusion, $A^{[\eta]}$ is continuous.

	Let us also introduce the energy of an additive functional as 
	\[
	e(A) \,=\, \lim_{t\searrow 0} \frac{1}{2t}E_\nu[A^2_t].
	\]
	We say that an additive functional $(M_t)_{t\ge 0}$ is a \emph{martingale additive functional of finite energy} if  for each  $t\ge 0$ we have $E_z[M_t^2]<\infty$ and $E_z[M_t]=0$ for $\EE$-quasi every $z\in \X$ as well as $e(M)<\infty$. We say that a continuous additive functional $(N_t)_{t\ge 0}$ is a \emph{continuous additive functional of zero energy} if for each  $t\ge 0$ we have $E_z[|N_t|]<\infty$  for $\EE$-quasi every $z\in \X$ and  $e(N)=0$. We state Fukushima's celebrated decomposition theorem.
	\begin{theorem}[Fukushima's decomposition]\label{thm_fukushima} Let $\eta\in \D(\EE)$ and $(A^{[\eta]}_t)_{t\ge 0}$ be the additive functional defined in \eqref{e:AF_eta}. Then there exists a unique martingale additive functional of finite energy $(M_t)_{t\ge 0}$ and a continuous additive functional of zero energy $(N_t)_{t\ge 0}$ such that 
		\[
		A^{[\eta ]} \,=\, M^{[\eta]} \,+\,N^{[\eta]},
		\]
		as additive functionals.
	\end{theorem}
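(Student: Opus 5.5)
The statement to be proved is Fukushima's decomposition (Theorem \ref{thm_fukushima}) for a quasi-regular, local, conservative Dirichlet form on a Polish space. The plan is to reduce to the classical regular case treated in \cite[Theorem 5.2.2]{FOT} by the quasi-homeomorphism (local compactification) method of \cite[Chapter VI]{ma2012introduction}. Quasi-regularity gives a regular Dirichlet form $(\EE^\sharp,\D(\EE^\sharp))$ on a locally compact separable metric space $\X^\sharp$. It also gives a quasi-homeomorphism $j\colon \X\setminus N_0\to\X^\sharp$, with $N_0$ $\EE$-exceptional, which intertwines the forms, capacities, quasi-continuous versions and the associated processes. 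Additive functionals, their exceptional and defining sets, and the energy $e(\cdot)$ (which only involves $E_\nu$) are all transported by $j$. Hence it suffices to prove the statement in the regular setting and pull it back.

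In the regular case, take $\eta\in\D(\EE)$ with quasi-continuous version $\tilde\eta$. First treat $\eta=R_1 f$ for bounded $f\in L^2(\nu)$, where $R_1$ is the $1$-resolvent. For such $\eta$ one sets
\[
M_t=\tilde\eta(Z_t)-\tilde\eta(Z_0)-\int_0^t(\tilde\eta-f)(Z_s)\,\d s,\qquad N_t=\int_0^t(\tilde\eta-f)(Z_s)\,\d s .
\]
The Markov property shows that $M$ is a square integrable martingale under $P_z$ for quasi every $z$, and that it is additive. Since $N$ is Lipschitz in time, $E_\nu[N_t^2]\le t^2\|\tilde\eta-f\|^2_{L^2(\nu)}$, which gives $e(N)=0$. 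Also $e(A^{[\eta]})=\EE(\eta,\eta)$ by symmetry of the semigroup: $\frac1{2t}E_\nu[(\eta(Z_t)-\eta(Z_0))^2]=\frac1t(\eta-T_t\eta,\eta)_{L^2(\nu)}\to\EE(\eta,\eta)$. Combining these with the inequality $e(A+B)^{1/2}\le e(A)^{1/2}+e(B)^{1/2}$ yields $e(M)<\infty$.

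Next, for general $\eta$, approximate by $\eta_n=R_1 f_n\to\eta$ in $\D(\EE)$; such elements are dense. Because $e(M^{[\eta_n]}-M^{[\eta_m]})\le \EE(\eta_n-\eta_m,\eta_n-\eta_m)$ up to constants, the martingale parts form a Cauchy sequence in the Hilbert space of martingale additive functionals of finite energy, normed by $e$. They converge to some $M^{[\eta]}$. Set $N^{[\eta]}=A^{[\eta]}-M^{[\eta]}$; its energy vanishes in the limit. Continuity of $N$ uses the capacitary estimate $P_\nu(\sup_{t\le T}|\tilde\eta_n(Z_t)-\tilde\eta(Z_t)|>\lambda)\lesssim \lambda^{-1}\|\eta_n-\eta\|_{\D(\EE)}$, together with Doob's inequality for the martingale parts. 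A subsequence then converges uniformly on compacts quasi-surely, and one must check that the limits remain additive functionals with a common defining set.

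Uniqueness is short. If $M+N=M'+N'$, then $M-M'=N'-N$ is a continuous martingale additive functional of zero energy. Its quadratic variation has Revuz measure of total mass $2e(M-M')=0$, so it vanishes.

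The main obstacle is the passage to the limit for general $\eta$: one must upgrade $L^2$/energy convergence to quasi-sure locally uniform convergence so that the limits are genuine additive functionals with a common exceptional set. This is where the capacitary maximal inequality and a Borel–Cantelli argument over a fast subsequence are needed. I would follow \cite[Sections 5.1--5.2]{FOT} verbatim at this step.
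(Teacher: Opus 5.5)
Your proposal is correct and takes the same route the paper relies on. The paper does not reprove Fukushima's decomposition: it cites it from \cite{FOT} and remarks that it transfers to quasi-regular forms on Polish spaces via the local compactification method of \cite[Chapter VI]{ma2012introduction}. That is exactly your reduction, followed by the standard argument of \cite[Section 5.2]{FOT}: the resolvent case $\eta=R_1 f$, approximation in the space of martingale additive functionals of finite energy, complete under $e$, and uniqueness via the Revuz measure of $\langle M-M'\rangle$.
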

	We remark that since in our situation $(A_t^{[\eta]})_{t\ge 0}$ is continuous and $(N^{[\eta]}_t)_{t\ge 0}$ is continuous by definition, so is also $(M^{[\eta]}_t)_{t\ge 0}$. 
	Based on the symmetry of the Dirichlet form, a related decomposition can be obtained when the process is started from its equilibrium measure $\nu$, cf.\ \cite{LyonsZheng,LyonsZhang}.
	\begin{theorem}[Lyons--Zheng decomposition]\label{LZ}
		Let $T\in(0,\infty)$ and define the backwards filtration $\mathfrak{G}_t^{T} = \sigma\{Z_{T-s}| s \in [0,t]\}$. Then, under $P_\nu$,
		there exists a continuous square integrable $\mathfrak{G}^{T}$-martingale $(\overline{M}_t)_{t\in [0,T]}$ with $\overline{M}_0=0$ such that 
		\begin{equation}
			A^{[\eta ]}_t \,=\, \frac{1}{2}M^{[\eta]}_t \,-\, \frac{1}{2}\bigl(\overline{M}_T^{[\eta]} - \overline{M}_{T-t}^{[\eta]} \bigr),
		\end{equation}
		for $t\in [0,T]$,
		$P_\nu$-almost surely.
	\end{theorem}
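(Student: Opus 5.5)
The statement to prove is the Lyons--Zheng decomposition (Theorem~\ref{LZ}). The plan is to obtain it from Fukushima's decomposition (Theorem~\ref{thm_fukushima}) applied once forward in time and once to the time-reversed process, and then to use symmetry to cancel the zero-energy parts.

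\emph{Step 1: forward decomposition.} Under $P_\nu$, Theorem~\ref{thm_fukushima} gives
\[
\tilde\eta(Z_t)-\tilde\eta(Z_0)=M^{[\eta]}_t+N^{[\eta]}_t ,
\]
where $M^{[\eta]}$ is a square integrable $\FF$-martingale. Since $\nu$ is invariant (conservativeness plus symmetry), $E_\nu[(M_t^{[\eta]})^2]=2t\,e(M^{[\eta]})<\infty$. So $M^{[\eta]}$ is a genuine martingale on $[0,T]$.

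\emph{Step 2: reversal.} Symmetry of $(T_t)_{t>0}$ with respect to $\nu$ and $\nu(\X)=1$ imply that, under $P_\nu$, the reversed process $\hat Z_s=Z_{T-s}$, $s\in[0,T]$, has the same finite-dimensional laws as $(Z_s)_{s\in[0,T]}$. This is checked on products $\prod_l\eta_l(Z_{s_l})$ via the Markov property. Both processes have continuous paths, so their laws on $C([0,T];\X)$ coincide.

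\emph{Step 3: the backward martingale.} I would use the fact that $N^{[\eta]}$ is a measurable functional of the path of $\tilde\eta(Z)$. Concretely, $N^{[\eta]}_t=\lim_{\alpha\to\infty}\int_0^t \alpha(\alpha R_\alpha\eta-\eta)(Z_s)\,\d s$, with $R_\alpha$ the resolvent, and the limit holds in $L^2(P_\nu)$ uniformly on $[0,T]$ by the construction in \cite[Section 5.2]{FOT}. This expression is a time-additive integral, hence it is reversal-invariant. Applying Step 1 to $\hat Z$ and using Step 2, I define
\[
\overline M_t:=\tilde\eta(Z_{T-t})-\tilde\eta(Z_T)-\hat N_t ,\qquad \hat N_t:=\lim_{\alpha\to\infty}\int_0^t\alpha(\alpha R_\alpha\eta-\eta)(Z_{T-s})\,\d s .
\]
This is a continuous square integrable $\mathfrak G^T$-martingale, because its law as a process together with the filtration of $\hat Z$ matches that of $M^{[\eta]}$ relative to the forward filtration. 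The key identity, read off from the integral representation, is
\[
\hat N_T-\hat N_{T-t}=N_t^{[\eta]} .
\]

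\emph{Step 4: combine.} Set $A_t=\tilde\eta(Z_t)-\tilde\eta(Z_0)$. Evaluating the definition of $\overline M$ at $T$ and $T-t$ and subtracting gives
\[
\overline M_T-\overline M_{T-t}=\tilde\eta(Z_0)-\tilde\eta(Z_t)-N_t^{[\eta]}=-A_t-N^{[\eta]}_t .
\]
From Step 1, $A_t=M_t^{[\eta]}+N^{[\eta]}_t$. Adding the two relations eliminates $N^{[\eta]}$ and yields
\[
2A_t=M^{[\eta]}_t-(\overline M_T-\overline M_{T-t}),
\]
which is the claim after dividing by $2$.

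The main obstacle is Step 3. One must justify that the zero-energy part reverses exactly to its own increment, $N$ mapping to $N_T-N_{T-\cdot}$, uniformly in $t$ and $P_\nu$-almost surely. The approximation $N=\lim_\alpha\int\alpha(\alpha R_\alpha\eta-\eta)(Z_s)\,\d s$ must hold in a norm controlled under the stationary law. I would first treat $\eta\in\D(\L)$, where $N_t=\int_0^t\L\eta(Z_s)\,\d s$ is explicit and the reversal is trivial. Then I would extend by density in $\D(\EE)$. The needed control comes from $E_\nu\bigl[\sup_{t\le T}|M_t^{[\eta_k]}-M_t^{[\eta]}|^2\bigr]\lesssim T\,\EE(\eta_k-\eta,\eta_k-\eta)$ (Doob's inequality), together with the analogous bound for $\overline M$ by stationarity. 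This closes the identity in the limit.
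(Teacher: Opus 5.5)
Your proposal is correct. It is the standard time-reversal argument behind this result: forward Fukushima decomposition, reversibility of $Z$ on $[0,T]$ under $P_\nu$, the identity $\hat N_T-\hat N_{T-t}=N^{[\eta]}_t$ for the zero-energy part via $\D(\L)$/resolvent approximation, and elimination of $N^{[\eta]}$. The paper gives no proof of its own and simply quotes the theorem from \cite{LyonsZheng,LyonsZhang}.

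The one claim to soften is that $\int_0^t\alpha(\alpha R_\alpha\eta-\eta)(Z_s)\,\d s\to N^{[\eta]}_t$ in $L^2(P_\nu)$ uniformly on $[0,T]$ follows directly from the construction in \cite{FOT}. That construction only gives uniform convergence in probability, almost surely along a subsequence. This weaker mode is all Step 3 uses. Your fallback also avoids the issue entirely: prove the identity for $\eta\in\D(\L)$, then close by density using Doob's inequality for $M^{[\eta_k]}$ and, by stationarity, for $\overline M^{[\eta_k]}$.
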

	More can be said about the martingales in this representation when $(\EE,\D(\EE))$ admits a square field operator, i.e., if there exists a bilinear and continuous mapping $\Gamma \colon \D(\EE)\times \D(\EE) \to L^1(\nu)$ such that
	\begin{align}\label{eqn45}
		2 \EE(\eta, \eta \zeta) \,-\, \EE(\eta^2,\zeta )\,=\, \int_{\X} \zeta \Gamma(\eta ,\eta ) \,\d\nu,
	\end{align}
	for all bounded $\eta ,\zeta \in \D(\EE)$, where the class is an algebra for any Dirichlet form. In this case the latter is called the \emph{square field operator} of $(\EE,\D(\EE))$ and the quadratic variation processes of $M^{[\eta]}$ and $\overline{M}^{[\eta]}$ are given by 
	\begin{align}\label{e:QV}\int_0^t \Gamma(\eta,\eta)(Z_s)\,\d s
		\qquad \text{and}\qquad
		\int_0^t \Gamma(\eta,\eta)(Z_{T-s})\,\d s,
	\end{align}
	respectively, under $P_\nu$.
	\subsection{Mosco convergence in varying Hilbert spaces}\label{App:Mosco}
	We  recall the concept of Mosco convergence in varying Hilbert spaces introduced in \cite{kuwae_shioya}, and refer the reader also to \cite{Grothaus_Wittmann,KOLESNIKOV2006382} for the more concrete setting that we use here. Let $\G$ and $ \G_N$, for $N\in \N$, be Hilbert spaces. We assume that there exist a dense linear subspace $\CC_0 \subset \G$ and linear maps  $\Psi_N\colon \CC_0 \to \G_N$ such that $\|\Psi_N \zeta \|_{\G_N} \to \|\zeta \|_{\G}$, for all $\zeta \in \CC_0$. 
	We say that a sequence $\eta_N \in \G_N$ \emph{converges strongly} to $\eta \in \G$ if $(\eta_N,\Psi_N \zeta)_{\G_N} \to (\eta,\zeta)_{\G}$ for all $\zeta \in \CC_0$ and at the same time $\|\eta_N\|_{\G_N} \to \|\eta\|_\G$, as $N\to\infty$.
	Strong convergence may also be characterized by the existence of a sequence $ \CC_0 \ni \zeta_k \to \eta $ in $\G$, such that 
	\[\lim_{k\to\infty} 
	\limsup_{N\to\infty} \|\Psi_N \zeta_k - \eta_N \|_{\G_N} \,=\, 0.
	\]
	 If for  $\zeta_N \in  \G_N$ and $\zeta \in \G$, it holds $(\eta_N,\zeta_N)_{\G_N} \to (\eta,\zeta)_{\G}$ for every  $\eta_N\in \G_N$ strongly converging to  $\eta \in \G$, the sequence $\zeta_N$ \emph{converges weakly} to $\zeta$. Also in this generalized setting, it holds that weakly converging sequences are norm bounded and that 
	 \[
	 \| \zeta\|_{\G} \,\le\, \liminf_{N\to \infty} \|\zeta_N\|_{\G_N}
	 \]
	 in this case.
	 
	\begin{definition}[Mosco convergence]For each $N\in \N$, let $(\EE_N,\D(\EE_N))$ be a symmetric, non-negative, and closed bilinear form on $\G_N$, and $(\EE,\D(\EE))$ on $\G$. Then the sequence $\EE_N$ converges to $\EE$ in Mosco sense, if the following holds.
		\begin{enumerate}[(i)]
			\item For every  sequence $ \eta_N \in \G_N$  converging weakly to $\eta\in \G$, we have 
			\[
			\EE(\eta,\eta) \le \liminf_{N\to\infty} \EE_N(\eta_N,\eta_N).
			\]
			\item\label{IIII2} For every $\eta \in \D_0$, where $\D_0$ is  a dense subspace of $( \D(\EE), (\cdot,\cdot)_{\G}+\EE) $, there exists a sequence $\eta_N \in \G_N$ converging strongly to $\eta$ such that 
			\[
			\limsup_{N\to\infty} \EE_N(\eta_N, \eta_N) \le \EE(\eta,\eta) .
			\] 
		\end{enumerate}
	\end{definition}
We remark that we could have required the property in \eqref{IIII2} also for all $\eta\in \D(\EE)$. While this is a seemingly stronger condition, the two are equivalent, cf.\ \cite[Theorem 3.4]{Grothaus_Wittmann}.   
	We state the Mosco--Kuwae--Shioya convergence theorem as  in \cite[Theorem 2.6]{KOLESNIKOV2006382}.
	\begin{theorem}[Mosco, Kuwae, Shioya]\label{Mosco_theorem_app}
		For each $N\in \N$, let $(\EE_N,\D(\EE_N))$ be a symmetric, non-negative, and closed bilinear form on $\G_N$, and $(\EE,\D(\EE))$ on $\G$. Moreover, let $(T_{N,t})_{t>0}$ and $(T_t)_{t>0}$ be their associated strongly continuous semigroups of contractions. Then the following are equivalent:
		\begin{enumerate}[(i)]
\item $\EE_N$ converges to $\EE$ in Mosco sense.
\item For every $\eta_N\in \G_N$ converging strongly to  $\eta\in \G$, $T_{N,t} \eta_N $ converges strongly to $ T_t\eta$ for any $t>0$.
		\end{enumerate}
	\end{theorem}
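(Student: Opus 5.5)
The statement is the Mosco--Kuwae--Shioya theorem in the form of \cite[Theorem 2.6]{KOLESNIKOV2006382}. My plan follows the classical route through resolvents rather than working with semigroups directly.

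\textbf{Step 1: Mosco convergence is equivalent to strong resolvent convergence.} Let $G^N_\alpha=(\alpha-\L_N)^{-1}$ and $G_\alpha=(\alpha-\L)^{-1}$ for $\alpha>0$. I would show that Mosco convergence holds if and only if $G^N_\alpha\eta_N\to G_\alpha\eta$ strongly whenever $\eta_N\to\eta$ strongly. The key is the variational characterisation: $u_N=G^N_\alpha\eta_N$ minimises
\[
\EE_N(u,u)+\alpha\|u\|_{\G_N}^2-2(\eta_N,u)_{\G_N}.
\]

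\emph{Mosco implies resolvent convergence.} The bound $\EE_N(u_N,u_N)+\alpha\|u_N\|^2\le \|\eta_N\|^2/\alpha$ makes $(u_N)$ norm bounded. In the Kuwae--Shioya framework bounded sequences have weakly convergent subsequences; I expect this to follow from the dense set $\CC_0$ and a diagonal argument. Let $u$ be a weak limit. For any $v\in\D_0$, take a recovery sequence $v_N\to v$ strongly as in condition (ii) of the definition. Minimality gives that the functional at $u_N$ is at most the functional at $v_N$. The liminf condition, together with weak lower semicontinuity of the norm, then shows that $u$ minimises the limit functional over $\D_0$, hence over $\D(\EE)$ by density. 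Therefore $u=G_\alpha\eta$. Comparing the energies then yields $\EE_N(u_N,u_N)+\alpha\|u_N\|^2\to \EE(u,u)+\alpha\|u\|^2$, and splitting off the liminf of each piece gives norm convergence. So $u_N\to u$ strongly, and the subsequence-subsequence principle removes the dependence on the subsequence.

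\emph{Resolvent convergence implies Mosco.} Write
\[
\EE^{(\beta)}(u,u)=\beta(u-\beta G_\beta u,u),
\]
which increases to $\EE(u,u)$ as $\beta\to\infty$. For the limsup condition, set $u_N=\beta G^N_\beta(\ldots)$ and diagonalise in $\beta$. For the liminf condition, if $\eta_N\rightharpoonup\eta$ weakly, then $\EE_N(\eta_N,\eta_N)\ge \EE_N^{(\beta)}(\eta_N,\eta_N)$. The right-hand side is a quadratic form built from a strongly convergent bounded operator, so its liminf is at least $\EE^{(\beta)}(\eta,\eta)$; letting $\beta\to\infty$ finishes. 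I expect this last step to be the main obstacle, because weak lower semicontinuity of $\EE^{(\beta)}_N$ across varying spaces needs care. I would handle it by writing $\EE^{(\beta)}_N=\beta\|\eta_N\|^2-\beta^2(G_\beta^N\eta_N,\eta_N)$ and controlling the second term using self-adjointness, rewriting it as $\|\sqrt{\beta G^N_\beta}\,\eta_N\|^2$.

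\textbf{Step 2: resolvents and semigroups.} The strong convergences are uniformly bounded (all operators are contractions), so they pass through norm limits and approximate identities. For resolvents to semigroups, I would use $T_t=\lim_n (\frac nt G_{n/t})^n$ uniformly on bounded sets in $t$. Strong convergence persists under compositions of strongly convergent contractions, as in the iteration argument preceding \eqref{conv_of_sems}. An $\epsilon/3$ argument then interchanges $n$ and $N$, using an equicontinuity estimate that holds uniformly in $N$ by spectral calculus, applied first to nice $\eta$ and then extended by density. Conversely, $G_\alpha=\int_0^\infty e^{-\alpha t}T_t\,dt$, and dominated convergence on the Laplace transform gives strong resolvent convergence from semigroup convergence.
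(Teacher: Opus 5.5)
The paper gives no proof of this statement; it only quotes Kolesnikov's Theorem 2.6. Your resolvent-based outline is the standard Kuwae--Shioya route, and most of it is sound:
\begin{itemize}
\item the direction Mosco $\Rightarrow$ resolvent convergence via minimisers;
\item the limsup part, for instance with $u_N=\beta G^N_\beta v_N$, $v_N\to u$ strongly, and a diagonal choice of $\beta$;
\item Step 2. There, spectral calculus even gives $\sup_{\lambda\ge 0}|(1+t\lambda/n)^{-n}-e^{-t\lambda}|\to 0$, an operator-norm bound uniform in $N$, so no density argument is needed.
\end{itemize}
The genuine gap is in the liminf part of ``resolvent convergence implies Mosco'', exactly at the step you flag. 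The fix you propose yields the inequality in the wrong direction.

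You split $\EE^{(\beta)}_N(\eta_N,\eta_N)=\beta\|\eta_N\|^2-\beta\|(\beta G^N_\beta)^{1/2}\eta_N\|^2$ and control the two terms separately; this cannot work. Weak convergence gives $\liminf_N\|\eta_N\|^2\ge\|\eta\|^2$ for the first term. For the subtracted term you would need $\limsup_N\|(\beta G^N_\beta)^{1/2}\eta_N\|^2\le\|(\beta G_\beta)^{1/2}\eta\|^2$, but weak convergence of $(\beta G^N_\beta)^{1/2}\eta_N$ only gives the reverse inequality.

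A simple example shows this. Take $\G_N=\G$, $\EE_N=\EE=0$, and $\eta_N\rightharpoonup 0$ with $\|\eta_N\|=1$. Both terms stay equal to $\beta$, although their would-be limits vanish; only their difference is controlled.

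The missing ingredient is nonnegativity of the form as a whole. Set $B_N=\beta(I-\beta G^N_\beta)$ and $B=\beta(I-\beta G_\beta)$. Then $B_N$ is bounded, self-adjoint and nonnegative, and $B_Nv_N\to B\eta$ strongly whenever $v_N\to\eta$ strongly. Fix such a sequence $v_N$. Nonnegativity gives $(B_N\eta_N,\eta_N)\ge 2(B_Nv_N,\eta_N)-(B_Nv_N,v_N)$. By the definition of weak convergence, the right-hand side tends to $(B\eta,\eta)=\EE^{(\beta)}(\eta,\eta)$.

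Alternatively, write $\EE^{(\beta)}_N(\eta_N,\eta_N)=\|B_N^{1/2}\eta_N\|^2$. Polynomial approximation of the square root, uniform on $[0,\beta]$, gives $B_N^{1/2}\to B^{1/2}$ strongly. Self-adjointness then gives $B_N^{1/2}\eta_N\rightharpoonup B^{1/2}\eta$, and lower semicontinuity of the norm applies to this single sequence.

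With either replacement, letting $\beta\to\infty$ finishes the liminf inequality as you intended.
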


	\section{Piecewise interpolation of H\"older functions}\label{sec_holder}
	
	For the reader's convenience, we include the proof of Lemma \ref{Lemma_numerical_approx-n}.
	\begin{proof}[Proof of Lemma \ref{Lemma_numerical_approx-n}] Firstly, we can replace $Q_N h$ defined in \eqref{Eq101} by $
		{R}_N h \,=\, \sum_{i=0}^{N} h(x_i) \hat{\vp}_i$, because they only differ by the constant
		\begin{equation}\label{Eq122-n}
			\biggl|
			\frac{1}{N} \biggl(\frac{h(x_0)+h(x_N)}{2}+ \sum_{k=1}^{N-1} h(x_k)
			\biggr)
			\biggr| \,\le \, \sum_{\alpha=1}^{N}\int_{I_\alpha} \biggl|h(x) - \frac{h(x_{\alpha+}) + h(x_{\alpha-})}{2} \biggr|\,dx \,\le \, N^{-\tilde{s}}\|h\|_{C^{\tilde{s}}(\T)},
		\end{equation}
		where we used that $h$ is mean-free. Next, we compute 
		\begin{align}\begin{split}\label{Eq120-n}
				&|R_N h(x) - h(x)| \,\le  \,\biggl|
				(h(x)-h(x_{i+1}))\frac{x - x_i}{x_{i+1} - x_i}
				\biggr| \,+\, 
				\biggl|
				(h(x)-h(x_i))\frac{x_{i+1} - x}{x_{i+1} - x_i}
				\biggr| \,\le\, N^{-\tilde{s}} \|h\|_{C^{\tilde{s}}(\T)},
			\end{split}
		\end{align}
		assuming $x\in [x_i, x_{i+1}]$.
		To bound also $[R_N h - h]_{C^s(\T)}$, we write 
		\begin{align}
			&
			\bigl|	(R_Nh(y) - h(y) )\, -\, (R_Nh(x) -h (x)  ) \bigr|
			\\&\quad =\,
			\biggl|
			h(x) \,-\, \biggl(
			h(x_i) + (x-x_i )\frac{h(x_{i+1}) - h(x_{i})}{x_{i+1}-x_i}\biggr) \,-\, 
			\biggl(
			h(y) \,-\, \biggl(
			h(x_j) + (y-x_j )\frac{h(x_{j+1}) - h(x_{j})}{x_{j+1}-x_j}\biggr)
			\biggr)
			\biggr|,\label{Eq121-n}
		\end{align}
		where we assume additionally  that $y \in [x_j, x_{j+1}] $ and distinguish three cases:
		\begin{enumerate}[(i)]
			\item If $|x-y|\ge 1/N$, we can estimate \eqref{Eq121-n} by
			$
			2 N^{-\tilde{s}} \|h\|_{C^{\tilde{s}}(\T)} \,\le\, 2 N^{-(\tilde{s}-s)} |x-y|^s\|h\|_{C^{\tilde{s}}(\T)}
			$
			using  the previously obtained \eqref{Eq120-n}. 
			\item If $|x-y|<1/N$ and $i=j$, we bound \eqref{Eq121-n} instead by
			\begin{align}&
				|h(x) - h(y)|\,+\, N|x-y||h(x_{i+1}) - h(x_i)|
				\,\le \, 2 N^{ - (\tilde{s} - s)}|x-y|^s\|h\|_{C^{\tilde{s}}(\T)}.
			\end{align}
			\item Otherwise, we have $|x-y|<1/N$ and we can assume that $j=i+1$ so that we can bound \eqref{Eq121-n} by
			\begin{align}&
				\biggl|
				h(x) \,-\, \biggl(
				h(x_{i+1}) + (x-x_{i+1} )\frac{h(x_{i}) - h(x_{i+1})}{x_{i}-x_{i+1}}\biggr) \,-\, 
				\biggl(
				h(y) \,-\, \biggl(
				h(x_{i+1}) + (y-x_{i+1} )\frac{h(x_{i+2}) - h(x_{i+1})}{x_{i+2}-x_{i+1}}\biggr)
				\biggr)
				\biggr|\\& \quad 
				\le \, |h(x)-h(y)| \,+\, N|x-x_{i+1}|| h(x_{i+1}) -h(x_{i})| \,+\, N|y-x_{i+1}||h(x_{i+2}) - h(x_{i+1})|
				\\& \quad
				\le\, 2 N^{-(\tilde{s}-s)}|x-y|^s\|h\|_{C^{\tilde{s}}(\T)}.
			\end{align}
		\end{enumerate}
		In any case, together with \eqref{Eq122-n}--\eqref{Eq120-n} the proof is finished.
	\end{proof}

	\section{Tightness and Mosco convergence of  spectral Galerkin approximations}\label{App:spectral}
	Here, we give the details for the claim that under Assumption \ref{assumption_mosco_concrete}  the findings from Theorem \ref{thm_existence_deg__fin_case} and Theorem \ref{thm_convergence} also hold for the approximate Dirichlet forms  \eqref{E2} with  metric defined in \eqref{eqn_spectral_Galerkin_metric}. 
	\subsection{Existence and tightness of diffusion processes}
	The existence part of Theorem \ref{thm_existence_deg__fin_case} follows readily from Theorem \ref{thm_abstract}. Regarding the uniform tightness of the associated  diffusions, we need, as in Section \ref{S:discrete}, upper bounds on the measures and metrics. 
	
	As in Lemma \ref{p1}, the former may be deduced from  two ingredients: the fact that  $\Pi_{\le N}$ is an orthoprojection from  $\dot{H}^1(O)$ onto $ \check{\MM}_N$, cf.\ Lemma \ref{lemma_is_ortho}, which follows from the cosine modes being the eigenfunctions of the Neumann Laplacian, and an estimate
	 \[
	\|\Pi_{\le N} h - h\|_{C^s(\T)} \lesssim 
	N^{-(\tilde{s}-s -\epsilon)}\| h\|_{C^{\tilde{s}}(\T)},\qquad 0<s< \tilde{s}<1,\] 
	for mean-free functions $h$, 
	as stated in Lemma \ref{Lemma_numerical_approx-n} for $Q_N$.
	Here, we allow for an additional $\epsilon>0$, since Fourier (and thus cosine) series do not converge in supremum-based spaces. As they are bounded in $L^p$ and therefore in Bessel-potential spaces $H^{s,p}$, see \cite[Proposition 4.1.6 and Theorem 4.1.7]{grafakos2014classical}, we have however
	\begin{equation}\label{e4}
	\|\Pi_{\le N} h - h\|_{C^s(\T)} \lesssim 
	\| \Pi_{\le N} h - h\|_{H^{{s+\epsilon/2},p}(\T)} \,\lesssim \, N^{-(\tilde{s}-s -\epsilon)} \|  h\|_{H^{{\tilde{s}-\epsilon/2},p}(\T)}  \,\lesssim \, N^{-(\tilde{s}-s -\epsilon)} \|  h\|_{C^{{\tilde{s}}}(\T) },
	\end{equation}
	for sufficiently large $p$ and small $\epsilon>0$. Combining these observations as in Lemma \ref{p1} yields that
		\begin{align}\label{e:Bspp-bound_MMcheck}
			\sup_{N\ge 1}	\int_{\check{\MM}_N}  \|h\|_{C^\gamma(\T)}^p \,\d \check{\mu}_N \,<\,\infty,
		\end{align}
		for each $\gamma\in (0,1/2)$ and $p\in (0,\infty)$. 
		
		Regarding the metric tensor, we consider differential forms 
		\begin{align}\label{eqn924-app}
			q_\cdot\dot{h} = \int_{\T}\dot{h} q =  \int_{\T}\dot{h} \Pi_{\le N}q ,
		\end{align}
		which, for ${p}$ as in \eqref{eqn_spectral_Galerkin_metric},  we may evaluate by means of the Legendre formula 
		\begin{align}\begin{split}\label{eqn_explicit_formula_for_metric_galerkin}
			\frac{1}{2}	\check{g}^{h} (q,q ) \,&=\, \sup_{\dot {{h}} \in \mathcal{T}_h\check{\MM}_N} \biggl\{ \int_\T m(h) \px \Pi_{\le N} q \px {p}  \,-\, \frac{1}{2} \int_{\T} m(h) |\partial_x {p}|^2 \biggr\}
			\\&  = \, \frac{1}{2} \int_{\T} m(h) |\partial_x \Pi_{\le N}{q}|^2 .
		\end{split}\end{align}
		We remark that compared to \eqref{E89} we have an identity, since by taking $\dot{h} = -\Pi_{\le N}\partial_x   ( m(h) \partial_x  \Pi_{\le N} q)$ one can maximize the above.

		To prove also uniform space-time regularity, by the above spatial regularity assertion, it suffices to prove that 
		\begin{equation}\label{e3}
			E_{\check\mu_N}^{1/p} 
		|h_{t}(x)  - h_s(x)|^p \,\lesssim_{(p,T)}\, |t-s|^{1/8-\delta}
		\end{equation}
		for all $0\le s\le t \le T$, $x,y\in \T$ and $\delta>0$, 
		as in the proof of Proposition \ref{proposition_unif_reg}. 	Then, for a standard mollifier $\psi_\epsilon$ 
		 with $\supp \psi_\epsilon = [-\epsilon,\epsilon]$, we  split 
		\begin{align}&
			E_{\check{\mu}_N} ^{1/p}
			|h_{t}(x)  - h_s(x)|^p \\&\quad \le\,
			E_{\check{\mu}_N} ^{1/p}
			|h_{t}(x)  - \psi_\epsilon*h_t(x)|^p \,+\,
			E_{\check{\mu}_N} ^{1/p}
			|\psi_\epsilon*h_{t}(x)  - \psi_\epsilon*h_s(x)|^p
			\,+\, 
			E_{\check{\mu}_N} ^{1/p}
			|\psi_\epsilon*h_{s}(x)  - h_s(x)|^p,
		\end{align}
		where the convolution with respect to the even extension can be rephrased using 
		$\overline{\psi_\epsilon}$ as in \eqref{eqn327}--\eqref{eqn8324}, i.e., as
		\[
		\psi_\epsilon*h(x) \,=\,  \int_{\T} \overline{\psi_\epsilon}(x,z) h(z)\,\d z.
		\]
		The Lyons--Zheng decomposition and \eqref{eqn_explicit_formula_for_metric_galerkin} yield the upper bound 
		\begin{align}E_{\check{\mu}_N} ^{1/p}	|\psi_\epsilon*h_{t}(x)  - \psi_\epsilon*h_s(x)|^p\,\lesssim \, |t-s|^{1/2}\biggl( {\int_{\check{\MM}_N}}  \biggl(\int_{\T} m(h)( \partial_z \Pi_{\le N}\overline{\psi_\epsilon}(x,\cdot) )^2\,\d z\biggr)^{p/2} \,\d\check{\mu}_N (h)\biggr)^{1/p}.
		\end{align}
	Since $\Pi_{\le N}$ decreases the $\dot{H}^1(O)$-norm, \eqref{e3} follows by interpolation with \eqref{e:Bspp-bound_MMcheck}. 
		
	\subsection{Upper bound in the Mosco limit} To verify Theorem \ref{thm_mosco_abstract}~\eqref{Mosco_I1}, we let $p\in \dot{H}^1(O)$. Then, by \eqref{e4}, \eqref{eqn_explicit_formula_for_metric_galerkin} and the H\"older regularity asserted in Lemma \ref{lemma_reg}, we find that
	\begin{align} \check{g}^{\Pi_{\le N}h} (  p ,  p )  \,=\,  \int_O m(\Pi_{\le N} h) |\partial_x \Pi_{\le N} p |^2 \,\to\, \int_O m( h) |\partial_x {p}|^2 ,\qquad N\to\infty,
	\end{align}
	for $\mu$-a.e.\ $h$. Uniform $\mu$-integrability of the left-hand side follows from Assumption \ref{assumption_mosco_concrete}.

	\subsection{Lower bound in the Mosco limit}

	Let $\dot{h}\in \dot{H}^1(O)$ and, for now,  $h\in \dot{C}^\epsilon (\overline{O})$ with $\epsilon\in (0,1/2)$. By \eqref{eqn_spectral_Galerkin_metric}, there exists for each $N$ a ${p}_N $ such that 
	\begin{align}&
		\frac{1}{2}	\check{g}_{ \Pi_{\le N} h}(\Pi_{\le N} \dot h, \Pi_{\le N} \dot h ) \,= \,\int_O {p}_N \Pi_{\le N} \dot h \,-\, \frac{1}{2}\int_{\T} m(\Pi_{\le N} h) |\partial_x {p}_N  |^2.
	\end{align}
	Then, since 
	\[  \int_{\T} m(\Pi_{\le N} h) | \partial_x {p}_N  |^2\,=\, \int_O {p}_N \Pi_{\le N} \dot h 
	\]
	implies uniform boundedness of $ {p}_N$ in $\dot{H}^1(O)$, up to passing to a subsequence we have $
	\partial_x {p}_N \rightharpoonup \partial_x {p}$ in $L^2(O)$,  for some $ {p}\in \dot{H}^1(O)$. This implies
	\[
	\int_O {p}_N \Pi_{\le N} \dot h \,=\, \int_O  \dot h \Pi_{\le N}{p}_N \,\to\, \int_O {p} \dot h
	\]
	and since also $\sqrt{m}(\Pi_{\le N}h)\partial_x {p}_N \rightharpoonup \sqrt{m}(h) \partial_x {p}$ in $L^2(O)$ by \eqref{e4}, we conclude that
	\begin{align}&
		\limsup_{N\to\infty}	\frac{1}{2} \check	g_{ \Pi_{\le N} h}(\Pi_{\le N} \dot h, \Pi_{\le N} \dot h )  \,=\,\int_O {p} \dot h\,-\,\frac{1}{2}\liminf_{N\to\infty} \int_\T \bigl| \sqrt{m}(\Pi_{\le N} h ) \partial_x \Pi_{\le N}{p}_N \bigr|^2  \\&\quad \le\, \int_O {p} \dot h \,-\, \frac{1}{2} \int_\T \bigl| \sqrt{m}(h) \partial_x {p} \bigr|^2 .
	\end{align}
 Because $\mu$ is supported on $\dot{C}^\epsilon(\overline{\T})$, the first part of Theorem \ref{thm_mosco_abstract}~\eqref{Mosco_I2} follows. The required uniform integrability is a consequence of 
	\begin{align}&
	\check g_{ \Pi_{\le N} h}(\Pi_{\le N} \dot h, \Pi_{\le N} \dot h ) \,= \,\int_{\T} m(\Pi_{\le N} h) |\partial_x {p}_N  |^2 \, \lesssim_c \,
		\|\dot{h}\|_{\dot{H}^{-1}(\T)}^2
	\end{align}
being bounded uniformly in $h$ .

	\section{Integration by parts formula for  spectral Galerkin approximations}
	\label{App:proof_IBP}
		\begin{proof}[Proof of Lemma \ref{Lemma_generator_approx_problem}]
		For $\eta$ as in \eqref{Eq11_New} and $\zeta\in \D_0$ 
		we write   
		\begin{align}
			\begin{split}
				\label{Eq111}
				-\check{\EE}_N( \eta ,\zeta ) \,=\, &\sum_{i=1}^k\int_{\MM}  
			\partial_i   
			\Phi\biggl(\int_{\T}  h q^1,\dots , \int_{\T}  h q^k \biggr) 
			\int_O  \Pi_{\le N}\partial_x( m(\Pi_{\le N} h)  \partial_x \Pi_{\le N}
			q^i )
			\diff|_h\zeta  \,\d x
			\,\d\mu(h)
			\\
			& +\, \sum_{i=1}^k\int_{\MM} 
			\partial_i   
			\Phi\biggl(\int_{\T}  h q^1,\dots , \int_{\T}  h q^k \biggr) \int_O     (\Pi_{> N} \partial_x^2  \Pi_{> N}q^i )     
			\diff|_h \zeta 
			\,\d x
			\,\d\mu(h)\,=\, I_1 \,+\, I_2,
			\end{split}
		\end{align}
		using integration by parts in the $x$-variable and the orthogonality of $\Pi_{\le N} $   in $L^2(O)$. For the former we use in particular that each $q^i$ and therefore also  $\Pi_{>N}q^i$ satisfies Neumann boundary conditions.  
		The vector field 
		\[
		X_2 (h)\,=\, \sum_{i=1}^k \partial_i   
		\Phi\biggl(\int_{\T}  h q^1,\dots , \int_{\T}  h q^k \biggr)  (\Pi_{> N} \partial_x^2  \Pi_{> N}q^i )     
		\]
		integrated against $\diff \zeta $  in $I_2$ has precisely the form \eqref{eqn_XX}. Using the Gaussian integration  by parts formula \eqref{GI_eqn}, we find that
		\begin{align}I_2 \,=\,- \int_\MM 
			\zeta \div_\mu(X_2) 
			\,\d \mu,
		\end{align} 
		where 
		\begin{align} \label{e9}\div_\mu(X_2)\,=\, &- \sum_{i=1}^k
			\partial_i	\Phi\biggl(\int_{\T}  \cdot  q^1,\dots , \int_{\T}  \cdot  q^k \biggr)
			\bigl( \cdot, 
			\partial_x^2 \Pi_{>N}q^i
			\bigr)_{\dot{H}^1(O)  }
			\\&- \sum_{i,j=1}^k  \partial_{ij} \Phi\biggl(\int_{\T}  \cdot  q^1,\dots , \int_{\T}  \cdot  q^k \biggr) \int_\T  \partial_x \Pi_{>N} q^i \partial_x \Pi_{>N}  q^j.
		\end{align}
		
		We claim that the same argument can be applied to the vector field in $I_1$, i.e., 
		\begin{align}X_1(h) \,=\, \sum_{i=1}^k 
			\partial_i   
			\Phi\biggl(\int_{\T}  h q^1,\dots , \int_{\T}  h q^k \biggr) 
			\Pi_{\le N}\partial_x( m(\Pi_{\le N} h)  \partial_x \Pi_{\le N}
			q^i )
			.
		\end{align}
		To this end, we introduce the functions
		\begin{align}
			\Psi_{i,l}(r_1,\dots , r_k,s_1,\dots, s_N ) \,=\, -\partial_{i}\Phi(r_1,\dots,r_k )\int_\T \partial_x p^l m( s_1 \vp_1 +\dots+ s_N \vp_N ) \partial_x \Pi_{\le N} q^i ,
		\end{align}
		for $i=1,\dots, k$ and $l=1,\dots, N$,
		so that 
		\[
		X_1(h)\,=\,\sum_{i=1}^k\sum_{l=1}^N \Psi_{i,l}\biggl(
		\int_{\T}  h q^1,\dots , \int_{\T}  h q^k ,
		\int_{\T}  h p^1,\dots , \int_{\T}  h p^N \biggr) \vp_l,
		\]
		where $\vp_k$ and $p^k$ are taken as in \eqref{eqn112}. By \eqref{Eq220}, the above is of the form \eqref{eqn_XX} and therefore 
		\begin{align}I_1 \,=\, - \int_{\MM} \zeta \div_\mu(X_1)\,\d \mu, 
		\end{align} 
		where 
		\begin{align}
			\begin{split}\label{e8}
				\div_\mu(X_1) \,=\,&-\,\sum_{i=1}^k\sum_{l=1}^N
				\Psi_{i,l}\biggl(
				\int_{\T}  \cdot q^1,\dots , \int_{\T}  \cdot q^k ,
				\int_{\T}  \cdot p^1,\dots , \int_{\T}  \cdot p^N \biggr) (\cdot, \vp_l)_{\dot{H}^1(O)}
				\\& +\, \sum_{i,j=1}^k\sum_{l=1}^N \partial_{r_j} \Psi_{i,l}\biggl(
				\int_{\T}  \cdot q^1,\dots , \int_{\T}  \cdot q^k ,
				\int_{\T}  \cdot p^1,\dots , \int_{\T}  \cdot p^N \biggr) \int_O \vp_l q^j 
				\\& +\, \sum_{i=1}^k\sum_{j,l=1}^N \partial_{s_j} \Psi_{i,l}\biggl(
				\int_{\T}  \cdot q^1,\dots , \int_{\T}  \cdot q^k ,
				\int_{\T}  \cdot p^1,\dots , \int_{\T}  \cdot p^N \biggr)\int_O  \vp_l p^j\,=\,J_1\,+\, J_2\,+\, J_3.
			\end{split}
		\end{align}
		Inserting the definition of $\Psi_{i,l}$, we see that 
		\begin{align}\label{e5}J_1(h) \,=\,-\sum_{i=1}^k	\partial_i   
			\Phi\biggl(\int_{\T}  h q^1,\dots , \int_{\T}  h q^k \biggr) \bigl( h, 
			\Pi_{\le N}\partial_x( m(\Pi_{\le N} h)  \partial_x \Pi_{\le N}
			q^i )				\bigr)_{\dot{H}^1(O)  }.
		\end{align}
		Since 
		\begin{align}\partial_{r_j} 
			\Psi_{i,l}(r_1,\dots , r_k,s_1,\dots, s_N ) \,=\, -\partial_{ij}\Phi(r_1,\dots,r_k )\int_\T \partial_x p^l m( s_1 \vp_1 +\dots+ s_N \vp_N ) \partial_x \Pi_{\le N} q^i ,
		\end{align}
		we obtain for the same reason that 
		\begin{align}\label{e6}
			J_2(h) \,=\,  -\sum_{i,j=1}^k\partial_{ij}\Phi
			\biggl(\int_{\T}  h q^1,\dots , \int_{\T}  h q^k \biggr) 
			\int_\T  m(\Pi_{\le N} h ) \partial_x \Pi_{\le N} q^i \partial_x \Pi_{\le N} q^j.
		\end{align}
		To evaluate also $J_3$, we observe that
		\begin{align}
			\partial_{s_j} 
			\Psi_{i,l}(r_1,\dots , r_k,s_1,\dots, s_N ) \,=\, -\partial_{i}\Phi(r_1,\dots,r_k )
			\int_\T \partial_x p^l m'( s_1 \vp_1 +\dots+ s_N \vp_N ) \vp_j \partial_x \Pi_{\le N} q^i ,
		\end{align}
		and consequently 
		\begin{align}
			J_3(h) \,=\, -\sum_{i=1}^k	\partial_i   
			\Phi\biggl(\int_{\T}  h q^1,\dots , \int_{\T}  h q^k \biggr) \sum_{j=1}^N
			\int_O    m'(\Pi_{\le N}  h) \vp_j\partial_x  p^j \partial_x \Pi_{\le N } q^i .\label{eqn632}
		\end{align}
		Using the elementary identity
		\[ \vp_j\partial_x  p^j \,=\, -2  \pi j  \cos(\pi j x )\sin(\pi j x )
		\,=\, -\pi j \sin(2\pi j x),
		\]
		we can rewrite 
		\[\sum_{j=1}^N
		\int_O    m'(\Pi_{\le N}  h) \vp_j\partial_x  p^j \partial_x \Pi_{\le N } q^i \,=\,\frac12 \sum_{j=1}^N
		\int_O  \partial_x (\cos(2\pi j \cdot ))   m'(\Pi_{\le N}  h)  \partial_x \Pi_{\le N } q^i .
		\]
		Using that 
		\[
		\sum_{j = 1}^N \cos(2\pi j x ) \,=\, \frac{D_N(2 \pi x ) -1}{2}
		\]
		for the Dirichlet kernel $D_N$ defined in \eqref{eqn_def_Dirichlet kernel}, and that the constant is annihilated by differentiation, we can further express this as
		\[
		\sum_{j=1}^N
		\int_O    m'(\Pi_{\le N}  h) \vp_j\partial_x  p^j \partial_x \Pi_{\le N } q^i \,=\, \frac{1}{4}\int_O  \partial_x (D_N(2\pi \cdot ))  m'(\Pi_{\le N}  h)  \partial_x \Pi_{\le N } q^i.
		\]
		Inserting this 
		in \eqref{eqn632}, and \eqref{e5}, \eqref{e6} and \eqref{eqn632} in \eqref{e8} yields together with \eqref{e9} the desired \eqref{Eq74}.
	\end{proof}

	\vspace{.2cm}
	\noindent
	\textbf{Acknowledgements.}
	The first author acknowledges support by the Max Planck Society
	through the Research Group “Stochastic Analysis in the Sciences (SAiS)” and the DFG
	CRC/TRR 388 “Rough Analysis, Stochastic Dynamics and Related Fields”, Project A11.   This research was partially carried out during the third author's visits at the TU Berlin supported by the European Union (ERC,
	FluCo, grant agreement No. 101088488). Views
	and opinions expressed are however those of the author(s) only and do not necessarily reflect those
	of the European Union or of the European Research Council. Neither the European Union nor the
	granting authority can be held responsible for them.
	
	\vspace{.2cm}
	
		\noindent
	\textbf{AI usage declaration.}
	When this manuscript was completed, we used a large language model for a final check of mathematical correctness and language. In doing so, a previously imposed lower bound on the mobility in Theorem \ref{thmB}~\eqref{I2} could be removed based on the current proof of Lemma \ref{lemma_IBP_fd}. The authors take full responsibility for the contents of this manuscript.
	
	\vspace{.2cm}
	\noindent
	\textbf{Data availability.} This manuscript has no associated data.

	\vspace{.2cm}
	\noindent
	\textbf{Declaration – Conflict of interest.} The authors have no conflict of interest.


\begin{thebibliography}{10}
		
		\bibitem{Agresti_Sauerbrey}
		{\sc A.~Agresti and M.~Sauerbrey}, {\em Well-posedness of the stochastic
			thin-film equation with an interface potential}, Commun. Math. Phys., 407
		(2026), p.~66.
		\newblock Id/No 158.
		
		\bibitem{AR_90_closability}
		{\sc S.~Albeverio and M.~R\"ockner}, {\em Classical {D}irichlet forms on
			topological vector spaces---closability and a {C}ameron-{M}artin formula}, J.
		Funct. Anal., 88 (1990), pp.~395--436.
		
		\bibitem{AR_91}
		\leavevmode\vrule height 2pt depth -1.6pt width 23pt, {\em Stochastic
			differential equations in infinite dimensions: solutions via {D}irichlet
			forms}, Probab. Theory Related Fields, 89 (1991), pp.~347--386.
		
		\bibitem{berghlofstroom}
		{\sc J.~Bergh and J.~L\"ofstr\"om}, {\em Interpolation spaces. {A}n
			introduction}, vol.~No. 223 of Grundlehren der Mathematischen Wissenschaften,
		Springer-Verlag, Berlin-New York, 1976.
		
		\bibitem{bergh_Vp}
		{\sc J.~Bergh and J.~Peetre}, {\em On the spaces {$V\sb{p}$} {$(0<p\leq \infty
				)$}}, Boll. Un. Mat. Ital. (4), 10 (1974), pp.~632--648.
		
		\bibitem{BERNIS1990}
		{\sc F.~Bernis and A.~Friedman}, {\em Higher order nonlinear degenerate
			parabolic equations}, Journal of Differential Equations, 83 (1990),
		pp.~179--206.
		
		\bibitem{Boga_Gaussian}
		{\sc V.~I. Bogachev}, {\em Gaussian measures}, vol.~62 of Mathematical Surveys
		and Monographs, American Mathematical Society, Providence, RI, 1998.
		
		\bibitem{Chen_Fukushima}
		{\sc Z.-Q. Chen and M.~Fukushima}, {\em Symmetric {M}arkov processes, time
			change, and boundary theory}, vol.~35 of London Mathematical Society
		Monographs Series, Princeton University Press, Princeton, NJ, 2012.
		
		\bibitem{dapratozabczyk}
		{\sc G.~Da~Prato and J.~Zabczyk}, {\em Stochastic equations in infinite
			dimensions}, vol.~152 of Encyclopedia of Mathematics and its Applications,
		Cambridge University Press, Cambridge, second~ed., 2014.
		
		\bibitem{dareiotis2021nonnegative}
		{\sc K.~Dareiotis, B.~Gess, M.~V. Gnann, and G.~Gr{\"u}n}, {\em Non-negative
			martingale solutions to the stochastic thin-film equation with nonlinear
			gradient noise}, Archive for Rational Mechanics and Analysis, 242 (2021),
		pp.~179--234.
		
		\bibitem{DMES2005}
		{\sc B.~Davidovitch, E.~Moro, and H.~A. Stone}, {\em Spreading of viscous fluid
			drops on a solid substrate assisted by thermal fluctuations}, Phys. Rev.
		Lett., 95 (2005), p.~244505.
		
		\bibitem{EK}
		{\sc S.~N. Ethier and T.~G. Kurtz}, {\em Markov processes}, Wiley Series in
		Probability and Mathematical Statistics: Probability and Mathematical
		Statistics, John Wiley \& Sons, Inc., New York, 1986.
		\newblock Characterization and convergence.
		
		\bibitem{feller_annals}
		{\sc W.~Feller}, {\em The parabolic differential equations and the associated
			semi-groups of transformations}, Ann. of Math. (2), 55 (1952), pp.~468--519.
		
		\bibitem{fischer_gruen_2018}
		{\sc J.~Fischer and G.~Grün}, {\em {Existence} of positive solutions to
			stochastic thin-film equations}, SIAM Journal on Mathematical Analysis, 50
		(2018), pp.~411--455.
		
		\bibitem{friz_victoir}
		{\sc P.~K. Friz and N.~B. Victoir}, {\em Multidimensional stochastic processes
			as rough paths}, vol.~120 of Cambridge Studies in Advanced Mathematics,
		Cambridge University Press, Cambridge, 2010.
		\newblock Theory and applications.
		
		\bibitem{FOT}
		{\sc M.~Fukushima, Y.~Oshima, and M.~Takeda}, {\em Dirichlet forms and
			symmetric {M}arkov processes}, vol.~19 of De Gruyter Studies in Mathematics,
		Walter de Gruyter \& Co., Berlin, extended~ed., 2011.
		
		\bibitem{wuchen}
		{\sc Y.~Gao, W.~Li, and J.-G. Liu}, {\em Fluctuations in {W}asserstein dynamics
			on graphs}, 2024.
		
		\bibitem{GGKO21}
		{\sc B.~Gess, R.~Gvalani, F.~Kunick, and F.~Otto}, {\em Thermodynamically
			consistent and positivity-preserving discretization of the thin-film equation
			with thermal noise}, Mathematics of Computation, 92 (2023), pp.~1931--1976.
		
		\bibitem{GJ_ARMA}
		{\sc P.~Gon\c{c}alves and M.~Jara}, {\em Nonlinear fluctuations of weakly
			asymmetric interacting particle systems}, Arch. Ration. Mech. Anal., 212
		(2014), pp.~597--644.
		
		\bibitem{grafakos2014classical}
		{\sc L.~Grafakos}, {\em Classical {F}ourier analysis}, Springer, New York,
		2014.
		
		\bibitem{Grothaus_Wittmann}
		{\sc M.~Grothaus and S.~Wittmann}, {\em Mosco convergence of gradient forms
			with non-convex interaction potential}, Integral Equations Operator Theory,
		96 (2024), pp.~Paper No. 29, 69.
		
		\bibitem{GG_LK}
		{\sc G.~Gr\"un and L.~Klein}, {\em Finite speed of propagation for a class of
			stochastic thin-film equations}, SIAM J. Math. Anal., 57 (2025),
		pp.~4822--4866.
		
		\bibitem{gruen_rumpf}
		{\sc G.~Gr\"un and M.~Rumpf}, {\em Nonnegativity preserving convergent schemes
			for the thin film equation}, Numer. Math., 87 (2000), pp.~113--152.
		
		\bibitem{GruenMeckeRauscher2006}
		{\sc G.~Grün, K.~Mecke, and M.~Rauscher}, {\em Thin-film flow influenced by
			thermal noise}, Journal of Statistical Physics, 122 (2006), pp.~1261--1291.
		
		\bibitem{GubPer_uniqueness}
		{\sc M.~Gubinelli and N.~Perkowski}, {\em Energy solutions of {KPZ} are
			unique}, J. Amer. Math. Soc., 31 (2018), pp.~427--471.
		
		\bibitem{GubPer_generator}
		\leavevmode\vrule height 2pt depth -1.6pt width 23pt, {\em The infinitesimal
			generator of the stochastic {B}urgers equation}, Probab. Theory Related
		Fields, 178 (2020), pp.~1067--1124.
		
		\bibitem{gvalani2023stochastic}
		{\sc R.~S. Gvalani and M.~Tempelmayr}, {\em Stochastic estimates for the
			thin-film equation with thermal noise}, Commun. Math. Phys., 407 (2026),
		p.~87.
		\newblock Id/No 189.
		
		\bibitem{HMW}
		{\sc M.~Hairer, J.~Maas, and H.~Weber}, {\em Approximating rough stochastic
			{PDE}s}, Comm. Pure Appl. Math., 67 (2014), pp.~776--870.
		
		\bibitem{janson}
		{\sc S.~Janson}, {\em Brownian excursion area, {W}right's constants in graph
			enumeration, and other {B}rownian areas}, Probab. Surv., 4 (2007),
		pp.~80--145.
		
		\bibitem{kallenberg1997foundations}
		{\sc O.~Kallenberg}, {\em Foundations of modern probability. {In} 2 volumes},
		Cham: Springer, 2021.
		
		\bibitem{KOLESNIKOV2006382}
		{\sc A.~V. Kolesnikov}, {\em {M}osco convergence of {D}irichlet forms in
			infinite dimensions with changing reference measures}, Journal of Functional
		Analysis, 230 (2006), pp.~382--418.
		
		\bibitem{kuwae_shioya}
		{\sc K.~Kuwae and T.~Shioya}, {\em Convergence of spectral structures: a
			functional analytic theory and its applications to spectral geometry}, Comm.
		Anal. Geom., 11 (2003), pp.~599--673.
		
		\bibitem{lepingle}
		{\sc D.~L{\'e}pingle}, {\em La variation d'ordre p des semi-martingales}, Z.
		Wahrscheinlichkeitstheor. Verw. Geb., 36 (1976), pp.~295--316.
		
		\bibitem{LyonsZhang}
		{\sc T.~J. Lyons and T.~S. Zhang}, {\em Decomposition of {D}irichlet processes
			and its application}, Ann. Probab., 22 (1994), pp.~494--524.
		
		\bibitem{LyonsZheng}
		{\sc T.~J. Lyons and W.~A. Zheng}, {\em A crossing estimate for the canonical
			process on a {D}irichlet space and a tightness result}, Ast{\'e}risque,
		(1988), pp.~249--271.
		
		\bibitem{ma2012introduction}
		{\sc Z.-M. Ma and M.~R{\"o}ckner}, {\em Introduction to the theory of
			(non-symmetric) Dirichlet forms}, Springer Science \& Business Media, 2012.
		
		\bibitem{metzger2022existence}
		{\sc S.~Metzger and G.~Grün}, {\em Existence of nonnegative solutions to
			stochastic thin-film equations in two space dimensions}, Interfaces and Free
		Boundaries, 24 (2022), pp.~307--387.
		
		\bibitem{Nualart_related}
		{\sc D.~Nualart}, {\em The {M}alliavin calculus and related topics},
		Probability and its Applications (New York), Springer-Verlag, Berlin,
		second~ed., 2006.
		
		\bibitem{ODB97}
		{\sc A.~Oron, S.~H. Davis, and S.~G. Bankoff}, {\em Long-scale evolution of
			thin liquid films}, Rev. Mod. Phys., 69 (1997), pp.~931--980.
		
		\bibitem{Otto_PME}
		{\sc F.~Otto}, {\em The geometry of dissipative evolution equations: the porous
			medium equation}, Comm. Partial Differential Equations, 26 (2001),
		pp.~101--174.
		
		\bibitem{rock_schmu_tightness}
		{\sc M.~R\"ockner and B.~Schmuland}, {\em Tightness of general {$C_{1,p}$}
			capacities on {B}anach space}, J. Funct. Anal., 108 (1992), pp.~1--12.
		
		\bibitem{Sauerbrey_2021}
		{\sc M.~Sauerbrey}, {\em Martingale solutions to the stochastic thin-film
			equation in two dimensions}, Ann. Inst. Henri Poincar\'e{} Probab. Stat., 60
		(2024), pp.~373--412.
		
		\bibitem{sheng1990extension}
		{\sc S.~Y. Sheng}, {\em The extension of the theorems of {{\v{C}}. V.
				Stanojevi{\'c}} and {V. B. Stanojevi{\'c}}}, Proceedings of the American
		Mathematical Society, 110 (1990), pp.~895--904.
		
		\bibitem{triebel_95}
		{\sc H.~Triebel}, {\em Interpolation theory, function spaces, differential
			operators}, Johann Ambrosius Barth, Heidelberg, second~ed., 1995.
		
		\bibitem{Renesse_Sturm}
		{\sc M.-K. von Renesse and K.-T. Sturm}, {\em {Entropic measure and Wasserstein
				diffusion}}, The Annals of Probability, 37 (2009), pp.~1114 -- 1191.
		
		\bibitem{zhuzhu_phi43}
		{\sc R.~Zhu and X.~Zhu}, {\em Dirichlet form associated with the {$\Phi^4_3$}
			model}, Electron. J. Probab., 23 (2018), pp.~1--31.
		
	\end{thebibliography}
\end{document}